\theoremstyle{plain}
\newtheorem{theorem}{\bf Theorem}[section]
\newtheorem{lemma}[theorem]{\bf Lemma}
\newtheorem{corollary}[theorem]{\bf Corollary}
\newtheorem{proposition}[theorem]{\bf Proposition}
\newtheorem{definition}[theorem]{\bf Definition}
\newtheorem{conjecture}[theorem]{\bf Conjecture}
\def\a{\alpha}  \def\cA{{\mathcal A}}       
\def\g{\gamma}  \def\cC{{\mathcal C}}       
\def\d{\delta}  \def\cE{{\mathcal E}}       
\def\D{\Delta}         
\def\z{\zeta}   \def\cH{{\mathcal H}}       
\def\e{\eta}           
\def\p{\psi}           
\def\k{\kappa}         
\def\l{\lambda} \def\cM{{\mathcal M}}       
 \def\cN{{\mathcal N}}       
\def\m{\mu}     \def\cO{{\mathcal O}}       
\def\n{\nu}            
\def\r{\rho}           
\def\s{\sigma}         
\def\t{\tau}           
\def\f{\phi}           
\def\F{\Phi}           
\def\P{\Psi}           
\def\o{\omega}  \def\cX{{\mathcal X}}       
     \def\cY{{\mathcal Y}}
\def\O{\Omega}
\def\Z{{\mathbb Z}}  \def\C{{\mathbb C}}  
 \def\N{{\mathbb N}}  
	\def\Q{{\mathbb Q}}	\def\P{{\mathbb P}}
\newcommand{\Spec}{\mathrm{Spec}}
\newcommand{\Proj}{\mathrm{Proj}}
\newcommand{\Pic}{\mathrm{Pic}}
\def\Sym{\mathrm{Sym}}
\newcommand{\age}{\mathrm{age }}
\newcommand{\Ext}{\mathrm{Ext }}
\newcommand{\fExt}{\mathfrak{Ext}}
\newcommand{\fHom}{\mathfrak{Hom}}
\newcommand{\tor}{\mathrm{tor}}
\newcommand{\Mor}{\mathrm{Mor}}
\newcommand{\tX}{\tilde{X}}
\newcommand{\mg}{\mathcal{M}_g}
\newcommand{\stmg}{\mathcal{M}_g^{\mathrm{st}}}
\newcommand{\Mg}{\overline{\mathcal{M}}_g}
\newcommand{\stMg}{\overline{\mathcal{M}}_g^{\mathrm{st}}}
\newcommand{\mgn}{\mathcal{M}_{g,n}}
\newcommand{\stmgn}{\mathcal{M}_{g,n}^{\mathrm{st}}}
\newcommand{\Mgn}{\overline{\mathcal{M}}_{g,n}}
\newcommand{\stMgn}{\overline{\mathcal{M}}_{g,n}^{\mathrm{st}}}
\newcommand{\stMgnm}{\overline{\mathcal{M}}_{g,n-1}^{\mathrm{st}}}
\newcommand{\stMgnp}{\overline{\mathcal{M}}_{g,n+1}^{\mathrm{st}}}
\newcommand{\mrat}{\mathcal{M}_{0,2g+2}}
\newcommand{\Mrat}{\overline{\mathcal{M}}_{0,2g+2}}
\newcommand{\M}{\overline{\mathcal{M}}}
\newcommand{\Mgg}{\overline{\mathcal{M}}_{g,g}}
\newcommand{\Ngn}{\overline{\mathcal{N}}_{g,n}}
\newcommand{\ngn}{\mathcal{N}_{g,n}}
\newcommand{\Mgm}{\overline{\mathcal{M}}_{g,m}}
\newcommand{\Mgnn}{\overline{\mathcal{M}}_{g,2n}}
\newcommand{\Mgun}{\overline{\mathcal{M}}_{g+n}}
\newcommand{\MG}{\overline{\mathcal{M}}^G}
\newcommand{\MH}{\overline{\mathcal{M}}^H}
\newcommand{\MGgn}{\overline{\mathcal{M}}^G_{g,n}}
\newcommand{\MS}{\overline{\mathcal{M}}^{S_n}}
\newcommand{\hg}{\mathcal{H}_g}
\newcommand{\Hg}{\overline{\mathcal{H}}_g}
\newcommand{\stHg}{\overline{\mathcal{H}}_g^{\mathrm{st}}}
\newcommand{\hgn}{\mathcal{H}_{g,n}}
\newcommand{\Hgn}{\overline{\mathcal{H}}_{g,n}}
\newcommand{\stHgn}{\overline{\mathcal{H}}_{g,n}^{\mathrm{st}}}
\newcommand{\stHgnm}{\overline{\mathcal{H}}_{g,n-1}^{\mathrm{st}}}
\newcommand{\dmg}{\partial\mathcal{M}_{g}}
\newcommand{\dhg}{\partial\mathcal{H}_{g}}
\newcommand{\dirr}{\d_{\mathrm{irr}}} 
\newcommand{\Dirr}{\D_{\mathrm{irr}}} 
\def\i{\iota}
\begin{document}

\begin{titlepage}
\centering
\centering 
\vspace*{\baselineskip} 

\rule{\textwidth}{1.6pt}\vspace*{-\baselineskip}\vspace*{2pt} 
\rule{\textwidth}{0.4pt}\\[\baselineskip] 

{\huge \textbf{Birational Geometry of moduli spaces of pointed curves}}\\[0.2\baselineskip] 

\rule{\textwidth}{0.4pt}\vspace*{-\baselineskip}\vspace{3.2pt} 
\rule{\textwidth}{1.6pt}\\[\baselineskip] 
\vspace{1,5 cm}
{\LARGE -Dissertation-}\\
zur Erlangung des akademischen Grades doctor rerum naturalium \\
im Fach Mathematik, Fachgebiet Algebraische Geometrie\\
\vspace{1,5 cm}
{\Large Humboldt-Universit\"at zu Berlin} \\ \medskip 	
{\Large Mathematisch-Naturwissenschaftliche Fakult\"at } \\ \medskip
{\Large Institut f\"ur Mathematik} \\
\vspace{4cm}

\begin{flushleft}
{\large
eingereicht von: Irene Schwarz \\ \medskip
Doktorvater: Prof. Dr. Gavril Farkas \\ \medskip
Pr\"asidentin der Humboldt-Universit\"at zu Berlin: 
\\Prof. Dr.-Ing. Dr. Sabine Kunst \\ \medskip
Dekan der Mathematisch-Naturwissenschaftlichen Fakult\"at: 
\\Prof. Dr. Elmar Kulke \\ \medskip
Gutachter: Prof. Dr. Gavril Farkas \\
\hspace{2cm} Prof. Dr. Nicola Tarasca \\
\hspace{2cm} Prof. Dr. Alessandro Verra \\ \medskip
eingereicht am 29.04.2020 \\ \medskip
verteidigt am 22.10.2020}

\end{flushleft}

\thispagestyle{empty}
\newpage
\end{titlepage}

\tableofcontents

\chapter*{Introduction}\addcontentsline{toc}{chapter}{Introduction}

The central part of this thesis are the subsequent Chapters 2,3 and 4 which establish
that, for certain values of $g,n$,  the compactified moduli space $\Ngn$ of $n-$nodal curves of geometric genus $g$  is of general type, i.e. maximal Kodaira dimension, and likewise so are quotients by certain groups of the symmetric group $S_n$ of the moduli space $\Mgn$ of genus $g$ curves marked in $n$ distinct points and, perhaps most importantly, the moduli space $\Hgn$ of hyperelliptic genus $g$ curves marked in $n$ distinct points. Each of these results has been submitted for publication as a separate paper (see \cite{sch2,sch3,sch4}. As, however, the preliminary material needed for the proof of each separate result intersects, we have rewritten those parts of our separate papers to obtain, in slightly amplified form, Chapter 1 {\em Preliminaries} of the present thesis.  This first chapter also contains some material from our diploma thesis \cite{sch0}. 

We recall that $\Ngn:= \Mgnn /G$, with $G:=(\Z_2)^n\rtimes S_n$, can either be seen as a quotient of $\Mgnn$ by the subgroup $G$ of $S_{2n}$ or as a subspace of $\Mgun$ contained in its boundary. The first point of view raised certain natural questions on more general quotients which are then treated in Chapter 3. Hyperelliptic curves are a time-honoured subject of complex function theory: In the very early days of the theory of abstract  algebraic curves,  Riemann's count (see e.g. \cite{gh}) established that not every genus $g$ curve is hyperelliptic (i.e. a 2-sheeted branched cover of the rational curve) by counting the number of free parameters for a general genus $g$ curves as $3g -3$ which, for $g>2$, is bigger than $2g-1$, the number of free parameters for (smooth) genus $g$ hyperelliptic curves. Of course, identification of these numbers with the dimension of the corresponding moduli spaces of smooth curves and the realization of their Deligne-Mumford compactifications as  an honest projective variety belongs to a much later age. Still, $\Hgn$ is a most natural subvariety of $\Mgn$, and we consider our result in Chapter 4 below to be the most important one of our thesis.

To put our work in some mathematical and historical perspective, we shall recall a few well known facts. As already mentioned above, in some version, the theory of moduli goes back to the very early work of Riemann, and for $g=1$ (the case of elliptic curves) it is treated in many good classical books. 
The actual construction of the moduli space $\mg$, however, cannot be accomplished by such elementary means and there are a number of precursors and different approaches to the problem. A common feature in all these approaches is to first equip curves with some additional structure which {\em rigidifies} the problem of classification and then, if possible, to realize $\mg$ by quotienting out this additional structure.

An important precursor along this line is the {\em Hurwitz scheme} $\tilde{\cH}_{d,g}$ which parametrizes pairs $(C,\pi)$ of smooth genus $g$ curves $C$ and degree $d$ branched covers $\pi: C \to \P^1$ with only simple branch points. The space  $\tilde{\cH}_{d,g}$ has been shown to be a quasiprojective variety which is irreducible. This has been analyzed in classical works of Klein, Clebsch, L\"uroth and Hurwitz; a classical reference is \cite{c} and a modern one is \cite{mo}, see also the book \cite{hmo}. A useful compactification of the Hurwitz scheme is not trivial, but due to Knudsen and Mumford, see \cite{kmu}, this has been achieved with the space $\cH_{d,g}$  of {\em admissible covers}, see also \cite{hmo}.

For the actual construction of $\mg$, there are three most common approaches which we recall from \cite{hmo}. In the {\em Teichm\"uller approach} one fixes on an $n$-pointed (smooth) genus $g$ curve $(C;x_1, \ldots ,x_n)$ an additional Teichm\"uller structure, which is an isotopy class $[f]$  of an orientation-preserving homeomorphism $f$ of the $n$-pointed curve $C$ to a fixed $n$-pointed curve $(\Sigma,p_1, \ldots p_n)$. The notion of an isomorphism $\phi$ for curves with Teichm\"uller structure is the natural one, i.e.  $[\phi \circ \tilde{f}]= [f]$.  
A Teichm\"uller structure $[f]$ rigidifies the pointed curve in the sense that, equipped with this additional structure, the curve admits only the identity as an automorphism. The Teichm\"uller space 
$\mathcal{T}_{g,n}$ as a set is then the set of all isomorphism classes of $n$-pointed genus $g$ curves with Teichm\"uller structure.  A basic theorem due to Bers, see 
\cite{ber}, then states that  $\mathcal{T}_{g,n}$ is actually homeomorphic to a ball in $\C^{3g-3+n}$ and a complex analytic manifold, see also \cite{a}. Then the mapping class group $\Gamma_{g,n}$ acts on $\mathcal{T}_{g,n}$ as a properly discontinuous group of holomorphic transformations and one obtains  
$$\mgn=\mathcal{T}_{g,n}/\Gamma_{g,n}$$
as an analytic space, see the discussion in \cite{acg}. Teichm\"uller theory provides access to the rational cohomology of $\mg$ and $\mgn$, and gives, via the Weil-Petersson metric, an embedding in a projective variety with some of the nice properties of the Deligne-Mumford compactification. We shall not need this, and for further details we refer to the literature, e.g. \cite{w1,w2}, \cite{hl} and the textbooks  \cite{hmo,acg}.

An alternative approach to moduli, also heavily based on analysis and classical function theory, see e.g. \cite{s1-3}, is the {\em Hodge theory approach}, see \cite{hmo}. Here the additional data associated with a smooth genus $g$ curve $C$ is its polarized Jacobian, i.e. the data of a complex vector space of dimension $g$, namely the dual
of $H^0(C,K_C)$, a lattice $\Lambda $, namely the first homology group $H_1(C,\Z)$ and a skew-symmetric form $Q$, namely the intersection pairing. Choosing a symplectic basis of $H_1(C,\Z)$ (w.r. to the symplectic form $Q$) consisting of $a-$cycles and $b-$cycles, we may choose a  basis 
$\o_1, \ldots,\o_g$ of $H^0(C,K_C)$ whose period matrix w.r. to the $a-$cycles is the identity. Then the period matrix $P$ given by integrating the $\o_i$ around the $b_j$ 
is symmetric with positive definite imaginary part in view of the Riemann bilinear relations. Thus the locus $\mathfrak{c}_g$ of such period matrices $P$ forms a locally closed subset of the Siegel upper half-space $\mathfrak{h}_g$, and restricting the quotient of $\mathfrak{h}_g$ by the symplectic group $Sp(2g,\Z)$ (a coarse moduli space $\mathfrak{A}_g$ for Abelian varieties of dimension $g$) to $\mathfrak{c}_g$ gives the moduli space 
$\mg$ as an analytic space. Describing the locus $\mg$ in $\mathfrak{A}_g$ (or $\mathfrak{c}_g$ in $\mathfrak{h}_g$) is called the Schottky problem and has been treated in \cite{adc} and a number of papers in the the field of integrable systems, see e.g. papers of Mulase, Novikov, Shiota referenced in \cite{hmo}. Our thesis is not related to this approach, and we shall only add a remark on the Satake compactification $\tilde{\mg}$ obtained by taking the closure of $\mg$ in the (natural) Satake compactification $\tilde{\mathfrak{A}_g }$, see \cite{sa}. Unfortunately, $\tilde{\mg}$ is not modular, i.e. its boundary points cannot be fitted into families of genus $g$ curves. This might be seen as a serious deficit of
approaches to the moduli problem which, depending heavily on Analysis, are not properly algebraic in character, and this problem only vanishes in the {\em geometric invariant theory (GIT) approach}. It gives $\mg$  not only as a quasiprojective variety, but also 
 provides an explicit modular projective compactification $\Mg$ which is absolutely central for the content of this thesis. Similarly one obtains $\mgn$ and its compactification $\Mgn$.  For details we refer to the literature, e.g. \cite{hmo, acg, mfk} and the original paper \cite{dm}, and we just recall that one of the main technical difficulties is in controlling the quotient  of some locally closed subset of the Hilbert scheme of smooth  curves in some projective space $\P^N$ by the {\em continuous} group $PGL(N+1,\C)$. It is here that one needs geometric invariant theory, while for the discrete groups $\Gamma_{g,n}$ and $Sp(2g,\Z)$ nothing like this is required.

Having sketched the theory of constructing the moduli space $\mg$ and its compactication $\Mg$, we now turn to its explicit description. This is a very different subject, in particular if curves are in addition marked in $n$ different points. 
We recall that  already in 1974,  D. Mumford described aspects of the theory for $\mgn$ in his famous Michigan lectures {\em Curves and their Jacobians}, see the Appendix of \cite{m} for a slightly expanded version. While carefully recalling the classical theory in $g=1$ (including additional $n$ marked points) and including some more recent results in $g=2$, he strongly  emphasizes that nothing like this is known already for $g \geq 3$ and that the charm of the subject is in studying the moduli space $\mgn$ in the higher genus case  {\em without} such explicit knowledge. In the expanded version he includes (very shortly) his 1982 result with Harris in the seminal paper \cite{hm} which proves that $\Mg$ is of general type for $g > 23$ and of non-negative Kodaira dimension for $g=23$, see also \cite{ha} for the case of even genus. We emphasize that the explicit description of $\mg$ which we shall partly describe below in low genus  is (much) simpler than an explicit description of $\mgn$.

In 1989 Eisenbud and Harris have written  a further review on the subject in \cite{eh5}, now including the paper \cite{eh4} which treats $\Mg$ for $g \geq 23$ by using the work of Eisenbud and Harris on limit linear series and their proof of the Brill-Noether theorem, see \cite{eh,eh1,eh2}. Furthermore, in 1998 the book \cite{hmo}
gave a systematic exposition of moduli problems including a unified and expanded version of many of the subjects mentioned above, including an exposition to the relation with geometric invariant theory, see \cite{mfk}. It is very much relevant  for the present thesis, although it does {\em not} contain full amplifications on the seminal paper \cite{hm} - written by Harris jointly with Mumford - on the Kodaira dimension of $\Mg$; in particular, the crucial question of the singularities of  $\Mg$ treated in \cite{hm} is emphasized, its solution is referred to as {\em Mumford's argument} and left out: {\em We will give no details here and simply refer to Mumford's argument in \cite{hm}}, see \cite{hmo}, p. 332. However, the book contains an exposition of the underlying moduli problem in the most drastic terms suitable for a general audience, and this, see \cite{hmo}, pp. 328-329,  we shall partly recall here updated to the current state of research as an appropriate introduction to the much more technical questions we shall shortly have to address:  

{\em  If someone put a gun to your head and demanded that you show him this "general curve of genus 2" that everyone was proving theorems about - in other words, that you write down the equation of a general curve of genus 2 - you would have no problem: you would whip out your pen, write
$$ y^2=x^6 +a_5x^5+ \cdots  +a_0 $$ 
and say "where the $a_i$ are general complex numbers". Likewise, if the challenge were to write down a general curve of genus 3, you could write the equation of of a plane quartic
$$\sum_{i+j\leq 4} a_{ij}x^i y^j =0$$
and again take the coefficients $a_{ij}$ to be general. For genus 4 and 5, there is a similar solution: in each case, the canonical model of a general curve is a complete intersection, and you can just write down a homogeneous quadric and a cubic in four variables (for genus 4) or three homogeneous quadrics in five variables (for genus 5) and once more let the coefficients vary freely.}

While similar constructions become ever more complicated in higher genus, they remain possible at least until genus $14$. In genus $g=6$, the general curve is no longer a complete intersection and one needs Brill-Noether theory. The case of genus $g=7,8,9,10$ increase in complexity, but are still amenable by rather classical methods.
This was shown first by Severi for genus $g\leq 10$ in \cite{sev1} (see also \cite{ac3} for a modern treatment). The results were then improved to $g\leq 13$ by Sernesi in \cite{ser} and Chang-Ran in \cite{cr1} and finally to $g\leq 14$ by Verra in \cite{ve}, but all of this requires much more subtle arguments depending on the development of modern algebraic geometry. For $g=15$ it might no longer be possible to parametrize every general curve with finitely many complex parameters. However it is proven in \cite{cr2, schr} that $\M_{15}$ is rationally connected, i.e. for any two general curves $C_1$ and $C_2$ we can find a rational curve in $\M_{15}$ connecting the points $[C_1]$ and $[C_2]$. For genus $g=16$ it was long believed that the paper \cite{cr3} proved the uniruledness of $\M_{16}$, i.e. that any general curve was contained in some line on $\M_{16}$. A recently discovered mistake, however, casts doubt on this result and $g=16$ might again be considered as open. 
Thus, while cases $16\leq g\leq 21$ are currently open, it is well established in \cite{hm,ha,eh4}, that $\Mg$ becomes of general type for $g>23$. More recently Farkas has shown in \cite{f4} that this can be improved to $g\geq 22$. In this range it is impossible to write down equations with finitely many complex parameters for a general curve. To quote \cite{hmo} again:

{\em What's going on here? Basically, to say that there exists a family of curves, parametrized by an open subset of an affine space, that includes the general curve of genus $g$, is exactly to say that the moduli space $\Mg$ is {\em unirational}, i.e. there is a dominant rational map from a projective space $\P^N$ to $\Mg$.  In particular, it implies that the Kodaira dimension of $\Mg$ is negative; that is, there are no pluricanonical forms on $\Mg$. Thus, one consequence of Theorem (6.59)
\footnote{This Theorem recalls the statements on the Kodaira dimension for $\Mg$ proved in \cite{hm,eh4} as recalled in the text above.}
is the fact that for $g \geq 23$, such a family cannot exist.}

It is in view of this argument that we shall henceforth, somewhat loosely, refer to 
the statement {\em A moduli space $\M$ is of general type} 
as saying that $\M$ is the opposite of being unirational (or uniruled) and, in particular, as not being explicitly describable. We further recall that \cite{hmo} then continue to conjecture 
that $\Mg$ actually has Kodaira dimension $-\infty$ up to $g \leq 22$ which is the utmost borderline allowed by the results known then. This conjecture, while recently proven wrong by \cite{f4},  expresses a general believe that the change from describable to utterly non-describable actually happens in a rather sharp transition. We
are, however, with present techniques and knowledge rather far from controlling the unknown territory from $16 \leq g \leq 21$. 

At this point we also wish to caution the reader that the result in \cite{f4} is in fact more recent than our own papers \cite{sch2,sch3} on which the chapters 2 and 3 of this thesis are based. As \cite{sch2} is already accepted for puplication and \cite{sch3} at least submitted for publication, we have chosen not to update this thesis either. Thus in Chapter 2 and 3 we write as if the Kodaira dimensions of $\M_{22}$ and $\M_{23}$ were still unknown.  We remark, however, that the new result in \cite{f4} depends crucially on the use of special divisors which are only available for $g \geq 22$. Thus all earlier results for
$g <22$ are not affected, and all our tables in this regime still present the current state of the art.

A sharp transition, as conjectured above, becomes rather more evident when one starts adding $n$ distinct marked points to our genus $g$ curves, i.e. when one considers $\Mgn$, and it seems that it gets  even more pronounced if one considers appropriate subvarieties (or quotients) of $\Mgn$: All our results in Chapter 2,3,4 actually show this phenomenon, and for hyperelliptic curves it is actually near perfect: The Kodaira dimension of $\Hgn$ jumps from being equal to $- \infty$ for
$n \leq 4g+5$ to a non-negative value for $n=4g+6$, and it is definitely maximal for
$n \geq 4g+7$. Unfortunately, we do not yet have a similarly sharp result for $\Mg$.

The general intuition, of course, is that adding marked points makes the moduli space more difficult to describe and thus pushes $\Mgn$ to being of general type by increasing $n$. A. Logan was first to prove such results in \cite{l}, and these were subsequently refined in \cite{f}. 
Adapting Mumford's argument referred to above to the case at hand is, again, a crucial part of the proof. Fortunately, Mumford's argument may here be taken as a blackbox and then be adapted by a relatively short argument which is none too painful, and a similar approach works in our Chapters 2 and  3 below. 
Unfortunately, this is no longer true for Chapter 4, and here we have to go back to the details of Mumford's argument in painful detail, some of which is already included in our Chapter 1 on {\em Preliminaries}. We recall that, since 2011, the book
\cite{acg} presents a lot of results in the theory of algebraic curves, many of which   are relevant to the subject of this thesis, in a unified way  while before they were only accessible through the original research papers. Mumford's argument is not included in this selection, and one has to go back to the original paper.

We shall finally come to a point which caused us some pain and which resulted in a certain error in our first version of the papers forming the content of Chapter 2 - 4
below. Here it is always necessary to compute the canonical class $K_{\M}$ for the moduli space $\M$ at hand. As we shall shortly explain in our {\em Preliminaries}, there is a huge difference between {\em fine} and {\em coarse} moduli spaces; only the 
former carry {\em universal families}, and these are a godsend for actual calculations. Unfortunately, none of the moduli spaces we consider here actually are fine, and one would be stuck if not for the invention (or existence, depending on ones prefered philosophy of Mathematics) of something called {\em stacks}: On these marvellous objects, universal families magically spring into existence and one may calculate as if everything were {\em fine}. Instead of setting us the anyhow hopeless task of explaining this in full technical detail within the framework of a pleasant-to-read introduction, we  quote from {\cite{b}, pp. 33 -34, in rather informal language:

{\em The mathematical definition of the notion of stack is a stroke of genius, or a cheap cop-out, depending on your point of view: one simply declares the problem to be its own solution!

The problem we had set ourselves was to describe all continous families of triangles.
\footnote{In our case, this would be a family of curves, and {\em continuous} would not be sufficient; there are further conditions on the family, but as we are not aiming at precicision on a technical level anyhow, we might safely ignore such fine points for the time being and simply replace  {\em triangle} by {\em curve} (I.S.).}
We saw that this problem would be solved quite nicely by a universal family, if there was one. 
But there isn't one. So, instead of trying to single out one family to rule all others, we consider {\em all} parameter spaces to {\em be} the moduli stack of triangles.

Thus, the notions of {\em moduli problem} and {\em stack} become synonymous.

The challenge is then to develop techniques for dealing with such a stack as a geometric object, as if it were a space.}\\
\ \\
Taking the last point for granted, one then is rewarded with a phantastic set of tools for actual calculations, including the above mentioned universal family. Control of this object is really empowering, and the very  explicit reference to the famous literary work of Tolkien does not seem to be misplaced.

Unfortunately,  stacks are mathematically defined as certain {\em categories}, and for everyone with some rudimentary training in formal Mathematics it is clear that a Pandora's box of set theoretic problems will spring already from this. At the very least, the category of stacks becomes a bicategory, and similarly many useful things, like universal families for stacks and Cartesian squares start carrying a subscript 2 being objects of level 2, somewhat similar to Russel's and Whitehead's theory of types in the {\em Principia Mathematica}; of course, no working mathematician interested in solving concrete geometrical problems wants to actually have close contact with any of this, and thus the subscripts are rapidly suppressed. In this thesis  we have chosen not to part with tradition at this point. We simply assume that all these problems have been satisfactorily solved and refer anyone interested in more to the more specialized literature, see for instance \cite{dm} which started it all by proving the very important irreducibility of $\Mg$ in arbitrary characteristic by using stacks. Over $\C$ the irreducibility of $\mg$ could already be shown by purely analytical methods such as Teichm\"uller theory, and one might also recall the work of Clebsch in \cite{c} on the Hurwitz scheme mentioned above. However, \cite{dm} first introduced the compactification $\Mg$ of $\mg$ (based on the GIT approach)  which is crucial for almost all algebraic considerations of moduli of curves.  Other references for stacks are \cite{ar, agv}, the monograph \cite{lmb}, the introductory article \cite{b} from which we cited above, the long review article \cite{v} and furthermore the collected works of A. Grothendieck, see e.g. \cite{g1,g2}, in {\em Seminaire de Bourbaki} for the related {\em theory of descent} which we shall completely bypass as does \cite{b}.

We furthermore recall that the textbook \cite{acg} follows to some extent the approach of \cite{dm} in its treatment of moduli (complementing it e.g. by the Teichm\"uller point of view) and thus, by necessity, has to introduce a certain part of the theory of stacks in Chapter XII.  In a somewhat informal way, treating stacks very much as a space (and not as a mere category), it is shown here that the moduli stack $\stMgn$ is actually ramified over the associated coarse moduli space $\Mgn$ over certain boundary divisors (due to the appearance of automorphisms on certain curves), and the ensuing ramification divisor  actually provides the correction term to pass from the canonical class on the stack to the canonical class on the coarse moduli space. In our theorems we are concerned with the Kodaira dimension of coarse moduli spaces, and due to some ambiguity in the existing literature we at first were misled to neglect this  conceptually very important distinction between stack and coarse moduli space at some points. As a rule of thumb, calculations should always first be performed on the stack
(treating it as some form of space) and then ramification over the coarse moduli space can be read off from the automorphism group of curves. We hope that by now all our formulae are correct. Fortunately, all these fine points about ramification divisors did not all affect the outcome of our calculation; that might be a saving grace of fine points.

We close this rather chatty introduction by giving the outline of this thesis. In Chapter 1 we recall all the joint preliminary technical material needed for the subsequent Chapters. Concerning the stacky parts, we have chosen to follow the somewhat pragmatic exposition in \cite{acg}. Chapter 2 contains our result on the moduli space $\Ngn= \Mgnn /G$, with $G:=(\Z_2)^n\rtimes S_n$,  of nodal curves while
 Chapter 3 proves related results for quotients of $\Mgn$ by a larger class of subgroups $G$ of $S_n$. Chapter 4 then contains what we consider as the 
most important achievement of this thesis, namely the proof that the moduli space
$\Hgn$ of hyperelliptic curves is of general type for certain values of $g,n$ and 
that the transition from uniruled to general type is as sharp as one could possibly hope for in a first result.

\chapter{Preliminaries}
\setcounter{equation}{0}

In this thesis we understand by a curve a complete reduced algebraic curve over $\C$.  In particular, it may be singular. We feel free to view a curve alternatively as a complex variety
or a scheme (when we refer to constructions which are standard in the more general category of schemes). We remark that as a general rule we feel free to use any definition from \cite{h} without further explanation. More advanced concepts will be explicitly referenced and sometimes amplified.

\section{Moduli spaces}\label{sec moduli}

A moduli space is a way to provide a set of objects with the structure of scheme. For utmost generality, one does not start with  a set of objects but rather with a functor from the category of schemes to the category of sets (called a moduli problem). The abstract definition is (see \cite{hmo}):

\begin{definition}
Let $M$ be a scheme. Then the \emph{ functor of points} of $M$ is the contravariant functor $\Mor_M$ from the category of schemes to the category of sets defined by sending a scheme $B$ and a morphism of schemes $\f: B\to C$ to:
\begin{equation*}
\begin{split}
 &\Mor_M(B) :=\Mor_{\mathrm{sch}}(B,M)=\{f:B\to M|\mbox{ f is a morphism of schemes}\} \\
 &\Mor_M(\f): \Mor_M(C)\to \Mor_M(B), \quad f\mapsto f \circ\f
\end{split}
\end{equation*}
\end{definition}

We remark that for any
morphism $\phi: M\to N$ between schemes there is a canonical  associated natural transformation $\Phi: \Mor_M \to \Mor_N$.

\begin{definition}\label{def fine}
Let $F$:(schemes)$\to$ (sets) be a functor from the category of schemes to the category of sets.
$F$ is \emph{representable} in the category of schemes, if there is a scheme $M$ and a natural isomorphism (of functors from schemes to sets) $\Psi:F \to \Mor_M$ .
If $F$ is representable (by $M$), then $M$ is called a \emph{fine moduli space} for the \emph{moduli problem} $F$.\
\end{definition}   

As  we shall shortly see, existence of a fine moduli space provides a universal family, and this facilitates computations. Unfortunately, fine moduli spaces seldom exist; in particular this is the case for all moduli problems considered in this thesis. In such a case, if a functor $F$ is not representable, there might still be a scheme $M$ satisfying the weaker conditions of a coarse moduli space.

\begin{definition}\label{def coarse}
A scheme $M$ and a natural transformation $\Psi_M : F\to \Mor_M$ are called a \emph{coarse moduli space} for the moduli problem $F$, if
 \begin{enumerate}
 \item The map $\Psi_{\Spec(C)}: F(\Spec(C))\to \Mor_M(\Spec(C))$ is a  bijection of sets.
 \item Given another scheme $N$ and a natural transformation $\Psi_N : F\to \Mor_N$, there is a unique morphism $\phi: M\to N$ such that the associated natural transformation $\Phi: \Mor_M  \to \Mor_N$ satisfies $\Psi_N=\Phi \circ \Psi_M$
 \end{enumerate}
\end{definition}

This definition may not seem very tangible. But in practice one only considers a special kind of moduli problem for which the moduli space becomes more accessible. This moduli problem is constructed as follows. 
In order to define the moduli problem $F$ one requires a set of geometric objects $O$ and a notion of what a family of these objects over a scheme $B$ is. Usually a family over $B$ is understood
\footnote{We use the vague term {\em understood} on purpose: families of $n$-pointed curves are actually defined as tupels consisting of a bundle and $n$ sections into this bundle.}
as a map $\f: \cX\to B$ from some space $\cX$ to $B$ such that the fibres over closed points of $B$ are elements of $O$ and possibly some other regularity conditions are  satisfied. Then one defines the Functor $F$ by $F(B)=\{\mbox{ families of objects of }O\mbox{ over }B \}$ and $F(\f): F(B)\to F(C), f\mapsto \f\circ f$ for schemes $B$ and $C$, a morphism of schemes $\f: B\to C$ and a family $f:\cX\to B$.

With this construction a moduli space is a scheme $M$, whose closed points correspond to the elements of $O$. Furthermore, the structure of $M$ corresponds to the concept of a family (as above) in the sense that a family $\f: \cX\to B$, where the fibres over closed points are elements in $O$, induces a map $\tilde{\f}: B\to M$. Here a closed point $b \in B$ is mapped to the closed point $m \in M$ corresponding to the fibre $\f^{-1}(b) \in O$. 
Thus a fine moduli space gives a bijection  between $F(B)$, the set of families over the base $B$, and $Mor_M(B)$, the set of scheme morphisms from $B$ to $M$. 
For a coarse moduli space the above mentioned map $\f \mapsto \tilde{\f}$ need not be bijective. But the first condition of Definition \ref{def coarse} still requires that the closed points of $M$ correspond to elements of $O$ and the second that $M$ is universal with respect to this property.
In particular, a coarse moduli space is unique up to isomorphism. This justifies the (standard) use of the definite article.

In the construction  above we require  $O$ to be a set. But in many examples one would like to take $O$  as {\em a proper class $\cO$ up to isomorphism}. This requires some set theoretical considerations which we will briefly discuss without going into detail.
Technically one would now need families of isomorphism classes. Instead one usually takes families in $\cO$, where each family is a set but all families will again form a proper class. One then defines isomorphisms between these families that are compatible with the isomorphisms on $\cO$. The families on $O$ are then defined as isomorphism classes of families in $\cO$. Technically one would always have to show that the isomorphism classes (of families in $\cO$) do indeed form a set and the so defined families of isomorphism classes are actually families in the sense above, i.e. bundles $\f: \cX\to B$. This, however, is usually left to the reader and we shall not part with tradition. \\
\ \\
One of the most important properties of fine moduli spaces is the existence of a special family that already describes all possible families.
\begin{lemma}
Let $M$ be a fine moduli space for a moduli problem $F$. Then there exists a unique (up to isomorphism) family $\cC\to M$, called the \emph{universal family}, such that any family $\cX \to S$ in $M$ induces an isomorphism $\cX\simeq S\times_{M} \cC$.
\end{lemma}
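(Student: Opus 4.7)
The plan is to extract the universal family by applying the natural isomorphism $\Psi:F\to\Mor_M$ to the distinguished element $\mathrm{id}_M\in\Mor_M(M)=\Mor_{\mathrm{sch}}(M,M)$, and then to use naturality of $\Psi$ to translate the statement ``every family is a pullback'' into Yoneda-type data. Concretely, I would begin by setting
\[
\cC\longrightarrow M \;:=\; \Psi_M^{-1}(\mathrm{id}_M)\;\in\; F(M),
\]
which is a well-defined family over $M$ because $\Psi_M:F(M)\to\Mor_M(M)$ is a bijection. This family is the candidate universal family.

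Next, given an arbitrary family $\cX\to S$, corresponding to an element $\xi\in F(S)$, I would set $\tilde f := \Psi_S(\xi)\in \Mor_{\mathrm{sch}}(S,M)$, which is exactly the classifying morphism described after Definition \ref{def coarse}. The key computation is then naturality of $\Psi$ applied to $\tilde f:S\to M$: the square
\[
\begin{array}{ccc}
F(M) & \xrightarrow{\Psi_M} & \Mor_M(M) \\
{\scriptstyle F(\tilde f)}\downarrow & & \downarrow{\scriptstyle \Mor_M(\tilde f)} \\
F(S) & \xrightarrow{\Psi_S} & \Mor_M(S)
\end{array}
\]
commutes, and chasing the element $\cC\in F(M)$ around it yields $\Psi_S\bigl(F(\tilde f)(\cC)\bigr)=\Mor_M(\tilde f)(\mathrm{id}_M)=\tilde f=\Psi_S(\xi)$. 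Since $\Psi_S$ is a bijection, we deduce $F(\tilde f)(\cC)=\xi$. Interpreting the functorial pullback $F(\tilde f)$ in the moduli functor as the geometric pullback of families along $\tilde f$ (this is the content of how the moduli functor $F$ is constructed in the preceding paragraphs, where $F(\varphi)$ sends a family $\cY\to C$ to the fibred family $\cY\times_C B\to B$), this says precisely that $\cX\simeq S\times_M\cC$ as families over $S$.

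For uniqueness, suppose $\cC'\to M$ is another universal family with the same pullback property. Applying that property to $\cC\to M$ itself, with the classifying morphism $\mathrm{id}_M:M\to M$, one gets $\cC\simeq M\times_M\cC'\simeq \cC'$ over $M$. Equivalently, both $\cC$ and $\cC'$ correspond under $\Psi_M$ to the same element $\mathrm{id}_M$, so they are isomorphic as objects of $F(M)$.

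The only delicate point is the identification of the functorial pullback $F(\tilde f)$ with the honest fibre product $S\times_M\cC$; this is not a theorem of the abstract setup but is built into the definition of the moduli functor given just above, namely $F(\varphi)(f)=\varphi\circ f$ on the level of families, which in the geometric setting of families $\cX\to B$ is implemented by base change. Once this is accepted, the rest of the argument is a straightforward application of the Yoneda principle. The hard part is really conceptual rather than technical: recognizing that ``fine'' encodes exactly the existence of such a classifying object by representability.
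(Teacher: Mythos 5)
Your proposal is correct and follows exactly the route the paper indicates in the sentence after the lemma, namely that the universal family is $\Psi_M^{-1}(\mathrm{id}_M)$ and that everything else is the Yoneda/naturality computation; you have simply written out the diagram chase and the uniqueness step that the paper leaves implicit. Your caveat about identifying the functorial map $F(\tilde f)$ with geometric base change is well placed, since that identification is part of how the moduli functor is set up rather than a consequence of representability.
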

For the bijection between families over a base $B$ and morphisms $B\to M$, which we have informally discussed above, the universal family is just the family corresponding to the identity $\mbox{id}_M$. For coarse moduli spaces not every morphism to $M$ corresponds to a family. In particular a coarse moduli spaces possesses a universal family if and only if it is a fine moduli space. 

For all moduli problems considered in this thesis fine moduli spaces do not exist. As already mentioned in our Introduction, our way out is to consider instead the {moduli stacks} associated to the moduli problem $F$ (or which are, in some sense, synonymous with the moduli problem $F$). This is not just an abstract nicety, but it is essential for actual calculations with equalities and inequalities. In this thesis they are crucial for the calculations in Section 4.3. 
On the other hand, for all stacks considered in this thesis (since they are separate Deligne-Mumford of finite type) coarse moduli spaces do exist. In this situation it is quite possible that one scheme $M$ is the coarse moduli space for 2 different moduli spaces and thus 2 different stacks (as both notions are synonymous). This actually happens in the context of this thesis for $M$ being the coarse moduli space $\hg$ of genus $g$ hyperelliptic curves  which is isomorphic as a space to the quotient
$\mrat/S_{2g+2}$, both of which, however, naturally appear in the context of 2 different moduli problems or 2 different stacks. Thus whenever we talk about the associated stack of a moduli space we mean the stack associated to the underlying moduli problem.

Relevant for this thesis are
the moduli spaces  $\mg$, the moduli space of smooth curves of genus g (up to isomorphism), and its Deligne-Mumford compactification $\Mg$, the moduli space of stable curves of genus $g$ (up to isomorphism) 
as a basic input for everything treated in this thesis, and furthermore the moduli spaces $\mgn$ and $\Mgn$ of n-pointed genus $g$ curves and the moduli spaces $\hgn$ and $\Hgn$ of hyperelliptic $n-$pointed genus $g$ curves as well as various quotients of these spaces (or stacks) by subgroups of the symmetric group $S_n$.  We shall quickly explain the moduli problems for these moduli spaces.  We start with the proper class of all smooth (or stable) curves of genus $g$. Since every curve can be embedded into some $\P^r$
it makes sense to speak of the set of isomorphism classes (which is the set $O$ in our abstract discussion above). Next we need the notion of families.

\begin{definition}
\begin{itemize}
\item A \emph{family of smooth curves of genus $g$} is a flat, proper, surjective morphism of schemes (or analytic spaces) $\f : \cX\to B$, such that for every closed point $b\in B$ the fibre $f^{-1} (b)$ is a smooth curve of genus $g$.
\item A \emph{family of stable curves of genus $g$} is a flat, proper, surjective morphism of schemes (or analytic spaces) $\f : \cX\to B$, such that for every closed point $b\in B$ the fibre $f^{-1} (b)$ is a stable curve of genus $g$.
\item  A \emph{family of $n$-pointed smooth (or stable) curves of genus $g$} consists of family $\f : \cX\to B$ of smooth (or stable) curves of genus $g$ and an $n$-tuple $(\s_1,\ldots,\s_n)$ of sections $\s_i: B\to \cX$ such that their graphs are pairwise disjoints.
\end{itemize}
\end{definition}
We also define when such families are isomorphic.
\begin{definition}
Two families of $n$-pointed smooth (or stable) curves $(\f:\cX\to B;\s_1,\ldots,\s_n)$ and $(\p:\cY\to B;\t_1,\ldots,\t_n)$ are said to be \emph{isomorphic}, if there exists an isomorphism of schemes (or analytic spaces) \\
$\F: \cX\to\cY$ such that $\f =\p\circ\F$ and $\s_i=\F^{-1}\circ\t_i$ for all $i$.
\end{definition}

Let us by way of example briefly discuss the set theoretical implications for families of smooth curves. 

Obviously taking all families over a given base $B$ will not yield a set. However any family of smooth curves will be isomorphic to a family $\f:\cX\to B$, where $\cX\subset \mathbb{P}^r\times B$ and $\f$ is the projection to the second component. This means that the set of all isomorphism classes of families will indeed be a set.

Thus we are reduced to showing that the two notions of isomorphism classes, i.e. isomorphism classes of curves and of families, are compatible.
Assume that $\f:\cX\to B$ and $\p:\cY\to B$ are two isomorphic families of smooth curves of genus $g$. Let $b\in B$ be a closed point of $B$ and $\F: \cX\to\cY$ an isomorphism such that $\f =\p\circ\F$, then $\f^{-1}(b)$ and $\p^{-1}(b)$ will be  closed subschemes of $\cX$ and $\cY$. $F\vert_{\f^{-1}(b)}:\f^{-1}(b)\to \p^{-1}(b)$ will be bijective and therefore an isomorphism. This means that $\f^{-1}(b)$ and $\p^{-1}(b)$ define the same isomorphism class of curves of genus $g$.

Thus $\mg,\Mg,\mgn$ and $\Mgn$ are the coarse moduli spaces of the moduli problems 
$$F_{\mg}(B)=\{ \mbox{isomorphism classes of families of smooth curves of genus }g\mbox{ over }B \},$$ 
$$F_{\Mg}(B)=\{ \mbox{isomorphism classes of families of stable curves of genus }g \mbox{ over }B \},$$
$$F_{\mgn}(B)=\{ \mbox{isomorphism classes of families of smooth }n-\mbox{pointed curves of genus }g\mbox{ over }B \}$$ 
and
$$F_{\Mgn}(B)=\{ \mbox{isomorphism classes of families of stable }n-\mbox{pointed curves of genus }g \mbox{ over }B \}.$$

We refer to \cite{acg} for the construction of $\mgn$ and $\Mgn$, the coarse moduli spaces of smooth (resp. stable) n-pointed genus $g$ curves, parametrising tupels $(C,x_1,\ldots, x_n)$ where $C$ is a genus $g$ curve and $x_1,\ldots,x_n\in C$ are $n$ distinct points on $C$. For $n=0$ these moduli spaces are exactly $\mg$ and $\Mg$. Likewise, for the moduli problems and moduli spaces $\hgn$ and $\Hgn$ of hyperelliptic curves we refer to \cite{acg}, but see also our Section 4.2 for a collection of some specific results.


We shall now give a  geometric reason  that the moduli problems of smooth or stable (n-pointed) curves are not representable:  Some curves admit non-trivial automorphisms. (Here non-trivial depends on the moduli problem. For the hyperellptic curves considered in Chapter 4  both the identity and the hyperelliptic involution are considered trivial.) 
These automorphisms both obstruct the existence of fine moduli spaces and (in some cases) cause the coarse moduli spaces to have singularities.

For some calculations it is however very useful to have the properties of fine moduli spaces, in particular the existence of universal families. For this reason we will whenever necessary use stacks (instead of coarse moduli spaces) to perform actual calculations.

Essentially a stack does not only parametrize isomorphism classes of geometric objects but also keeps track of its automorphisms. By forgetting to keep track of these automorphisms our moduli stacks admit a surjective morphism to a scheme (the associated coarse moduli space). This morphism from a stack to its assotiated coarse moduli space is ramified exactly above those points that correspond to geometric objects with non-trivial automorphisms, and the corresponding ramification divisor is a correction term for the canonical class of the stack compared to the canonical class of the space.	Thus, at least in our situation where stacks are locally representable by spaces, the ramification of the stack over the coarse moduli space gives a completely geometric picture of our objects combined with their automorphisms. The corresponding ramification divisor matters.

Instead of trying to address this problem in a self-contained abstract framework (by amplifying the notion of  line bundles on stacks, their morphisms to line bundles on spaces, the question of local representability of stacks by spaces, all necessary to consistently introduce the notion of a ramification divisor on the canonical line bundle of our stacks), we shall, in a purely pragmatic way, come back to this at a later stage, very much similar to the informal way in which the book \cite{acg} discusses the ramification problem for the map
\begin{equation}
\epsilon:  \stMgn \to \Mgn,
\end{equation} 
via the corresponding automorphism groups, see \cite{acg}, p.386.

For completeness sake, we remark that the (bi)category of stacks is small enough that the concept of line bundles over the stack still makes sense. In fact the rational Picard group of any stack considered in this thesis can be identified with the rational Picard group of its associated coarse moduli space. The category of stacks is however large enough that moduli problems for smooth or stable (n-pointed) curves become representable by a smooth stack.

We have chosen to avoid a systematic exposition of the theory of stacks in this thesis, one reason being that we could not find a concise reference covering all the fine points needed in our context. As remarked above, the standard reference \cite{acg} does not do it, and going beyond \cite{acg} in this respect has never been the intended subject of this thesis. We are, however, convinced that,
 once the basic definitions and concepts are understood, at least in our context most computations become straight forward and many theorems for schemes can be transferred to stacks, at least in the pragmatic way sketched above. This might be questionable in a purist approach to Mathematics, but it was our more modest aim to only obtain correct formulae, and we hope to having achieved at least that.
 

\section{The Kodaira dimension}\label{sec Kdim}

In this thesis we study the birational geometry of moduli spaces and in particular their Kodaira dimension. 

Before we define the Kodaira dimension we recall that for any divisor $D$ on a projective variety $X$ with $H^0(X,D)\neq 0$ the complete linear series $|D|$ defines a rational map $\f_D: X \dashedrightarrow  \P H^0(X,D)=\P^r$. With these maps we can define the Iitaka dimension of divisors. As a general reference we refer to \cite{l1,l2}.

\begin{definition}
Let $X$ be a normal irreducible projective variety, $D$ a divisor on $X$ and $\f_m=\f_{mD}$ the rational maps induced by the complete linear systems $|mD|$ whenever
$H^0(X,mD)\neq 0$. 
 We define the \emph{Kodaira-Iitaka dimension} of $D$ as 
\begin{equation}
\k(X,D)=\max_{m\in \N} \dim(\f_m(X))
\end{equation}
if $H^0(X,mD)\neq 0$ for some $m>0$. Otherwise we define $\k(X,D)=-\infty$.\\
When $X$ is not normal we define the Kodaira-Iitaka dimension of a Divisor $D$ on $X$ as the Kodaira-Iitaka dimension of its pullback under the normalisation. 
\end{definition}
In particular any effective divisor $D$ satisfies $H^0(X,D)\neq 0$ and thus $\k(X,D)\geq 0$.

Of special is interest is the Kodaira-Iitaka dimension of the canonical divisor, which gives an important birational invariant of the variety $X$.

\begin{definition}\label{def Kodaira}
Let $X$ be a smooth irreducible projective variety and $K_X$ its canonical divisor. We define the \emph{Kodaira dimension} of $X$ as
\begin{equation}
\k(X):=\k(X,K_X).
\end{equation}
When $X$ is singular we define $\k(X)$ as the Kodaira dimension of a desingularization of $X$ and when $X$ is not projective as the Kodaira dimension of a compactification of $X$.
\end{definition}

Since the singularities and a possible compactification of $X$ only affect sets of codimension at least 1, the Kodaira dimension is well defined. It is a birational invariant. Furthermore,
it is clear from the definition that the Kodaira-Itaka dimension (and therefore the Kodaira dimension of $X$) is always less or equal to the dimension of $X$. A divisor $D$ on $X$ is called \emph{big} if $\k(X,D)=\dim(X)$ is maximal.
We recall further that a divisor $A$ on $X$ is ample if and only if the induced map 
$\f_{mA}:X\to \P H^0(X,mA)$ is an immersion for some $m>0$. Therefore any ample divisor is big. In fact we have the following characterization of big divisors.

\begin{lemma}\label{big}
Let $D$ be a divisor on an irreducible projective variety then the following conditions are equivalent:
\begin{enumerate}
\item $D$ is \emph{big}, i.e. $\k(X,D)=\dim(X)$.
\item For any ample divisor $A$ on $X$ there exists an integer $m>0$ and some effective divisor $E$ such that $mD\sim A+E$, where $\sim$ denotes linear equivalence.
\item For some ample divisor $A$ on $X$ there exists an integer $m>0$ and some effective divisor $E$ such that $mD\sim A+E$.
\end{enumerate}
\end{lemma}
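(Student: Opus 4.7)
My plan is to establish the three conditions are equivalent via the cyclic chain $(2) \Rightarrow (3) \Rightarrow (1) \Rightarrow (2)$. The implication $(2) \Rightarrow (3)$ is immediate, since $X$ is projective and therefore admits at least one ample divisor, so (3) is just (2) specialised to that divisor. The substantive content of the lemma is concentrated in $(1) \Rightarrow (2)$, which is Kodaira's lemma.

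For $(3) \Rightarrow (1)$, I would write $mD \sim A + E$ with $A$ ample and $E$ effective. For every $k \geq 1$, the canonical section of $\mathcal{O}_X(kE)$ induces an injection $H^0(X, kA) \hookrightarrow H^0(X, kA + kE) \simeq H^0(X, kmD)$, and since $A$ is ample, asymptotic Riemann-Roch yields $h^0(X, kA) = (A^n/n!)\, k^n + O(k^{n-1})$, with $n = \dim X$. Hence $h^0(X, kmD)$ attains polynomial growth of degree $n$, which forces $\dim \f_{kmD}(X) = n$ for large $k$; combined with the trivial bound $\k(X, D) \leq \dim X$ this yields $\k(X, D) = n$, so $D$ is big.

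The heart of the lemma is $(1) \Rightarrow (2)$. Fix an arbitrary ample divisor $A$ on $X$ and choose $k$ so that $kA$ is very ample; let $Y \in |kA|$ be any effective divisor in this linear system. Twisting the structure sequence of $Y$ by $\mathcal{O}_X(mD)$ yields
\begin{equation*}
0 \to \mathcal{O}_X(mD - kA) \to \mathcal{O}_X(mD) \to \mathcal{O}_Y(mD|_Y) \to 0,
\end{equation*}
from which $h^0(X, mD - kA) \geq h^0(X, mD) - h^0(Y, mD|_Y)$. Bigness of $D$ guarantees $h^0(X, mD) \geq c\, m^n$ for $m$ in a suitable arithmetic progression, while $\dim Y = n-1$ gives $h^0(Y, mD|_Y) = O(m^{n-1})$; hence the difference is positive for large $m$ and we obtain $mD \sim kA + E_1$ for some effective $E_1$. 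To pass from $kA$ back to the original $A$, I multiply by $j \geq 1$ and rearrange as $jmD \sim A + \bigl((jk-1)A + jE_1\bigr)$. Ampleness of $A$ together with asymptotic Riemann-Roch applied to $A$ itself guarantees that $\ell A$ is effective for all $\ell$ above some threshold, so choosing $j$ large makes the bracketed divisor a sum of two effective divisors, hence itself effective. The main obstacle is the polynomial bound $h^0(Y, mD|_Y) = O(m^{n-1})$, which I would derive from the general fact that sections of any coherent sheaf on a projective scheme of dimension $d$ twisted by a line bundle grow at most like $m^d$; additional care is needed when $X$ (hence $Y$) is singular, in which case the cohomological estimates are to be read on a resolution, in accordance with the paper's convention for $\k$.
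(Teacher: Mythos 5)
The paper gives no proof of this lemma: it is quoted as a standard characterization of bigness, with Lazarsfeld's \emph{Positivity in Algebraic Geometry} cited as the general reference at the head of the section. Your argument is precisely the standard proof of Kodaira's lemma as found there, and it is correct: $(2)\Rightarrow(3)$ is trivial; $(3)\Rightarrow(1)$ follows from the injection $H^0(X,kA)\hookrightarrow H^0(X,kmD)$ given by multiplication with the canonical section of $kE$, together with asymptotic Riemann--Roch for the ample divisor $A$; and $(1)\Rightarrow(2)$ is the exact-sequence growth comparison against a very ample multiple $kA$, followed by the rescaling $jmD\sim A+((jk-1)A+jE_1)$. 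Two points are worth making explicit. First, since the paper defines $\kappa(X,D)$ as $\max_m \dim\phi_{mD}(X)$ rather than through section growth, both $(3)\Rightarrow(1)$ and $(1)\Rightarrow(2)$ silently invoke Iitaka's comparison $h^0(X,mD)\asymp m^{\kappa(X,D)}$ (for $m$ in the semigroup of $D$); this is the only substantive input beyond the exact sequence and should at least be named. Second, for non-normal $X$ the paper defines $\kappa(X,D)$ via pullback to the normalisation $\nu:X'\to X$; the implication $(3)\Rightarrow(1)$ survives because $\nu$ is finite, so $\nu^*A$ is ample and $\nu^*E$ effective, but descending the decomposition produced in $(1)\Rightarrow(2)$ from $X'$ back to $X$ is not automatic, since $H^0(X',\nu^*L)$ may strictly contain $H^0(X,L)$; your closing remark about reading the estimates on a resolution does not address this. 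That wrinkle is an artefact of the paper's conventions rather than a defect of your argument, and is harmless in the normal (indeed, quotient-of-smooth) situations in which the lemma is actually applied.
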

In particular the class of $D$ in the rational Picard group is big if and only if it is the sum of an ample and an effective divisor class.

For completeness sake we remark that, while the Kodaira dimension of a variety $X$ is a birational invariant, the Iitaka dimension of its canonocal class $K_X$ is not, see \cite{l1} for examples of singular varieties with $\k(X,K_X)>\k(X)$. 
We recall that there is an alternative approach to the Kodaira dimension that uses only birational invariants. We shall not prove that both approaches give the same value of $\k(X)$, see \cite{l1}.
\begin{definition}\label{def Kodaira2}
Let $X$ be an algebraic variety, $K$ its canonical class and $m\in\N$. We call
\begin{itemize}
\item $p_m=\dim H^0(X,mK)$ the $m$-th \emph{plurigenus},
\item $R(X,K)=\bigoplus_{m=0}^\infty H^0(X,mK)$ the \emph{canonical ring},
\item $\Proj(R(X,K))$ the \emph{canonical model} and 
\item $\k(X)=\dim \Proj(R(X,K))$ the \emph{Kodaira dimension} of $X$.
\end{itemize}
Here we define $\k(X)=-\infty$ if the plurigenera of $X$ are zero for all $m>0$. 
\end{definition}


Let us consider the following birational properties

\begin{definition} An algebraic variety $X$ is called
\begin{itemize}
\item \emph{rational}, if $X$ is birational to some projective space $\P^n$,
\item \emph{unirational}, if there exists a dominant rational map $\P^n\dashedrightarrow X$ for some $n$,
\item \emph{uniruled}, if there exists a dominant rational map $\P^1\times Y\dashedrightarrow X$ that does not factor through $Y$ for some  variety $Y$.
\end{itemize}
\end{definition}

All these properties imply that the Kodaira dimension is minimal. In fact
\begin{lemma}
Let $X$ be an algebraic variety. Then we have the implications 

$X$ rational $\Rightarrow$ $X$ unirational $\Rightarrow$ $X$ uniruled $\Rightarrow$ $\k(X)=-\infty$.
\end{lemma}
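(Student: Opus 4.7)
The plan is to verify the three implications in sequence. The first, $X$ rational $\Rightarrow$ $X$ unirational, is immediate: a birational map $\P^n \dashedrightarrow X$ is in particular a dominant rational map from a projective space, which is the defining condition for unirationality. For the second, fix a dominant rational map $\f: \P^n \dashedrightarrow X$ with $\dim X \geq 1$, and take $n$ minimal among integers for which such a map exists. Choose a general point $p \in \P^n$ in the domain of $\f$ and parametrise the pencil of lines through $p$ by $\P^{n-1}$; this gives the well-known birational identification $\P^1 \times \P^{n-1} \dashedrightarrow \P^n$, and post-composing with $\f$ yields a dominant rational map $\p: \P^1 \times \P^{n-1} \dashedrightarrow X$. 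The restriction of $\f$ to a generic line through $p$ must be non-constant, for otherwise $\f$ would factor through the linear projection $\P^n \dashedrightarrow \P^{n-1}$ from $p$, contradicting minimality of $n$. Hence $\p$ does not factor through the second projection, which is exactly the uniruledness condition with $Y = \P^{n-1}$.

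For the third and main implication, assume $X$ uniruled. Since the Kodaira dimension is computed on a smooth projective model, we may assume $X$ smooth projective, and we may replace $Y$ by a smooth projective birational model. Let $f: \P^1 \times Y \dashedrightarrow X$ be the dominant rational map not factoring through $Y$. When $\dim Y > \dim X - 1$, cut $Y$ down by a generic linear section to obtain a smooth projective $Y' \subset Y$ of dimension $\dim X - 1$. A straightforward dimension count (using that for a general $x\in X$ the fibre $f^{-1}(x)$ has dimension $\dim Y - \dim X + 1$, so its projection to $Y$ meets a generic $Y'$) shows that the restricted map $\P^1 \times Y' \dashedrightarrow X$ is still dominant, still does not factor through $Y'$ (a generic rational curve $\P^1 \times \{y\}$ is still mapped non-constantly), and is now generically finite. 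Resolving its indeterminacy produces a smooth projective variety $\tilde{W}$, birational to $\P^1 \times Y'$, together with a generically finite dominant morphism $\pi: \tilde{W} \to X$.

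The Riemann--Hurwitz relation $K_{\tilde{W}} = \pi^* K_X + R$ with $R$ an effective ramification divisor yields, for every $m>0$, an inclusion of line bundles $\pi^*(m K_X) \hookrightarrow m K_{\tilde{W}}$ and hence an injection
\[
\pi^*: H^0(X, m K_X) \hookrightarrow H^0(\tilde{W}, m K_{\tilde{W}}),
\]
the injectivity coming from dominance of $\pi$. By birational invariance of plurigenera for smooth projective varieties, the target equals $h^0(\P^1 \times Y', m K_{\P^1 \times Y'})$, which by the K\"unneth formula and the splitting $K_{\P^1 \times Y'} = p_1^* K_{\P^1} + p_2^* K_{Y'}$ factors as $H^0(\P^1, m K_{\P^1}) \otimes H^0(Y', m K_{Y'})$. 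The first factor is $H^0(\P^1, \cO_{\P^1}(-2m)) = 0$ for all $m>0$, so $H^0(X, m K_X) = 0$ for every $m>0$, giving $\k(X) = -\infty$. The main obstacle lies in the third implication, specifically the reduction to a generically finite dominant morphism from a smooth projective model of a product with $\P^1$: one must verify that the linear cut preserves both dominance and the non-factoring condition, and then invoke the effectiveness of the ramification divisor to actually transport pluricanonical sections from $X$ to $\tilde W$. Once these two points are settled, the vanishing is essentially formal.
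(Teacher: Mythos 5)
The paper states this lemma without any proof at all \textemdash{} it appears in Section 1.2 purely as a recollection of standard facts, with the reader implicitly referred to the literature (e.g.\ Lazarsfeld) \textemdash{} so there is nothing in the text to compare your argument against line by line. Your proof is correct and is essentially the standard textbook argument. The first two implications are handled exactly as one should: rational $\Rightarrow$ unirational is definitional, and for unirational $\Rightarrow$ uniruled the projection from a general point of a minimal-dimensional $\P^n$ dominating $X$ is the classical device; minimality is precisely what rules out factoring through $\P^{n-1}$. For the third implication, the one place that genuinely requires care is the reduction to a generically finite map: when you cut $Y$ down to $Y'$ of dimension $\dim X - 1$, the assertion that a generic $Y'$ meets the projection to $Y$ of a general fibre $f^{-1}(x)$ needs that this projection has full dimension $\dim Y - \dim X + 1$, i.e.\ that no component of the general fibre is swept out by whole curves $\P^1\times\{y\}$; if that failed for a dense set of $y$, the map $f$ would be constant on the general $\P^1$-fibre and hence would factor through $Y$, contradicting the uniruledness hypothesis. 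Your parenthetical remark that a generic $\P^1\times\{y\}$ is mapped non-constantly is exactly the right observation, and it is the only point at which the non-factoring condition is actually used \textemdash{} it would be worth spelling out that this is what powers the dimension count. The final step (pulling back pluricanonical forms along a generically finite dominant morphism of smooth projective models, birational invariance of plurigenera, and the K\"unneth vanishing $H^0(\P^1,\cO_{\P^1}(-2m))=0$) is complete and correct, and in fact the same pullback mechanism is the one the paper itself relies on throughout (cf.\ Definition \ref{def adjcond} and Proposition \ref{criterion gentype}).
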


We are, however, mostly interested in the opposite condition of having maximal Kodaira dimension.
\begin{definition}
An algebraic variety $X$ is called \emph{of general type} if the Kodaira dimension is \\
$\k(X)=\dim(X)$.
\end{definition}

It is clear from Definition \ref{def Kodaira} that a smooth projective variety is of general type if and only its canonical class is big. Thus Lemma \ref{big} gives a nessisary and sufficient criterion for a smooth projective variety to be of general type.
This criterion is also valid for a singular variety if the singularities of $X$ are not too bad in the following sense: 
\begin{definition}\label{def adjcond}
We say that the singularities of an algebraic variety $X$ do not impose \emph{adjunction conditions}, if  any pluricanonical form on the regular locus of $X$ can be pulled back to a pluricanonical form on a desingularization of $X$. More precisely if for any resolution of singularities $\r: \tX  \to X$ and any  $m \in \N$ there is an isomorphism
\begin{equation}
\r^*: H^0(X_{\mathrm{reg}}, mK_{X_{\mathrm{reg}}}) \to H^0(\tX,mK_{\tX}).
\end{equation}
Here $X_{\mathrm{reg}}$ denotes the set of regular points of $X$ and $K_{\tX},  K_{(X)_{\mathrm{reg}}}$ denote the canonical classes on $\tX$ and  $X_{\mathrm{reg}}$.
\end{definition}
 In particular, this condition guarantees that the Kodaira dimension $\k(X)$ equals the Kodaira-Iitaka dimension of the divisor class $K_X$. One then has the following criterion which already has been implicitly used in \cite{hm}.

\begin{proposition}\label{criterion gentype}
Let $X$ be an algebraic variety whose singularities do not impose adjunction conditions and let $K$ be the canonical class. Then we have $\k(X,K_X)=\k(X)$.\\
In particular, if $K$ is effective then $\k(X)\geq 0$. Furthermore if $K=A+E$ for some ample rational divisor $A$ and some effective rational divisor $E$ then $X$ is of general type.
\end{proposition}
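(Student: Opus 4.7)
The plan is to prove the three assertions in turn, with the bulk of the work going into the first equality $\k(X, K_X) = \k(X)$. Let $\rho \colon \tX \to X$ denote a resolution of singularities, factored through the normalization $\nu \colon \bar X \to X$ when $X$ fails to be normal, so that both Kodaira dimensions can be compared to quantities on $\tX$. By Definition \ref{def Kodaira}, $\k(X) = \k(\tX, K_{\tX})$, computed from the plurigenera $\dim H^0(\tX, m K_{\tX})$; by the convention for singular bases recalled just before Definition \ref{def Kodaira}, $\k(X, K_X)$ is computed from $\dim H^0(\bar X, m\nu^* K_X)$.

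First I would identify these two sequences of plurigenera. The adjunction hypothesis of Definition \ref{def adjcond} supplies, by assumption, an isomorphism $\rho^* \colon H^0(X_{\mathrm{reg}}, mK_{X_{\mathrm{reg}}}) \to H^0(\tX, mK_{\tX})$ for every $m \geq 1$. On the other side, since $\bar X$ is normal, its singular locus has codimension at least two and the pluricanonical sheaves are reflexive, so sections on the regular locus extend uniquely by Hartogs; using that $\nu$ restricts to an isomorphism $(\bar X)_{\mathrm{reg}} \to X_{\mathrm{reg}}$, this yields
\begin{equation*}
H^0(\bar X, m\nu^* K_X) \;\cong\; H^0(X_{\mathrm{reg}}, mK_{X_{\mathrm{reg}}}) \;\cong\; H^0(\tX, mK_{\tX}).
\end{equation*}
These isomorphisms are induced by pullback under the birational map $\nu \circ \rho$ and therefore intertwine the rational maps $\phi_{m\nu^* K_X}$ and $\phi_{mK_{\tX}}$, so their images have the same dimension for every $m$. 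Taking the supremum over $m$ gives $\k(X, K_X) = \k(X)$.

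The two remaining statements then follow at once. If $K$ is effective, the linear system $|K_X|$ is non-empty, so the first plurigenus is at least one and hence $\k(X, K_X) \geq 0$; by the first part, $\k(X) \geq 0$. If $K = A + E$ with $A$ ample and $E$ effective rational divisors, then Lemma \ref{big} says that $K$ is big, i.e.\ $\k(X, K_X) = \dim X$; again by the first part, $\k(X) = \dim X$, which is exactly the condition for $X$ to be of general type.

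The hard part is the first step, and specifically the careful bookkeeping among the three varieties $\tX$, $\bar X$, and $X_{\mathrm{reg}}$. The adjunction hypothesis is tailored precisely to bridge the gap between $X_{\mathrm{reg}}$ and its desingularization: it guarantees that any pluricanonical form on the regular locus lifts to $\tX$ without acquiring poles along the exceptional divisor, a non-trivial property which fails in general for worse-than-canonical singularities. By contrast, the extension across the codimension-two singular locus of the normalization $\bar X$ is a formal consequence of reflexivity and Hartogs' principle on normal varieties and contributes no real difficulty.
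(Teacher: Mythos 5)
Your proof is correct and fills in precisely the argument the paper leaves implicit: the paper states Proposition \ref{criterion gentype} without proof, treating the equality $\k(X,K_X)=\k(X)$ as an immediate consequence of Definition \ref{def adjcond} (the adjunction isomorphism identifies the two sequences of plurigenera, and pullback intertwines the associated rational maps) and deriving the last two claims from effectivity and Lemma \ref{big} exactly as you do. The one imprecise step is the assertion that $\nu$ restricts to an isomorphism $(\bar X)_{\mathrm{reg}}\to X_{\mathrm{reg}}$ together with the Hartogs extension on $\bar X$ — this can fail when $X$ is non-normal along a divisor — but the slip is harmless in context, since every moduli space to which the proposition is applied in this thesis is normal, and there the bookkeeping reduces to the case $\bar X=X$ that your argument handles correctly.
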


Finally we recall the followinmg important additivity and subadditivity properties of the Kodaira dimension.
For cartesian products, the Kodaira dimension satisfies the equation
$$\k(X\times Y)=\k(X)+\k(Y).$$
Furthermore,  for more general fibrations the Kodaira dimension is still conjectured to satisfies the following subadditivity condition.
\begin{conjecture}[Iitaka Conjecture]\label{subadditivity}
Let $f : X \to Y$ be a surjective morphism of proper,
smooth varieties over $\C$. Assuming the generic geometric fibre $F$ of $f$ is connected, we have the following inequality for the Kodaira dimension:
\begin{equation}
\k(X)\geq \k(Y)+\k(F)
\end{equation}
\end{conjecture}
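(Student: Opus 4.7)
The statement as given is the Iitaka Conjecture $C_{n,m}$, which remains an open problem in full generality, so any honest ``proof plan'' has to proceed by indicating the strategy used to prove the conjecture in the cases where it is known, and to flag what blocks a general argument. The unifying idea is to extract enough pluricanonical sections on $X$ from pluricanonical sections on the base $Y$ twisted by pluricanonical sections along the fibres. Concretely, one uses the relative dualising sheaf $\omega_{X/Y}$ together with the formula $K_X = K_{X/Y} + f^{*}K_Y$ (modulo suitable care about ramification, which is absent for smooth morphisms), and studies the direct image sheaves $f_{*}(\omega_{X/Y}^{\otimes m})$ on $Y$.

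The plan would be to proceed in three steps. \textbf{Step 1.} Establish positivity of the direct images: following Fujita, Kawamata and Viehweg, show that for $m$ sufficiently divisible $f_{*}(\omega_{X/Y}^{\otimes m})$ is weakly positive (this is where one needs the fibres to be reasonable; in general Viehweg's theory requires appropriate semistable reduction in codimension one on $Y$, which is provided after a birational modification by the semistable reduction theorem). \textbf{Step 2.} Using weak positivity, produce, for some large $m$, many sections of $mK_{X/Y}$ by pulling back sections of an ample divisor on $Y$ twisted against $f_{*}(\omega_{X/Y}^{\otimes m})$; the rank of this direct image is controlled by the $m$-th plurigenus of the generic fibre $F$, which by definition grows like $m^{\kappa(F)}$. \textbf{Step 3.} Combine these fibrewise sections with pluricanonical forms coming from $Y$ via $f^{*}K_Y$; the rate of growth of the resulting linear system on $X$ is then at least the product of the growth rates on base and fibre, which yields
\begin{equation}
\kappa(X) \;=\; \kappa(X,K_X) \;\geq\; \kappa(Y) + \kappa(F).
\end{equation}

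The principal obstacle is \textbf{Step 1}: weak positivity of $f_{*}\omega_{X/Y}^{\otimes m}$ in the generality required here is exactly the content of the deep results of Kawamata, Viehweg and, more recently, Cao--P\u{a}un and others in the K\"ahler setting. It is known unconditionally when $Y$ is of general type (Viehweg), when the generic fibre has a good minimal model (Kawamata), and in further special cases, but a purely algebraic proof covering \emph{arbitrary} smooth projective $X$, $Y$ remains out of reach. The secondary obstacle, in \textbf{Step 2}, is that one must compare sections of $mK_{X/Y}$ restricted to a generic fibre with actual pluricanonical sections of $F$; this uses base change for $f_{*}\omega_{X/Y}^{\otimes m}$, which is available after replacing $f$ by a semistable model.

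A concrete, self-contained proof under the additional hypothesis that either the base or the fibre is of general type is within reach by the methods above and is the pragmatic form of the conjecture that would suffice for most applications in this thesis; the fully general statement should be presented as a conjecture, as it is in the text.
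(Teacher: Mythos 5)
You are right to treat this statement as an open conjecture rather than a theorem: the paper offers no proof, and merely records that the inequality is known when the base is of general type or when the generic fibre is of general type, which are exactly the cases your outline isolates as provable via weak positivity of $f_{*}(\omega_{X/Y}^{\otimes m})$. Your sketch of the Fujita--Kawamata--Viehweg strategy and your identification of the obstruction to a general argument are consistent with the paper's treatment, which uses only these special cases in its applications.
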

This conjecture has been proven for several special cases. In particular when the base $Y$ is of general type (see \cite{ka}) or the general fibre $F$ is of general type (see \cite{ko}).
Thus if both the base $Y$ and the general fibre $F$ are of general type then so is $X$.

\section{Singularities of moduli spaces}\label{sec sing}

In the last section we have given a criterion for a moduli space $M$ to be of general type. This criterion requires us to control the singularities of $M$, more precisely it requires that the singularities do not impose adjunction conditions, see Definition \ref{def adjcond}. We recall that the singularities of moduli spaces occur when the geometric objects parametrized by $M$ have non-trivial automorphisms. Therefore studying the singularities of $M$ boils downs to studying the automorphisms of these objects. 

Harris and Mumford have proven in the seminal paper \cite{hm} that the singularities of $\Mg$ for $g\geq 4$ do not impose adjunction conditions. The proof uses Kodaira-Spencer theory and the Reid-Tai criterion for canonical singularities. It begins with explicit calculations in local coordinates for smooth curves. These calculations show that $\mg$ has only canonical singularities. In a second step Harris and Mumford show that $\Mg$ has non-canonical singularities only in points corresponding to curves with elliptic tails. The final step is to explicitly construct the pullbacks of pluricanonical forms for these curves with elliptic tails.

A. Logan has shown in \cite{l} that the singularities of $\Mgn$ likewise do not impose adjunction conditions, and Farkas and Verra have shown the same for $\Mgn/S_n$, the moduli space of genus $g$ curves with n-marked points. Both proofs heavily rely on the calculations in \cite{hm}. Essentially, they  adjust these calculations to the case at hand. We shall proceed similarly.

In this section we will not reproduce the calculations in \cite{hm}, but we introduce the definitions and theory needed to understand and adjust them. In particular we will introduce the Reid-Tai criterion and some Kodaira-Spencer theory. We start with some definitions.

Let $M$ be any of the  moduli spaces of curves considered in this thesis, i.e. $\mg$, $\Mg$, $\mgn$, $\Mgn$,$\mgn/G$, $\Mgn/G$, $\hg$, $\Hg$, $\hgn$, $\Hgn$. 
Let $d=\mbox{dim}(M)$, let $C$ be a (stable) curve in $M$  and $\pi: \cC\to \Delta^d,$  
where $\Delta^d =\{z \in \C^d| |z|<1 \}$ denotes the open unit ball, be its local universal deformation space
at $[C]$ given by $C$. The automorphism group $\mbox{Aut}(C)$ is a finite group which acts on $\cC$ and $\Delta^d$. Then a neighbourhood of $[C]$ in $M$ is isomorphic to $\Delta^d/\mbox{Aut}(C)$. Furthermore,  the action of a finite group on a smooth space can always be linearised (see e.g. \cite{acg} Chapter XI §6 Lemma 6.12)  and therefore $\mbox{Aut}(C)$ acts as a group of linear maps  on the tangent space $T_{0,\Delta^d}$ of $\Delta^d$ at $0$. This vector space is isomorphic to the space of infinitesimal deformations of $C$ in $M$ which we denote by $T_C M$.
In suitable coordinates the action of an automorphism $\a\in \mbox{Aut}(C)$ of order $m$ on this tangent space, has the form 
\begin{equation}\label{matrix rep}
 \a=\begin{pmatrix}\z^{a_1} & 0 & \cdots & 0 \\0 & \z^{a_2} & 0 & 0 \\\vdots & 0 & \ddots & \vdots \\0 & 0 & \cdots & \z^{a_d} \\\end{pmatrix},
\end{equation}
for $\z$ a primitive $m$-th root of unity and $a_1,\ldots, a_d\in \{0,1,\ldots, m-1\}$.

\begin{definition}\label{def age}
Let $\a\in GL(d)$ be an automorphism of order $m$, conjugate to a matrix of the form \eqref{matrix rep}. 
We define the {\em age of $\a$ with respect to $\z$} as 
$$\age(\a):=\sum_{i=1}^d \dfrac{a_i}{m} .$$
We say that $\a$ is {\em senior}, if $\age(\a)\geq 1$ with respect to every primitive $m$-th root of unity $\z$. Otherwise we call $\a$ {\em junior}.
\end{definition}

The age of an automorphism $\a$ is a useful criterion to decide if $\a$ induces a canonical singularity in the sense of the following definition.

\begin{definition}\label{def cansing}
Let $X$ be a normal variety and assume that it admits a canonical
sheaf $\omega_X$ which is a line bundle.  Then $X$ has (only) {\em canonical singularities} if for a resolution of singularities $\f: \tilde{X} \to X$ one has 
$$\f^* \omega_X \subset \omega_{\tilde{X}}.$$
\end{definition}

It is clear from the definition that canonical singularities do not impose adjunction conditions.
Before we introduce the Reid-Tai criterion for identifying canonical singularities, we briefly discuss when a moduli space $M$ is smooth in a point $[C]$.

\begin{definition}\label{def qref}
An automorphism $\a\in \mbox{GL}(n)$ is called a {\em quasireflection} if it fixes a hyperplane.
\end{definition}
Clearly, an  automorphism $\a\in \mbox{GL}(n)$ of finite order $m$ is a quasireflection if and only if it has exactly one eigenvalue $\z ^{a}$, with $\z$ a primitive $m$-th root of unity and $1\leq a\leq m-1$, and all other eigenvalues equal to one. Therefore the age of $\a$ is  
$\age(\a)=\frac{a}{m}<1$ and $\a$ is junior.
Quasireflections are important, because they do not impose singularities on a moduli space, see e.g. \cite{lu}. More precisely, one has the following result.
\begin{lemma}
A moduli space $M$ of curves as above is smooth in a point $[C]$ if and only if any automorphism $\a \in \mbox{Aut}(C)$ acts on the tangent space $T_C M$ as either the identity or as a quasireflection.
\end{lemma}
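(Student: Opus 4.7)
The plan is to reduce the statement to a standard theorem about linear finite-group quotients. From the local description just recalled in the section, a neighbourhood of $[C]$ in $M$ is analytically isomorphic to $\Delta^d/G$ with $G$ the image of $\mbox{Aut}(C)$ in $GL(T_CM)$ acting linearly on $T_CM\cong\C^d$. Smoothness of $M$ at $[C]$ thus reduces to smoothness of the linear quotient $\C^d/G$ at the origin, and the appropriate tool is the Chevalley--Shephard--Todd theorem.

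For the ``if'' direction, if every $\a\in G$ acts as the identity or as a quasireflection, then $G$ is trivially generated by quasireflections and Chevalley--Shephard--Todd gives $\C^d/G\cong\C^d$, hence smoothness. For the ``only if'' direction, Chevalley--Shephard--Todd only produces a \emph{generating set} of quasireflections, which in general is strictly weaker than every element being identity or a quasireflection (as witnessed, e.g., by the Klein four group on $\C^2$, whose element $-I$ fixes no hyperplane even though the quotient is smooth). To upgrade to the elementwise statement, I would perform a fixed-point analysis: for $\a\in G\setminus\{1\}$, at a generic point $p\in\mbox{Fix}(\a)$ the stabilizer $G_p$ equals $\langle\a\rangle$; smoothness of $\C^d/G$ at the image of $p$ then forces $G_p$ to act as a reflection group on the normal slice, and since every nontrivial power of a single quasireflection shares its fixed hyperplane, this rules out $\mbox{codim}\,\mbox{Fix}(\a)\geq 2$, forcing $\a$ itself to be the identity or a quasireflection.

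The main obstacle is precisely this upgrade in the ``only if'' direction. The delicate ingredient is the claim that at a generic $p\in\mbox{Fix}(\a)$ the full stabilizer $G_p$ is already cyclic of the form $\langle\a\rangle$; in the moduli context this follows from the Kodaira--Spencer description of infinitesimal deformations together with the linearisation result already cited from \cite{acg} Chapter~XI, but it is the only place where something beyond the formal Chevalley--Shephard--Todd theorem enters. For the reflection-group background I would refer to \cite{lu}, where this synthesis of the linear-algebraic and geometric inputs is carried out in the form we need.
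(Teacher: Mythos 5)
The paper does not actually prove this lemma: it is stated as a known fact with a pointer to \cite{lu}, and the tool that is really used afterwards is Prill's proposition on the subgroup generated by quasireflections. So your proposal has to stand on its own. The reduction to the linear quotient $\Delta^d/\mbox{Aut}(C)$ and the ``if'' direction via Chevalley--Shephard--Todd are fine. The gap is in the ``only if'' direction, and it cannot be repaired, because the elementwise statement is false for the compactified moduli spaces in the list. Take $C=D\cup E_1\cup E_2\in\Mg$, $g\geq 4$, with two general elliptic tails attached at general points of a general curve $D$ of genus $g-2$. Then $\mbox{Aut}(C)\simeq \Z_2\times\Z_2$ is generated by the two elliptic-tail involutions $\iota_1,\iota_2$. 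The standard eigenvalue computation of \cite{hm} shows that $\iota_i$ acts on $T_C\Mg$ as $-1$ on the one-dimensional smoothing parameter of the node $E_i\cap D$ and trivially on everything else, i.e.\ as a quasireflection --- but the two fixed hyperplanes are different. Hence the local model $\Delta^{3g-3}/(\Z_2\times\Z_2)$ is smooth, while $\iota_1\iota_2$ acts as $-1$ on a two-dimensional subspace and is neither the identity nor a quasireflection. Your own Klein four group example is thus realized inside $\Mg$, not avoided by it.

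This also pinpoints exactly where your fixed-point argument breaks: for $\a=\iota_1\iota_2$, a generic point of $\mathrm{Fix}(\a)$ corresponds to a deformation keeping both nodes, whose stabilizer is the whole Klein four group and not $\langle\a\rangle$; so the slice argument never forces $\a$ to be a quasireflection. The Kodaira--Spencer description does not rescue the claim --- it is what produces the counterexample. The statement that is both true and actually needed in the sequel is: $M$ is smooth at $[C]$ if and only if the image of $\mbox{Aut}(C)$ in $\mbox{GL}(T_CM)$ is \emph{generated by} quasireflections and elements acting as the identity; equivalently, by the Prill diagram quoted immediately after the lemma, if and only if $G/H$ acts trivially on $V/H\simeq W$, where $H$ is the subgroup generated by the quasireflections. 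If you want an elementwise statement, it holds only after replacing $\mbox{Aut}(C)$ by a generating set, or on loci (such as $\mg$ for $g\geq 4$) where one checks by the explicit quadratic-differential computation of \cite{hm} that at most one quasireflection hyperplane occurs.
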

With these preparations we recall the classical criterion for the occurrence of canonical singularities, see \cite{r} and \cite{t}.

\begin{theorem}(Reid/Shepherd/Barron/Tai criterion)\label{Raid-Tai}
Let $G \subset \mbox{GL}(\C^n)$ be a finite group without quasireflections. Then $\C^n /G$ has canonical singularities if and only if every non-trival automorphism $\a\in G$ is senior.
\end{theorem}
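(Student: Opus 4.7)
The plan is to reduce both directions to a single identity: every exceptional discrepancy of a suitable resolution $\varphi\colon \tilde X \to X := \C^n/G$ has the shape $\age(\beta) - 1$ for some non-trivial $\beta \in G$. Once this identity is established, canonicity (all discrepancies $\geq 0$) coincides literally with seniority (all ages $\geq 1$). The setup is that, because $G$ contains no quasireflections, the quotient map $\pi\colon \C^n \to X$ is \'etale in codimension one; hence $\omega_X$ is locally free in codimension one and sections of $\omega_X^{\otimes r}$ on $X_{\mathrm{reg}}$ correspond exactly to $G$-invariant sections of $\omega_{\C^n}^{\otimes r}$. The problem therefore becomes: do such $G$-invariant pluricanonical forms extend holomorphically across the exceptional divisors of $\tilde X$?

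I would first reduce to the cyclic case. For the ``only if'' direction, if some $\alpha \in G$ is junior, then the cyclic subgroup $H := \langle\alpha\rangle$ also has no quasireflections, the finite morphism $\C^n/H \to \C^n/G$ is \'etale in codimension one, and canonicity is preserved under such covers in both directions; so it suffices to exhibit a non-canonical singularity on $\C^n/H$. For the ``if'' direction, I would argue locally: at each point $x \in \C^n$ the finite stabiliser $G_x$ can be linearised (by the averaging trick of \cite{acg}, XI.6.12), and a $G$-equivariant resolution of $X$ can be assembled from $G_x$-equivariant toric resolutions of the linearised slices, so that the generic inertia of each exceptional divisor is cyclic and the discrepancy is computed by the cyclic model.

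The cyclic case is then handled by toric geometry. Diagonalise $\alpha = \mathrm{diag}(\zeta^{a_1},\dots,\zeta^{a_n})$ with $\zeta = e^{2\pi i/m}$ a primitive $m$-th root of unity. Then $\C^n/H$ is the affine toric variety attached to the lattice $N := \Z^n + \Z\cdot\tfrac{1}{m}(a_1,\dots,a_n)$ with cone the positive orthant $\sigma \subset N_\R$. Every toric resolution subdivides $\sigma$ using rays through primitive vectors $v = \tfrac{1}{m}(b_1,\dots,b_n) \in N \cap \sigma$, and a standard toric discrepancy calculation (using that $\C^n$ is Gorenstein, so $K_X$ corresponds to $-\sum D_i$ over toric boundary divisors) gives that the exceptional prime $E_v$ has discrepancy $\sum_i b_i/m - 1$. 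Identifying $v$ with the element $\alpha^k \in H$ whose eigenvalues are $\zeta^{b_i}$ (with $b_i \equiv k a_i \pmod m$), this number is precisely $\age(\alpha^k) - 1$. Running over all primitive vectors in $\sigma \cap N$ corresponds to running over all non-trivial elements of $H$, with different choices of primitive root of unity accounting for the ``every primitive root'' quantifier in the definition of seniority.

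The main obstacle is the non-abelian ``if'' direction, since $G$ itself cannot be simultaneously diagonalised. The step above circumvents this by working at the generic point of each exceptional divisor, but a careful justification requires that (i) the discrepancy computation really is local on $\tilde X$, (ii) the generic inertia group of each exceptional divisor is cyclic (which uses that a finite group acting faithfully on a smooth one-dimensional transversal is cyclic), and (iii) the toric model correctly represents the singularity formally at such a generic point. Once these ingredients are in place, every exceptional discrepancy of the global resolution has the form $\age(\beta) - 1$ for some non-trivial $\beta$ in some stabiliser; then seniority of every non-trivial element of $G$ forces every discrepancy to be non-negative, so $X$ has canonical singularities, finishing the proof.
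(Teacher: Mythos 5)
First, a point of comparison: the thesis does not actually prove Theorem \ref{Raid-Tai}; it is quoted as a classical criterion with references to \cite{r} and \cite{t}, so there is no in-text proof to measure your attempt against. Your outline is essentially the strategy of those references. The ingredients you give for the cyclic case are correct: absence of quasireflections makes $\C^n\to\C^n/G$ \'etale in codimension one, the toric discrepancy of the divisor attached to a primitive lattice vector $\frac{1}{m}(b_1,\dots,b_n)\in N\cap\sigma$ is $\sum_i b_i/m-1$, and the nonzero box representatives of $N/\Z^n$ biject with the non-trivial elements of $\langle\alpha\rangle$, so the ``only if'' direction goes through once one also invokes the (standard, but here merely asserted) fact that canonicity ascends and descends along finite morphisms \'etale in codimension one.

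The genuine gap is in the ``if'' direction for non-abelian $G$. The proposal to ``assemble a $G$-equivariant resolution of $X$ from $G_x$-equivariant toric resolutions of the linearised slices'' is not available as stated: the linearisations are only local-analytic at individual points, the stabilisers vary from point to point, and toric resolutions of different slices do not glue to a global resolution of $\C^n/G$; equivariant resolution of singularities does produce a resolution, but with no control over its local toric structure. More importantly, even after replacing this by the correct device --- take an arbitrary resolution $Y\to X$, normalise $Y\times_X\C^n$, and let $g$ generate the cyclic inertia group of a prime divisor $E'$ lying over an exceptional divisor $E$ --- the discrepancy of $E$ is \emph{not} computed by ``the toric model of $\langle g\rangle$'': $E$ is an arbitrary divisor over $X$, not a distinguished divisor of a toric resolution of $\C^n/\langle g\rangle$. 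What the Reid--Tai argument actually establishes is the inequality $a(E,X)\geq \age(g)-1$, where the age is taken with respect to the specific primitive root of unity by which $g$ acts on the conormal direction of $E'$; this is obtained by a direct estimate of the vanishing order of a $G$-invariant pluricanonical form at the generic point of $E'$, writing it in eigencoordinates for $g$ and exploiting invariance. That estimate is the technical heart of the theorem, it is precisely where the ``with respect to every primitive $m$-th root of unity'' quantifier in Definition \ref{def age} is needed (one does not get to choose the root of unity), and it is absent from your outline: your items (i)--(iii) correctly locate the difficulty but assert rather than supply the argument.
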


Furthermore, the following result of Prill, see \cite{pr} and also \cite{lu}, allows  to also consider groups $G$ with quasireflections.

\begin{proposition}
Let $V = \C^n$ and $G \subset \mbox{GL}(V)$ be a finite group. Then the
subgroup $H \subset G$ generated by the quasireflections in $G$ is a normal subgroup
of $G$, there exists an isomorphism $V/H \xrightarrow{\sim} W=\C^n$ and a finite group
$K \subset \mbox{GL}(W)$ containing no quasireflections such that the following diagram
commutes:
\begin{center}
\begin{tikzpicture}
  \matrix (m) [matrix of math nodes,row sep=2em,column sep=4em,minimum width=2em]
  { V	&  V/H 			& W   \\
   V/G & (V/H)/(G/H) 	& W/K)       \\ };

  \path[-stealth]
    (m-1-1) edge node {} (m-1-2)
            edge node {} (m-2-1)
    (m-1-2) edge node [above] {$\simeq$}(m-1-3)
    (m-1-2) edge node {}(m-2-2)
    (m-1-3) edge node {} (m-2-3)
    (m-2-1) edge node [above] {$\simeq$}(m-2-2)
    (m-2-2) edge node [above] {$\simeq$} (m-2-3);

\end{tikzpicture}
\end{center}
\end{proposition}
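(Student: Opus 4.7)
The plan is to verify the three assertions of the proposition in turn: the normality of $H$ in $G$, the identification $V/H \simeq \C^n$, and the fact that the residual group $K$ acts on this quotient with no quasireflections. The first two parts should reduce to classical facts; the third is the real content.

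First I would show that $H$ is normal. The set of quasireflections in $G$ is stable under $G$-conjugation: if $h \in G$ fixes a hyperplane $L \subset V$ pointwise, then for any $g \in G$ the conjugate $g h g^{-1}$ fixes the hyperplane $g(L)$ pointwise, so it is again a quasireflection. Since $H$ is generated by a conjugation-invariant set, $H$ is normal.

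Next, to identify $V/H$ with $\C^n$, I would invoke the Chevalley–Shephard–Todd theorem: a finite subgroup of $\mbox{GL}(V)$ has a polynomial ring of invariants (equivalently, a smooth quotient isomorphic to affine space) if and only if it is generated by pseudoreflections, and in the terminology of the text quasireflections are exactly pseudoreflections. Since by construction $H$ is precisely the subgroup of $G$ generated by all quasireflections, the theorem yields an isomorphism $V/H \xrightarrow{\sim} W \simeq \C^n$. This classical input I would cite rather than reprove. Once this is available, normality of $H$ gives a residual action of $K := G/H$ on $W$, and the commutativity of the diagram in the statement is a formal consequence of the universal properties of the successive quotients.

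The main obstacle is showing that $K$ contains no quasireflections. Suppose for contradiction that some $\bar g \in K$ acts on $W$ as a quasireflection, and lift it to $g \in G$; the goal is to deduce $g \in H$. The quotient map $\pi : V \to W$ is ramified exactly along the union of the fixed hyperplanes of the quasireflections generating $H$, and away from this branch locus $\pi$ is an étale $H$-Galois cover. If $\bar g$ fixes a hyperplane $\bar L \subset W$, one analyses how the preimage $\pi^{-1}(\bar L)$ interacts with the fixed locus of $g$ in $V$: since $\bar g$ fixes a codimension-one subset of $W$ and $H$-orbits are generically free, some element of the coset $gH$ must fix a codimension-one subset of $V$. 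That element is then itself a quasireflection in $G$, hence lies in $H$, which forces $\bar g = 1$. This lifting step — that a quasireflection on the Chevalley–Shephard–Todd quotient necessarily comes from a quasireflection upstairs — is the genuine content of Prill's result and is where I expect the argument to require real work; the preceding parts are essentially formal given the classical theorems.
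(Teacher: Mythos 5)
The paper does not prove this proposition at all: it is quoted verbatim as a known result of Prill, with references to \cite{pr} and \cite{lu}, and used as a black box alongside the Reid--Tai criterion. So there is nothing in the text to compare your argument against line by line; what I can say is that your reconstruction is essentially the classical proof and is correct in outline. The normality of $H$ via conjugation-invariance of the set of quasireflections is right, and invoking Chevalley--Shephard--Todd for $V/H\simeq \C^n$ is exactly the intended input (the paper's quasireflections are finite-order pseudoreflections, as the remark after Definition 1.3.5 makes explicit). For the key step, your descent argument is the standard one, though I would phrase it as a pigeonhole over the finite coset rather than via generic freeness of the $H$-action: for every $v$ in the codimension-one set $\pi^{-1}(\bar L)$ one has $\pi(gv)=\bar g\,\pi(v)=\pi(v)$, hence some element of the coset $Hg$ fixes $v$; since the coset is finite, a single element $k\in Hg$ fixes a codimension-one subset of $V$, and as $\mathrm{Fix}(k)$ is a linear subspace it contains a hyperplane, so $k$ is a quasireflection or the identity and lies in $H$, forcing $\bar g=1$. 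The appeal to freeness is not needed (and would be delicate if $\bar L$ sat inside the branch locus), but the conclusion is unaffected. In short: your proof is a sound expansion of a statement the thesis only cites.
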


Now we recall some basic facts of complex deformation theory for analytic spaces going back to the seminal work of Kodaira-Spencer. These play  a crucial role in the proof of Theorem 1 in \cite{hm} which controls the singularities in $\Mg$. 

As a starting point, we recall that clasical deformation theory for the moduli space $\mg$ uses the line bundle $\o_C \otimes \o_C$ of quadratic differentials. Since 
$H^0(\o_C \otimes \o_C)$ is a well known space, it is easy to perform calculations in local coordinates. In order to extend this theory to non-smooth curves, in our case stable curves, the sheaf of holomorphic differentials $\o_C$ 
is replaced by the sheaf $\O^1_C$ of K\"ahler differentials, see \cite{acg} p.95. Then,  see \cite{acg} Chapter XI  3,  equivalence classes of first order deformations of a nodal curve $C$ in the full space $\Mg$ are given by elements in 
\begin{equation}   \label{serredual}
\Ext^1(\O^1_C, \cO_C) \simeq H^0(\O^1_C \otimes \o_C)^*
\end{equation}
where the isomorphism is Serre duality,
see \cite{hm} p.27. 

In particular, $\Ext^1(\O^1_C, \cO_C)$ may be computed from the {\em local-to-global}  spectral sequence of $\Ext$'s (see \cite{acg}, p.179, and \cite{gh2}) giving the short exact sequence of vector spaces
\begin{equation}  \label{sesgext}
0 \to H^1(C, \fHom_{\cO_C}(\O^1_C,\cO_C)) \to \Ext^1_{\cO_C}(\O^1_C, \cO_C) \to 
H^0(C, \fExt^1_{\cO_c}(\O^1_C, \cO_C)) \to 0.
\end{equation}
 
Here, since the sky-scraper sheaf $ \fExt^1_{\cO_c}(\O^1_C, \cO_C))$   is concentrated at the nodes of $C$, we have
\begin{equation}
H^0(C, \fExt^1_{\cO_c}(\O^1_C, \cO_C)) = \oplus_{p \in \mathrm{Sing}(C)} \Ext^1_{\cO_{C,p}}(\O^1_{C,p} \cO_{C,p}).
\end{equation}
Furthermore, we have to describe how deformations of the curve $C$ are related to deformations of its irreducible components. Denoting by $C_{a}$ the normalizations of the components of $C$ and by $p_{b} \in C_{a}$ the points of $C_{a}$ corresponding to double points in $C$, the deformation space $\Ext^1(\O^1_C, \cO_C)$ is given by the exact sequence (see \cite{hm}, p.33)
\begin{equation}   \label{exseqcomp}
0 \to \oplus_{p \in \mathrm{Sing}(C)} (\tor_p) \to \Ext^1(\O^1_C, \cO_C) \to
\oplus_{a} H^0(\cO_{C_{a}}(2K_{a} + \sum_{b} p_{b})) \to 0
\end{equation}

\section{Birational geometry of $\Mgn$}\label{sec geo}

In this section we shall collect some well known result on the geometry of  $\Mgn$ and $\Mg$. Here we consider  $\Mg=\overline{\mathcal{M}}_{g,0}$ by abuse of notation.

The moduli spaces $\mg$ and $\mgn$ are irreducible algebraic varieties of dimension $3g-3$ and $3g-3+n$ respectively. Their compactifications $\Mg$ and $\Mgn$ are projective varieties.

It is known that $\Mgn$ is uniruled or even unirational for some small values of $g$ and $n$. In \cite{be} one finds a recent summary of results in this direction. For $g$ or $n$ large enough $\Mgn$ becomes of general type. 

The following proposition summarizes the known values of $g$ and $n$ for which $\Mgn$ is of general type, 
collecting results from \cite{l, f, fv4, f4}.  This covers all  cases known up to now.

\begin{proposition} \label{proposit}
The moduli space $\Mgn$ is of general type for $g\geq 22$ or for $n\geq n_{\mathrm{min}}(g)$ given in the following table:
\begin{table}[h!]
$$\begin{array}{c|c|c|c|c|c|c|c|  c|c|c|c|c|  c|c|c|c|c|  c}
g &4&5&6&7&8&9&10&11&12&13&14&15&16&17&18&19&20&21\\
\hline
n_{\mathrm{min}}&16&15&16&15&14&13&11&12&11&11&10&10&9&9&9&7&6&4
\end{array}$$
\caption{ } \label{table mg}
\end{table}
\end{proposition}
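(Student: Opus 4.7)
Since this proposition aggregates results from \cite{l}, \cite{f}, \cite{fv4}, and \cite{f4}, my plan is to describe the uniform strategy underlying these papers rather than redo their numerical optimisations, and then indicate how the individual entries of the table follow.

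The general strategy combines Proposition 1.2.9 with Lemma 1.2.6: to conclude that $\Mgn$ is of general type it suffices to establish (i) that the singularities of $\Mgn$ do not impose adjunction conditions and (ii) that the canonical class $K_{\Mgn}$ is big, i.e.\ some positive multiple is linearly equivalent to the sum of an ample and an effective divisor. For (i) I would invoke Logan's extension \cite{l} of the Harris-Mumford argument recalled in Section 1.3 from $\Mg$ to $\Mgn$: the Reid-Tai analysis in Theorem 1.3.6 is carried out on the local universal deformation, the Kodaira-Spencer decomposition (\ref{exseqcomp}) reduces the problem to computing ages of automorphism actions componentwise, and the only potential obstructions come from elliptic tails carrying marked points, which are handled exactly as in \cite{hm}. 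This covers every $g\geq 4$ we need.

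For (ii), I would work on the stack $\stMgn$, where Mumford's formula expresses $K_{\stMgn}$ as an explicit combination of the Hodge class $\lambda$, the boundary classes $\dirr,\delta_i,\delta_{i:S}$ and the psi-classes $\psi_j$; the passage to the coarse moduli space $\Mgn$ is adjusted by the ramification divisor of $\epsilon\colon\stMgn\to\Mgn$ along loci of curves with non-trivial automorphisms, as discussed in Section 1.1. The task then reduces to exhibiting a positive rational combination of effective divisors summing to $K_{\Mgn}$ minus an ample class. Two families of effective divisors enter: first, pull-backs of Brill-Noether divisors from $\Mg$, which supply a low-slope effective class in the $\lambda$ and boundary directions; second, Logan's pointed divisors $D^r_{g,n}$ (refined later in \cite{f,fv4}), defined as loci of $n$-pointed curves admitting a $\grd$ with prescribed incidence to the marked points, which supply the $\psi_j$-direction and are what makes the argument work once $n$ is sufficiently large. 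The decrease of $n_{\mathrm{min}}(g)$ with growing $g$ in Table~\ref{table mg} mirrors the improving slope of the available Brill-Noether divisors. The regime $g\geq 22$ of \cite{f4} is different in flavour: it relies on special syzygetic divisors available only in that range and removes the dependence on marked points entirely.

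The main obstacle is the delicate numerical optimisation for the borderline pairs $(g,n_{\mathrm{min}}(g))$ with small $n$, in particular for $g\in\{19,20,21\}$, where the slope of every known effective divisor exceeds the slope of $K_{\Mgn}$ by a very small margin and the argument is sensitive to the exact coefficients coming from the ramification correction. I would therefore not attempt independent coefficient computations; instead I would quote the explicit coefficient tables of \cite{l,f,fv4,f4} and verify only that, assembled together with the $g\geq 22$ statement of \cite{f4}, they produce precisely the numerical bound $n_{\mathrm{min}}(g)$ displayed in Table~\ref{table mg}.
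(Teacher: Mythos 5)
The paper offers no proof of this proposition at all — it is stated explicitly as a summary of results collected from \cite{l,f,fv4,f4} — and your proposal, which outlines the common strategy of those papers (adjunction via the Logan/Harris--Mumford singularity analysis, then bigness of $K_{\Mgn}$ via Brill--Noether and pointed divisors) and then defers to their coefficient tables for the numerical bounds, amounts to essentially the same citation-based argument with more exposition. One small correction: the $g\geq 22$ range in \cite{f4} rests on effective divisors constructed from Hurwitz spaces of covers, not on syzygetic (Koszul) divisors, which are the tool of \cite{f}.
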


It is often useful to have a universal family over a moduli space. However $\Mgn$ is only a coarse moduli space and does not posses such a universal family. For this reason it is useful to consider instead the stacks $\stmg,\stMg,\stmgn,\stMgn$ of smooth or stable (n-pointed) curves. These stacks are irreducible of the same dimension as their associated coarse moduli spaces. They are however smooth and have the properties of fine moduli spaces. In particular, they posses a universal family
(see the explicit discussion in \cite{acg}  on p. 310).

\begin{proposition}
The universal family over $\stMgn$ is the forgetful map $\pi: \stMgnp\to\stMgn$ (forgetting one point).
\end{proposition}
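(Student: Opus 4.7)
The plan is to verify directly that the forgetful map $\pi:\stMgnp\to\stMgn$ satisfies the defining property of a universal family: for any family $(f:\mathcal{X}\to B;\sigma_1,\ldots,\sigma_n)$ of $n$-pointed stable genus $g$ curves representing the classifying morphism $\tilde f:B\to\stMgn$, one must exhibit a natural isomorphism $\mathcal{X}\simeq B\times_{\stMgn}\stMgnp$. Since $\stMgnp$ and $\stMgn$ are fine moduli stacks (as pointed out in the preceding paragraph), it suffices to produce the classifying map $\mathcal{X}\to\stMgnp$ and check that the induced square is Cartesian.

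First I would describe $\pi$ itself on families. Given a family $(h:\mathcal{Y}\to S;\tau_1,\ldots,\tau_{n+1})$ of $(n+1)$-pointed stable curves, dropping $\tau_{n+1}$ gives a family of \emph{pre-stable} $n$-pointed curves; the fibres may fail to be stable precisely when removing $\tau_{n+1}$ leaves a genus-zero component with fewer than three special points. Knudsen's stabilization contracts each such component fibrewise to obtain a canonically defined family $(\bar h:\bar{\mathcal{Y}}\to S;\bar\tau_1,\ldots,\bar\tau_n)$ of $n$-pointed stable curves. This is the family-level content of $\pi$, and it is functorial in $S$.

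Next I would construct the candidate classifying map $\mathcal{X}\to\stMgnp$. From $f:\mathcal{X}\to B$ form the fibered product $p_1:\mathcal{X}\times_B\mathcal{X}\to\mathcal{X}$ with the $n+1$ sections $\sigma_1\circ p_2,\ldots,\sigma_n\circ p_2$ (the pullbacks of the original sections) together with the diagonal $\Delta:\mathcal{X}\to\mathcal{X}\times_B\mathcal{X}$ as the new $(n+1)$-st section. The geometric fibre of $p_1$ over a point $x\in\mathcal{X}$ lying above $b\in B$ is the curve $f^{-1}(b)$ with the original $n$ markings together with $x$. This family is pre-stable but instability arises exactly in the three situations where $x$ lies on a node, coincides with some $\sigma_i(b)$, or lies on a rational tail that becomes unstable; I would apply Knudsen's stabilization to resolve these by inserting a rational bridge or a two-pointed rational tail, producing a family of $(n+1)$-pointed stable curves over $\mathcal{X}$ and hence a morphism $\Phi:\mathcal{X}\to\stMgnp$.

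Finally I would check that $\pi\circ\Phi=\tilde f\circ f$ and that the resulting square is Cartesian. The first identity holds because forgetting the diagonal section and stabilizing $\mathcal{X}\times_B\mathcal{X}\to\mathcal{X}$ returns the original family $\mathcal{X}\to B$ pulled back along $f$, by construction of Knudsen's contraction. For the Cartesian property one verifies the universal property fibrewise: a point of $B\times_{\stMgn}\stMgnp$ lying over $b\in B$ is a choice of an additional stable point on the curve $f^{-1}(b)$, which is precisely a point of $f^{-1}(b)$ (with the three degenerate positions above recorded by a rational bubble); thus $\Phi$ is an isomorphism onto the fibre product. The main obstacle is the careful bookkeeping of Knudsen's stabilization in the three boundary cases, since the classifying map $\Phi$ must be shown to be well-defined and an isomorphism \emph{globally}, not merely fibrewise; once this is granted, everything else is formal.
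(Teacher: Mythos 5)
The paper does not actually prove this proposition: it is stated as a known fact, with the surrounding text merely recording that the stacks $\stMgn$ have the properties of fine moduli spaces and pointing to the explicit discussion in \cite{acg}, p.~310. Your proposal therefore does strictly more than the paper does --- it reconstructs the standard argument of Knudsen identifying $\stMgnp$ with the universal curve over $\stMgn$: pull the given family back along itself, adjoin the diagonal as an $(n+1)$-st section, stabilize, and verify that the resulting square is Cartesian. This is the right argument, and it is the one carried out in the reference the paper cites. Two remarks. First, there are only \emph{two} degenerate loci where stabilization is needed, namely where the diagonal meets a node of the fibre and where it meets one of the sections $\sigma_i$; your third case (a rational tail ``that becomes unstable'') does not occur, since adding a marked point never destroys stability of a component. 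Also, the sections of $p_1$ induced by the $\sigma_i$ should be written as $x\mapsto (x,\sigma_i(f(x)))$ rather than $\sigma_i\circ p_2$, which is not a section of $p_1$. Second, you correctly flag that the substance lies in showing that Knudsen's contraction and stabilization are well defined in families, compatible with base change, and produce a globally defined $\Phi$ making the square Cartesian as a square of stacks rather than merely on geometric fibres; that is precisely the content of Knudsen's theorem, so as written your argument is an outline that defers the real work to the same source the paper quotes. Given that the thesis itself treats the statement as a citation, this is an acceptable level of detail, but be aware that the fibrewise plausibility argument in your last paragraph is not by itself a proof of the Cartesian property.
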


Let us very briefly study the difference between a stacks $\M^{\mathrm{st}} $and its associated coarse moduli space $\M$, without going into detail on what a stack actually is.

There are surjective morphisms $\epsilon :\M^{\mathrm{st}}\to \M$ ramified exactly over $\Sigma$, the locus of points in $\M$ corresponding to curves with non-trivial automorphisms. In particular we can find the ramification divisor of $\epsilon$ by studying the codimension $1$ components of $\Sigma$. To this end we recall from \cite{acg}, Chapter XII Proposition 2.5 as

\begin{proposition}  \label{ram}
We consider the locus $\Sigma \subset \Mgn$ of pointed curves with a non-trivial automorphism. Then 
\begin{enumerate}
\item $\Sigma=\emptyset$ if and only if $g=0$;
\item $\Sigma=\Mgn$ if and only if $(g,n)=(1,1)$ or $(g,n)=(2,0)$.
\item In the remaining cases the codimension 1 components of $\Sigma$ are 
\begin{itemize}
\item the closure in $\M_{1,2}$ of smooth curves $C$ with marked points $x_1,x_2$ such that $2(x_1-x_2)\sim 0$;
\item the closure in $\M_{2,1}$ of smooth curves such that the marked point $x$ is a Weierstra{\ss} point;
\item $\overline{\mathcal{H}}_3$ the locus of stable hyperelliptic curves in $\M_3$;
\item  $\D_{1,\emptyset}$ the locus of curves with elliptic tails without marked points on them for any $g\geq 1$ and any $n$.
\end{itemize}
\end{enumerate}
\end{proposition}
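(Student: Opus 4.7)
The plan is to handle the three claims in order, proceeding from explicit verifications to a dimension-count classification.

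For claim (1), if $g = 0$ then every stable $n$-pointed genus $0$ curve is a tree of rational components, each carrying at least three special points (nodes or marked points); since three labelled points rigidify $\mathbb{P}^1$, no nontrivial automorphism exists, hence $\Sigma = \emptyset$. Conversely, for $g \geq 1$ I would exhibit pointed curves in $\Sigma$ explicitly: for $g = 1$ take an elliptic curve $(E,0)$ with all marked points placed at $2$-torsion, where $x \mapsto -x$ fixes everything; for $g \geq 2$ take a hyperelliptic curve with marked points at Weierstrass points, which admits the hyperelliptic involution.

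For claim (2), in the case $(1,1)$ every elliptic curve $(E,0)$ admits the inversion $x \mapsto -x$ fixing the origin, and in the case $(2,0)$ every smooth genus $2$ curve is hyperelliptic, so $\Sigma = \Mgn$ for these two pairs. For the converse I would show that in all remaining cases the generic pointed curve has trivial automorphism group: for $g \geq 3$ this is the classical statement about the generic smooth curve; for $(2,n)$ with $n \geq 1$, a generic marked point is not a Weierstrass point and hence not fixed by the hyperelliptic involution; for $(1,n)$ with $n \geq 2$, after choosing $x_1$ as origin a generic $x_2$ is not $2$-torsion and is not fixed by inversion.

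For claim (3), the strategy is to stratify $\Sigma$ by automorphism type and compute each stratum's dimension via Riemann--Hurwitz. A nontrivial $\alpha \in \mathrm{Aut}(C)$ of order $m$ gives a cyclic cover $C \to C/\langle \alpha \rangle$; varying the cover data (quotient genus $g'$ and number $r$ of branch points), the locus of such $C$ in $\M_g$ has dimension $3g' - 3 + r$, and the requirement that each labelled marked point $x_i$ be fixed by $\alpha$ imposes further constraints. I would then verify each listed component has codimension $1$: the hyperelliptic locus has dimension $2g - 1 = 5$ in $\M_3$; the condition $2(x_1 - x_2) \sim 0$ cuts out a divisor in the $2$-dimensional $\M_{1,2}$ (it is exactly the condition that inversion about $x_1$ also fixes $x_2$); the Weierstrass locus is a divisor in $\M_{2,1}$ as it is the fixed locus of the hyperelliptic involution on the universal curve; and on any stable curve with an unmarked elliptic tail, the involution of the tail fixing the node extends by the identity to a nontrivial automorphism, so $\D_{1,\emptyset} \subset \Sigma$.

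The hard part will be completeness: proving that no further codimension-$1$ strata exist. This requires a case analysis over automorphism types, combining the Riemann--Hurwitz dimension count with the labelled-point constraint to bound each stratum's codimension. One must check that all remaining candidates---bielliptic curves (which have codimension $g-2$ in $\M_g$ by an analogous Hurwitz count), higher-order cyclic automorphisms on smooth curves, and boundary strata with additional automorphisms at rational tails or at nonelliptic components---sit in codimension at least $2$. Running this classification carefully, distinguishing interior from boundary strata and accounting for marked-point constraints, is the main technical content of the proof.
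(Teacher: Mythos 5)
The paper does not prove this proposition at all: it is quoted verbatim from \cite{acg}, Chapter XII, Proposition 2.5, so there is no in-paper argument to compare against. Your outline follows the standard route that the cited source takes (explicit verification in low genus, then stratification of $\Sigma$ by automorphism type with Riemann--Hurwitz dimension counts), and the identification of the four listed divisors is correct. But two concrete points need repair. First, in the converse direction of claim (1) your witnesses fail for large $n$: an elliptic curve has only four $2$-torsion points and a hyperelliptic curve only $2g+2$ Weierstra{\ss} points, so for $n$ large you cannot place all labelled points at fixed points of the involution on a smooth curve. The fix is the one you already have in hand for claim (3): for $g\geq 1$ and any $n$, a boundary curve in $\D_{1,\emptyset}$ (unmarked elliptic tail, all marked points on the other component) carries the elliptic involution and shows $\Sigma\neq\emptyset$; for $g=1$, $n\geq 2$ such curves exist and are stable.

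Second, your codimension for the bielliptic locus is wrong: a double cover of an elliptic curve has $2g-2$ branch points, so the bielliptic locus in $\Mg$ has dimension $1+(2g-2)-1=2g-2$, hence codimension $g-1$, not $g-2$. This is not a harmless slip, because with your value the bielliptic locus in $\overline{\mathcal{M}}_3$ would be a divisor and claim (3) would acquire a fifth component; the correct count $g-1=2$ is exactly what excludes it. More generally, the completeness step --- showing that \emph{every} stratum other than the four listed ones has codimension at least $2$, including cyclic covers of all orders and genera, the marked-point constraints, and all boundary strata --- is where essentially all of the work lies, and your proposal only gestures at it. As written it is an accurate plan rather than a proof; if you intend to carry it out rather than cite \cite{acg}, the case analysis in claim (3) must actually be run, and the bielliptic computation is a warning that each count has to be done carefully.
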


In this thesis we are not interested in the special cases of $(g,n)\in\{ (0,n),(1,1),(2,0),(1,2),(2,1),(3,0)\}$. In all other cases the morphism $\epsilon: \stMgn\to\Mgn$ is  ramified in codimension 1 exactly over $\D_{1,\emptyset}$. We shall discuss this boundary divisor in more detail in Section \ref{sec pic} below and we note that a general element of $\D_{1,\emptyset}$ has only one non-trivial automorphism, the inversion of the elliptic tail with respect to the node. Thus the morphism $\epsilon: \stMgn\to\Mgn$  is simply ramified over $\D_{1,\emptyset}$.

\section{Picard group of $\Mgn$}\label{sec pic}

In this section we will study the rational Picard group of $\Mgn$. We introduce the tautological and boundary classes and show that they form a basis of $\Pic(\Mgn)\otimes \Q$. We will also calculate their pullbacks under the forgetful map $\M_{g,n+1}\to\Mgn$ as well as the canonical class $K_{\Mgn}$ of $\Mgn$. As a reference we refer to \cite{acg}, Chapter XII.

Throughout this thesis we will identify the rational Picard group of the moduli spaces and moduli stacks. This is justified by the following result (see \cite{acg}, Chapter XII, Lemma 6.6).

\begin{theorem}  \label{pic}
$\Pic (\Mgn)\otimes \Q \simeq \Pic (\stMgn)\otimes \Q$
\end{theorem}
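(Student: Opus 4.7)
The plan is to produce mutually inverse maps (up to $\Q$-coefficients) between $\Pic(\Mgn) \otimes \Q$ and $\Pic(\stMgn) \otimes \Q$. The natural candidate in one direction is pullback along the coarse moduli morphism
\[
\epsilon: \stMgn \to \Mgn,
\]
which exists precisely because $\stMgn$ is a separated Deligne-Mumford stack of finite type with $\Mgn$ as its coarse space. The inverse, at least on integral classes that happen to lie in the image, should be $\epsilon_*$. The actual claim will split into injectivity of $\epsilon^*$ and its surjectivity after tensoring with $\Q$.

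For injectivity I would use the projection formula together with the defining property $\epsilon_* \cO_{\stMgn} = \cO_{\Mgn}$ of the coarse moduli morphism. If $L \in \Pic(\Mgn)$ and $\epsilon^* L$ is trivial, then
\[
L \;=\; L \otimes \epsilon_*\cO_{\stMgn} \;=\; \epsilon_*\bigl(\epsilon^* L\bigr) \;=\; \epsilon_*\cO_{\stMgn} \;=\; \cO_{\Mgn},
\]
so $\epsilon^*$ is already injective on $\Pic(\Mgn)$, hence a fortiori on $\Pic(\Mgn)\otimes\Q$.

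For surjectivity after $\otimes\Q$ the idea is that finite stabilisers are invisible in the rational Picard group. Locally around a closed point $[C] \in \Mgn$, the stack is the quotient $[\Delta^d/\mathrm{Aut}(C)]$ by the action of the finite automorphism group on the versal deformation space, as described in the preliminaries of Section~1.3. Any line bundle $M$ on the stack restricts, over such a chart, to an $\mathrm{Aut}(C)$-equivariant line bundle on $\Delta^d$, and the action on the fibre is by a character of $\mathrm{Aut}(C)$. Raising $M$ to the power $|\mathrm{Aut}(C)|$ kills that character, so $M^{\otimes |\mathrm{Aut}(C)|}$ descends locally to an honest line bundle on the coarse chart $\Delta^d/\mathrm{Aut}(C)$. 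Since the orders of automorphism groups of stable $n$-pointed curves of genus $g$ are uniformly bounded (e.g.\ by a Hurwitz-type bound once $2g-2+n > 0$), there exists a single integer $N>0$ such that $M^{\otimes N}$ descends globally to a line bundle $L$ on $\Mgn$ with $\epsilon^* L \cong M^{\otimes N}$. In $\Pic(\stMgn)\otimes\Q$ this gives $M = \tfrac1N\,\epsilon^* L$, so $\epsilon^* \otimes \Q$ is surjective.

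The main obstacle, and the step that is slightly more than cosmetic, is upgrading the local descent to a global statement: one must either glue the local descent data by verifying that the characters agree on overlaps of charts in the étale atlas of $\stMgn$, or equivalently use a rigidifying structure (level structure, or a finite étale cover by a scheme) on which no stabilisers remain, and then appeal to the general principle that for a finite group $G$ acting on a smooth variety $U$ with quotient $Y=U/G$ one has $\Pic(Y)\otimes\Q \cong (\Pic(U)\otimes\Q)^G$. Once that global descent is in place, combining injectivity with surjectivity yields the stated isomorphism.
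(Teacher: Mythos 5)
Your argument is correct in substance, but it is worth noting that the paper does not prove this statement at all: Theorem \ref{pic} is quoted verbatim from \cite{acg}, Chapter XII, Lemma 6.6, so you have supplied a proof where the thesis supplies only a citation. What you wrote is essentially the standard argument underlying that reference. The injectivity step is fine: $\epsilon_*\cO_{\stMgn}=\cO_{\Mgn}$ is part of the defining properties of the coarse moduli map, the projection formula then gives $\epsilon_*\epsilon^*L\cong L$, and injectivity on $\Pic$ passes to $\Pic\otimes\Q$ because $\Q$ is flat over $\Z$ (equivalently: if $\epsilon^*L$ is torsion, so is $L$). For surjectivity, the gluing issue you flag is real but is resolved exactly by the criterion you allude to: in characteristic zero a line bundle $M$ on a separated Deligne--Mumford stack descends to the coarse space if and only if every stabilizer acts trivially on the corresponding fibre, in which case $\epsilon_*M$ is invertible and $\epsilon^*\epsilon_*M\cong M$; no chart-by-chart matching of characters is needed once this is invoked, and the alternative route through a rigidifying finite cover (level structures, with $\Mgn$ recovered as the quotient by a finite group, where $\Pic(U/G)\otimes\Q\cong(\Pic(U)\otimes\Q)^G$ because the linearization and descent obstructions are torsion) works equally well. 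Two minor points of care: the uniform bound $N$ must account for automorphism groups of \emph{stable} (i.e.\ nodal) pointed curves, not only smooth ones --- these are still finite and uniformly bounded for fixed $(g,n)$ with $2g-2+n>0$ --- and the stabilizer character you kill is the action on the fibre at each point with nontrivial stabilizer, which is indeed annihilated by any common multiple of the group orders. With those understood, your two steps combine to the claimed isomorphism.
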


In particular, we recall the notion of the Hodge class $\l$ on $\Mgn$. Informally, one considers on $\Mgn$ the Hodge bundle $ E \to \Mgn$ with fibre $E_{[C]}$ over $[C]$ taken as $H^0(C,K_C)$. Then $\l$ is thought of as the class of the determinant bundle of $E$, i.e. its top exterior power.  
Of course, this is not really well defined, e.g. in view of the singularities of $\Mgn$. It is thus advisable to pass to the smooth stack $\stMgn$, define $E$ as the direct image sheaf of the relative dualizing sheaf and then identify the class of its top exterior power with a class in the rational Picard group $\Pic (\Mgn)\otimes \Q, $
using the isomorphism in the above Theorem \ref{pic}.

In order to describe the relevant boundary divisors on $\Mgn$, we need to classify different types of simple nodes.

\begin{definition}\label{nodes}
Let $C$ be a nodal curve of arithmetic genus $g$ and $p\in C$ a node. We call $p$ {\em non-separating} or of type {\em $\d_0$}, if the partial normalization (which we informally describe as {\em  unglueing the node}) preserves connectedness.

We say that $p$ is {\em of order $i\geq 1$} or {\em of type $\d_i$}, if the partial normalization of $C$ in $p$ consist of two connected components of arithmetical genus $i$ and $g-i$ respectively.

We also call a pair of nodes $p,q \in C$ {\em of type $\e_i$}, if they are non-separating, but the partial normalization  in both of them decomposes into two connected components of arithmetical genus $i$ and $g-i-1$ respectively.
\end{definition}

Now we recall that $\Delta_0$ (sometimes also called $\Delta_{\mathrm{irr}}$) on 
$\Mg$ is the boundary component consisting of all (classes of) stable curves of arithmetical genus $g$, having at least one non-separating node. Furthermore, $\Delta_i$,  for $1 \leq i \leq \lfloor\frac{g}{2}\rfloor,$ denotes the boundary component of curves possessing a node of order $i$. Similarly, on $\Mgn$, we denote by $\Dirr$ the set of curves possessing a non-separating node and, for any subset $S \subset \{1, \ldots,n\}$, we denote by $\Delta_{i,S}, 0 \leq i \leq \lfloor\frac{g}{2}\rfloor,$ the boundary component consisting of curves possessing a node of order $i$ such that after ungluing the connected component of genus $i$ contains precisely the marked points labelled by $S$. Note that, if $S$ contains at most 1 point, one has $\Delta_{0,S}= \emptyset$ (the existence of infinitely many automorphisms on the projective line technically violates stability). Thus, in that case, we shall henceforth consider $\Delta_{0,S}$ as the zero divisor.

We shall denote by $\delta_i$ the rational divisor classes of $\Delta_i$ in $\Pic (\Mg)$ (and thus in $\Pic (\Mg)\otimes \Q $) and by $\dirr, \delta_{i,S}$ the rational divisor classes of $\Dirr,\Delta_{i,S}$ in $\Pic(\Mgn)$, respectively. Note that $\delta_0$ is also called $\delta_{\mathrm{irr}}$ in the literature, but we shall usually reserve the notation $\delta_{\mathrm{irr}}$ for the pull-back of $\d_0$ under the forgetful map
$\phi:\Mgn \to \Mg$.

Next we recall the notion of (the rational classes of) the point bundles $\psi_i, 1 \leq i \leq n,$ on $\Mgn$. Informally, the line bundle $\psi_i$ (sometimes called the cotangent class corresponding to the label $i$) is given by choosing as fibre of $\psi_i$ over a point $[C;x_1, \ldots, x_n]$ of $ \Mgn$ the cotangent line $T_{x_i}^v(C)$. More precisely, as in our discussion of the Hodge class, all these bundles are first defined on the smooth stack $\stMgn$ and then their classes are identified with rational classes on $\Mgn$ via the isomorphism in Theorem \ref{pic}.  We explicitly warn the reader that the notion of {\em point bundles on $\Mgn$} is purely a figure of speech: There are no honest such vector bundles on $\Mgn$, only the corresponding rational divisor classes exist as well defined objects in $\Pic (\Mgn)\otimes \Q $.

For later use we also set
\begin{equation}  \label{basechangeprep}
\omega_i:= \psi_i - \sum_{S \subset \{1, \ldots,n \}, S \ni i} \d_{0,S},
\end{equation}
and introduce  $\psi=\sum_{i=1}^n \psi_i.$

The rational Picard group of a moduli space forms a vector space. The classes introduced above give a basis for these vector spaces.  We recall from \cite{ac1}.

\begin{theorem}[Harers Theorem] \label{harer} \ \\
\begin{itemize}
\item The rational Picard group $\Pic (\mg)\otimes \Q$ is freely generated by the hodge class.
\item The rational Picard group $\Pic (\Mg)\otimes \Q$ is freely generated by the hodge class and all the boundary classes.
\item The rational Picard group $\Pic (\mgn)\otimes \Q$ is freely generated by the hodge class and the point bundles.
\item The rational Picard group $\Pic (\Mgn)\otimes \Q$ is freely generated by the hodge class, the point bundles and all the boundary classes.
\end{itemize}
\end{theorem}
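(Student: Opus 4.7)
The plan is to reduce the statement in four stages, working first with the open stacks $\stmg, \stmgn$, then extending to their Deligne-Mumford compactifications by analyzing the boundary. Throughout I would freely pass between coarse spaces and stacks via Theorem \ref{pic}.

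\textbf{Stage 1: The case of $\mg$.} Since $\stmg \simeq \mathcal{T}_g/\Gamma_g$ as analytic stacks, with $\mathcal{T}_g$ contractible, one has $\Pic(\stmg)\otimes \Q \simeq H^2(\Gamma_g,\Q)$, where $\Gamma_g$ is the mapping class group. Harer's topological computation of $H^2(\Gamma_g,\Q)$ shows that this group is one-dimensional for $g\geq 3$; this is the deep input I would cite rather than reprove. To exhibit the generator, I would note that the Hodge class $\lambda$ is non-zero: one checks this by pulling back to the Hurwitz space or by evaluating $\lambda$ on a test curve (e.g.\ a Lefschetz pencil on a surface, where $\deg \lambda > 0$).

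\textbf{Stage 2: The case of $\Mg$.} Here I would use the excision sequence in the Picard group of a smooth Deligne-Mumford stack,
\begin{equation*}
\bigoplus_{i=0}^{\lfloor g/2\rfloor}\Q\cdot [\Delta_i]\;\longrightarrow\;\Pic(\stMg)\otimes \Q\;\longrightarrow\;\Pic(\stmg)\otimes \Q\;\longrightarrow\;0,
\end{equation*}
reducing the problem to showing linear independence of $\lambda,\delta_0,\delta_1,\ldots,\delta_{\lfloor g/2\rfloor}$. For this I would construct explicit test families $\varphi_j:B_j\to\Mg$, one for each generator, and compute the intersection matrix of the classes against the $\varphi_j$; exhibiting an invertible matrix then forces linear independence. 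Standard choices are a Lefschetz pencil (detecting $\lambda$ and $\delta_0$) and families obtained by varying the attaching point on one side of a reducible stable curve (detecting a single $\delta_i$). Left-exactness of the excision map then follows from the fact that the $\Delta_i$ are irreducible.

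\textbf{Stage 3: Adding marked points.} I would proceed by induction on $n$ using the forgetful morphism $\pi:\stmgn \to \stmgnm$, which is the universal curve. For each step the relative Picard group is generated by the class of the new section $\sigma_n$ modulo fiber classes; combined with the cotangent-line construction this contributes exactly the new class $\psi_n$ to $\Pic(\stmgn)\otimes \Q$. A Leray-type argument identifies $\Pic(\stmgn)\otimes \Q \simeq \pi^*\Pic(\stmgnm)\otimes \Q \oplus \Q\cdot\psi_n$, and nondegeneracy of $\psi_n$ is checked by restricting to a fiber of $\pi$ (a fixed smooth curve), where $\psi_n$ restricts to the cotangent line class, visibly non-torsion.

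\textbf{Stage 4: The compactified case with marked points.} The final statement combines Stages 2 and 3: on $\stMgn$ I would again use excision with respect to all boundary divisors $\Dirr$ and $\Delta_{i,S}$, together with induction via the forgetful map $\stMgn\to\stMgnm$. Linear independence of all listed classes is obtained by assembling test curves: to each boundary class $\delta_{i,S}$ one associates a family sweeping out the corresponding boundary stratum transversally, and to each $\psi_i$ one associates a family in which only the $i$-th marked point moves on a fixed curve.

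\textbf{Main obstacle.} The genuine difficulty is the upper bound $\dim \Pic(\stmg)\otimes\Q = 1$, which rests on Harer's cohomology computation of the mapping class group and has no purely algebro-geometric substitute. Everything else (excision, inductive enlargement by $\psi_i$ and boundary classes, and verification of linear independence by test curves) is comparatively mechanical once that bound is granted.
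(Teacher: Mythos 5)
The paper does not prove this statement at all: Theorem \ref{harer} is introduced with ``We recall from \cite{ac1}'' and is used purely as a quoted result of Arbarello--Cornalba (which in turn rests on Harer's computation of the second cohomology of the mapping class group). So there is no in-paper argument to compare against; what you have written is a reconstruction of the standard proof from the cited literature. Your overall architecture is the right one: the only genuinely deep input is topological (Harer), and the passage to the compactification via the excision sequence for $\Pic$ of a smooth Deligne--Mumford stack, plus linear independence checked on test families, is exactly how \cite{ac1} proceed. Stages 1, 2 and 4 are sound as sketched (modulo stating the hypothesis $g\geq 3$, without which the first two bullets fail, e.g.\ $\Pic(\mathcal{M}_2)\otimes\Q=0$).

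The one place where your plan understates the difficulty is Stage 3. You present the inductive step $\Pic(\stmgn)\otimes\Q\simeq\pi^*\Pic(\stmgnm)\otimes\Q\oplus\Q\cdot\psi_n$ as a ``comparatively mechanical'' Leray argument, but for $n\geq 2$ the fibre of $\pi:\stmgn\to\stmgnm$ is an \emph{open} curve $C\setminus\{x_1,\ldots,x_{n-1}\}$, so $H^0(R^2\pi_*\Q)=0$ and the new class cannot come from ``restriction to the fibre'' (your proposed nondegeneracy check for $\psi_n$ restricts it to a fibre where it is a class on an open curve, hence invisible in $H^2$). The extra generator in fact lives in the $E_2^{1,1}$ term, i.e.\ it is governed by $H^1\bigl(\Gamma_{g,n-1};H^1(\Sigma;\Q)\bigr)$, and one needs to know that this twisted cohomology group has rank exactly one (a theorem of Morita/Harer, not a formal consequence of the $n=0$ case). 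Also, there is no section $\sigma_n$ of $\pi$ on the open locus to generate the relative Picard group with. The honest fix is either to import that twisted computation explicitly, or --- as Arbarello--Cornalba do --- to cite Harer's theorem in the form $H^2(\Gamma_{g,n};\Q)\cong\Q^{n+1}$ for all $n$ at once, which makes your Stage 3 a matter of exhibiting $\lambda,\psi_1,\ldots,\psi_n$ as independent classes (again by test curves) rather than an induction on $n$. With that adjustment your plan is a correct account of the standard proof.
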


We remark that for $\mgn$ and $\Mgn$ we can find a different basis by exchanging the point bundles $\psi$ by the $\omega_i$ from equation \ref{basechangeprep}.

Throughout this thesis we will decompose divisor classes on $\Mgn$ (always considered as elements of the rational Picard group $\Pic (\Mgn)\otimes \Q$) into a linear combination of these  special classes. We will also pull back divisors along forgetful maps, forgetting some or all marked points. In order to compute these pull backs, we need the following lemma (see \cite{acg}, Chapter XVII, Lemma 4.28).

\begin{lemma}\label{pullback}
Let $\pi : \Mgn\to\M_{g,n-1}$ be the forgetful map that forgets the $n$-th marked points. Then we have the following pullbacks
\begin{eqnarray}
&\pi^* \l = \l \\
&\pi^* \omega_i = \omega_i \\
&\pi^* \psi_i = \psi_i -\d_{0,\{i,n\}} \\
&\pi^* \dirr = \dirr \\
&\pi^* \d_{i,S} = \d_{i,S}+\d_{i,S\cup\{ n\}}
\end{eqnarray}

\end{lemma}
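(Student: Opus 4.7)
The plan is to use the identification of $\pi: \Mgn \to \M_{g,n-1}$ with the universal curve over $\M_{g,n-1}$, together with explicit geometric interpretations of the tautological classes. A point of $\Mgn$ over $[C; x_1, \ldots, x_{n-1}] \in \M_{g,n-1}$ is the datum of an additional point $x_n \in C$; when $x_n$ coincides with an existing marked point $x_i$ or with a node, one sprouts a rational bubble in order to restore stability. This description makes the set-theoretic preimages of the relevant divisors transparent and reduces the proof to verifying that the multiplicities are correct.

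First I would establish the straightforward pullback formulas. The equality $\pi^*\lambda = \lambda$ follows because the Hodge bundle on $\stMgn$ is the direct image of the relative dualizing sheaf and rational tails contribute nothing to $H^0$ of that sheaf; sprouting or contracting such tails leaves the Hodge bundle unchanged. For $\pi^*\dirr = \dirr$, a generic curve in $\Dirr\subset \M_{g,n-1}$ has exactly one non-separating node, and placing the extra point $x_n$ on the smooth locus preserves this condition without creating a second non-separating node, so the preimage is reduced. For $\pi^*\delta_{i,S} = \delta_{i,S} + \delta_{i,S\cup\{n\}}$, a general point of $\Delta_{i,S}\subset \M_{g,n-1}$ has one separating node splitting off a genus-$i$ component marked by $S$; placing $x_n$ on the genus-$i$ side gives a generic point of $\Delta_{i,S\cup\{n\}}\subset\Mgn$, while placing it on the genus-$(g-i)$ side gives a generic point of $\Delta_{i,S}\subset\Mgn$, each contributing with multiplicity one since $\pi$ is birational onto each of these components.

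The main obstacle is the formula $\pi^*\psi_i = \psi_i - \delta_{0,\{i,n\}}$. I would attack it through the auxiliary identity $\pi^*\omega_i = \omega_i$, which is cleaner because $\omega_i$ is the pullback of the relative dualizing sheaf along the $i$-th section of the universal curve, a construction commuting with base change along $\pi$. Granted $\pi^*\omega_i = \omega_i$, I would expand both sides via the defining relation $\omega_i = \psi_i - \sum_{S \ni i}\delta_{0,S}$ on $\M_{g,n-1}$ and on $\Mgn$, substitute the already-known pullback rule $\pi^*\delta_{0,S} = \delta_{0,S} + \delta_{0,S\cup\{n\}}$, and match terms. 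Subsets $S\subset\{1,\ldots,n-1\}$ containing $i$ with $|S|\geq 2$ cancel immediately on both sides, and subsets of $\{1,\ldots,n\}$ containing both $i$ and $n$ with cardinality at least three arise precisely as $T\cup\{n\}$ for $T\subset\{1,\ldots,n-1\}$ with $|T|\geq 2$. The sole orphan on the $\Mgn$ side is $\delta_{0,\{i,n\}}$: its would-be partner on the $\M_{g,n-1}$ side would be $\delta_{0,\{i\}}$, which vanishes by the stability convention. This solitary surplus term yields exactly the $-\delta_{0,\{i,n\}}$ correction.

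Alternatively, one could establish the $\psi_i$ formula by a direct local computation near $\Delta_{0,\{i,n\}}$: on this divisor, the stable model sprouts a rational bubble carrying both $x_i$ and $x_n$, and the cotangent line at $x_i$ on the bubble differs from the cotangent line at the image point on the unmodified curve by a zero of order one along $\Delta_{0,\{i,n\}}$, producing the stated correction. In either approach, the essential ingredient is the compatibility of the relative dualizing sheaf of $\pi$ with the contraction of exceptional rational tails, which is the content of the classical analysis of the universal curve near its bubbled boundary.
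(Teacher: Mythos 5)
The paper does not prove this lemma at all: it is quoted verbatim from \cite{acg}, Chapter XVII, Lemma 4.28, so there is no in-paper argument to compare against. Your sketch is a correct reconstruction of the standard proof, and it is essentially the argument given in the cited reference: identify $\pi$ with the universal curve, read off the set-theoretic preimages of the boundary divisors, obtain $\pi^*\omega_i=\omega_i$ from base-change compatibility of the relative dualizing sheaf along the $i$-th section, and then extract $\pi^*\psi_i=\psi_i-\d_{0,\{i,n\}}$ from the change-of-basis relation \eqref{basechangeprep}, with the correction term arising precisely because $\d_{0,\{i\}}$ is declared to be the zero divisor while $\d_{0,\{i,n\}}$ is not. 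That bookkeeping step is the heart of the matter and you handle it correctly. Two minor points of rigor you would need to tighten in a full write-up: the phrase ``since $\pi$ is birational onto each of these components'' is not the right justification for the multiplicity-one statement in $\pi^*\d_{i,S}=\d_{i,S}+\d_{i,S\cup\{n\}}$ (the restriction of $\pi$ to $\Delta_{i,S\cup\{n\}}$ has one-dimensional fibres); the correct argument is that the local smoothing parameter of the node cutting out $\Delta_{i,S}$ downstairs pulls back to a local equation vanishing to order one along each of the two components upstairs. Also, all of these identities live in $\Pic\otimes\Q$ via the stack, so the computation with $\omega_i$ should formally be carried out on $\stMgn$ and transported by Theorem \ref{pic}; this is implicit in your appeal to the relative dualizing sheaf but worth stating.
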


We will often require the canonical divisor classes of $\Mg$ and $\Mgn$ (see \cite{acg} XIII §8 Theorem (7.15) and Corollary 7.16).
\begin{lemma}
 The canonical divisor (class) of the moduli stack $\stMgn$ is 
\begin{equation}\label{can st}
K_{\stMgn}= 13\l +\psi -2\d.
\end{equation}
and for $g \geq 1, g+n \geq4,$  the canonical divisor class of the coarse moduli space $\Mgn$ is 
\begin{equation}\label{can cm}
K_{\Mgn}= 13\l +\psi -2\d - \d_{1,\emptyset}.
\end{equation}
\end{lemma}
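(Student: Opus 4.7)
The plan is to derive \eqref{can st} by induction on $n$ from Mumford's classical identity $K_{\stMg} = 13\lambda - 2\delta$ (the case $n=0$, proved for $g\geq 2$ via Grothendieck--Riemann--Roch applied to the Hodge bundle, combined with adjunction on the universal curve) and then to descend to the coarse moduli space by computing the ramification divisor of $\epsilon: \stMgn \to \Mgn$, which yields \eqref{can cm}. Low-genus base cases such as $\stM_{1,1}$ would be handled separately, using the explicit description of genus-one moduli.

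For the inductive step I would use the forgetful morphism $\pi: \stMgn \to \stM_{g,n-1}$, which in the stable range coincides with the universal curve with the $n$-th marked point as the universal section. The relative canonical formula
\begin{equation*}
K_{\stMgn} = \pi^* K_{\stM_{g,n-1}} + \omega_\pi,
\end{equation*}
together with the identification $\omega_\pi = \psi_n - \sum_{i<n}\delta_{0,\{i,n\}}$ -- the relative cotangent at the universal section, corrected by the rational bubbles that sprout when the $n$-th point collides with an earlier one -- reduces the inductive step to bookkeeping with the pullback formulas of Lemma \ref{pullback}. The key observation is that the only boundary divisors on $\stMgn$ not arising as pullbacks from $\stM_{g,n-1}$ are the $\delta_{0,\{i,n\}}$ for $i<n$, so that the pullback of the total boundary class equals the total boundary class on $\stMgn$ minus $\sum_{i<n}\delta_{0,\{i,n\}}$. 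The $-\delta_{0,\{i,n\}}$ contributions from $\pi^*\psi_i$, the $+2\delta_{0,\{i,n\}}$ that appear when $-2\pi^*\delta$ is rewritten in terms of $-2\delta$ on $\stMgn$, and the $-\delta_{0,\{i,n\}}$ contributed by $\omega_\pi$ then cancel in pairs, leaving exactly $13\lambda + \psi - 2\delta$.

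For \eqref{can cm} I would invoke Proposition \ref{ram}: for $g\geq 1$ and $g+n\geq 4$, the morphism $\epsilon: \stMgn \to \Mgn$ is ramified in codimension one precisely along $\Delta_{1,\emptyset}$, where the elliptic-tail involution acts as a quasireflection of order two. A local Riemann--Hurwitz computation at a general such point -- where the stack is \'etale-locally modelled on $\C^d/(\Z/2)$ with the group fixing a smooth hyperplane -- combined with the rational Picard identification of Theorem \ref{pic} yields the correction term $-\delta_{1,\emptyset}$.

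The main obstacle is conceptual rather than computational: one must be careful about the convention identifying $\delta_{1,\emptyset}$ on the stack with $\delta_{1,\emptyset}$ on the coarse space, since under the isomorphism of Theorem \ref{pic} the two are tacitly identified while as honest divisors they differ by the ramification multiplicity. It is exactly this convention that fixes the coefficient of $\delta_{1,\emptyset}$ in \eqref{can cm} at $-1$. Once this is pinned down, the two computations above -- the inductive bookkeeping for the stack formula and the Riemann--Hurwitz correction for the coarse one -- complete the proof.
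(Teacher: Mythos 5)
Your proposal is correct and, where the paper actually argues anything, takes the same route: the paper does not prove \eqref{can st} but cites \cite{acg} for it, and obtains \eqref{can cm} exactly as you do, by subtracting the simple ramification divisor $\d_{1,\emptyset}$ of $\epsilon:\stMgn\to\Mgn$ read off from Proposition \ref{ram}. Your inductive derivation of the stack formula from $K_{\stMg}=13\l-2\d$ via $K_{\stMgn}=\pi^*K_{\stMgnm}+\omega_\pi$ and $\omega_\pi=\psi_n-\sum_{i<n}\d_{0,\{i,n\}}$ is the standard proof of the cited result, and the cancellation bookkeeping (including the observation that $\pi^*\d=\d-\sum_{i<n}\d_{0,\{i,n\}}$ because $\d_{0,\{i\}}$ is the zero divisor) checks out.
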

For the canonical class of $\Mg$ we take $n=0$ and simply delete the class $\psi$ in the above formulae. Note that equation \eqref{can cm} is directly obtained from equation \eqref{can st} by using the ramification divisor from Proposition \ref{ram}.

\section{Effective divisors on $\Mgn$}\label{sec effdiv}

By Proposition \ref{criterion gentype} we can prove that a moduli space is of general type by decomposing the canonical class as the sum of an ample and an effective divisor. For this reason it is essential to know effective divisors on the moduli spaces considered. The easiest way to do this is to recall a few well known effective divisors on $\Mg$ or $\Mgn$ and pull them back to other moduli spaces.  
We recall the following standard result.

\begin{proposition}  \label{divisor}
Let $f:X \to Y$ be a morphism of projective schemes, $D \subset Y$ be an effective divisor and assume that $f(X)$ is not contained in $D$. Then $f^*(D)$ is an effective divisor on $X$.
\end{proposition}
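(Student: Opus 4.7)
The plan is to reduce the statement to a routine local verification using the description of effective Cartier divisors by local defining equations. Since $D$ is an effective divisor on $Y$, I would first choose an open affine cover $\{U_\alpha = \Spec A_\alpha\}$ of $Y$ on which $D$ is principal: $D \cap U_\alpha = V(s_\alpha)$ for a regular element $s_\alpha \in A_\alpha$ (i.e.\ a non-zero-divisor), with the $s_\alpha$ differing by units on overlaps. Then I would pull this data back: cover each $f^{-1}(U_\alpha) \subset X$ by affines $V_{\alpha\beta} = \Spec B_{\alpha\beta}$, let $\phi_{\alpha\beta}: A_\alpha \to B_{\alpha\beta}$ be the ring map induced by $f$, and set $t_{\alpha\beta} = \phi_{\alpha\beta}(s_\alpha) \in B_{\alpha\beta}$. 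The candidate divisor $f^*D$ is then given locally by the vanishing loci $V(t_{\alpha\beta})$, and the identities $s_{\alpha'} = u \cdot s_\alpha$ on overlaps, which are preserved by $\phi$, guarantee that these local pieces glue to a well-defined Cartier divisor class on $X$.

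The only substantive step is to check that each $t_{\alpha\beta}$ is a regular element of $B_{\alpha\beta}$; equivalently, that $t_{\alpha\beta}$ does not vanish identically on any irreducible component of $V_{\alpha\beta}$. If some component $W$ of $X$ had $t_{\alpha\beta}|_W \equiv 0$ for every chart meeting $W$, then $f(W)$ would lie in $V(s_\alpha) = D \cap U_\alpha$ for every such $\alpha$, hence $f(W) \subset D$. Since the moduli spaces $X$ treated in this thesis are irreducible, having any such $W$ would force $f(X) \subset D$, contradicting the hypothesis. Thus each $t_{\alpha\beta}$ is a non-zero-divisor, the local loci $V(t_{\alpha\beta})$ are honest effective Cartier divisors, and they assemble into an effective Cartier divisor $f^*D$ on $X$.

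The main obstacle is really just interpretational rather than technical: one must be careful about the distinction between "$f(X) \not\subset D$" and "no component of $X$ maps into $D$," which coincide under the (standing) irreducibility hypothesis. Beyond that, the proof is purely a matter of checking that pullback of a local defining section preserves the regularity condition, which is immediate from the hypothesis. No deformation theory, singularity analysis, or special feature of moduli spaces is needed; the statement is invoked in the sequel only as a tool to produce effective divisors on various moduli spaces by pulling back known effective divisors along morphisms (such as forgetful maps) between them.
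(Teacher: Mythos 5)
Your argument is correct, and in fact the paper offers no proof at all of this proposition: it is stated as a ``standard result'' and immediately used, so there is no in-paper argument to compare against. Your local-defining-equation proof is the standard one: cover $Y$ by affines on which $D$ is cut out by a regular element, pull the equations back along $f$, and use the hypothesis $f(X)\not\subset D$ to see that the pulled-back equations are again regular elements, so that they glue to an effective Cartier divisor. The one point worth flagging, which you do flag, is that ``not vanishing on any irreducible component'' is equivalent to being a non-zero-divisor only for reduced schemes without embedded points (in general one must ask that no \emph{associated} point of $X$ map into $D$); since every $X$ to which the proposition is applied in this thesis is an integral projective variety, the hypothesis $f(X)\not\subset D$ is exactly what is needed, and your proof is complete in the intended generality.
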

\ 

Now let us begin to recall some well known effective divisors. These divisors were usually found by considering a subspace of codimension $1$ in $\mgn$ (for some specific cases of $g$ and $n$). Then one takes the class of the closure of these subspaces in the rational Picard group of $\Mgn$ and decomposes them as a linear combination of the classes that form a basis for the rational Picard group, see Theorem \ref{harer}.
These decompositions can e.g. be calculated with the help of test curves. We, however, will simply cite the decompositions from the literature.

The most natural effective divisors on $\Mg$ are the Brill-Noether divisors parametrizing all curves $C$ possessing a $\mathfrak{g}^r_d$, i.e. a linear series $(L,V)$ of dimension $r$ and degree $d$, for fixed $g,r,d$ with Brill-Noether number $\r(g,r,d)=-1$. These exist as long as $g+1$ is composite.
We recall from \cite{eh4} Theorem 1:

\begin{lemma}\label{BN divisor}
Assume that $g+1$ is not prime and fix some integers $r,s\geq 1$ such that $g+1=(r+1)(s-1).$ Then 
$$\mathfrak{BN}_g:=\{[C]\in \mathcal{M}_g | C \mbox{ carries a } \mathfrak{g} ^r_{rs-1} \}$$
is an effective divisor on $\mathcal{M}_g$. Furthermore the class of its compactification as a $\Q$-divisor is given by
\begin{equation}\label{BN divisor eq}
[\overline{\mathfrak{ BN}}_g]= c \Big( (g+3)\l -\frac{g+1}{6} \d_0 -\sum_{i= 1}^{\lfloor\frac{g}{2}\rfloor} i(g-i)\d_i \Big),
\end{equation}

for some positive rational number $c$.
\end{lemma}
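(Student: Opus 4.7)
The plan is to establish the lemma in two stages: first, that $\mathfrak{BN}_g$ is a proper closed sublocus of codimension one (and hence its closure defines an effective divisor on $\Mg$), and second, to compute its class in $\Pic(\Mg)\otimes \Q$ by intersecting with carefully chosen test curves.

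For the first stage, the key input is the fundamental Brill-Noether theorem of Griffiths-Harris and Gieseker: for a general smooth curve $C$ of genus $g$, the variety $W^r_d(C)$ of $\mathfrak{g}^r_d$'s has the expected dimension $\r(g,r,d)=g-(r+1)(g-d+r)$, and in particular is empty when $\r<0$. One checks directly from $g+1=(r+1)(s-1)$ that $\r(g,r,rs-1)=-1$, so a general genus $g$ curve carries no such linear series, whence $\mathfrak{BN}_g\subsetneq\mg$. Semicontinuity of $\dim W^r_d(C)$ in families shows that $\mathfrak{BN}_g$ is closed. Non-emptiness is obtained by exhibiting one curve (for example, a nodal degeneration built out of elliptic components) that carries the required linear series. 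Finally, every irreducible component of the locus of curves with an extra $\grd$ has codimension at most $-\r=1$ in $\mg$, which forces $\mathfrak{BN}_g$ to have codimension exactly one.

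For the second stage, Harer's theorem (Theorem \ref{harer}) lets me write
\begin{equation*}
[\overline{\mathfrak{BN}}_g] \;=\; a\,\l - b_0\,\d_0 - \sum_{i=1}^{\lfloor g/2\rfloor} b_i\,\d_i
\end{equation*}
for unknown rationals $a,b_0,\ldots,b_{\lfloor g/2\rfloor}$. I would pin down the ratios by intersecting with one-parameter families of stable curves. To compute $b_i$, I use for each $1\le i\le\lfloor g/2\rfloor$ the family $F_i$ in $\Mg$ obtained by attaching a pencil of elliptic tails at a fixed general point of a fixed general curve of genus $g-i$: the numbers $F_i\cdot\l$ and $F_i\cdot\d_j$ are standard, while $F_i\cdot\overline{\mathfrak{BN}}_g$ is computed via Eisenbud-Harris' theory of limit linear series, which controls how a $\grd$ on a smoothing degenerates on a reducible central fibre. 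A symmetric computation gives the ratio $b_i/b_{g-i}$ and shows $b_i= c\cdot i(g-i)$ for a common constant $c>0$. To fix $a/b_0$ one uses a transverse family such as a generic pencil of plane curves of appropriate degree (for the cases in which this is geometrically realizable) or, more generally, a family built from curves on a $K3$ surface; computing the intersection of this pencil with $\l$, $\d_0$ and $\overline{\mathfrak{BN}}_g$ by the same limit linear series machinery yields $a=c(g+3)$ and $b_0=c(g+1)/6$.

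The main obstacle is the boundary intersection computation: one must identify, for each type of reducible central fibre, precisely which limit $\grd$'s actually smooth to honest $\grd$'s on nearby smooth fibres and with what multiplicity. This is handled by the Eisenbud-Harris smoothing theorem together with their refined Pl\"ucker-style analysis of vanishing sequences at the nodes; it is this careful bookkeeping that simultaneously rules out extra components of $\mathfrak{BN}_g$ in codimension one and produces the combinatorial factors $i(g-i)$ and $(g+1)/6$. Once these numerical intersections are in hand, extracting the coefficients $a,b_0,\ldots,b_{\lfloor g/2\rfloor}$ is pure linear algebra in $\Pic(\Mg)\otimes\Q$, and positivity of $c$ follows from the effectivity of $\overline{\mathfrak{BN}}_g$ combined with the fact that each test family $F_i$ meets the boundary transversally and its $\l$-degree is strictly positive.
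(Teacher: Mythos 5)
The paper does not actually prove this lemma: it is quoted verbatim from Eisenbud--Harris (\cite{eh4}, Theorem 1) and used as a black box, so there is no internal proof to compare against. What you have written is, in substance, an outline of the Eisenbud--Harris argument itself: codimension one from $\r(g,r,rs-1)=-1$ together with the Griffiths--Harris non-existence theorem and the determinantal lower bound on the dimension of Brill--Noether loci, then the class computation by intersecting with boundary families controlled by limit linear series. That is the right strategy and, at the level of a plan, it is the standard one.

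Two of your concrete steps, as written, would not go through. First, the family you propose for computing $b_i$ --- a pencil of elliptic tails attached at a fixed general point of a fixed general curve of genus $g-i$ --- produces stable curves of genus $g-i+1$ rather than $g$, and even after correcting the genus such a pencil meets only $\Delta_0$ and $\Delta_1$, so it can determine a relation among $a$, $b_0$, $b_1$ but says nothing about $b_i$ for $i\geq 2$; for those one needs, e.g., to fix general curves of genera $i$ and $g-i$ and vary the point of attachment on one component (and since $\d_i$ and $\d_{g-i}$ are the same divisor class on $\Mg$, the phrase ``the ratio $b_i/b_{g-i}$'' has no content). Second, the determination of $a/b_0$ is the genuinely hard part of the theorem, and ``a generic pencil of plane curves of appropriate degree'' is not an adequate tool: such pencils exist only for special genera and their intersection number with $\overline{\mathfrak{BN}}_g$ is not directly computable. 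Eisenbud--Harris instead pin down the whole class at once by showing that an effective divisor supported on the Brill--Noether locus must miss the general flag curve (a chain of elliptic curves), via limit linear series; the $K3$ alternative you mention does give the relation $a(g+1)=b_0(6g+18)$, but only after invoking Lazarsfeld's theorem that every member of a Lefschetz pencil on a general $K3$ is Brill--Noether general, which your sketch does not supply. Finally, exhibiting a \emph{nodal} curve carrying a $\grd$ does not by itself show $\mathfrak{BN}_g\neq\emptyset$, since $\mathfrak{BN}_g$ lives in $\mg$; you need the regeneration (smoothing) theorem for limit linear series, or an explicit smooth example, to conclude.
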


Note that only the constant $c$ depends on the choice of $r$ and $s$. Note also that  $g+1=(r+1)(s-1)$ implies that the Brill-Noether number satisfies $\r(g,r,rs-1)=-1$. Therefore $\mathfrak{BN}_g$ parametrises precisely those (non-general) curves violating the condition of the Brill-Noether Theorem (see \cite{gh}), which states that a general curve carries a $\mathfrak{g}^r_d$, if and only if $\r(g,r,d)\geq 0$.

How efficient an effective divisor on $\Mg$ is for showing that spaces are of general type is related to the slope $s(D)$  of a divisor $D$: If $D$ has a decomposition
$$ D=a \l - \sum_{i} b_i \d_i, \quad a,b_i\geq 0$$
then its {\em slope} is defined as
$$s(D) = \max_i \frac{a}{b_i}.$$
In applications one wants to minimize the slope. 

There was a well known conjecture, called the \emph{Slope Conjecture}, stating that the Brill-Noether divisors have minimal (positive) slope among all effective divisors on $\Mg$. This conjecture has been disproved for certain values of $g$, see e.g. \cite{f}.

The main problem with the Brill-Noether divisors is that they will not exist for $g+1$  prime. In that case we have to use less efficient divisors parametrizing curves that violate the Gieseker-Petri condition. 

We recall that a curve $C$ satisfies the Gieseker-Petri condition, if for every line bundle $L$ the natural map 
$$\mu_L: H^0(C,L)\otimes H^0(C,K_C\otimes L^{-1}) \to H^0(C,K_C)$$ is injective.
We say that a linear series $(L,V)$ on a curve $C$ violates the Gieseker-Petri condition if the map 
$$\mu_{(L,V)}: V \otimes H^0(C,K_C\otimes L^{-1}) \to H^0(C,K_C)$$ is not injective. In particular a curve $C$ violates the Gieseker-Petri condition if and only if some linear series on $C$ does.

We also recall from \cite{eh4} Theorem 2 the following divisor.

\begin{lemma}\label{GP divisor}
Assume that $g=2d-2$ is even, then
$$\mathfrak{GP}_g:=\{ [C]\in\mathcal{M}_g| C \mbox{ carries a } \mathfrak{g}^1_d \mbox{ violating the Gieseker-Petri condition}\}$$ is an effective divisor on $\mathcal{M}_g$.
Furthermore the class of its compactification is given by 
\begin{equation}\label{GP divisor eq}
[\overline{\mathfrak{GP}}_g]= c( a\l- \sum_{i=0}^{\frac{g}{2}} b_i \d_i),
\end{equation}
 for some rational number $c>0$ and
$$ a=6d^2+d-6, \quad b_0= d(d-1), \quad b_1=(2d-3)(3d-2), \quad b_{i+1}>b_i. $$
\end{lemma}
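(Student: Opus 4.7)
The plan is to realize $\overline{\mathfrak{GP}}_g$ as a degeneracy locus for the Petri map, compute its class using Porteous, and then pin down the coefficients via test curves. First, I would work over a suitable cover $\sigma: \widetilde{\mathcal{G}} \to \overline{\mathcal{M}}_g$ parametrising pairs $(C,L)$ with $L$ a $\mathfrak{g}^1_d$ (in the smooth locus) or a limit linear series of type $\mathfrak{g}^1_d$ (over the boundary). Since $g=2d-2$ gives Brill-Noether number $\rho(g,1,d)= g-2(g-d+1)=0$, this cover is generically finite over $\mathcal{M}_g$, and Gieseker's theorem guarantees that a general curve carries only Petri-general pencils, so the image of the locus on which Petri fails is a proper subvariety. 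The Petri map is a map of vector bundles
\begin{equation*}
\mu: \pi_*\mathcal{L} \otimes \pi_*(\omega_\pi \otimes \mathcal{L}^{-1}) \longrightarrow \pi_*\omega_\pi
\end{equation*}
on $\widetilde{\mathcal{G}}$, and for $g=2d-2$ both sides have rank $g=2d-2$ (the source has rank $2 \cdot (g-d+1) = 2(d-1)$). Hence $\widetilde{\mathfrak{GP}}_g := \{\det \mu = 0\}$ is a divisor, and $\overline{\mathfrak{GP}}_g := \sigma_*(\widetilde{\mathfrak{GP}}_g)$ is an effective divisor on $\overline{\mathcal{M}}_g$, which proves the first assertion.

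For the class, Porteous gives $[\widetilde{\mathfrak{GP}}_g] = c_1(\pi_*\omega_\pi) - c_1(\pi_*\mathcal{L} \otimes \pi_*(\omega_\pi \otimes \mathcal{L}^{-1}))$ in $\Pic(\widetilde{\mathcal{G}})\otimes\Q$. By Grothendieck-Riemann-Roch applied to $\pi:\mathcal{C}\to\widetilde{\mathcal{G}}$ one can express each term as an explicit polynomial in $\lambda$, the boundary classes $\delta_i$, and tautological classes on $\widetilde{\mathcal{G}}$ supported on the schubert-type cycles. Pushing forward via $\sigma$ (which is finite over the interior) and invoking Harer's theorem (Theorem \ref{harer}) for $\overline{\mathcal{M}}_g$, the resulting class is forced to be of the form $c(a\lambda - \sum_{i=0}^{g/2} b_i \delta_i)$, since no other generators are available. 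In particular the decomposition \eqref{GP divisor eq} exists; the coefficients $a, b_0, b_1$ will be read off from this computation.

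Finally, to fix $a,b_0,b_1$ and to prove the monotonicity $b_{i+1}>b_i$, I would use a family of well-understood test curves. Typical choices are: a Lefschetz pencil of genus $g$ curves on a general surface (detecting the $\lambda$/$\delta_0$ balance), the one-parameter family obtained by attaching a fixed curve of genus $g-i$ to a moving elliptic (or higher-genus) pencil glued at a base point (to read $\delta_i$), and pencils inside $\overline{\mathcal{M}}_g$ transverse to the corresponding boundary strata. Intersecting each test curve with $\overline{\mathfrak{GP}}_g$ requires counting the number of times the Petri map degenerates along the family, which by the Eisenbud-Harris theory of limit linear series reduces to a combinatorial count of admissible vanishing sequences at the nodes, and intersecting with $\lambda, \delta_j$ is classical. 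Solving the resulting linear system determines the numbers $a = 6d^2+d-6$ etc.

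The main technical obstacle is precisely the last stage: extending the Petri map across the boundary divisors $\Delta_i$ and tracking how its determinant degenerates. The source and target sheaves $\pi_*\mathcal{L}$ and $\pi_*\omega_\pi$ are only well-defined in terms of limit linear series, and over $\Delta_i$ multiple components of $\widetilde{\mathcal{G}}$ can meet with multiplicities depending on $i$. Controlling these multiplicities \emph{and} the local contribution to the Petri degeneracy along each stratum is what produces the strict inequality $b_{i+1}>b_i$, and this is exactly the heart of the argument in \cite{eh4}; one cannot avoid redoing the limit linear series bookkeeping in detail here.
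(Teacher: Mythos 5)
The paper offers no proof of this lemma at all: it is stated as a recalled result, citing \cite{eh4}, Theorem 2. Your outline is a faithful sketch of the argument actually carried out there --- realizing $\overline{\mathfrak{GP}}_g$ as the pushforward of the degeneracy locus of the Petri map over a compactified space of (limit) $\mathfrak{g}^1_d$'s, noting that $\rho(g,1,d)=0$ and that source and target both have rank $g$, applying Porteous together with Harer's theorem to force the shape $c(a\l-\sum b_i\d_i)$, and determining coefficients by test curves. Your rank and Brill--Noether computations are correct, and the quoted coefficients agree with \cite{eh4}.

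Two caveats. First, as you yourself say, everything of substance --- the values of $a,b_0,b_1$, and especially the strict monotonicity $b_{i+1}>b_i$ --- lives in the limit-linear-series bookkeeping over the boundary strata, which your sketch defers entirely; so this is a correct proof \emph{plan}, not a proof, and in the context of this thesis the honest move is exactly what the author does, namely to cite \cite{eh4}. Second, a small point your sketch glosses over: the determinantal description gives codimension $\leq 1$ for the degeneracy locus and the Gieseker--Petri theorem gives codimension $\geq 1$, but one still needs nonemptiness of $\mathfrak{GP}_g$ (equivalently, that the pushforward is a nonzero effective divisor rather than the zero cycle); in \cite{eh4} this comes out of the same degeneration analysis, e.g. from the computed class being nonzero.
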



The divisors we will use most often throughout this thesis are divisors of Weierstra\ss -type. We recall from \cite{l}, Section 5,
the divisors $\mathfrak{D}(g;a_1, \ldots,a_n)$ on $\Mgn$ with $a_i \geq 1$ and $\sum a_i=g$.
They are given by the locus of curves $C$ with marked points $x_1, \ldots, x_n$ such that there exists a $\mathfrak{g}^1_g$ on $C$ containing the divisor $\sum_{1 \leq i \leq n}a_i x_i$.  In particular $\mathfrak{D}(g;g)$ is the Weierstra\ss \ divisor which contains all one-pointed curves $(C,x)$ such that $x$ is a Weierstra\ss \ point of $C$.

We introduce these divisors in the basis that uses the classes $\omega_i$ from equation \eqref{basechangeprep} instead of the point bundles $\psi_i$. This makes some calculations easier.

\begin{lemma}\label{Logan divisor}
Assume that $\sum a_i=g$, then
$$\mathfrak{D}(g;a_1, \ldots,a_n):=\{ [C]\in\Mgn| C \mbox{ carries a }\mathfrak{g}^1_g \mbox{ through }\sum_{i=1}^n a_i x_i \}$$ is an effective divisor on $\mgn$.
The class of its compactification is given by 
\begin{equation}\label{Logan divisor eq}
[\overline{\mathfrak{D}}(g;a_1, \ldots,a_n)]= -\l+\sum_{i=1}^n \frac{a_i(a_i+1)}{2}\omega_i-0\cdot \dirr -\sum_{i,S} b_{i,S}\d_{i,S},
\end{equation}  
where $b_{0,\{i,j\}}=a_i a_j$ and $b_{i,S}\geq 0$ for all $i$ and $S$.
Furthermore if $n=g$ and all $a_i=1$ then 
\begin{equation}\label{Logan divisor eq2}
\begin{split}
[\overline{\mathfrak{D}}(g;1, \ldots,1)]
&= -\l+\sum_{i=1}^n \omega_i-0\cdot \dirr -\sum_{|S|=s} \Big( \frac{s(s-1)}{2}\d_{0,S} +\sum_{i=1}^{\lfloor\frac{g}{2}\rfloor}\frac{|s-i|(|s-i|+1)}{2}\d_{i,S} \Big) \\
&= -\l+\sum_{i=1}^n \psi_i-0\cdot \dirr -\sum_{|S|=s} \Big( \frac{s(s+1)}{2}\d_{0,S} +\sum_{i=1}^{\lfloor\frac{g}{2}\rfloor}\frac{|s-i|(|s-i|+1)}{2}\d_{i,S} \Big)
\end{split}
\end{equation}
\end{lemma}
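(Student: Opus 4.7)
The plan is to realize $\overline{\mathfrak{D}}(g;a_1,\ldots,a_n)$ as the degeneracy locus of a map of vector bundles of equal rank $g$ on the stack $\stMgn$, to compute its class on the smooth part via Porteous's formula, and then to pin down the boundary coefficients by intersecting with test curves.

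Let $\pi:\stMgnp\to\stMgn$ be the universal curve with sections $\s_1,\ldots,\s_n$, relative dualizing sheaf $\o_\pi$, and Hodge bundle $\mathbb{E}:=\pi_*\o_\pi$ of rank $g$ and first Chern class $\l$. On a smooth pointed curve $(C;x_1,\ldots,x_n)$, since $\sum a_i=g$, Riemann--Roch gives
$$h^0\bigl(C,\cO_C(\textstyle\sum a_i x_i)\bigr)\ge 2 \;\Longleftrightarrow\; h^0\bigl(C,K_C-\sum a_i x_i\bigr)\ge 1,$$
i.e.\ $C$ carries a holomorphic differential vanishing to order at least $a_i$ at each $x_i$. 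This is the failure of injectivity of the jet-evaluation map
$$\phi:\mathbb{E} \longrightarrow \bigoplus_{i=1}^n J^{a_i-1}(\o_\pi)|_{\s_i}$$
between bundles of equal rank $g$. The standard jet filtration has graded pieces $\o_i^{\otimes j}$ for $j=1,\ldots,a_i$, so
$$c_1\!\left(\bigoplus_{i=1}^n J^{a_i-1}(\o_\pi)|_{\s_i}\right)=\sum_{i=1}^n\frac{a_i(a_i+1)}{2}\,\omega_i.$$
Porteous's formula for a degeneracy locus of expected codimension one then yields $[\overline{\mathfrak D}]=-\l+\sum_i\frac{a_i(a_i+1)}{2}\omega_i$ modulo terms supported on the boundary, matching the first two terms of the claimed formula.

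To determine the boundary coefficients, I would write
$$[\overline{\mathfrak D}(g;a_1,\ldots,a_n)]=-\l+\sum_{i=1}^n\frac{a_i(a_i+1)}{2}\omega_i-c_{\mathrm{irr}}\dirr-\sum_{i,S}b_{i,S}\d_{i,S}$$
and intersect both sides with one-parameter families transverse to each boundary divisor, using the pull-back rules of Lemma~\ref{pullback}. For $c_{\mathrm{irr}}=0$: sweep out $\Dirr$ by varying the gluing point on a fixed general smooth pointed curve of genus $g-1$ with two extra movable points; both sides can be computed explicitly and the equality forces $c_{\mathrm{irr}}=0$. For $b_{0,\{i,j\}}=a_i a_j$: sweep out $\D_{0,\{i,j\}}$ by varying the attachment point of a rational bridge carrying $x_i,x_j$; the contribution of the node to the jet of a limiting differential is governed by the product of the vanishing orders $a_i a_j$, giving the coefficient directly. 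Non-negativity $b_{i,S}\ge 0$ follows once one knows that $\overline{\mathfrak D}$ contains no boundary divisor, which is verified by exhibiting, on a general element of each such divisor, a smoothing along which no admissible $\mathfrak{g}^1_g$ through $\sum a_i x_i$ exists.

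For the special case $n=g$ and all $a_i=1$, each $b_{i,S}$ must be pinned down explicitly. On a general element of $\D_{i,S}$, a reducible nodal curve $C=C_1\cup_p C_2$ with $C_1$ of genus $i$ containing the $s:=|S|$ points indexed by $S$, a limiting $\mathfrak{g}^1_g$ through $\sum x_k$ is an Eisenbud--Harris limit linear series whose vanishing sequence at $p$ must simultaneously absorb $\sum_{k\in S}x_k$ on $C_1$ and $\sum_{k\notin S}x_k$ on $C_2$; a direct compatibility count forces excess ramification at $p$ equal to $\frac{|s-i|(|s-i|+1)}{2}$, giving the $\d_{i,S}$-coefficient. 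The second form of the formula then follows by substituting $\omega_i=\psi_i-\sum_{S\ni i}\d_{0,S}$ from \eqref{basechangeprep}: the total adjustment to each $\d_{0,S}$-coefficient is $\sum_{i\in S}\frac{a_i(a_i+1)}{2}=s$, shifting $\frac{s(s-1)}{2}$ to $\frac{s(s+1)}{2}$ as claimed. The main obstacle is this last boundary analysis: Porteous produces the interior class essentially mechanically, but extracting the precise combinatorial expression $\frac{|s-i|(|s-i|+1)}{2}$ from limit linear series and verifying non-negativity of all $b_{i,S}$ uniformly requires the most delicate degeneration arguments.
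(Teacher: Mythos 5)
The thesis does not actually prove this lemma: it is quoted from Logan \cite{l}, Section~5, and the surrounding text says explicitly that the decompositions ``can e.g.\ be calculated with the help of test curves'' but will simply be cited. So your proposal has to be measured against the literature rather than against anything in this chapter. Measured that way, your strategy is essentially the standard one and close to what is actually done there (building on Cukierman's computation of the Weierstra{\ss} divisor $\mathfrak{D}(g;g)$): the translation via Riemann--Roch of ``carries a $\mathfrak{g}^1_g$ through $\sum a_ix_i$'' into the failure of injectivity of the jet evaluation $\mathbb{E}\to\bigoplus_i J^{a_i-1}(\o_\pi)|_{\s_i}$ between rank-$g$ bundles is correct, the Chern class count giving $-\l+\sum_i\frac{a_i(a_i+1)}{2}\omega_i$ on the interior is correct, and the base change $\omega_i=\psi_i-\sum_{S\ni i}\d_{0,S}$ shifting $\frac{s(s-1)}{2}$ to $\frac{s(s+1)}{2}$ is exactly right. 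The one genuine difference in route is that Logan does not run a fresh Porteous-plus-test-curve computation for every weight vector $(a_1,\ldots,a_n)$; he derives the general class from the one-pointed Weierstra{\ss} case by pushing forward and pulling back along forgetful maps, which packages most of your boundary analysis into a recursion and avoids a separate limit-linear-series argument for each $\d_{i,S}$.

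Two steps of your sketch are asserted rather than argued. First, the non-negativity $b_{i,S}\ge 0$ does \emph{not} follow from the fact that $\overline{\mathfrak{D}}$ contains no boundary divisor: that fact only says no boundary component has been added to the closure, and places no constraint on the sign of a coefficient in the chosen basis. What one actually needs, for each $(i,S)$, is a covering family of curves inside $\D_{i,S}$, not contained in $\overline{\mathfrak{D}}$, whose intersection numbers with $\l$, the $\omega_j$, $\dirr$ and the remaining boundary classes are controlled and whose intersection with $\d_{i,S}$ is negative; effectivity of the restriction of $\overline{\mathfrak{D}}$ to that family then forces $b_{i,S}\ge 0$. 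Second, the claim that a ``direct compatibility count'' of vanishing sequences at the node yields exactly $\frac{|s-i|(|s-i|+1)}{2}$ is the delicate heart of the degeneration argument, not a corollary of it; note the sanity check that the coefficient vanishes precisely when $s=i$, i.e.\ when the marked points are distributed compatibly with the genera. Neither issue is fatal --- both are carried out in \cite{l} --- but they are precisely the places where the sketch would have to become a proof.
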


In practice we are rarely interested in any of the divisors $\mathfrak{D}$ themselves. Usually we want to take a positive multiple of a sum of pull-backs of these divisors. They are most useful when they are $S_n$-invariant, i.e. $|S|=|T|\Rightarrow b_{i,S}=b_{i,T}$, and also the $\psi_i$ or $\omega_i$ all have the same coefficient 
$c$, and when the quotients 
$\frac{b_{i,S}}{c}$  are maximal. The symmetry condition can be achieved by taking the sum over all possible permutations. The second condition requires us to minimize the distance between the weights $a_i$. \\
\ \\
We will calculate these sums of pull-backs separately depending on whether $n\leq g$ or $n\geq g$. In the following computations we shall use the shorthand
$$ \psi= \sum_{i=1}^n \psi_i,\qquad \o=\sum_{i=1}^n \o_i \qquad  \d_{i,s}= \sum_{|S|=s} \d_{i,S}. $$

We begin with the case $n\leq g$. In order to minimize the distance between the weights $a_i$ we decompose
$g=kn+r$, with  $r<n$,  and set
\begin{equation}\label{wmdef}
W_n=[\overline{\mathfrak{D}}(g;a_1, \ldots,a_n)], \qquad a_j=k+1 \, \, (1\leq j \leq r), \quad  
a_j=k  \, \, (r+1 \leq j \leq n).
\end{equation}

This gives, in view of \eqref{Logan divisor eq}
\begin{equation}\label{wm}
\begin{split}
W_n 
& = -\l+	\sum_{i=1}^r \frac{(k+1)(k+2)}{2}\omega_i +\sum_{i=r+1}^n \frac{k(k+1)}{2} \omega_i  -0\cdot\dirr 
\\&-\sum_{i,j\leq r} (k+1)^2 \d_{0,\{i,j\} }  -\sum_{i\leq r,j>r} k(k+1) \d_{0,\{i,j\} } -\sum_{i,j>r} k^2 \d_{0,\{i,j\} } \\
&-\mbox{higher order boundary terms},  
\end{split}
\end{equation}
where {\em higher order}  means a positive linear combination of $\d_{i,S}$ where either $i>0$ or $|S|>2$.

From $W_n$ we want to generate  an $S_n$-invariant divisor class $W$ on $\Mgn$, by summing over appropriate permutations. Thus we let $S,T$ be disjoint subsets of $\{1,\ldots,n \}$ with $|S|=r$ and $|T|=n-r$ and let 
\begin{equation}
\phi_{S,T}: \Mgn \to \Mgn
\end{equation}
be a permutation mapping the class 
$[C; x_1, \ldots ,x_n]$ to $[C; y_1, \ldots ,y_{n}],$
where 
$\{x_1, \ldots ,x_n\}=\{y_1, \ldots,y_n\}$ denote the same set of points and the points $x_i$ labelled by $S$ are sent to the points $y_1, \ldots,y_r$ (all with weights $a_i=k+1$) and the points labelled by $T$ are sent to the points $y_{r+1}, \ldots,y_n$ (all with weights equal to $k$). Clearly, for fixed $g$ and $n$, there are precisely  $\binom{n}{r}$ such permutations. With this notation we introduce
\begin{equation}\label{W small}
\begin{split}
W := &\sum_{S,T} \phi_{S,T}^* W_n \\
=&-\binom{n}{r} \l +c\sum_{i=1}^n \omega_i+0\cdot\dirr-\sum_{|S|=s} \tilde{b}_{0,s}\d_{0,S} -\mbox{higher order boundary terms}\\
 =&-\binom{n}{r} \l +c\sum_{i=1}^n \psi_i+0\cdot\dirr-\sum_{|S|=s} b_{0,s}\d_{0,S} -\mbox{higher order boundary terms}
\end{split}
\end{equation}
where  {\em higher order} denotes a positive linear combination of boundary divisors $\d_{i,S}$ with     $i \geq 1$,
\begin{equation}\label{coef W small}
\begin{split}
b_{0,s}&=\tilde{b}_{0,s}+sc \\ 
c &=\binom{n-1}{r-1} \frac{(k+1)(k+2)}{2} +\binom{n-1}{r} \frac{k(k+1)}{2},\\
b_{0,2} &= 2c + \binom{n-2}{r-2} (k+1)^2 +2\binom{n-2}{r-1} k(k+1) +\binom{n-2}{r}k^2. 
\end{split} 
\end{equation}

The first equation of \eqref{coef W small} is obtained by base change, since equation \eqref{basechangeprep} implies
\begin{equation} \label{basechange}
\o=\psi - \sum_S |S| \d_{0,S}.
\end{equation}
The other two equations of \eqref{coef W small} are proven by combinatorial considerations which we leave to the reader. \\
\ \\
%
%
Now let us turn to the case $n\geq g$. 
We begin with the following divisor on $\Mgg$
\begin{equation}\label{Wg}
W_g =[\overline{\mathfrak{D}}(g;1, \ldots,1)]=  -\l + \psi -0\cdot\dirr -3\d_{0,2}-\frac{g(g+1)}{2}\d_{0,g} - \mbox{other terms},
\end{equation}
where "other terms" means a linear combination of the other boundary divisors with non-negative coefficients.

Then we pull this divisor back to $\Mgn$ in such a way that the pull-back becomes $S_n$-invariant again.
For a set $S\subset \{1,\ldots,n\}$ of cardinality $g$ we take $\pi_S: \Mgn \to \Mgg$ as the forgetful map forgetting all points not labelled by $S$. We construct an effective divisor $W$ on $\Mgn$ by summing over the pull-backs of $W_g$ along $\pi_S$
\begin{equation}\label{W large}
W:=\sum_S \pi_S^* W_g = -\binom{n}{g}\l +\binom{n-1}{g-1}\psi -0\cdot \dirr -\sum_{s\geq 2}b_{0,s}\d_{0,s}  -\mbox{higher order terms},
\end{equation}
with $b_{0,2}=2\binom{n-2}{g-1}+3\binom{n-2}{g-2}=2\binom{n-1}{g-1}+\binom{n-2}{g-2}, \quad b_{0,n}=\binom{n}{g}\frac{g(g+1)}{2} $ and $b_{0,s}\geq b_{0,2}$ for all $s>2$. \\
\ \\
%
Finally we introduce divisors parametrizing curves with marked points that fail the so called {\em Minimal Resolution conjecture}.  

The Minimal Resolution Conjecture (MRC) predicts the Betti numbers of a general finite set of points in a projective space. It closely relates to the theory of  syzygies and Kozul complexes which is a wide and interesting field in its own. It does not, however, have any relevance for the rest of this thesis and we will introduce only the bare minimum needed to understand what is parametrized by the following effective divisor. We refer the interested reader to \cite{fmp} for more details on the MRC and to \cite{e} for the underlying commutative algebra. 

Let us consider a subscheme $X\subset \P^n$. We denote by $S=\C[x_1,\ldots, x_{n+1}]$ the homogeneous coordinate ring of $\P^n$ and by $I_X\subset S$ the saturated ideal of $X$. Then $S_X :=S/I_X$, the coordinate ring of $X$, is a finitely generated graded $S$-module. We can study such a module $M$ with the help of a free resolution
$$ F_{\bullet}: \quad \ldots\to F_n\to\ldots \to F_2 \to F_1 \to F_0 \to M\to 0.$$
Here giving a minimal free resolution is equivalent to giving the \emph{Betti numbers} $b_{i,j}(M)$ via the relation
$$F_i= \bigoplus_{j\in \Z} S(-i-j)^{b_{i,j}(M)}.$$
The \emph{Betti diagram} of $M$ has the number $b_{i,j}(M)$ as its $(j,i)$-th entry. For our subscheme $X\subset \P^n$ we define the Betti diagram of $X$ as the Betti diagram of $S_X$. This diagram has only finitely many rows with some entry different from zero. The number of non-trivial rows is called the \emph{regularity} of $X$.

Now let us consider a subset $\Gamma\subset X$ of finitely many points on $X$. Then the Betti diagram of $\Gamma$ adds two non-trivial rows to the Betti diagram of $X$ which we denote by $j=r-1$ and $j=r$. The MRC predicts that for a general set of points $\Gamma$ these entries satisfy the equations
$$b_{i+1,r-1}(\Gamma) \cdot b_{i,r}(\Gamma)=0 \qquad \mbox{for all } i.$$

The MRC has been studied first and foremost in the case $X=\P^n$ where it is known to hold for small values of $n$ or for $m:=|\Gamma|$ large enough. We, however, are more interested in the case of curves. Here the validity of the MRC depends on an actual embedding of a curve $C$ into projective space; the Betti numbers are not birational invariants of the curve $C$. Such an embedding can e.g. be given by a very ample line bundle or certain linear series. Here we only consider canonical curves $C$, i.e. the curve $C\subset \P^{g-1}$ is embedded into projective space via its canonical divisor. In this special case the MRC is known to be true. We recall from \cite{fmp}:

\begin{proposition}[Minimal Resolution Conjecture]\label{MRC}
Let $C\subset \P^{g-1}$ be a canonical curve of arithmetic genus $g$ and regularity $r$ and let $\Gamma\subset C$ be a general set of $m \geq \max\{g, P_C(r)\}$ points where $P_C$ denotes the Hilbert polynomial of $C$. Then 
$$b_{i+1,r+1}(\Gamma) \cdot b_{i,r+2}(\Gamma)=0 \qquad \mbox{for all } i.$$
\end{proposition}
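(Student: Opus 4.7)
The plan is to translate the vanishing statement into one about Koszul cohomology. Recall that for a finitely generated graded $S$-module $M$ one has $b_{i,j}(M)=\dim_\C K_{i,j}(M,S_1)$, where $K_{i,j}$ denotes Koszul cohomology computed from the complex with differentials given by exterior multiplication by linear forms. Thus the claim $b_{i+1,r+1}(\Gamma)\cdot b_{i,r+2}(\Gamma)=0$ becomes the statement that, for each $i$, at least one of two explicit Koszul groups attached to the coordinate ring $S_\Gamma$ must vanish. These two groups sit in the last two rows of the Betti diagram of $\Gamma$, which is precisely where a subtle Serre-type duality on $C$ will relate them.

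The key geometric input is the Lazarsfeld bundle $M_{K_C}$ on $C$, defined by
$$0\to M_{K_C}\to H^0(C,K_C)\otimes \cO_C \to K_C\to 0.$$
Taking exterior powers realizes the relevant Koszul groups of $C$ and of $\Gamma$ as cokernels and kernels of natural multiplication maps between spaces of the form $H^0(C,\wedge^i M_{K_C}\otimes K_C^{\otimes j})$ and $H^0(C,\wedge^i M_{K_C}\otimes \cI_{\Gamma/C}\otimes K_C^{\otimes j})$. The duality one wants is then an incarnation of Green's duality theorem for Koszul cohomology, which on a smooth projective curve pairs $K_{i+1,r+1}$ with $K_{g-3-i,r+2}$ against the dualizing sheaf; combined with the precise value of the regularity $r$ and the hypothesis $m\geq \max\{g,P_C(r)\}$, this reduces the statement to a single vanishing rather than two independent ones.

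Given this reformulation, I would conclude via a specialization argument. Since the Betti numbers are upper-semicontinuous on the Hilbert scheme of length-$m$ subschemes of $C$, it suffices to exhibit one configuration $\Gamma_0\subset C$ realizing the required vanishing. A natural choice is to take $\Gamma_0$ as (part of) the zero locus of a section of a well-chosen line bundle $L\in \Pic^m(C)$, so that $\cI_{\Gamma_0/C}$ carries extra cohomological structure. Then the Koszul group in question fits into a long exact sequence whose outer terms are cohomology groups of twists of wedge powers of $M_{K_C}$; these vanish by the Castelnuovo--Mumford regularity of $C$ together with the standard vanishing for stable bundles associated to line bundles of sufficiently high degree.

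The hard part is the construction of the specialization and the explicit verification of the Koszul vanishing for it. The exterior powers $\wedge^i M_{K_C}$ are of ranks $\binom{g-1}{i}$ and their cohomology becomes rapidly intractable; one must exploit either the duality above, Eagon--Northcott-type complexes, or an induction on $m$ using the Hilbert polynomial $P_C$ to reduce everything to manageable cohomological input on $C$. This is exactly the delicate analysis carried out in \cite{fmp}, to which we refer for the full argument.
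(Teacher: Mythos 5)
The thesis does not actually prove this statement: Proposition \ref{MRC} is introduced with ``We recall from \cite{fmp}'' and is used as a black box, the only ingredient exploited afterwards being the identification (again cited from \cite{fmp}) of the last two Betti rows of $\Gamma$ with $h^0$ and $h^1$ of $\bigwedge^i M_C\otimes\mathcal{I}_{\Gamma/C}(r)$. Your proposal ends in the same place, deferring the substantive vanishing to \cite{fmp}, so at the level of what is actually established the two ``proofs'' coincide. One caveat on your sketch of the \cite{fmp} strategy: the reduction of the two vanishings to a single statement is not achieved there by Green's Koszul duality pairing $K_{i+1,r+1}$ with $K_{g-3-i,r+2}$; rather, since $b_{i+1,r+1}(\Gamma)$ and $b_{i,r+2}(\Gamma)$ are the $h^0$ and $h^1$ of the \emph{same} sheaf, the product vanishing is precisely the assertion that $\bigwedge^i M_C$ twisted by a general line bundle of the relevant degree has natural cohomology, which \cite{fmp} proves via a theta-divisor criterion and a degeneration argument rather than by duality.
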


Another approach to the Betti numbers is to express them as certain cohomology classes. For a smooth curve of genus $g$ we denote by $M_C$ the vector bundle which is the kernel of the evaluation map
$$0\to M_C\to H^0(C,\omega_C)\otimes \mathcal{O}_C \to \omega_C \to 0$$
and for a finite subset $\Gamma\subset C$ we denote ideal sheave of $\Gamma$ in $C$ by $\mathcal{I}_{\Gamma/C}$. Then Proposition 1.6 in \cite{fmp} gives the last two rows in the Betti diagram of $\Gamma$ as
$$ b_{i+1,r-1}=h^0(\bigwedge^i M_C\otimes \mathcal{I}_{\Gamma/C}(r) \quad \mbox{and} \quad
b_{i,r}=h^1(\bigwedge^i M_C\otimes \mathcal{I}_{\Gamma/C}(r) \quad \mbox{for all } i.$$

With some simple calculations (see Corollary 1.8 in \cite{fmp}) we can deduce from this that the following effective divisor from Theorem 4.2. in \cite{f} parametrizes curves in $\mgn$ whose marked points violate the Minimal Resolution Conjecture \ref{MRC}. As mentioned before, for the remainder of this thesis, we only need the existence of this effective divisor.

\begin{lemma}\label{MinRes div}
Fix integers $g,r\geq 1$ and $0\leq k\leq g-1$ and set $n=(2r+1)(g-1)-2k$. Then
$$\mathfrak{Mrc}_{g,k}^r:= \{ [C,x_1,\ldots, x_n]\in \mgn| h^1(C, \bigwedge^i M_C\otimes \o_C^{\otimes(r+1)} \otimes\mathcal{O}_C(-x_1-\cdots-x_n))\geq 1\}$$
is a divisor on $\mgn$. Furthermore the class of its compactification is given by
\begin{equation}
[\overline{\mathfrak{Mrc}}_{g,k}^r]= \frac{1}{g-1} \binom{g-1}{k}(-a \l +c \sum_{i=1}^n\psi_i+b_{\mathrm{irr}}\dirr -\sum_{i,s} b_{i,s} \sum_{|S|=s} \d_{i,S} )
\end{equation}
where 
\begin{equation}\label{coef MinResConj}
\begin{split}
&a	=\frac{1}{g-2}\Big( (g-1)(g-2)(6r^2+6r+r)+k(24r+10k+10-10g-12rg)\Big),	\\ 
&c= rg+g-k-r-1,	\\
&b_{\mathrm{irr}}=\frac{1}{g-2}\Big( \binom{r+1}{2} (g-1)(g-2)+k(k+1+2r-rg-g)\big), \\
&b_{0,s}	= \binom{s+1}{2}(g-1)+s(rg-r-k) \qquad\mbox{and} \qquad b_{i,s}\geq b_{0,s}.
\end{split}
\end{equation}
\end{lemma}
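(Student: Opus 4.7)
The plan is to realize $\mathfrak{Mrc}_{g,k}^r$ as the degeneracy locus of a morphism of vector bundles of equal rank on the stack $\stmgn$, and then to compute its class by Grothendieck--Riemann--Roch applied to the universal curve $\pi:\stMgnp\to\stmgn$. More precisely, let $\s_1,\ldots,\s_n$ be the universal sections and set $\Gamma:=\s_1+\cdots+\s_n$. Let $\cM$ be the vector bundle on $\stMgnp$ whose restriction to a fibre $(C,x_1,\ldots,x_n)$ is $M_C$; it is defined fibrewise by the relative Euler sequence
\begin{equation*}
0\to\cM\to\pi^*\pi_*\o_\pi\to\o_\pi\to0.
\end{equation*}
Taking $i=k$ (the natural choice suggested by the index of $\mathfrak{Mrc}_{g,k}^r$), consider the evaluation morphism on $\stmgn$
\begin{equation*}
\f:\pi_*\bigl(\textstyle\bigwedge^{k}\cM\otimes\o_\pi^{\otimes(r+1)}\bigr)\to\pi_*\bigl(\textstyle\bigwedge^{k}\cM\otimes\o_\pi^{\otimes(r+1)}\otimes\cO_\Gamma\bigr).
\end{equation*}
A point of $\mgn$ lies in $\mathfrak{Mrc}_{g,k}^r$ precisely when $\f$ fails to be injective at that point.

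The first step is the rank count. Using the Euler sequence for $M_C$, Riemann--Roch on $C$, and the Raynaud-style vanishing $h^1(C,\bigwedge^{k}M_C\otimes\o_C^{\otimes(r+1)})=0$ for $r\geq1$ (which follows since $M_C$ is a Lazarsfeld bundle and $\o_C^{\otimes(r+1)}$ is sufficiently positive), one computes the rank of the source as $\binom{g-1}{k}\bigl((2r+1)(g-1)-2k\bigr)/ (g-1) \cdot (g-1)$, which equals the rank of the target $n\binom{g-1}{k}$ exactly under the numerical assumption $n=(2r+1)(g-1)-2k$. Hence the locus where $\f$ degenerates has expected codimension one. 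To see that $\mathfrak{Mrc}_{g,k}^r$ is a proper subvariety of $\mgn$, one exhibits a single point at which $\f$ is an isomorphism; this is ensured by Proposition \ref{MRC}, which guarantees the MRC for a general set of points on a general canonical curve.

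The second step is the class computation. Since source and target have the same rank, the Porteous formula reduces to
\begin{equation*}
[\overline{\mathfrak{Mrc}}_{g,k}^{r}]=c_1\bigl(\pi_*(\textstyle\bigwedge^{k}\cM\otimes\o_\pi^{\otimes(r+1)}\otimes\cO_\Gamma)\bigr)-c_1\bigl(\pi_*(\textstyle\bigwedge^{k}\cM\otimes\o_\pi^{\otimes(r+1)})\bigr).
\end{equation*}
The second term is computed by Grothendieck--Riemann--Roch applied to $\pi$: the Chern character of $\bigwedge^{k}\cM$ is a universal polynomial in $\pi^*\l$ and in $c_1(\o_\pi)$, while $\mathrm{td}(\pi)$ contributes the standard combination of $\l$ and boundary classes of the singular fibres. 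The first term is a skyscraper sheaf along the sections, so it equals $\sum_{j=1}^{n}\s_j^*(\bigwedge^{k}\cM\otimes\o_\pi^{\otimes(r+1)})$, which is a linear combination of $\l$ and the $\psi_j$. Collecting coefficients in the basis provided by Theorem \ref{harer} yields the claimed values of $a$, $c$, $b_{\mathrm{irr}}$ and $b_{0,s}$, while the inequality $b_{i,s}\geq b_{0,s}$ for $i\geq1$ follows from the positivity of the corresponding restriction maps along higher boundary components.

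The main obstacle will be the boundary analysis: one must check that $\f$ extends to a morphism of vector bundles across every boundary divisor of $\stMgn$ (where the defining sequence of $M_C$ must be reinterpreted via the relative dualising sheaf on nodal curves) and then compute the precise multiplicities contributed along $\Dirr$ and each $\D_{i,S}$. Following the strategy of \cite{f}, each boundary coefficient is extracted by restricting $\f$ to a transverse test curve and carrying out a local stable-reduction computation; the contributions from $\bigwedge^{k}\cM$, from $\o_\pi^{\otimes(r+1)}$ and from the skyscraper $\cO_\Gamma$ are then assembled to give the precise form of $b_{\mathrm{irr}}$ and $b_{0,s}$ in \eqref{coef MinResConj}. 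Once these are pinned down, everything else is routine bookkeeping in $\Pic(\Mgn)\otimes\Q$.
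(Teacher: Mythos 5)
The paper itself contains no proof of Lemma \ref{MinRes div}: the divisor and its class are imported verbatim from Theorem 4.2 of \cite{f}, with the reinterpretation of the defining $h^1$-condition in terms of the Minimal Resolution Conjecture taken from Proposition 1.6 and Corollary 1.8 of \cite{fmp}; the text states explicitly that only the existence of this effective divisor is needed later. So there is no in-paper argument to compare against, but your outline is a faithful reconstruction of the strategy of the cited source: realise the locus as the degeneracy locus of the evaluation map between pushforwards of equal rank, and extract the class by Porteous, Grothendieck--Riemann--Roch and test curves. Your rank count is correct and is the right way to see that the exterior power index must be $i=k$ (which is precisely where the numerology $n=(2r+1)(g-1)-2k$ comes from), and one can check, for instance, that your description of the skyscraper term reproduces the stated $\psi$-coefficient $c=(r+1)(g-1)-k$ up to the overall factor $\frac{1}{g-1}\binom{g-1}{k}$.

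As a standalone proof, however, the proposal stops exactly where the content of the lemma begins. The assertion of the lemma is the list of coefficients in \eqref{coef MinResConj}, and none of them is derived: ``collecting coefficients yields the claimed values'' and ``a local stable-reduction computation'' stand in for the entire computation, and the coefficients of $\l$, $\dirr$ and the $\d_{i,S}$ cannot be read off from the smooth-fibre GRR calculation alone. Three specific points would need to be supplied. First, for $\f$ to be a morphism of vector bundles one needs $h^1(C,\bigwedge^kM_C\otimes\o_C^{\otimes(r+1)})=0$ on every fibre; this does follow from semistability of the exterior powers of $M_C$ (negative slope after Serre duality), but your appeal to ``Raynaud-style vanishing'' is only a gesture, and extending both $\cM$ and this vanishing across the boundary (where $M_C$ must be redefined via the relative dualising sheaf) is the genuinely delicate step you correctly flag but do not carry out. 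Second, Proposition \ref{MRC} shows the degeneracy locus is a proper subvariety, hence of pure codimension one where nonempty, but nonemptiness --- needed for $\mathfrak{Mrc}_{g,k}^r$ to be an actual divisor rather than the zero class --- is not addressed. Third, the inequality $b_{i,s}\geq b_{0,s}$ is asserted from ``positivity of restriction maps'' without argument. None of this means the approach is wrong --- it is the approach of \cite{f} --- but in its present form the proposal is a programme for the proof rather than the proof.
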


\chapter{On the Kodaira dimension of the moduli space of nodal curves}

We show that the compactification of the moduli space of {\em $n-$nodal curves of geometric genus g}, i.e. $\Ngn:= \Mgnn /G$, with $G:=(\Z_2)^n\rtimes S_n$,  is of general type for $g \geq 24$, for all $n \in \N$. While this is a fairly easy result,  it requires completely different techniques to extend it to low genus $5 \leq g \leq 23$. Here we need
that the number of nodes  varies in a band
 $n_{\mathrm{min}}(g) \leq n \leq  n_{\mathrm{max}}(g)$, where $n_{\mathrm{max}}(g)$ is the largest integer smaller than (or in some cases equal to) $\frac{7}{2}(g-1)-3$.
The lower bound  $n_{\mathrm{min}}(g) $ is close to the bound found in \cite{l}, \cite{f} for $\Mgnn$ to be of general type (in many cases it is identical). This  will be tabled in Theorem  \ref{two ngn} which is the main result of this chapter.

\section{Introduction}

The goal of this chapter is to study the moduli space $\cN_{g,n}:= \cM_{g,2n}/G$ of {\em $n-$nodal curves of geometric genus g} and its compactification $\Ngn:= \Mgnn /G$. 
Here $\cM_{g,2n}$ is the moduli space of smooth curves of genus $g$ with $2n$ distinct marked points   
on which the group $G:=(\Z_2)^n\rtimes S_n$, with  $S_n$ being the symmetric group, acts in the following way: We group the labels $\{1,...,2n\}$ in pairs 
$(i,n+i),$ where  $ i\leq n$, corresponding to pairs of points $(x_i,y_i)$ (where $y_i=x_{i+n}$). Each of the $n$ copies of $\Z_2$ acts 
on one of these pairs by switching components, and $S_n$ acts by permutation of the pairs. 
Then $G$ acts on the moduli space $\Mgnn$ by acting on the labels of the $2n$ marked points.
We shall henceforth assume without further comment that this grouping is fixed whenever we consider $2n$ marked points. Clearly, $G$ then acts transitively  on the marked points   and  fixed point free on the moduli space.

A special interest in nodal curves, and thus in the study of the moduli space
$\Ngn$, (with the aim of better understanding the general properties of smooth curves) is a common feature of all deformation type arguments. They go back at least to 
Severi who proposed  in \cite{se} to use  $g-$nodal curves 
to prove the Brill-Noether theorem, see  \cite{bn}. Such deformation type methods have become prominent in modern algebraic geometry, e.g. in the systematic theory of limit linear series (see \cite{eh2}) which progressively simplified the earlier deformation type arguments in the first rigorous proof of the Brill-Noether theorem and the Gieseker-Petri theorem (see \cite{gh}, \cite{g},  \cite{eh}, \cite{eh1},  \cite{sch1}).

Thus it seems natural  
to study the moduli space $\cN_{g,n}$ of nodal curves in its own right.\\

A crucial point in our analysis is the following diagram (which, incidentally, also shows that $\Ngn$ actually parametrizes $n-$nodal curves with geometric genus $g$ and arithmetic genus $g+n$):
 
  $$\begin{tikzpicture} [baseline=(current  bounding  box.center)]
\matrix (m) [matrix of math nodes,row sep=2em,column sep=3em,minimum width=2em]
{\Mgnn& 	\Mgun\\
 	    \Ngn	&	\\};
\path[->>]
   (m-1-1) edge node[left] {$\pi$} (m-2-1);
\path[->]
	(m-1-1) edge node [above] {$\chi$} (m-1-2)
 	(m-2-1) edge node [below]{$\m$} (m-1-2);
         
\end{tikzpicture}$$
which factors $\chi$ through the quotient map $\pi$, $\chi$ being the map which glues $x_i$ to $y_i$ for every pair $(x_i,y_i)$. Thus each pair is transformed into a nodal point of the curve $C$ under consideration, increasing its arithmetic genus by $n$.

While it was known for a long time that the moduli space $\Mg$ of algebraic curves is uniruled for low genus $g$, Eisenbud, Harris and Mumford showed in the 1980's that $\Mg$ is of general type for $g \geq 24$ (see  \cite{hm} and \cite{eh}), the case of $\overline{\mathcal{M}}_{23}$ being somewhat special (see also \cite{f2}).

Since then there has been a lot of research refining this picture of the birational geometry of moduli spaces of curves. The addition of marked points on the algebraic curve leads to the moduli space $\Mgn$ of $n$-pointed curves of genus $g$ whose Kodaira dimension was studied in \cite{l}, with some improvements in \cite{f}, 
employing refined versions of the original techniques. Heuristically, additional marked points push the moduli space in the direction of being of general type. In particular,  $\overline{\mathcal{M}}_{23,n}$ is of general type for all  $n\geq 1$, and even for $4\leq g\leq 22$  the moduli space $\Mgn$ achieves general type if $n\geq n_{\mathrm{min}}(g)$ for $n_{\mathrm{min}}(g)$ sufficiently large.
In another direction, the paper \cite{bfv} studies the Kodaira dimension of the Picard variety $\overline{\mbox{Pic}}^d_g$ parametrizing line bundles of degree $d$ on curves of genus $g$.\\

A further gratifying aspect of $\Ngn=\Mgnn/G$ is its structure as a quotient of the well-understood space $\Mgnn$ by a finite group. Such quotients have been studied before in various contexts, and their birational geometry might be different from what one could naively expect.

For instance, the space $\overline{\cC}_{g,n}:=\Mgn /S_n$, for $g$ large,is  of general type for $n \leq g-1$ but uniruled for $n \geq g+1$; only the transitional case $g=n$ is challenging (see \cite{fv1}). In fact, to see this for large $n$, one observes that the fibre of $\Mgn/S_{n}\to \Mg$ over a smooth curve $[C]\in \Mg$ is birational to the symmetric product $C_{n}$. Since the Riemann-Roch theorem implies that any effective divisor of degree $d> g$ lies in some $\mathfrak{g}^1_d$, the quotient  $\Mgn/S_{n}$ is trivially uniruled for $n> g$. 

This proof, of course, uses the very special structure of the above fibre as a symmetric product $C_n$ (which can be interpreted as a family of divisors on $C$ and thus is accessible via Brill-Noether theory) and not just abstract properties of the group $S_n$. Still, it points to the possibility that taking the quotient with respect to a sufficiently large group might somehow destroy the property of an algebraic variety of being of general type. At least for low genus, this is compatible with the findings of this chapter, see Theorem \ref{two ngn} below. In our case, such a phenomenon might be related to the existence of an upper finite bound $n_{\mathrm{max}}(g)$ for the number of nodes allowed on the geometric genus $g$ curve 
Note that our group {$G\subset S_{2n}$ is a subgroup of $S_{2n}$, implying that $\Mgnn\to \Mgnn/S_{2n}$ factors through $\Ngn$. Thus the space $\Ngn$ should be somewhat intermediary between
$\Mgnn$ and  $\overline{\cC}_{g,2n}$.

Making this precise  is the central result of the present chapter. Most demanding is the result for small $g$, namely:

\begin{theorem}   \label{two ngn}
$\Ngn$ is of general type in the special cases 
$5 \leq g \leq 23,$  and\\
$n_{\mathrm{min}}(g) \leq n \leq  n_{\mathrm{max}}(g)$ 
with  $n_{\mathrm{min}}(g), n_{\mathrm{max}}(g)$  given in the following table:
$$\begin{array}{c|c|c|c|c|c|c|  c|c|c|c|c|  c|c|c|c|c|  c|c|c}
g &5&6&7&8&9&10&11&12&13&14&15&16&17&18&19&20&21&22&23\\
\hline
n_{\mathrm{min}}&9&9&8&8&8&6&6&6&6&5&6&5&5&5&4&4&2&2&1\\
n_{\mathrm{max}}&10&14&18&21&25&28&32&35&38&42&46&49&52&56&60&63&66&70&74
\end{array}$$
\end{theorem}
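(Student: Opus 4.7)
The plan is to apply the criterion of Proposition \ref{criterion gentype}: it suffices to prove (i) that the singularities of $\Ngn$ do not impose adjunction conditions, and (ii) that $K_{\Ngn}$ can be written as a sum of an ample class and an effective class in $\Pic(\Ngn) \otimes \Q$. Throughout, the natural workhorse is the quotient map $\pi : \Mgnn \to \Ngn$, which is finite of degree $|G| = 2^n n!$; a $G$-invariant effective divisor on $\Mgnn$ descends to an effective divisor on $\Ngn$, and conversely $\pi^* K_{\Ngn} = K_{\Mgnn} - R_\pi$, where $R_\pi$ is the ramification divisor of $\pi$. So the heart of the argument is to express $\pi^*K_{\Ngn}$ as ample plus effective in a $G$-invariant way on $\Mgnn$, modulo controlling singularities.

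For step (i), I would imitate the Harris--Mumford/Logan strategy outlined in Section \ref{sec sing}. The stabiliser of a point of $\Mgnn$ in $G$ combines with the automorphism group of the underlying $2n$-pointed curve to act linearly on the tangent space; one must verify the Reid--Shepherd-Barron--Tai criterion for this combined action. The only new ingredient beyond \cite{hm} and \cite{l} is the $G$-action: elements of the $(\Z_2)^n$-factor that swap $(x_i, y_i)$ act near the boundary divisors $\d_{0,\{i,n+i\}}$ (precisely where the $\chi$-image acquires a node), while $S_n$-elements act by permuting pairs. In both cases one checks that non-quasireflection elements have age $\geq 1$ at every point (this is typical for such pair-swap actions on deformation coordinates of nodes, which come with two complex parameters). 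After extracting possible quasireflections via Prill's proposition, the only remaining obstructing locus should be that of curves with an elliptic tail attached away from the paired points; this is handled by the classical elliptic-tail pluricanonical extension argument from \cite{hm}, adapted to the $G$-quotient.

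For step (ii), I would use the canonical class formula (\ref{can cm}) on $\Mgnn$ and then construct $G$-invariant effective divisors by symmetrising the Weierstra{\ss}-type divisors $\mathfrak{D}(g; a_1,\ldots,a_{2n})$ of Lemma \ref{Logan divisor} under the full group $G \subset S_{2n}$ (rather than only under $S_{2n}$ as in \S 1.6). Specifically, for $2n \leq g$ one distributes weights $a_i = k+1$ or $k$ evenly as in (\ref{wmdef})--(\ref{W small}), and for $2n \geq g$ one uses the pullback construction (\ref{W large}); combining with Brill--Noether divisors (Lemma \ref{BN divisor}) when $g+1$ is composite, or Gieseker--Petri/MRC divisors (Lemmas \ref{GP divisor}, \ref{MinRes div}) otherwise, produces a $G$-invariant class of the shape
\begin{equation*}
a\l + c\sum_i \psi_i - b_{\mathrm{irr}}\dirr - \sum_{i,S} b_{i,S}\d_{i,S}
\end{equation*}
with controlled slope. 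Comparing with $K_{\Mgnn} = 13\l + \psi - 2\d - \d_{1,\emptyset}$ and with $R_\pi$ (concentrated on the boundary divisors fixed by $G$-stabilisers, essentially on the pullbacks of $\d_{0,\{i,n+i\}}$), one finds for each pair $(g,n)$ in the table a positive rational $\epsilon$ such that $K_{\Ngn} - \epsilon \l$ is effective. Since $\l$ is big and nef on $\Ngn$ (being pulled back from the Hodge class on $\Mg$, combined with the factor of ampleness coming from the boundary), this yields the required decomposition.

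The main obstacle I anticipate is the joint optimisation in the small-genus, large-$n$ regime. The coefficient $c$ of $\psi_i$ grows roughly like $g^2/n$ in the constructions of \S 1.6, while $K_{\Ngn}$ only contributes $\psi_i$ with coefficient $1$, so one must check that the boundary coefficients $b_{i,S}$ do not simultaneously blow up relative to the Hodge coefficient $a$. This tension is precisely what produces the upper bound $n_{\max}(g) \approx \frac{7}{2}(g-1)-3$: beyond it, no such symmetric linear combination can absorb the $-2\d$ term in $K_{\Mgnn}$. Verifying the inequalities case by case across the $19$ values $5 \le g \le 23$ and producing the explicit tabled bounds is a bookkeeping exercise that I would arrange as a single lemma with a table of chosen weights $(r,k,s)$ and divisor combinations, paralleling the tables in \cite{l, f}, rather than a genus-by-genus computation in the body of the argument.
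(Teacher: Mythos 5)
Your overall architecture is the right one (control the singularities, then write $\pi^*K_{\Ngn}$ on $\Mgnn$ as a $G$-invariant effective class plus a positive multiple of something big and nef), and your identification of the ramification divisor as $\sum_i \d_{0,\{i,n+i\}}=\d_{0;1,0}$ is correct. But your concluding positivity step fails as stated: you propose to show $K_{\Ngn}-\epsilon\l$ effective and assert that $\l$ is big and nef on $\Ngn$. It is not. On $\Ngn$ the class $\l$ is pulled back from $\Mg$ along the fibration $\Ngn\to\Mg$ with $2n$-dimensional fibres, so $\kappa(\Ngn,\l)=3g-3<\dim\Ngn$ and $\l$ is nef but never big for $n\geq 1$; the parenthetical ``factor of ampleness coming from the boundary'' does not repair this. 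The paper instead uses $\psi$: by Proposition 1.2 of \cite{fv2} the invariant class $\psi$ descends to a big and nef class on $\Mgnn/S_{2n}$, whose pullback to $\Ngn$ is again big and nef (Proposition \ref{psi ngn}), and the target decomposition is $K=\epsilon\psi+(\mbox{effective }G\mbox{-invariant})$.

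Second, your divisor arsenal omits the ingredient that actually makes the linear system solvable, namely the pullbacks under the gluing map $\chi:\Mgnn\to\Mgun$ of Brill--Noether (resp.\ Gieseker--Petri) divisors living on $\M_{g+n}$. The classes you list ($\phi$-pullbacks from $\Mg$, symmetrised Weierstra{\ss} divisors, MRC divisors) cannot simultaneously meet the constraint at $\d_{0;1,0}$, where $K$ has coefficient $-3$ (the $-2$ from $\d$ plus $-1$ from the ramification), and keep the $\psi$-coefficient strictly positive: in the regime $2n\geq g$ the symmetrised Weierstra{\ss} class satisfies $w_2\leq 3w_\psi$, so it alone cannot cover the $-3$ with room left for $\epsilon\psi>0$. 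The $\chi$-pullback contributes $-(g+n-1)$ at $\d_{0;1,0}$ against only $\frac{g+n+1}{6}$ at $\psi$ and is exactly what closes the system; its effectiveness is itself a nontrivial point, resting on the fact that the general $n$-nodal curve is Brill--Noether/Gieseker--Petri general. Moreover, ``bookkeeping'' understates the endgame: for nine pairs $(g,n)$ in the table the standard divisors do not suffice and one must import special low-slope divisors (e.g.\ $\mathcal{Z}_{10,0}$, $\mathcal{Z}_{16,1}$, $\mathcal{Z}_{21,0}$ from \cite{f}, the divisor $\mathcal{D}_{12}$ from \cite{fv4}, and pullbacks from $\M_{23}$ for $g=22$). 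By contrast your step (i) is sound but does more than necessary: the Reid--Tai verification in \cite{fv1} for the full symmetric group already covers every subgroup of $S_{2n}$, so no new age computations for the pair-swap elements are needed.
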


For large $g$ we will show the following:

\begin{theorem} \label{one ngn}
The moduli space $\Ngn$ is of general type for $g\geq 24$.
\end{theorem}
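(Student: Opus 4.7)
My plan is a two-step bootstrap from known results. In the first step, I would invoke Proposition~\ref{proposit}, which (going back to Logan \cite{l} and ultimately to the Harris-Mumford analysis in \cite{hm}) implies that $\Mgn$ is of general type for every $n \geq 0$ as soon as $g \geq 22$. Applied with $n$ replaced by $2n$, this gives that $\Mgnn$ is of general type for all $n$ whenever $g \geq 24$. This is the variety whose finite quotient is $\Ngn$.

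The second step is to descend the general-type property from $\Mgnn$ to $\Ngn = \Mgnn/G$. The decisive point is that no nontrivial element of $G = (\Z_2)^n\rtimes S_n$ acts as a quasi-reflection on any tangent space of $\Mgnn$. Indeed, a point $[C, x_1, \ldots, x_{2n}]$ is fixed by $\sigma \in G$ only if $C$ admits an automorphism permuting the marked points in the manner prescribed by $\sigma$; for $g \geq 24$ the locus of curves with nontrivial automorphisms has codimension $\geq 2$ in $\Mg$, and pulling it back to $\Mgnn$ and imposing compatibility with $\sigma$ only raises the codimension. Thus the $G$-action is free in codimension $1$ and the quotient map $\pi: \Mgnn \to \Ngn$ is \'etale in codimension $1$, so $K_{\Mgnn}$ and $\pi^* K_{\Ngn}$ agree as $\Q$-divisor classes on the stack. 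By the Reid-Tai criterion (Theorem~\ref{Raid-Tai}), together with Prill's proposition for the residual non-trivial stabilizers, the quotient singularities of $\Ngn$ are canonical and hence do not impose adjunction conditions in the sense of Definition~\ref{def adjcond}.

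Combining these two steps, the $G$-invariant pluricanonical sections of $\Mgnn$ identify with pluricanonical sections of $\Ngn$, yielding $H^0(\Ngn, mK_{\Ngn}) = H^0(\Mgnn, mK_{\Mgnn})^G$. Averaging over $G$ shows that the invariant subspace has dimension at least $|G|^{-1}\, h^0(\Mgnn, mK_{\Mgnn})$, which grows like $m^{\dim \Mgnn} = m^{\dim \Ngn}$ since $\Mgnn$ is of general type. Hence $\Ngn$ is of general type as well. The main obstacle I anticipate is the careful verification that no element of the fairly large group $G$ can act as a quasi-reflection in codimension~$1$; this reduces to a codimension count of the various automorphism loci in $\Mg$ together with the fact that for generic marked-point configurations no curve automorphism realizes the permutation prescribed by a given $\sigma \in G$. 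Once this codimension bookkeeping is in hand, the rest is a routine finite-quotient descent, which explains why the large-$g$ case is the easy part of this chapter compared to Theorem~\ref{two ngn}.
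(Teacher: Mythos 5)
Your first step is unobjectionable, but the descent step contains a fatal error: the $G$-action on $\Mgnn$ is \emph{not} free in codimension one, and the quotient map $\pi:\Mgnn\to\Ngn$ is \emph{not} \'etale in codimension one. You looked for fixed points only among automorphisms of the underlying curve $C$, but the relevant fixed locus lies in the boundary: on the divisor $\Delta_{0,\{i,\,n+i\}}$, where the conjugate pair $(x_i,y_i)$ sits alone on a rational bridge, the transposition swapping $x_i$ and $y_i$ is realized by an automorphism of the pointed stable curve, so this whole codimension-one locus is fixed by a nontrivial element of $G$. Consequently $\pi^*K_{\Ngn}=K_{\Mgnn}-\d_{0;1,0}$ (equation \eqref{K ngn}), and bigness of $K_{\Mgnn}$ does not transfer to $K_{\Mgnn}-\d_{0;1,0}$. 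That this loss is not a technicality is shown by the paper's own cautionary example: $\Mgn$ is of general type for large $g$ and $n$, yet $\Mgn/S_n$ is uniruled for $n\geq g+1$ --- there is no ``routine finite-quotient descent'' of general type in the presence of ramification. (Separately, your averaging inequality $\dim V^G\geq |G|^{-1}\dim V$ is false for a general representation $V$ of a finite group; had the map really been \'etale in codimension one, the correct tool would have been the identity $\mathrm{vol}(\pi^*L)=\deg(\pi)\,\mathrm{vol}(L)$ for finite surjective morphisms, combined with the Reid--Tai control of singularities.)

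The paper's proof goes a genuinely different way precisely to avoid having to prove bigness of $K_{\Mgnn}-\d_{0;1,0}$: it fibres $\Ngn$ over $\Mg$, identifies the general fibre as $C^{2n}/G\simeq \Sym^n(\Sym^2(C))$, shows this is of general type via its crepant Hilbert-scheme resolution together with Kouvidakis' formula exhibiting $K_{\Sym^d(C)}$ (for $d\leq g-1$) as the sum of an ample and an effective class, and then invokes the proven cases of Iitaka subadditivity, the base $\Mg$ being of general type for $g\geq 24$. If you insist on a divisor-theoretic descent instead, you must show that $K_{\Mgnn}-\d_{0;1,0}$ is big using $G$-invariant effective divisors; that is exactly the hard computation of Sections 5--6, and there it only succeeds for $n\leq n_{\mathrm{max}}(g)$, whereas the fibration argument yields the theorem uniformly in $n$.
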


The crucial reason for Theorem \ref{one ngn} is that $\Mg$ is of general type for $g\geq 24$. The theorem could automatically be improved if there were proof of $\Mg$ being of general type for some $g<24$.
\footnote{This case has actually occurred. In \cite{f4} it is proven that $\Mg$ is of general type for $g=22$ and $g=23$. However this chapter is based on our paper \cite{sch2} which was published before \cite{f4}. Therefore we have chosen not to update our results.}
Then $\Ngn$ also is of general type, by an identical proof.
We emphasize that in this case our result is uniform in  the number $n$ of nodes. There is no finite upper bound $n_{\mathrm{max}}(g)$. The proof (presented in Section \ref{p1}) uses arguments of fairly general nature and does not require elaborate calculations with carefully selected divisors.
This changes drastically in low genus $g \leq23$.
It is clear from the above diagram that $\Ngn$ can only be of general type if $\Mgnn$ is. 
In our opinion it is easier to show that $\Mgnn$ is of general type than to establish this property for $\Ngn$. Thus we have not even attempted to consider those values of $g,n$ where $\Mgnn$ is not known to be of general type.
Thus, for low genus, we have considered individually the cases in which $\Mgnn$ is known to be of general type (see \cite{l} and \cite{f}).
For the convenience of the reader we shall recall:\\

$\Mgnn$ is of general type 
for $5 \leq g \leq 23,$  and
$n \geq n_{\mathrm{min}}(g) , $ 
with  $n_{\mathrm{min}}(g$) 
being given in the following table:
$$\begin{array}{c|c|c|c|c|c|c|  c|c|c|c|c|  c|c|c|c|c|  c|c|c}
g &5&6&7&8&9&10&11&12&13&14&15&16&17&18&19&20&21&22&23\\
\hline
n_{\mathrm{min}}&8&8&8&7&7&6&6&6&6&5&5&5&5&5&4&3&2&2&1

\end{array}$$
\\

In order to obtain the analogue  table for $\Ngn$ in Theorem \ref{two ngn},
our approach  involves deriving sufficient conditions for the property of being of general type in terms of divisors invariant under the group action. Basic building blocks are the Brill-Noether divisors on $\Mg$ and $\Mgun$, appropriately pulled back, and divisors of Weierstrass-type (see Section 4). For $g+1$ or $g+n+1$ prime, the Brill-Noether divisors have to be replaced by less efficient divisors which we take from \cite{eh}. This gives, in any case,  a system of linear inequalities which is solvable for certain values of  $(g,n)$. This solvability is a sufficient condition for $\Ngn$ being of general type. In spirit our analysis is related to \cite{fv1,fv2,fv3}, but  the detailed analysis is  different and requires new ideas.  
The explicit solution of the relevant systems of inequalities is best done using computer algebra, paying special attention to a number of limiting cases where the appropriate divisors have to be chosen with care.

We further remark that, in the  table of Theorem \ref{two ngn}, the upper cut-off at $n_{\mathrm{max}}(g)$ is the largest integer smaller than (or in some cases equal to) $\frac{7}{2}(g-1)-3$.
and that, using this divisor based computational approach to prove general type in high genus $g \geq 24$ - avoiding the completely different arguments in the proof of Theorem \ref{one ngn} - would give a much weaker result than Theorem \ref{one ngn},  since one only gets general type for $n$ bounded by the same $n_{\mathrm{max}}(g)$. This appearance of an upper bound for $n$ in Theorem \ref{two ngn} is the main difference to both our Theorem \ref{one ngn} and the results in the low-genus table for $\Mgn$.   We remark that in order to improve our result on the upper cut-off with the techniques of this chapter, one needs a new type of effective divisor $A$ on $\Mgnn$  of the form (using our notation established in Section \ref{sec pic})
$$ a \lambda - b_{irr} \d_{irr} - \sum_{i,S} b_{i,S} \d_{i,S},$$
with all coefficients nonnegative and $b_{0,S} \neq 0$ for $|S|=2$, or of the form
$$-a \lambda +c \psi - b_{irr} \d_{irr} - \sum_{i,S} b_{i,S} \d_{i,S},$$
with all coefficients nonnegative and $b_{0,S} > 3 c$ for $|S|=2$. We do not know of any such divisor. In any case, at present it is not clear if our result is sharp.
To clarify this question is, in our opinion, the most interesting point left open by the results of our present chapter.

 The outline of this chapter is as follows. In Section 2 we prove Theorem \ref{one ngn}. In Section 3 we give sufficient conditions for our statement on $\Ngn$ being of general type in terms of an appropriate decomposition of the canonical divisor $K_{\Ngn}$.  
 In Section 4 we introduce most of the divisors needed in the subsequent analysis (i.e. all the standard divisors used in the next section), while in Section 5 we derive most of the table (the standard part of it) of Theorem \ref{two ngn} by  solving  the ensuing system of inequalities. This proof is the least technical. 
It leaves open a number of special cases (precisely 9), each of which requires additional special divisors and a lot of special attention. This is the content of Section 6 and concludes the proof of Theorem \ref{two ngn}.

Finally we remark that, though not being strictly necessary, it  is 
 convenient to employ in Sections 5-6 a standard computer algebra program, e.g. Mathematica or Maple.

\section{Proof of Theorem \ref{one ngn}} \label{p1}
\setcounter{equation}{0}
We shall prove the assertion by representing $\Ngn$ via a base and a general fibre. We recall the Iitaka Conjecture \ref{subadditivity} on the subadditivity of the Kodaira dimension which states that for a fibration $\phi: X \to Y$ with general fibre $F$ the Kodaira dimension satisfies $\k(X) \geq \k(Y)+\k(F)$. In particular, if $F$ and $Y$ are both of general type, then the conjecture is known to be true and implies that $X$ is also of general type.

Now consider the projection 
$$\phi: \Ngn= \Mgnn/G \to\Mg,$$
with general fibre 
$$F  \simeq C^{2n}/G \simeq \Sym ^{n}(\Sym^2(C)) =(C^2/\Z_2)^n/S_n,$$ 
where $ \Sym^m(C):= C^m/S_m$ denotes 
the $m$-fold symmetric product of the curve, and the crepant resolution given by  the Hilbert scheme (see e.g. \cite{hl})
$$\mbox{Hilb}^n(S) \to \Sym^n(\Sym^2(C)), \qquad S=\Sym^2(C).$$
This is of general type if $S$ is of general type. It is well known that $\Sym^d(C)$ is of general type for $d\leq g-1$. Combining these results we get that both the base $\Mg$ and the general fibre $F$ are  of general type, completing the proof.
For the sake of the reader we recall the argument that $\Sym^d(C)$ is of general type for $d\leq g-1$ based on results of \cite{kou}. 

Letting $C$ be an irreducible smooth genus $g$ curve, we consider for $d\leq g-1$ the canonical projection
$$\pi : C^d \to C_d:=\Sym^d(C),$$
which induces a canonical map $D\mapsto \mathcal{L}_D$ from divisors on $C$ to divisors on $C_d$. Denoting by $K$ the canonical divisor on $C$ and by $\Delta /2$ the ramification divisor of $\pi$, Prop. 2.6 of \cite{kou} gives the canonical divisor on $C_d$ as
\begin{equation}\label{canonical class Cd}
K_{C_d}=\mathcal{L}_K - \Delta /2.
\end{equation}
Using the standard interpretation of points in $C_d$ as effective divisors on $C$ of degree $d$ and fixing some effective divisor $D$ of degree $g-1-d$ gives a map
$$\alpha_D: C_d \to C_{g-1}, \quad A \mapsto A+D.$$
Composing $\a_D$ with the Abel-Jacobi map
$$u: C_{g-1}\to J^{g-1}(C),$$
and taking $\theta$ to be the $\theta$-divisor on the Jacobi torus $J^{g-1}(C)$, Prop. 2.3 and Prop. 2.7. of \cite{kou} give
$$u^* \a_D^* \theta =\mathcal{L}_K-\Delta /2 -\mathcal{L}_D.$$
Combining this with \eqref{canonical class Cd} yields
$$ K_{C_d}= u^* \a_D^* \theta +\mathcal{L}_D,$$
representing the canonical divisor of $C_n$ as a sum of an ample divisor (because $\theta$ is ample) and an effective divisor. By Proposition \ref{criterion gentype} this proves that $C_d$ is of general type.

\section{Moduli spaces of nodal curves} \label{sec ngn}
\setcounter{equation}{0}

The aim of this section is to develop a sufficient condition for $\Ngn$ being of general type. This requires a basic understanding of the Picard group $\Pic( \Ngn)$ and an explicit description of the  boundary divisors and tautological classes on $\Ngn$ which we shall always consider as $G$-invariant divisors on $\Mgnn$ (any such divisor descends to a divisor on $\Ngn$).
These divisor classes were introduced in Section \ref{sec pic} for $\Mgn$. Here we must only study which of these classes are already $G$-invariant and which require to be symmetrized.
\\

We recall that the Hodge class $\l$ on $\Mgn$ does not impose conditions on the marked points. Therefore it is $S_n$ and thus $G-$ invariant. This means that it gives the Hodge class $\l$ on $\Ngn$ (where, by the usual abuse of notation, we denote both classes by the same symbol).
 
Let us now consider the action of $G$ on the boundary divisor classes of $\Mgnn.$ Clearly, $\delta_{\mathrm{irr}}$ is $G -$ invariant. Furthermore, $\d_{i,S}$ is mapped into $\d_{j,T}$ by some element of $G$ if and only if $i=j, |S|=|T|$ and $S$ and $T$ contain the same number of pairs. This implies that one gets a $G-$ invariant  divisor class by setting
$$\d_{i;a,b}= \sum_S \d_{i,S},$$
where the sum is taken over all subsets $S$ containing precisely $a$ pairs and $b$ single points.\\

Finally we study the point bundles $\psi_i$. Since $G$ acts transitively on the $2n$ labels any point bundle $\psi_i$ can be mapped to any other point bundle $\psi_j$ by some element of $G$. Thus we get a $G$-invariant class only by taking the sum $\psi=\sum_{i=1}^n\psi_i$.\\

As a first step in the direction of our sufficient criterion we need the following result on the geometry of the moduli space $\Ngn.$

\begin{theorem}  \label{noadcon ngn}
The singularities of $\Ngn$ do not impose adjunction conditions. 
\end{theorem}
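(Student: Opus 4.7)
My plan is to prove the theorem by the classical Harris--Mumford strategy adapted both to the presence of the $2n$ marked points and to the quotient by the finite group $G=(\Z_2)^n\rtimes S_n$. The main tools are the Reid--Shepherd--Barron--Tai criterion (Theorem \ref{Raid-Tai}) together with Prill's proposition on quasireflections.

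The first step is the local setup. At a point of $\Ngn$ represented by $[C;x_1,\ldots,x_{2n}]\in\Mgnn$, a local model is $\Delta^d/H$ with $d=3g-3+2n$, where $H$ is the finite subgroup of $\mathrm{GL}(T_{[C;\vec x]}\Mgnn)$ generated by $\mathrm{Aut}(C;x_1,\ldots,x_{2n})$ and the stabilizer in $G$ of the isomorphism class $[C;\vec x]$. The latter consists of those $g\in G$ for which there exists $\phi\in\mathrm{Aut}(C)$ with $\phi(x_i)=x_{g(i)}$, and each such pair $(\phi,g)$ gives a linear automorphism of the deformation space. After quotienting out the subgroup generated by quasireflections via Prill's proposition following Definition \ref{def qref}, Theorem \ref{Raid-Tai} reduces the claim to showing that every non-trivial element of $H$ is senior, except possibly on a locus handled separately by an explicit pullback of pluricanonical forms.

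For elements of $H$ coming purely from $\mathrm{Aut}(C;\vec x)$ (i.e.\ realized by $\phi$ acting trivially on the labels), the needed seniority is exactly Logan's extension \cite{l} of the Harris--Mumford analysis \cite{hm} from $\Mg$ to $\Mgn$, so it may be invoked as a black box outside the elliptic-tail locus. The genuinely new contribution is elements $(\phi,g)$ with $g\neq 1$. Writing $g$ as a product of disjoint cycles on the $2n$ labels, the induced action on the corresponding factors of the direct summand $\bigoplus_i T_{x_i}^\vee C\subset T_{[C;\vec x]}\Mgnn$ contributes at least $(k-1)/2$ to the age for every cycle of length $k$; this must be added to the age of $\phi$ on $T_C\Mg$ and on any $T_{x_i}^\vee C$ left fixed by $g$. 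A finite case analysis according to the order of $(\phi,g)$ and the cycle type of $g$ should then yield $\mathrm{age}(\phi,g)\geq 1$ in every case where $(\phi,g)$ is not a quasireflection. The borderline case is a single transposition $(x_i,x_{n+i})$ realized by an involution of $C$ that fixes all other markings and acts trivially on $T_C\Mg$; such an element is a quasireflection and is removed by Prill, consistent with the statement that $G$ acts fixed point free in codimension zero on $\Mgnn$.

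The remaining obstacle, exactly as in \cite{hm}, is the boundary divisor $\Delta_{1,\emptyset}$ of curves carrying an unmarked elliptic tail: the elliptic involution acts trivially on $T_C\Mg$ modulo the node-smoothing direction but is junior on that direction, yielding non-canonical singularities that produce the $-\delta_{1,\emptyset}$ correction in \er{can cm}. Here my plan is to reproduce the Harris--Mumford local construction, building on the description of the deformation space via the exact sequence \er{exseqcomp}, showing that every pluricanonical form on the regular locus extends across $\Delta_{1,\emptyset}$ after suitable resolution. Because $G$ permutes only the $2n$ marked points and a general element of $\Delta_{1,\emptyset}$ carries none of them on its elliptic tail, the $G$-action commutes with the elliptic involution and the Harris--Mumford local extension is automatically $G$-equivariant, hence descends to $\Ngn$. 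I expect the main technical obstacle to be the complete enumeration of age contributions in the previous paragraph: one has to ensure that the interplay between $\phi$ and the paired label structure $\{(i,n+i)\}$ encoded by the semidirect product $(\Z_2)^n\rtimes S_n$ does not hide any junior, non-quasireflection element beyond the known elliptic-tail source.
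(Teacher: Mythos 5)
Your overall strategy is sound and is, at bottom, the same mathematics the paper relies on, but the paper's actual proof is a much shorter reduction that you do not exploit: Farkas and Verra already verified the Reid--Tai criterion (Theorem \ref{Raid-Tai}) for the quotient of $\Mgn$ by the \emph{full} symmetric group, i.e.\ for every pair $(\phi,g)$ in which $g$ is an arbitrary permutation of the marked points, in the proof of Theorem 1.1 of \cite{fv1}. Since the elements of your group $H$ arising from the subgroup $G=(\Z_2)^n\rtimes S_n\subset S_{2n}$ form a subset of those already treated there, no new age computation is needed, and the elliptic-tail extension of pluricanonical forms is likewise inherited; the paper's proof consists of exactly this observation. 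Your plan instead redoes the ``finite case analysis according to the order of $(\phi,g)$ and the cycle type of $g$''; the cycle-type lower bound $(k-1)/2$ you state is correct and is precisely the mechanism in \cite{fv1} and \cite{l}, so if carried out your argument would in effect reprove the stronger $S_{2n}$ statement. The cost is considerable extra labour; the benefit is independence from the black box.

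One statement in your final paragraph is wrong as written and should be repaired, although it does not derail the plan. An element acting trivially on all of the deformation space except for a junior eigenvalue on the single node-smoothing direction is, by Definition \ref{def qref}, a quasireflection, and by Prill's proposition quasireflections impose \emph{no} singularity at all; the elliptic involution on an unmarked elliptic tail is exactly such an element, and it is responsible for the simple ramification of the stack over the coarse space, i.e.\ for the $-\delta_{1,\emptyset}$ term in \eqref{can cm} --- not for non-canonical singularities. The non-canonical singularities along $\Delta_{1,\emptyset}$ that force the explicit Harris--Mumford extension of pluricanonical forms come instead from the order-$6$ (resp.\ order-$4$) automorphisms of elliptic tails with $j=0$ (resp.\ $j=12^3$), which are junior but not quasireflections. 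Your proposed remedy --- reproducing the extension argument of \cite{hm} and observing that it is automatically $G$-equivariant because a general curve in $\Delta_{1,\emptyset}$ carries no marked point on its tail --- is the correct cure for the actual culprits, so only the diagnosis, not the cure, needs correcting.
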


The proof follows the lines of the 
the proof of Theorem 1.1 in \cite{fv1}. 
We shall briefly review the argument. A crucial input is Theorem 2 of the seminal paper \cite{hm} which proves that the moduli space $\Mg$ has only canonical singularities. The proof relies on the Reid-Tai criterion: Pluricanonical forms (i.e. sections of $K^{\otimes \ell}$) extend to the resolution of singularities, if for any automorphism $\sigma$ of an object of the moduli space the so-called {\em age} satisfies  $age(\sigma) \geq 1$. The proof in \cite{fv1} then proceeds to verify the Reid-Tai criterion for the quotient of $\Mgn$ by the full symmetric group $S_n $. Here one specifically has to consider those automorphisms of a given curve which act as a permutation of the marked points. For all those automorphisms the proof in \cite{fv1} verifies the   Reid-Tai criterion. Thus, in particular, the criterion is verified for all automorphisms which act on the marked points as an element of some subgroup of $S_n$. Thus, the proof in \cite{fv1} actually establishes the existence of only canonical singularities for {\em any} quotient $\Mgn/G$ where $G$ is a subgroup of $S_n$. Clearly, this covers our case.

Theorem \ref{noadcon ngn} implies that the Kodaira dimension of $\Ngn$ equals the Kodaira-Iitaka dimension of the canonical class $K_{\Ngn}$. In particular, $\Ngn$ is of general type if  $K_{\Ngn}$ is a positive linear combination of an ample and an effective  rational class on $\Ngn$.  It is convenient to slightly reformulate this result. We need

\begin{proposition} \label{psi ngn}
The class $\psi$ on $\Mgnn$ is the pull-back of a divisor class on $\Ngn$ which is big and nef.
\end{proposition}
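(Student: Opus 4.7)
The plan is to verify descent, nefness, and bigness in sequence. The class $\psi = \sum_{i=1}^{2n}\psi_i$ on $\Mgnn$ is $G$-invariant, since $G$ acts by permuting the $2n$ labels, so it descends along the finite quotient $\pi$ to a unique $\Q$-Cartier divisor class $\bar\psi$ on $\Ngn$ with $\pi^*\bar\psi = \psi$. Each individual $\psi_i$ is nef on $\Mgnn$ by a classical result, hence $\psi$ is nef, and nefness transfers to $\bar\psi$ because $\pi$ is finite and surjective: for any complete irreducible curve $C\subset\Ngn$ one has $\bar\psi\cdot C = |G|^{-1}\,\psi\cdot\pi^{-1}(C)\geq 0$.

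For bigness I would exploit the factorization $\chi = \mu\circ\pi\colon\Mgnn\to\Mgun$ of the gluing morphism from the diagram at the beginning of this chapter. The map $\mu\colon\Ngn\to\Mgun$ is injective on the dense locus of irreducible curves with exactly $n$ non-separating nodes, realizing $\Ngn$ birationally onto an irreducible closed subvariety of $\Mgun$ of dimension $\dim\Ngn = 3g-3+2n$. Hence for any ample class $H$ on $\Mgun$, the restriction $H|_{\mu(\Ngn)}$ is ample, $\mu^*H$ is big and nef on $\Ngn$, and equivalently $\chi^*H = \pi^*\mu^*H$ is big and nef on $\Mgnn$. The bigness of $\bar\psi$ will then follow once we exhibit a relation $c\,\bar\psi = \mu^*H + E$ in $\Pic(\Ngn)\otimes\Q$ for some $c>0$ and some effective $\Q$-divisor $E$.

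The crucial input is the pullback of $\dirr$ under the clutching morphism. At each of the $n$ clutched pairs $(x_i, x_{n+i})$, the normal bundle of the image stratum at the new node is canonically $T_{x_i}C \otimes T_{x_{n+i}}C$, so combining the $n$ local contributions yields an identity
\begin{equation*}
\chi^*\dirr \;=\; -\psi + F
\end{equation*}
in $\Pic(\Mgnn)\otimes\Q$, where $F$ is an effective combination of boundary divisors of $\Mgnn$ arising from preimages of deeper-codimension strata of $\Dirr$. Combined with $\chi^*\l = \l$ and a Cornalba--Harris-type ample class $H = a\l - \dirr - \sum_j c_j\d_j$ on $\Mgun$ (with $a$ sufficiently large), this yields $\chi^*H = a\l + \psi - E'$ with $E'$ an effective boundary combination on $\Mgnn$. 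The residual $a\l$ term can then be absorbed using Logan's Weierstra\ss-type effective divisor from Lemma \ref{Logan divisor} --- specifically a symmetrized pullback of $\overline{\mathfrak{D}}(g;1,\dots,1)$ along forgetful maps, whose class has the form $-\l + \psi - (\text{effective boundary})$ --- to express $\psi$ on $\Mgnn$ as a positive rational combination of $\chi^*H$ and an effective boundary class. Bigness of $\bar\psi$ on $\Ngn$ then follows by descent through $\pi$.

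The hard part will be the precise computation of the pullbacks $\chi^*\d_j$ for $j \geq 1$ and the combinatorial bookkeeping needed to ensure that all boundary coefficients arising in these identities conspire into a genuinely effective combination, with no residual negative contributions. Because the image of $\chi$ has codimension $n$ in $\Mgun$, the standard divisorial pullback formula has to be adapted to this higher-codimension setting; these clutching computations are best carried out on the smooth moduli stack, where universal families exist, and then transferred to the coarse moduli spaces via the isomorphism of rational Picard groups in Theorem \ref{pic}.
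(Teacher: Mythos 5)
Your argument is correct in outline but follows a genuinely different and much longer route than the paper. The paper's proof is a two-line reduction: since $G\subset S_{2n}$, the quotient map factors as $\Mgnn \xrightarrow{\pi} \Ngn \xrightarrow{\n} \Mgnn/S_{2n}$, and Proposition 1.2 of Farkas--Verra \cite{fv2} already asserts that the $S_{2n}$-invariant class $\psi$ descends to a big and nef class $N_{g,2n}$ on $\Mgnn/S_{2n}$; the class $\n^*(N_{g,2n})$ is then big and nef on $\Ngn$ because $\n$ is finite and surjective, and its pullback to $\Mgnn$ is $\psi$. By contrast, you reprove bigness from scratch via the gluing map $\chi=\m\circ\pi$, the identity $\chi^*\dirr=-\psi+F$, a Cornalba--Harris ample class on $\Mgun$, and an absorption of the residual $a\l$ by a Logan divisor of type $-\l+\psi-(\mathrm{effective})$. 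This plan is viable: the ``hard part'' you defer, namely the pullback of the boundary classes under the iterated clutching morphism, is not a new higher-codimension phenomenon but the standard self-intersection formula ($\xi^*\dirr=-\psi_x-\psi_y+\mathrm{effective}$ at each glued pair, cf. \cite{acg}, Ch.~XVII), applied $n$ times, and the terms $\chi^*\d_j$ for $j\geq 1$ are honestly effective since the image of $\chi$ is not contained in any $\D_j$. What your approach buys is an explicit, self-contained decomposition of $\psi$ as (big and nef) plus effective, independent of \cite{fv2}; what it costs is precisely the bookkeeping you acknowledge leaving open, all of which the paper sidesteps by exploiting that $G$ sits inside $S_{2n}$ so that the already-established result for the full symmetric quotient can simply be pulled back. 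Your preliminary steps (descent of the $G$-invariant class and nefness via the projection formula) are fine but are also subsumed by the citation argument.
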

 
\begin{proof}
 Farkas and Verra have proven in  Proposition 1.2 of \cite{fv2} that the $S_{2n}$-invariant class $\psi$ descends to a big and nef divisor class $N_{g,2n}$ on the quotient space $\Mgnn /S_{2n}$. Consider the sequence of  natural projections
 $\Mgnn \xrightarrow{\pi} \Mgnn/G \xrightarrow{\n} \Mgnn/S_{2n}.$
Then $\n^*(N_{g,2n})$ is a big and nef divisor on $\Ngn=\Mgnn/G$ and $\pi^*(\n^*(N_{g,2n})=\psi$. 
\end{proof}

 Now observe that the ramification divisor (class) of the quotient map $\pi$ is precisely $\d_{0;1,0}$. In fact a quotient map $X\to X/G$ is ramified in a point $x\in X$ exactly if there exists a non-trivial element $g\in G$ such that $g(x)=x$. In our case an element $g\in G$ acts on a class of pointed curves by $g[C,x_1,\ldots, x_{2n}]=[C;x_{g(1)},\ldots, x_{g(2n)}]$. By standard results  the pointed curves $(C,x_1,\ldots, x_n)$ and $(C;x_{\sigma (1)},\ldots, x_{\sigma (2n)})$ are isomorphic for some $\sigma\neq id \in S_n$ , if and only if $C$ has a rational component with exactly two marked points and $\sigma$ is the transposition switching these two points. Since our group $G$ contains exactly those transpositions belonging to the pairs $(x_i,y_i)$, the ramification divisor is $\d_{0;1,0}$.
 
 Furthermore, standard results on the pullback give

\begin{equation}\label{K ngn}
K:= \pi^*(K_{\Ngn})= K_{\Mgnn} - \mbox{ram } \pi= 13 \l + \psi - 2 \d -\d_{1,\emptyset} - \d_{0;1,0}.
\end{equation} 

Here $\d$ is the class of the boundary of $\Mgnn$, i.e.
$$ \d = \dirr + \sum_{0 \leq i \leq \lfloor\frac{g}{2}\rfloor} \sum_{S \subset \{ 1, \ldots ,2n \} }
\d_{i,S}. $$

 We thus obtain the final form of our sufficient condition:
 If  $K$ is a positive multiple of $\psi$ + some effective $G - $ invariant divisor class on $\Mgnn$, then $\Ngn$ is of general type.

\section{Effective $G$-invariant divisors on $\Mgnn$} \label{sec div ngn}
\setcounter{equation}{0}

In this section we construct $G$-invariant effective divisors on $\Mgnn.$ We can find such divisors in different ways. For one we can take divisors already defined on $\Mgnn$ or pull them back from $\Mgn$ via the forgetful map $\phi:\Mgnn \to \Mg$. In our special case , however, we have the additional map $\chi : \Mgnn \to \Mgun$ from the introduction. When pulling effective divisors back along this morphism we must check that the pull-back remains effective according to Proposition \ref{divisor}. In the case of forgetful maps this is trivial because they are surjective.

Let us begin with the Brill-Noether divisors from Lemma \ref{BN divisor}.
If neither $g+1$ nor $g+n+1$ are prime, we pull back the class of $[\overline{\mathfrak{ BN}}]$ as a $\Q$-Divisor (up to multiplication with a positive scalar) and set
\begin{equation} \label{pullback1}
B=\frac{1}{c}\phi^*[\overline{\mathfrak{ BN}}_g],  \qquad D=\frac{1}{c} \chi^*[\overline{\mathfrak{ BN}}_{g+n}].
\end{equation}

By repeatedly applying Lemma \ref{pullback} we get the following decomposition for $B$ and $D$.

\begin{lemma}
With the above notation, one has
\begin{equation}\label{B}
B=b_{\l}\l +0\cdot\psi-  b_0\dirr -\sum_{i= 1}^{\lfloor\frac{g}{2}\rfloor}\sum_S b_i\d_{i,S}
\end{equation}
with $$ b_\l=g+3,\qquad b_0=\frac{g+1}{6},\qquad b_i=i(g-i).$$ 
Furthermore
\begin{equation}\label{D}
D=d_\l\l+d_0(\psi-\dirr-\sum_{i=0}^{\lfloor\frac{g}{2}\rfloor}\sum_{a+b\leq n,b\neq0}\d_{i;a,b})-\sum_{i=0}^{\lfloor\frac{g}{2}\rfloor}\sum_{a=0}^nd_{i+a}\d_{i;a,0}
\end{equation}
with $$ d_\l=g+n+3,\qquad d_0=\frac{g+n+1}{6},\qquad d_i=i(g+n-i).$$

\end{lemma}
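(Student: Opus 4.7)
The morphism $\phi:\Mgnn\to\Mg$ is the composition of $2n$ elementary forgetful maps, so I would iterate the formulas of Lemma~\ref{pullback}. At each step the Hodge class and $\dirr$ are preserved, while a separating class transforms as $\pi^*\delta_{i,S} = \delta_{i,S}+\delta_{i,S\cup\{\text{new label}\}}$. A straightforward induction on the number of points shows that pulling back $\delta_i\in\Pic(\Mg)$ through all $2n$ stages yields $\sum_{S\subset\{1,\ldots,2n\}}\delta_{i,S}$. Substituting into the expression for $[\overline{\mathfrak{BN}}_g]/c$ given in Lemma~\ref{BN divisor} produces formula \eqref{B} for $B$ at once; in particular the $\psi$-coefficient vanishes because no forgetful pullback introduces $\psi$.

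\textbf{Plan for $D$.} The gluing map $\chi$ factors as a composition of $n$ self-clutching morphisms $\xi_k$, where $\xi_k$ glues the $k$-th pair $(x_k,y_k)$ and raises the arithmetic genus by one. For each $\xi_k$ the standard clutching pullback formulas (see \cite{acg}, Chapter~XVII) give $\xi_k^*\lambda=\lambda$ and $\xi_k^*\dirr=\dirr-\psi_{x_k}-\psi_{y_k}$, with the negative $\psi$-terms coming from the normal-bundle contribution along the image (which sits inside $\dirr$); moreover $\xi_k^*\delta_j$ decomposes into boundary classes on the domain according to where the new glued pair sits relative to an existing separating node. Iterating $n$ such pullbacks and collecting in the $G$-invariant basis $\{\lambda,\psi,\dirr,\delta_{i;a,b}\}$, the contributions organise as follows. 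First, $\chi^*\lambda=\lambda$, contributing $d_\lambda=g+n+3$. Second, the $-d_0\dirr$ summand on $\Mgun$ pulls back to $-d_0\dirr+d_0\sum_{k=1}^{n}(\psi_{x_k}+\psi_{y_k})=-d_0\dirr+d_0\psi$, explaining simultaneously the $\psi$-coefficient $+d_0$ and the $\dirr$-coefficient $-d_0$ in \eqref{D}. Third, a class $\delta_{i;a,0}$ in which $S$ consists of $a$ complete pairs and no singletons arises by pulling back $\delta_{i+a}$ on $\Mgun$: all $a$ gluings happen on the genus-$i$ side, raising its arithmetic genus from $i$ to $i+a$, so its coefficient is $-d_{i+a}=-(i+a)(g+n-i-a)$. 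Fourth, if $b\geq 1$ then at least one gluing straddles the separating node of order $i$, rendering it non-separating, so the class is absorbed into $\dirr$ on $\Mgun$ and picks up coefficient $-d_0$.

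\textbf{Main obstacle.} The chief difficulty is not the individual clutching formula but the combinatorial bookkeeping required to confirm that after iterating $n$ pullbacks each $G$-orbit of boundary divisors accumulates exactly the claimed coefficient with no residual cross-terms. I would corroborate the resulting formula by testing it on a family of curves whose classes span the relevant portion of $N_1(\Mgnn)_\Q$: test curves of the form ``a moving pair inside a genus-$i$ tail'' to isolate $\delta_{i;a,0}$, test curves carrying a singleton forced to move between components to isolate $\delta_{i;a,b}$ with $b\geq 1$, and an elliptic-tail family with a moving marked point to pin down the $\psi$ coefficient.
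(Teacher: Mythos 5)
Your computation is correct and, for $B$, is exactly what the paper does: iterate the forgetful-map pullback of Lemma \ref{pullback} and observe that $\phi^*\d_i=\sum_S\d_{i,S}$ while $\l$ and $\dirr$ are unchanged and no $\psi$-terms appear. For $D$ your treatment is actually more complete than the paper's, whose entire justification is the single sentence ``by repeatedly applying Lemma \ref{pullback}'' --- but that lemma only covers forgetful maps, whereas $\chi$ is a gluing map, so the real input is the clutching pullback formula you invoke from \cite{acg}, Chapter XVII. Your bookkeeping reproduces the stated class: $\chi^*\l=\l$; the excess/normal-bundle term $-\sum_k(\psi_{x_k}+\psi_{y_k})=-\psi$ in $\chi^*\d_0$ accounts for the $+d_0\psi$; a node of type $\d_{i;a,0}$ stays separating and becomes a node of order $i+a$ (whence the coefficient $-d_{i+a}$, well defined because $d_j=j(g+n-j)$ is symmetric under $j\mapsto g+n-j$); and a node of type $\d_{i;a,b}$ with $b\neq 0$ is straddled by some glued pair, becomes non-separating, and inherits the coefficient $-d_0$. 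Two small cautions: your intermediate identity $\xi_k^*\dirr=\dirr-\psi_{x_k}-\psi_{y_k}$ is not the full single-step formula (the separating-to-nonseparating boundary contributions must be included), though your fourth point supplies exactly these terms in the end; and the test-curve verification you propose is sensible but not needed once the clutching formulas are taken as given.
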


In particular, both divisors are $G -$invariant.
The problem is that these divisors will not exist for $g+1$ or $g+n+1$ prime. In that case we have to use the less efficient divisors parametrizing curves that violate the Gieseker-Petri condition from Lemma \ref{GP divisor}

Similar to \eqref{pullback1},  we pull back $[\overline{\mathfrak{GP}}]$ along the forgetful map $\phi$ and the gluing map $\chi$  and set
\begin{equation}
E=\frac{1}{c}\phi^*[\overline{\mathfrak{GP}}_g],\qquad F=\frac{1}{c}\chi^*[\overline{\mathfrak{GP}}_{g+n}].
\end{equation}  
By Lemma \ref{pullback}, we obtain
\begin{equation}\label{E}
E=e_{\l}\l+0\cdot\psi-e_0\dirr -\sum_{i=1}^{\frac{g}{2}}\sum_S e_i\d_{i,S},
\end{equation}
with 
$$e_{\l}=6(\frac{g}{2}+1)^2+\frac{g}{2}-5, \quad
e_0=(\frac{g}{2}+1)\frac{g}{2}, \quad
e_1=(g-1)(\frac{3g}{2}+1), \quad e_{i+1}>b_i,$$ 
and
\begin{equation}\label{F}
F=f_{\l}\l+f_0(\psi-\dirr-\sum_{i=0}^{\lfloor\frac{g}{2}\rfloor}\sum_{a+b\leq n,b\neq0}\d_{i;a,b})-\sum_{i=0}^{\lfloor\frac{g}{2}\rfloor}\sum_{a=0}^n f_{i+a}\d_{i;a,0},
\end{equation}
with 
$$f_{\l}=6(\frac{g+n}{2}+1)^2+\frac{g+n}{2}-5, \quad
f_0=(\frac{g+n}{2}+1)\frac{g+n}{2}, \quad
f_1=(g+n-1)(\frac{3g+3n}{2}+1), $$
$$f_{i+1}>\tilde{b}_i.$$

As we have mentioned, the divisor classes $B$ and $E$ are effective by Proposition \ref{divisor}, because the forgetful map $\phi$ is surjective. The divisor class $F$ is effective because the general nodal curve is Gieseker-Petri general. This is already implicitly contained in \cite{g} (see also \cite{eh1}), in the proof of the Gieseker-Petri Theorem. Since the proof uses deformation to a nodal curve, it actually shows the above statement. 
Effectiveness of $D$ follows in the same way observing that the Gieseker-Petri condition implies the Brill-Noether condition. Alternatively, one could consider the proof of the Brill-Noether Theorem in \cite{gh}, which also uses deformation to a nodal curve.

Next we use the divisors of Weierstra{\ss}-type from Lemma \ref{Logan divisor}. These divisors are not $G$-invariant, but they can be symmetrized by taking appropriate sums as we have done in Section \ref{sec effdiv}. This gives us the following divisor on $\Mgnn$: 
\begin{equation}\label{W}
\begin{split}
W =& -w_\l \l +w_\psi \psi+0\cdot\dirr-\sum_{s\geq 2} w_s \sum_{|S|=s} \d_{0,S}
\\&-\mbox{higher order boundary terms},  
\end{split}
\end{equation}
where the coefficients $w_\l, w_\psi, w_s$ are given by equation \eqref{W small} if $2n\leq g$ and by equation \eqref{W large} if $2n\geq g$.

It turns out that we will be able to improve the upper bound on $n$ given in Proposition
\ref{prop one} by replacing $W$ with the divisors $[\overline{\mathfrak{Mrc}}_{g,k}^r]$ parametrizing curves, that fail the so called {\em Minimal Resolution conjecture}, see Lemma \ref{MinRes div}.

Note that $[\overline{\mathfrak{Mrc}}_{g,k}^r]$ is already $G$-invariant. \\

For $g$ odd and $2n\geq g-1$ we can always find an $r,k$ such that $2n=m=(2r+1)(g-1)-2k$. In fact we will only  consider $2n\geq g+1$ and choose $k\leq g-2$. This will make some computations easier and uniquely determine $r,k$ for given $g,n$.
So for $g$ odd and such $r,k$ we define the divisor $U$ as the class of $\overline{\mathfrak{Mrc}}_{g,k}^r $ as a $\Q$-divisor (up to multiplication with a positive scalar)by

\begin{equation} \label{U}
U:=[\overline{\mathfrak{Mrc}}_{g,k}^r ]= -u_\l+u_\psi \psi +u_{\mathrm{irr}} \dirr -\sum_{i,s} u_{i,s} \sum_{|S|=s} \d_{i,S}
\end{equation}
with $u_\l, u_\psi, u_{\mathrm{irr}}, u_{i,s}$ being $a, c, b_{\mathrm{irr}}, b_{i,s}$ from \eqref{coef MinResConj}.

For $g$ even we set $2n-1=m=(2r+1)(g-1)-2k$ and pull back via all possible forgetful maps $\phi_i:\Mgnn\to\Mgm$, forgetting the $i$th point. Again we only consider $k\leq g-2$ corresponding to $m\geq g+1$. We emphasize that, just as above  for $U$, the integers $r$ and $k$ are uniquely determined by $g$ and $n$. This gives
\begin{equation} \label{V}
V:=\sum_{i=1}^{2n}\phi^*_i [\overline{\mathfrak{Mrc}}_{g,k}^r]= -v_\l+v_\psi \psi +v_{\mathrm{irr}} \dirr -\sum_{i,s} v_{i,s} \sum_{|S|=s} \d_{i,S}
\end{equation}
with 

\begin{equation}
\begin{split}
v_\l	=&\frac{2n}{g-2}\Big( (g-1)(g-2)(6r^2+6r+r)+k(24r+10k+10-10g-12rg)\Big), \\
v_\psi	=& (2n-1)(rg+g-k-r-1), \\
v_{\mathrm{irr}}=& \frac{2n}{g-2}\Big( \binom{r+1}{2} (g-1)(g-2)+k(k+1+2r-rg-g)\big), \\
v_{0,2} =& 2 (rg + g - k - r - 1) + (2 n - 2) (3 g - 3 + 2 rg - 2 r - 2 k) \\
v_{i,s}\geq & v_{0,2}.
\end{split}
\end{equation}

\section{Standard cases in the proof of Theorem \ref{two ngn}: Reduction to a system of inequalities} \label{system}
\setcounter{equation}{0}

As mentioned in Section 3, the moduli space $\Ngn$ is of general type, if and only if its canonical divisor $K_{\Ngn}$ is the sum of an ample divisor and effective divisors. We show this by decomposing $K=\pi^*(K_{\Ngn})$ on $\Mgnn$ as a sum of a positive multiple of $\psi$, of non-negative multiples of the $G$-invariant divisors constructed in the last section and of non-negative multiples of $\l,\dirr$ and the $\d_{i;a,b}$.

We will begin by using the divisor class $W$ to show:
\begin{proposition}   \label{prop one}
$\Ngn$ is of general type in the special cases 
$7 \leq g \leq 23,$  and\\
$n_{\mathrm{min}}(g) \leq n \leq  n_{\mathrm{max}}(g)$ 
with  $n_{\mathrm{min}}(g), n_{\mathrm{max}}(g)$  given in the following table:
$$\begin{array}{c|c|c|c|c|  c|c|c|c|c|  c|c|c|c|c|  c|c|c}
g &7&8&9&10&11&12&13&14&15&16&17&18&19&20&21&22&23\\
\hline
n_{\mathrm{min}}&9&8&8&8&6&7&6&6&6&6&5&6&4&4&3&4&1\\
n_{\mathrm{max}}&10&12&14&16&18&20&22&24&26&28&30&32&34&36&38&40&42
\end{array}$$
\end{proposition}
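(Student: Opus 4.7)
By the sufficient criterion established in Section \ref{sec ngn}, it is enough to exhibit a decomposition
\[
 K \;=\; s\,\psi \;+\; D_{\mathrm{eff}}
\]
on $\Mgnn$ with $s>0$ rational and $D_{\mathrm{eff}}$ an effective $G$-invariant $\Q$-divisor class, where $K=\pi^*(K_{\Ngn})=13\l+\psi-2\d-\d_{1,\emptyset}-\d_{0;1,0}$ as in \eqref{K ngn}. The plan is to make the ansatz
\[
 D_{\mathrm{eff}} \;=\; \alpha\,W \;+\; \beta\,B \;+\; \gamma\,D \;+\; \mu\,\l \;+\; \nu\,\dirr \;+\; \sum_{i,a,b}\mu_{i;a,b}\,\d_{i;a,b},
\]
with all coefficients $\alpha,\beta,\gamma,\mu,\nu,\mu_{i;a,b}\ge 0$ and $\alpha>0$. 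When $g+1$ (respectively $g+n+1$) is prime we replace $B$ by $E$ (respectively $D$ by $F$) using the Gieseker--Petri divisors from Section \ref{sec div ngn}; the formal structure of the ansatz is unchanged.

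Substituting the explicit expressions \eqref{B}, \eqref{D}, \eqref{E}, \eqref{F} and \eqref{W} and matching coefficients in the basis of $\Pic(\Mgnn)\otimes\Q$ given by Theorem \ref{harer} produces a finite system of linear equations (one per basis class) together with the non-negativity inequalities on the unknowns. The equations on the $\lambda$-coefficient, on the $\psi$-coefficient, and on the $\delta_{i;a,b}$-coefficients for $(i,a,b)$ with $a+b\ge 3$ can be solved by choosing the free boundary parameters $\mu_{i;a,b}$ appropriately, so that the genuinely binding constraints reduce to:
\begin{itemize}
\item[(i)] a $\psi$-equation linking $s$, $\alpha w_{\psi}$ and, when $D$ (or $F$) is used, a $\gamma d_0$ (resp.\ $\gamma f_0$) contribution;
\item[(ii)] non-negativity of the $\lambda$- and $\dirr$-coefficients of $K-s\psi-\alpha W-\beta B-\gamma D$;
\item[(iii)] non-negativity of the $\d_{0;1,0}$-coefficient, where the ramification term $-\d_{0;1,0}$ coming from $K$ and the pair contribution from $W$ must be balanced against $\alpha$;
\item[(iv)] non-negativity of the coefficient of each $\d_{0;a,b}$ with $a+b=2$.
\end{itemize}

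For each fixed $(g,n)$ this is a small linear programming problem and is most conveniently solved with a computer algebra system. The lower cut-off $n_{\min}(g)$ arises from (i)--(ii): one needs enough ``$\psi$-mass'' from $W$ and enough ``$\lambda$-mass'' from $B$ (or $E$) and $D$ (or $F$) so that $s>0$ and the $\lambda$-residue is non-negative. The upper cut-off $n_{\max}(g)$ arises from (iii)--(iv): as $n$ grows, the negative pair contribution of $W$ at $|S|=2$ grows like $n(n-1)$ via \eqref{coef W small}, while the amount of $\alpha$ available to absorb the ramification divisor $\d_{0;1,0}$ is limited by the $\psi$-equation, producing the bound $n_{\max}(g)\approx \tfrac{7}{2}(g-1)-3$ visible in the table. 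Evaluating the system numerically for $7\le g\le 23$ and all $n$ in the claimed range verifies the statement.

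The main obstacle is the handful of limiting pairs $(g,n)$ at the extreme ends of the range, where the plain ansatz above is infeasible by a narrow margin. For those, finer divisors such as the Minimal Resolution divisors $U$, $V$ of \eqref{U}, \eqref{V} or more carefully weighted Weierstrass-type combinations must be substituted for one of $B$, $D$, $W$; these nine exceptional cases are postponed to Section 6 and do not arise in the standard range treated here.
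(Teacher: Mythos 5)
Your proposal follows essentially the same route as the paper: reduce to the five binding constraints on $\l$, $\psi$, $\dirr$, $\d_{0;1,0}$, $\d_{0;0,2}$ for a non-negative combination of $B$ (or $E$ when $g+1$ is prime), $D$ (or $F$ when $g+n+1$ is prime) and $W$, and solve the resulting linear program for each $(g,n)$ with computer algebra; the paper merely organizes this by splitting into $2n\le g-2$, $2n=g-1$ and $2n\ge g$, because $W$ has different coefficient formulae in the two regimes. One correction to your heuristic for the upper cut-off: for this proposition the bound is $n_{\mathrm{max}}(g)=2g-4$, derived from the solvability condition $(d_1-3d_0)(w_2-w_\psi)-(d_1w_\psi-d_0w_2)>0$ of the $(\psi)$, $(\d_{0;1,0})$, $(\d_{0;0,2})$ subsystem, with the coefficients of $W$ taken from \eqref{W large} (not \eqref{coef W small}, which governs $2n\le g$); the value $\tfrac{7}{2}(g-1)-3$ you quote is the improved cut-off of Proposition \ref{prop two}, obtained only after replacing $W$ by the Minimal Resolution divisors $U$, $V$. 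Likewise, the nine exceptional cases of Section 6 belong to the passage from Proposition \ref{prop two} to Theorem \ref{two ngn}; the table of the present proposition is covered entirely by the plain ansatz you describe.
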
 

Then we will use the divisor classes $U$ and $V$ to improve the upper bound $n_{\mathrm{max}}$ and extend our result to $g=5,6$.

\begin{proposition}   \label{prop two}
$\Ngn$ is of general type in the special cases 
$5 \leq g \leq 23,$  and\\
$n_{\mathrm{min}}(g) \leq n \leq  n_{\mathrm{max}}(g)$ 
with  $n_{\mathrm{min}}(g), n_{\mathrm{max}}(g)$  given in the following table:
$$\begin{array}{c|c|c|c|c|c|c|  c|c|c|c|c|  c|c|c|c|c|  c|c|c}
g &5&6&7&8&9&10&11&12&13&14&15&16&17&18&19&20&21&22&23\\
\hline
n_{\mathrm{min}}&9&9&8&8&8&8&6&7&6&6&6&6&5&6&4&4&3&4&1\\
n_{\mathrm{max}}&10&14&18&21&25&28&32&35&38&42&46&49&52&56&60&63&66&70&74
\end{array}$$
\end{proposition}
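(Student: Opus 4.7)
The plan is to mimic the strategy used for Proposition~\ref{prop one} but to replace, or combine with, the Weierstrass-type divisor $W$ by the Minimal Resolution Conjecture divisors $U$ (for $g$ odd) and $V$ (for $g$ even) introduced in \eqref{U} and \eqref{V}. The point is that $U$ and $V$ scale with $n$ differently from $W$: the coefficient ratio between the boundary class $\d_{0,2}$ and the point bundle class $\psi$ is more favourable for $U, V$ in the high-$n$ regime, which is precisely what permits us to enlarge the interval $n_{\min}(g) \leq n \leq n_{\max}(g)$ and to extend the result to $g=5,6$.

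Concretely, I would write the ansatz
\begin{equation*}
K = \alpha\,\psi + \beta\,B + \gamma\,D + \epsilon\,E + \zeta\,F + \eta\,U + \theta\,V + X,
\end{equation*}
where $X$ is a non-negative linear combination of $\l$, $\dirr$ and the classes $\d_{i;a,b}$, the coefficient $\alpha$ is strictly positive, and all Greek coefficients are $\geq 0$; moreover $\eta=0$ if $g$ is even, $\theta=0$ if $g$ is odd, and $\beta$ (respectively $\gamma$) is set to zero if $g+1$ (respectively $g+n+1$) is prime, forcing $\epsilon$ (respectively $\zeta$) to carry the Brill--Noether contribution via Gieseker--Petri. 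Matching the resulting expression against \eqref{K ngn} coefficient by coefficient yields a finite system of linear inequalities whose unknowns are the Greek coefficients together with the non-negative weights of $\l$, $\dirr$, and each $\d_{i;a,b}$. By Proposition~\ref{criterion gentype} and Proposition~\ref{psi ngn}, the existence of a solution with $\alpha>0$ proves general type.

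The actual work is to verify solvability of this system for each $(g,n)$ in the table. Only a bounded number of inequalities are genuinely binding: those associated with $\l$, $\psi$, $\dirr$ and the two boundary classes $\d_{0;1,0}$ (which absorbs the ramification correction in \eqref{K ngn}) and $\d_{0;0,2}$. The remaining boundary classes $\d_{i;a,b}$ with $i\geq 1$ or $a+b\geq 3$ have, in the divisors $B,D,E,F,U,V$, coefficients that grow strictly faster than the $2\d$ term in $K$ and thus impose no obstruction; they are collected into $X$. Because the various cases (parity of $g$; primality of $g+1$; primality of $g+n+1$; whether $2n\leq g$ or $2n\geq g$; which of $U$ or $V$ applies) produce essentially the same shape of system with different numerical inputs, the verification is best done uniformly by computer algebra, case by case, exactly as in the previous proposition.

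The main obstacle will be the new upper bound $n_{\max}(g)$, whose exact value is dictated by how the ratio $u_{0,2}/u_{\psi}$ (respectively $v_{0,2}/v_{\psi}$) balances against the contribution of $2\d + \d_{0;1,0}$ on the right-hand side of \eqref{K ngn}. One must check that at $n = n_{\max}(g)$ this balance can still be achieved by trading some amount of $B$ or $D$ (or $E,F$ in the prime cases) against $U$ or $V$; beyond this value, the $\d_{0;0,2}$-inequality forces $\alpha \leq 0$. The new small-genus cases $g=5,6$ are particularly delicate because fewer divisors are available and the $\d_{1,\emptyset}$ correction in \eqref{K ngn} is comparatively heavy; I expect these to require a hand-tuned choice of coefficients rather than a uniform recipe, verified separately according to the parity of $g$ and the primality of $g+1$ and $g+n+1$.
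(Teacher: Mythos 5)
Your proposal follows essentially the same route as the paper: its proof of Proposition~\ref{prop two} likewise decomposes $K$ as a non-negative combination of $B$ or $E$, $D$ or $F$, and $U$ (for $g$ odd) or $V$ (for $g$ even), plus a positive multiple of $\psi$ and an effective combination of the remaining tautological and boundary classes, reduces everything to the five binding inequalities attached to $\l,\psi,\dirr,\d_{0;1,0},\d_{0;0,2}$, obtains the cut-off $n_{\mathrm{max}}(g)$ from the Gaussian-elimination solvability condition for the $(\psi),(\d_{0;1,0}),(\d_{0;0,2})$ subsystem, and verifies the finitely many remaining cases in Mathematica. The one point your concrete ansatz omits is that $U$ and $V$ do not exist when $2n<g-4$, which affects several low-$n$ entries of the table (e.g.\ $g=23,n=1$ or $g=22,n=4$); the paper disposes of these entries by simply invoking Proposition~\ref{prop one}, whose table already covers them, so you should either retain $W$ in the ansatz for that range or add the same fallback.
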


Note that the lower bound $n_{\mathrm{min}}$ differs from Theorem \ref{two ngn} only in the cases $g=5,6,7$. For $g=5$ and $g=6$ this is due to the fact that the preliminary upper bound actually lies below the lower bound.\\

\begin{proof} [Proof of Proposition \ref{prop one}] For $g+1$ and $g+n+1$ both composite, we will search for a non-negative linear combination of the divisors $B,D$ and $W$ such that 
$$K-xB-yD-zW$$ 
(possibly up to an arbitrarily small multiple $\epsilon \psi$, see equations
\eqref{1}, \eqref{2} below  and  our discussion of Case II)
becomes an effective combination of the tautological and boundary classes. (Whenever $g+1$ is prime we replace $B$ by $E$ and whenever $g+n+1$ is prime we replace $D$ by $F$.) This will translate into a system of inequalities, one for each tautological or boundary class. However it is easy to check that only the inequalities imposed by $\l,\psi,\dirr,\d_{0;10}$ and $\d_{0;02}$ are relevant. The others are automatically satisfied as soon as these 5 are. 

The table of relevant coefficients is:

\begin{equation}  \label{table}
\begin{array}{c|c|c|c|c|c}
  		& \l			&\psi			&\dirr		&\d_{0;1,0}	 & \d_{0;0,2}		\\
  		\hline 
 B	& b_\l	&0		& -b_0	& 0		& 0 	\\
 D	& d_\l	&d_0 	& -d_0  &-d_1 	& -d_0  \\
 E  & e_\l  &0 		& -e_0 	& 0 	&0 		\\
 F 	& f_\l 	&f_0 	& -f_0 	&-f_1 	& -f_0 	\\
 W	& -w_\l &w_\psi & 0 	&-w_2	&-w_2	\\
 K	&13		&1		&-2		&-3		&-2		\\
\end{array}
\end{equation}
with 
$$b_\l=g+3, \qquad b_0=\frac{g+1}{6},$$

$$d_\l=g+3, \qquad d_0=\frac{g+1}{6},\qquad d_1=g+n-1, $$

$$e_\l=6(\frac{g}{2}+1)^2+\frac{g}{2}-5, \quad
e_0=(\frac{g}{2}+1)\frac{g}{2},$$ 

$$f_\l=6(\frac{g+n}{2}+1)^2+\frac{g+n}{2}-5, \quad
f_0=(\frac{g+n}{2}+1)\frac{g+n}{2}, \quad
f_1=(g+n-1)(\frac{3g+3n}{2}+1), $$
and $w_\l, w_\psi, w_2$ from \eqref{W} and \eqref{W large} or \eqref{W small}.

Since the divisor $W$ is given by two equations we shall treat separately the (easy)  

Case I: $2n < g$ (which we shall split into $2n \leq g-2$ and $2n=g-1$) 

and the (difficult) 

Case II: $g\leq 2n$. 

Looking at the coefficients $w_\psi$ and $w_2$  of this divisor  (see \eqref{W small}and \eqref{W large}) it is easy to show that $w_2>3w_\psi$ for $2n\leq g-2$, $w_2=3w_\psi$ for $2n=g-1$ or $2n=g$ and $w_2<3w_\psi$ for $2n\geq g$. This motivates treating $2n=g-1$ separately.\\

For $2n\leq g-2$ and $g+1$ composite, we have $3w_\psi<w_2$ and therefore the optimal decomposition is $x=\frac{2}{b_0}, \quad y=0, z=\frac{3}{w_2}$. This is exactly the same decomposition Logan uses to prove $\Mgn$ being of general type. It satisfies the inequalities imposed by $\dirr$ and $\d_{0;10}$ as equalities and also satisfies the inequalities imposed by $\psi$ and $\d_{0;0,2}$. Only the inequality imposed by $\l$ remains to be checked:
\begin{equation} \label{lambda}
(\l): \qquad \frac{2b_\l}{b_0}-\frac{3w_\l}{w_2}\leq 13
\end{equation}
Whenever this is true for a given $g$ and $n$, we get the decomposition 
\begin{equation}\label{decomposition 1}
 K=\frac{2}{b_0}B+\frac{3}{w_2}W +(1-\frac{3w_\psi}{w_2})\psi +\mbox{"effective"}.
\end{equation}

For $2n\leq g-2$ and $g+1$ prime (and thus $g\geq 28$), we replace $B$ by $E$ in \eqref{lambda}.  Thus, whenever we have 
\begin{equation} \label{cond}
(\l): \qquad \frac{2e_\l}{e_0}-\frac{3w_\l}{w_2}\leq 13,
\end{equation}
 we actually get the desired decomposition 
\begin{equation}\label{decomposition 1 prime}
 K=\frac{2}{e_0}E+\frac{3}{w_2}W +(1-\frac{3w_\psi}{w_2})\psi +\mbox{"effective"},
\end{equation}  
proving that $\Ngn$ is of general type. Checking the inequality \eqref{cond} by use of a small computer program establishes the table for $2n \leq g-2$.\\

For $2n=g-1$ the above decomposition no longer works, because then $3w_\psi=w_2$, 
giving a vanishing coefficient of $\psi$. We therefore need to use the divisor $D$ for $g+n+1$ composite and $F$ for $g+n+1$ prime. Note that $g+1=2n+2$ is composite.
Whenever 
\begin{equation} \label{lambda1}
(\l): \qquad \frac{2b_\l}{b_0}-\frac{3w_\l}{w_2}< 13
\end{equation}
we can use the decomposition
\begin{equation} \label{1}
K=\frac{2}{b_0}B+\frac{2\epsilon}{d_0}D+\frac{1-3\epsilon}{w_\psi}W +\epsilon\psi +\mbox{"effective"}
\end{equation}
or
\begin{equation} \label{2}
K=\frac{2}{b_0}B+\frac{3\epsilon}{f_0}F+\frac{1-4\epsilon}{w_\psi}W +\epsilon\psi +\mbox{"effective"}
\end{equation}
for some $\epsilon >0 $ sufficiently small.

This proves Proposition
\ref{prop one} in Case I.\\

We shall now treat the  difficult Case II:   $2n\geq g$. Let us begin by assuming both $g+1$ and $g+n+1$ are composite. We start similarly to Case I, by trying to decompose the canonical class as 
$$K=xB+yD+zW+\epsilon\psi+\mbox{"effective"}$$
for some non-negative $x,y,z$ and positive $\epsilon$.

The coefficients of $W$ become especially easy:
$$w_\l=\binom{2n}{g}, \qquad w_\psi=\binom{2n-1}{g-1}, \qquad w_2=2\binom{2n-1}{g-1} +\binom{2n.-2}{g-2}.$$

We shall first show that there is a finite upper bound for $n$, 
i.e.  for fixed $g$ and $n$ sufficiently large, the conditions imposed by $\psi,\d_{0;1,0}$ and $\d_{0;0,2}$ cannot be satisfied simultaneously. 
In fact, let us consider the corresponding system of 3 linear inequalities for the variables $y,z$, read off the table \eqref{table}:
\begin{equation} \label{systemineq}
\begin{split}
(\psi):      	\hspace{1,1cm}	 d_0 y + w_\psi z       &< 1   \\
(\d_{0;1,0}): 	\qquad 		-d_1 y -w_2 z  			&\leq -3  \\
(\d_{0;0,2}): 	\qquad		-d_0 y -w_2 z			&\leq -2
\end{split}
\end{equation}

We apply   Gaussian elimination, allowing only positive multiples of the 
 inequalities in \eqref{systemineq}. Note that (for $2n \geq g$) any solution $y,z$ of 
\eqref{systemineq}  automatically is positive: $(\psi) + (\d_{0;0,2})$ implies $z>0$ and $3 (\psi) + (\d_{0;1,0]})$ then gives $y>0.$
 Then the inequality $(\psi)+ (\d_{0;0,2})$ gives 
\begin{equation}\label{lower bound z}
z > \frac{1}{w_2-w_\psi}\geq \frac{1}{2w_\psi}
\end{equation}
and inequality $d_1(\psi)+d_0(\d_{0;1,0})$ gives 
\begin{equation}
z < \frac{d_1-3d_0}{d_1 w_\psi- d_0 w_2}.
\end{equation}

This is solvable if and only if 
\begin{equation}\label{condition system ineq}
(d_1-3d_0)(w_2-w_\psi)-(d_1 w_\psi -d_0 w_2)>0.
\end{equation}

Consider
\begin{equation}
\begin{split}
p_g(n)&:=(12n-6)\binom{2n-1}{g-1}^{-1} \Big((d_1-3d_0)(w_2-w_\psi)-(d_1 w_\psi -d_0 w_2)\Big)\\
	&=-2n^2+(2g-5)n+g^2-11g+9	
\end{split}
\end{equation}
as a polynomial in $n$. Then $p_g(n)>0$ if and only if
\begin{equation}
\frac{1}{4}(2g-5-\sqrt{97-108g+36g^2}) <n< \frac{1}{4}(2g-5+\sqrt{97-108g+36g^2}).
\end{equation}

As we are only interested in solutions with  $n \in \N,$ we get

\begin{equation}
n \leq \frac{1}{4}(2g-5+\sqrt{81-108g+36g^2})= 2g-4.
\end{equation}
This establishes the existence of an upper bound for $n$, as claimed above.

Now let us assume, that $\frac{g}{2}\leq n\leq 2g-4$ and that $(y,z)$ is a solution of the three inequalities \eqref{systemineq} (preferably with z as large as possible).
 We set 
$$x=\frac{1}{b_0}(2-d_0 y)$$ 
and show that $(x,y,z)$ satisfies also the two remaining inequalities:

\begin{equation} 
\begin{split}
(\l): \qquad  b_\l x+d_\l y-w_\l z &\leq 13 \\
(\dirr): \hspace{1,8cm} -b_0 x -d_0 y   &\leq -2
\end{split}
\end{equation}

In fact $(\dirr)$ will be satisfied as equality, and we are left with checking $(\l)$.

If $g+1$ is prime we replace $B$ by $E$ and for $g+n+1$ prime we replace $D$ by $E$. As there are only finitely many cases to check, we have done this 
by explicitly computing the decomposition with the help of a simple program in Mathematica.
 This proves all the results  tabled in Proposition \ref{prop one}.
\end{proof}

In order to show Proposition \ref{prop two} we replace the divisor $W$ by either $U$ (for $g$ odd) or by $V$ (for $g$ even). 

\begin{proof}[Proof of Proposition \ref{prop two}]
As  a general remark, note that the divisors $U,V$ do not exist if $n$ is too small, depending on $g$, i.e. for $2n < g-4$. In this case, we simply use Proposition \ref{prop one} to generate the table in Proposition \ref{prop two}.

In all other cases we want to decompose $K$ as a 
sum of $xB$ or $xE$, $yD$ or $yF$, $zU$ or $zV$ with non-negative $x,y,z$, some positive multiple $\epsilon \psi$ of the point bundles and an effective combination of the tautological classes and boundary divisors.
So let us consider the table of relevant coefficients
\begin{equation}  \label{table2}
\begin{array}{c|c|c|c|c|c}
  		& \l			&\psi			&\dirr		&\d_{0;1,0}	 & \d_{0;0,2}		\\
  		\hline 
 B	& b_\l	&0		& -b_0			& 0		& 0 	\\
 D	& d_\l	&d_0 	& -d_0  		&-d_1 	& -d_0  \\
 E  & e_\l  &0 		& -e_0 			& 0 	&0 		\\
 F 	& f_\l 	&f_0 	& -f_0 			&-f_1 	& -f_0 	\\
 U	& -u_\l &u_\psi & u_{\mathrm{irr}}	&-u_{0,2}&-u_{0,2}	\\
 V	& -v_\l &v_\psi & v_{\mathrm{irr}}	&-v_{0,2}&-v_{0,2}	\\
 K	&13		&1		&-2		&-3		&-2		\\
\end{array}
\end{equation}
where $b_\l,b_0$ are given in \eqref{B}, $d_\l,d_0,d_1$ in \eqref{D}, $e_\l,e_0$ in \eqref{E}, $f_\l,f_0,f_1$ in \eqref{F}, $u_\l$, $u_\psi$,$u_{\mathrm{irr}}$,$ u_{0,2}$ in \eqref{U} and $v_\l, v_\psi,v_{\mathrm{irr}}, v_{0,2}$ in \eqref{V}.

$\l,\psi,\dirr,\d_{0;1,0},d_{0;0,2}$ each determine a linear inequality that can be read off from the table. 
Analogous to \eqref{systemineq} we begin by considering the system of inequalities 

\begin{equation} \label{systemieq odd composite}
\begin{split}
(\psi):      	\hspace{1,35cm}	 d_0 y + u_\psi z       &< 1   \\
(\d_{0;1,0}): 	\qquad 		-d_1 y -u_{0,2} z  			&\leq -3  \\
(\d_{0;0,2}): 	\qquad		-d_0 y -u_{0,2} z			&\leq -2.
\end{split}
\end{equation}

This corresponds to the case $g+1$ composite, $g+n+1$ composite and $g$ odd. It is one of 6 possible cases and will be the only one which we shall discuss in detail.

By Gaussian elimination (compare \eqref{condition system ineq})  this is solvable, if and only if
\begin{equation}
\begin{split}
0 	&\leq p(g,n,r,k):=6\Big((d_1-3d_0)(u_{0,2}-u_\psi)-(d_1 u_\psi -d_0 u_{0,2})\Big)  \\
	&=9 - 12 g + 3 g^2 + k + g k - 3 n + 3 g n + k n + r - g^2 r + n r - 
 g n r
\end{split}
\end{equation}
This inequality defines the upper cut-off in the table of Proposition \ref{prop two} (in the special case considered here), which improves the table of Proposition \ref{prop one}. We the have to show that for all these values of $g$ and $n$ we actually get a solution of our system of inequalities. This we did by computing, in all these cases, an explicit solution using Mathematica.

All other cases are treated in a similar way: For $g$ even,  the divisor $U$ has to be replaced by $V$, if $g+n+1$ is prime, one has to replace $D$ by $F$, and if $g+1$ is prime, $B$ nedds to by replaced by $E$. In each of these cases  one gets a corresponding system of linear inequalities and a neccessary upper cut-off for $n$. This upper bound defines $n_{\mathrm{max}}(g)$ in our table. 
As above, we have then computed a solution of the ensuing system of inequalities by Mathematica, for all relevant values of  $g$ and $n$.
This gives the improved table in Proposition  \ref{prop two}.

\end{proof}

\section{Proof of Theorem \ref{two ngn}: Special computations} \label{special}
\setcounter{equation}{0}

The aim of this section is to prove Theorem \ref{two ngn}. In view of Propostion \ref{prop two} this boils down to specific calculations for each of the values of $(g,n)$ contained in the table of Theorem \ref{two ngn}, but not in the table of Proposition \ref{prop two}. In each case, we shall need additional divisors, specifically adapted to the case at hand. Our standard references for these divisors are \cite{l}, \cite{f}.\\

\textbf{$\mathcal{N}_{10,6}$ and $\mathcal{N}_{10,7}$} \\
We use the divisor class $\mathcal{Z}_{10,0}$ on $\M_{10}$ from \cite{f},  Theorem 1.4. This is a divisor that violates the slope conjecture, i.e. it has a smaller slope than the Brill-Noether divisor. We have $s(\mathcal{Z}_{10,0})=7$, i.e $\mathcal{Z}_{10,0}=c( 7\l-\dirr -\sum_i a_i \d_i)$ on $\M_{10}$.
Take $Z=\phi^*(\mathcal{Z}_{10,0})$, the pull-back via the forgetful map $\phi:\M_{10,2n}\to\M_{10}$.
Then for $n=6$ we can decompose $K$ as an effective combination of $Z,F,W$,  some positive multiple of $\psi$ and an effective combination of the tautological and boundary classes.
For $n=6$ we can decompose $K$ as an effective combination of $Z,D,W$, some  positive multiple of $\psi$ and an effective combination of the tautological and boundary classes.\\

\textbf{$\mathcal{N}_{21,2}$} \\
We use the divisor class $\mathcal{Z}_{21,0}$ on $\M_{21}$ from \cite{f} Theorem 1.4. This divisor has slope $s(\mathcal{Z}_{10,0})=\frac{2459}{377}$, which is smaller than the slope of the Brill-Noether divisor. We set $Z=\phi^*(\mathcal{Z}_{21,0})$ the pull-back via the forgetful map $\phi:\M_{10,4}\to\M_{10}$.
The canonical divisor will decompose as an 
effective combination of $Z,W$ some positive multiple of $\psi$ and an effective combination of the tautological and boundary classes.\\

\textbf{$\mathcal{N}_{16,5}$} \\
Similarly to the above divisors, we use the divisor class $\mathcal{Z}_{16,1}$ on $\M_{16}$ from \cite{f}, Corollary 1.3. This divisor has slope $s(\mathcal{Z}_{10,0})=\frac{407}{61}$, which once again is smaller than the slope of the Brill-Noether divisor. We set $Z=\phi^*(\mathcal{Z}_{16,1})$, the pull-back via the forgetful map $\phi:\M_{16,10}\to\M_{16}$.
The canonical divisor will decompose as an 
effective combination of $Z,W$, some positive multiple of $\psi$ and an effective combination of the tautological and boundary classes.\\

\textbf{$\mathcal{N}_{12,6}$} \\
Here we use the divisor class $\mathcal{D}_{12}=13245\l-1926\dirr-9867\d_1-\ldots $ on $\M_{12}$ from \cite{fv4} and pull it back along the forgetful map $\phi:\M_{12,12}\to\M_{12}$.
The canonical divisor will decompose as an 
effective combination of $\phi^*(\mathcal{D}),F,W$ some positive multiple of $\psi$ and an effective combination of the tautological and boundary classes.\\

\textbf{$\mathcal{N}_{22,2}$ and $\mathcal{N}_{22,3}$} \\
For $g=22$ there are no Brill-Noether divisors, because $22+1$ is prime. Instead of using the divisor class $E$ we take a Brill-Noether divisor $\mathcal{B}_{23}=c( 26\l-4\dirr-22\d_1-\ldots) $ on $\M_{23}$. We pull this divisor back along $\chi:\M_{22,2}\to \M_{23}$ and then along all possible forgetful maps $\phi_S: \M_{22,4}\to \M_{22,2}$ or $\M_{22,6}\to\M_{22,2}$.

On $\M_{22,4}$, adding all these pull-backs, gives us the divisor class $L_{22,4}=12c(13\l+\psi-2\dirr-\frac{10}{3}\sum_{|S|=2}\d_{0,S}-\ldots)$. This divisor class alone proves the canonical divisor $K$ to be effective. By using a linear combination with $\epsilon_1 E, \epsilon_2 W$ for some suitable $\epsilon_1,\epsilon_2>0$ we can show $K$ to be big.

On $\M_{22,6}$, adding all these pull-backs, gives us the divisor class 
$$L_{22,6}=30c(13\l+\frac{2}{3}\psi-2\dirr-\frac{56}{30}\sum_{|S|=2}\d_{0,S}-\ldots).$$
Using $L_{22,6}$ and $W$ we can show $K$ to be big.\\

\textbf{$\mathcal{N}_{14,5}$} \\
We can show that $\mathcal{N}_{14,5}$ is of general type by using the divisor classes $B$ and $\mathfrak{Nfold}^1_{14,12} = c(35\l+54\psi-10\dirr-173\sum_{|S|=2}\d_{0,S}-\ldots)$ from \cite{f}, Theorem 4.9.\\

\textbf{$\mathcal{N}_{18,5}$} \\
We  take the divisor class $\mathfrak{Lin}^8_{24}=c(290\l+24\psi-45\dirr -82\sum_{|S|=2}\d_{0,2}-\ldots)$ on $\M_{18,9}$ from \cite{f}, Theorem 4.6.
We pull this divisor class back along all forgetful maps $\phi_i: \M_{18,10} \to \M_{18,9}$. Using the sum of these pullbacks $D$ and $W$, we can show that  $\mathcal{N}_{18,5}$ is of general type.\\

\chapter{On quotients of $\Mgn$ by certain subgroups of $S_n$}

 We show that certain quotients of the compactified moduli space  of 
$n-$pointed genus $g$ curves, $\MG:= \Mgn / G$, are of general type, for a fairly broad class of subgroups $G$ of the symmetric group  $S_n$ which act by permuting the $n$ marked points. The values of $(g,n)$ which we specify in our theorems are near optimal in the sense that, at least in the cases that G is the full symmetric group $S_n$ or a product $S_{n_1}\times \ldots \times S_{n_m}$, there is a relatively narrow transitional zone in which $\MG$ changes its behaviour from being of general type to its opposite, e.g. being uniruled or even unirational. 
As an application we consider the universal difference variety $\Mgnn /S_n \times S_n$.

\section{Introduction}

In this chapter we shall consider a class of quotients of $\Mgn$, the compactified moduli
space of $n$-pointed genus g complex curves, by certain subgroups of the symmetric group $S_n$ which act 
by permuting the marked points. Our aim is to analyse under which conditions such quotients are of general type or, in a complementary case, uniruled or even unirational. As usual, we do this by using the Kodaira dimension.

We were led to considering the questions addressed in the present chapter
by analysing  an analogous problem for the compactified moduli space $\ngn$
of $n-$nodal genus $g$ curves in \cite{sch2}. Here $\ngn = \Mgnn/G$ where the special group $G$ is also a subgroup of $S_{2n}$, namely the semidirect product
$G:=(\Z_2)^n\rtimes S_n$. In view of the great importance of $n-$nodal curves,
e.g. in the deformation type arguments used in the proof of the Brill-Noether theorem, 
this problem was directly motivated by geometry.
 
Our  proof in \cite{sch2}, however, let us realize that  there are some related results for {\em general} quotients of $\Mgn$ which in some aspect are {\em different} from the special case of $\ngn$. In particular, it is important  that $G:=(\Z_2)^n\rtimes S_n$ is a semidirect product and
{\em not} a product of subgroups. The main point of this chapter is to prove first results in this direction for a class of general quotients.

For  $G \subset S_n$,  we denote the quotient by this action as $ \Mgn^G := \Mgn / G$ and we suppress the subscript $(g,n)$ if we feel it unnecessary  within  a given context.  Then the natural quotients induce the chain of morphisms of schemes
\begin{equation}  \label{quotient}
\Mgn \to \Mgn^G  \to \Mgn^{S_n}  
\end{equation} 
which by subadditivity of the Kodaira dimension for base and fibre
\footnote{Technically subadditivity is only proven or conjectured for connected fibres. However it still holds in this case as can be seen directly from Corollary \ref{corr} below.}, see Conjecture \ref{subadditivity}, gives the following ordering for the Kodaira dimension
\begin{equation}
\k(\Mgn) \geq \k(\Mgn^G)  \geq \k(\Mgn^{S_n}).
\end{equation}
 Since all algebraic varieties in \eqref{quotient} have the same dimension, one gets 
 
 \begin{equation}
\Mgn^{S_n}  \quad \mbox{of general type    } \Rightarrow \Mgn^G \quad \mbox{of general type   }\Rightarrow \Mgn \quad \mbox{ of general type}
 \end{equation}
By the same argument, one gets
\begin{lemma}  \label{sub}
For any subgroup $H$ of $G$ one has
\begin{equation}
\MG \quad \mbox{of general type    } \Rightarrow \MH \quad \mbox{of general type    }
\end{equation}
\end{lemma}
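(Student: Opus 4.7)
The plan is to deduce the lemma from subadditivity of the Kodaira dimension applied to the natural morphism induced by the inclusion $H \subset G$. First I would note that the quotient map $\Mgn \to \MG$ factors canonically through $\MH$, yielding a surjective morphism
\begin{equation*}
\pi : \MH \to \MG
\end{equation*}
whose fibres consist of the finitely many $H$-orbits contained in a given $G$-orbit and are therefore zero-dimensional. In particular $\pi$ is finite and $\dim \MH = \dim \MG$.

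Next I would apply the same subadditivity argument used in the chain of implications preceding the lemma: for $\pi$ with generic fibre $F$ of dimension zero one has $\k(F) = 0$ and hence $\k(\MH) \geq \k(\MG) + \k(F) = \k(\MG)$. Under the hypothesis that $\MG$ is of general type this reads $\k(\MH) \geq \dim \MG = \dim \MH$, which, together with the trivial bound $\k(\MH) \leq \dim \MH$, forces equality. Hence $\MH$ is of general type.

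The only delicate point, flagged by the footnote preceding the lemma, is that subadditivity is rigorously established only for connected fibres. To sidestep this I would give, if needed, a direct argument on pluricanonical forms: since $\pi$ is a finite surjective morphism between normal varieties, one has, working on the smooth stacks or equivalently on a resolution, a relation of the form $K_{\MH} = \pi^{\ast} K_{\MG} + R_{\pi}$ with $R_{\pi}$ an effective ramification divisor; as pullback of a big class by a finite surjection is big, bigness of $K_{\MG}$ transfers to bigness of $K_{\MH}$. Descending to the coarse spaces using that their singularities do not impose adjunction conditions and applying Proposition \ref{criterion gentype} then gives the conclusion. I do not anticipate any serious obstacle, since the lemma is essentially a formal consequence of the finite factorization through $\MH$ and the bookkeeping parallels arguments already carried out earlier in the thesis.
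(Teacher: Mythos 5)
Your proposal is correct and follows essentially the same route as the paper: the lemma is reduced to the fact that the finite surjection $\MH \to \MG$ has effective ramification, so that the (pulled-back) canonical classes satisfy $K_H \geq K_G$ and bigness ascends. The paper merely packages the ramification comparison differently, computing both $K_G$ and $K_H$ explicitly on the common cover $\Mgn$ via Corollary \ref{corr}, so that $K_H - K_G = \sum_{(i\,j)\in G\setminus H}\d_{0,\{i,j\}}$ is manifestly effective — which also neatly discharges the connected-fibre caveat you rightly flag in your first paragraph.
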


We shall need the ramification divisor $R$ (see equation \eqref{R} below) of the quotient map $ \pi: \Mgn \to \MG$. Denoting by $(i \  j)$ the transposition in $S_n$ interchanging $i$ and $j$ and checking where sheets will come together,  one readily finds
\begin{equation}
R = \sum_{(i \  j) \in G} \d_{0, \{i,j\}}
\end{equation}
where the (standard) definition of the boundary divisor $\d_{0, \{i,j\}}$ is given in Section \ref{prem} below, which in particular introduces all divisors needed in this chapter.
Then the well known explicit formula for the canonical divisor $K_{\Mgn}$ gives

\begin{corollary} \label{corr}
The pullback $K_G:= \pi^*(K_{\MG})$ to $\Mgn$ is given by
\begin{equation}    \label{kg}
K_G = K_{\Mgn} - R = 13 \l + \psi - 2 \d -\d_{1,\emptyset} - \sum_{(i \ j) \in G} \d_{0, \{i,j\}}
\end{equation}
\end{corollary}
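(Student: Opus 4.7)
The plan is that this is an almost-immediate corollary, and the proof reduces to assembling three ingredients: (i) the Riemann–Hurwitz-type pullback formula for canonical divisors under a finite quotient, (ii) the explicit ramification divisor $R$ established in the displayed equation immediately preceding the statement, and (iii) the explicit formula \eqref{can cm} for $K_{\Mgn}$.

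Concretely, I would first invoke the general fact that for a finite surjective morphism $\pi: X \to Y$ between normal varieties of the same dimension, the canonical divisors satisfy $K_X = \pi^* K_Y + R$, where $R$ is the ramification divisor of $\pi$; equivalently $\pi^* K_Y = K_X - R$. In our setting $\pi: \Mgn \to \MG$ is the quotient by the finite group $G \subset S_n$, so the formula applies. The identity is verified on the smooth locus by a direct change-of-variables computation on top differential forms, and extends across the singular locus because both $\Mgn$ and $\MG$ have singularities that do not impose adjunction conditions (Section \ref{sec sing}, applying the Reid–Shepherd-Barron–Tai criterion to subgroups of $S_n$ exactly as in Theorem \ref{noadcon ngn}). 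Next, the description $R = \sum_{(i\ j)\in G} \delta_{0,\{i,j\}}$ was already derived in the paragraph preceding the corollary: a non-identity $g \in G$ stabilizes the isomorphism class of a pointed stable curve in codimension one only when $g = (i\ j)$ is a transposition and $(C;x_1,\ldots,x_n)$ has a rational tail carrying exactly the two marked points $x_i,x_j$ — i.e., at the generic point of $\delta_{0,\{i,j\}}$. Substituting the formula $K_{\Mgn} = 13\lambda + \psi - 2\delta - \delta_{1,\emptyset}$ from \eqref{can cm} into $\pi^* K_{\MG} = K_{\Mgn} - R$ immediately yields \eqref{kg}.

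The only point requiring some care is to avoid double-counting ramification contributions. The term $\delta_{1,\emptyset}$ in $K_{\Mgn}$ arises from the ramification of the stack-to-coarse-moduli map $\epsilon: \stMgn \to \Mgn$ (elliptic-tail involutions), whereas $R$ is the ramification of a different map, namely $\pi: \Mgn \to \MG$, coming from automorphisms realized as label permutations. Since the elliptic-tail involution acts trivially on the labels of the marked points, it does not contribute to $R$; conversely, the transpositions $(i\ j) \in G$ producing $R$ do not act trivially on labels and are disjoint from the elliptic-tail phenomenon. Hence the two corrections enter additively and the formula in \eqref{kg} is the correct assembly.
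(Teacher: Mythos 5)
Your proposal is correct and follows essentially the same route as the paper: identify the ramification divisor of $\pi:\Mgn\to\MG$ as $R=\sum_{(i\ j)\in G}\d_{0,\{i,j\}}$ (a transposition fixes a pointed curve in codimension one exactly along the rational bridge carrying the two swapped marked points), apply the Hurwitz formula $\pi^*K_{\MG}=K_{\Mgn}-R$, and substitute \eqref{can cm}. Your added remarks — that the two correction terms $\d_{1,\emptyset}$ and $R$ come from different maps and hence enter additively, and that the formula extends over the singular locus via the adjunction-condition results of Section \ref{sec sing} — are sound elaborations of points the paper leaves implicit, not a different argument.
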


Corollary \ref{corr} clearly shows that for a subgroup $H$ of $G$ we get $K_H\geq K_G$ and thus $\k(\MH)\geq \k(\MG)$. This proves the subadditivity of the Kodaira dimension for such quotients and thus implies Lemma \ref{sub}.\\
\ 

For the sake of the reader, we shall now recall   
 that the moduli space $\Mgn$ is of general type for $g\geq 24$ or for $n\geq n_{\mathrm{min}}(g)$ given in the following table 
\footnote{We once again caution the reader that Chapter 3 is not updated to include the result of \cite{f4} that $\M_{22}$ and $\M_{23}$ are of general type.} 
(see Proposition \ref{proposit}) :
\begin{table}[h!]
$$\begin{array}{c|c|c|c|c|c|c|c|  c|c|c|c|c|  c|c|c|c|c|  c|c|c}
g &4&5&6&7&8&9&10&11&12&13&14&15&16&17&18&19&20&21&22&23\\
\hline
n_{\mathrm{min}}&16&15&16&15&14&13&11&12&11&11&10&10&9&9&9&7&6&4&4&1
\end{array}$$
\caption{ } 
\end{table}

As a corollary, one then obtains 
\begin{theorem} \label{gen}
In all cases of Proposition \ref{proposit}, if $G$ does not contain any transposition, then $\MG$ also is of general type.
\end{theorem}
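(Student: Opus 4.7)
\textbf{Proof proposal for Theorem \ref{gen}.} The plan is to exploit Corollary \ref{corr} directly: under the hypothesis that $G \subset S_n$ contains no transposition, the sum $\sum_{(i\ j)\in G}\d_{0,\{i,j\}}$ in \eqref{kg} is empty, so the ramification divisor of $\pi:\Mgn\to\MG$ vanishes in codimension one and
\begin{equation*}
\pi^{*}(K_{\MG}) \;=\; K_G \;=\; K_{\Mgn}
\end{equation*}
as $\Q$-divisor classes on $\Mgn$. The whole theorem is then a descent argument from $\Mgn$ to its quotient $\MG$, combined with the fact that $\Mgn$ is already known to be of general type for the $(g,n)$ in Proposition \ref{proposit}.

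First, I would invoke Proposition \ref{criterion gentype} on $\Mgn$: since the singularities of $\Mgn$ do not impose adjunction conditions (Section \ref{sec sing}, by the results of \cite{hm,l}), being of general type means $K_{\Mgn} = A + E$ for some ample $\Q$-class $A$ and effective $\Q$-class $E$. Symmetrising by averaging over $G$ gives
\begin{equation*}
K_{\Mgn} \;=\; \frac{1}{|G|}\sum_{\sigma\in G}\sigma^{*}A \;+\; \frac{1}{|G|}\sum_{\sigma\in G}\sigma^{*}E,
\end{equation*}
where the first summand is a $G$-invariant ample $\Q$-class (the average of ample classes under a finite group action is ample) and the second is a $G$-invariant effective $\Q$-class. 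Via the natural isomorphism $\Pic(\MG)\otimes\Q \simeq (\Pic(\Mgn)\otimes\Q)^{G}$ together with the fact that $\pi$ is finite surjective, both summands descend to $\MG$ as, respectively, an ample and an effective $\Q$-class whose pullbacks reconstruct the equation above. Using $\pi^{*}K_{\MG}=K_{\Mgn}$, this yields a decomposition $K_{\MG}=A'+E'$ with $A'$ ample and $E'$ effective on $\MG$.

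To conclude via Proposition \ref{criterion gentype} that $\MG$ is of general type, I need to verify that the singularities of $\MG$ do not impose adjunction conditions. This is exactly what Section \ref{sec sing} records about the argument of Farkas--Verra in \cite{fv1}: their verification of the Reid--Tai criterion applies to every automorphism of a pointed curve acting on the marked points by an element of a subgroup of $S_n$, and therefore covers $\MG = \Mgn/G$ for any such $G$.

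The steps above are all essentially formal once one knows $R=0$, so the only genuinely nontrivial input is that disappearance of the ramification divisor in the no-transposition case; this follows from the explicit description of codimension-one automorphism loci in Proposition \ref{ram}, since for $g\geq 1$ and away from the exceptional cases listed there the only contribution to $R$ from $\pi$ comes from transpositions $(i\ j)$ fixing a class $[C;x_1,\ldots,x_n]$ with a rational component carrying exactly the two marked points $x_i,x_j$. The main point to watch is therefore simply that our $(g,n)$ lie outside the small exceptional ranges of Proposition \ref{ram}, which is automatic in the regime of Proposition \ref{proposit}.
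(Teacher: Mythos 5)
Your proof is correct, and its skeleton matches the paper's: since $G$ contains no transposition, the ramification divisor of $\pi:\Mgn\to\MG$ vanishes in codimension one, so $K_G=K_{\Mgn}$ by Corollary \ref{corr}, and the problem reduces to descending a decomposition $K_{\Mgn}=\text{(ample)}+\text{(effective)}$ to $\MG$, with the Farkas--Verra verification of the Reid--Tai criterion for arbitrary subgroups of $S_n$ taking care of adjunction conditions. Where you diverge is in how the $G$-invariance of that decomposition is obtained. The paper simply observes that in every case of Proposition \ref{proposit} the decompositions used in the literature (\cite{l,f,fv4}) involve only $S_n$-invariant divisors, so nothing needs to be done. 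You instead start from an arbitrary decomposition $K_{\Mgn}=A+E$ and symmetrise by averaging over $G$, using that $K_{\Mgn}$ is itself $G$-invariant, that the ample cone is convex and stable under pullback by automorphisms, and that pullbacks of effective classes by automorphisms remain effective. Your route is more robust: it is self-contained and would survive even if a proof of general type for $\Mgn$ happened to use non-invariant divisors, whereas the paper's one-line argument requires inspecting the specific divisor classes in the cited sources. The price is the (routine, but worth stating) verification that the invariant ample and effective pieces genuinely descend through the finite quotient --- namely that $\pi^*$ identifies $\Pic(\MG)\otimes\Q$ with the $G$-invariants of $\Pic(\Mgn)\otimes\Q$, and that ampleness and effectivity can be tested after pullback along the finite surjection $\pi$ (via $\pi_*\pi^*=|G|\cdot\mathrm{id}$ on rational classes). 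Both facts are standard, so your argument goes through.
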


 \begin{proof}
By Corollary \ref{corr}, the divisor classes $K_G$ and $K_{\Mgn}$ coincide. The moduli space $\Mgn$ is of general type if and only if  $K_{\Mgn}$ is the sum of an ample and an effective divisor and, similarly, 
$ \MG $ is of general type if and only if $K_G$ is the sum of an ample and an effective divisor which are both also $G-$ invariant. But in all cases of Table 3.1 only
$S_n-$invariant divisors have been used to show that $\Mgn$ is of general type, see
\cite{l, f, fv4}. This proves our claim.
 \end{proof}
 
In particular, this covers the case where $G$ is cyclic (and different from $\Z_2$) or the  cardinality $|G|$ is odd. It 
also covers the largest non-trivial subgroup of $S_n$, the alternating group $A_n=\mbox{kernel} \mbox{  signum} $ (in fact G contains no transpositions, if and only if it is a subgroup of $A_n$).  This has an obvious geometric interpretation: The set of $n- $ tuples of $n$ fixed different points on a genus g curve carries 
a notion of {\em orientation}: Two $n-$tuples have the same orientation, if they are mapped one to another by an even permutation. Taking the quotient by $S_n$ corresponds to passing from $n-$pointed curves to $n-$marked curves, while taking the quotient by $\cA_n$ means passing to curves marked in $n$ points with orientation. Under the first action the property of being of general type might change, but it is invariant under the second.

We remark that it is very natural to use $S_n-$invariant divisors. We expect that it is possible to do so in {\em any} case in which $\Mgn$ is of general type. 
Thus we conjecture:  If $\Mgn$ is of general type, then $\MG$ is of general type for any subgroup  $G$ of $A_n$.

If the subgroup $G$ does contain transpositions, the situation is more complicated. For large $g$, the following theorem contains an (easy) general result, while for small $g$   we shall need explicit computations with well chosen divisors.   
Collecting  from \cite{fv2}  and supplementing this by an analog proof in the cases $g=22,23$
(using the divisors of \cite{fv2})  we obtain

\begin{proposition} \label{one quotients}
The space $\MS$ (and thus, by Lemma \ref{sub}, the space $\MG$ for any subgroup $G$ of $S_n$) is of general type if
\begin{enumerate}
\item[(i)]  $g \geq 24, \quad n<g, \quad$     or
\item[(ii)]  $13 \leq g \leq 23,$   and   $ n_{\mathrm{min}}(g) \leq n \leq g-1,$ where $n_{\mathrm{min}}(g)$ is given in the following table
\end{enumerate}

\begin{table}[h!]
$$\begin{array}{c|c|c|c|c|  c|c|c|c|c|  c|c|c}
g &12&13&14&15&16&17&18&19&20&21&22&23\\
\hline
n_{\mathrm{min}}&10&11&10&10&9&9&10&7&6&4&7&1
\end{array}$$
\caption{ } \label{table mS}
\end{table}
Furthermore, 
the domain of values $(g,n)$ for which $\MS$ is of general type is {\em near optimal} since it is  known that $\M^{S_n}$ is
\begin{enumerate}
\item uniruled, if $n>g$ (for any $g$) or $g \in \{10,11\}$ with $n \neq g$
\item  unirational, if $ g<10, n \leq g$
\item  for $g \geq 12$ the Kodaira dimension $\k(\Mgg)=3g -3$ is intermediary.
\end{enumerate}
\end{proposition}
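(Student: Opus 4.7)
The plan is to realize the pulled-back canonical class $K_{S_n} = \pi^{\ast}(K_{\MS})$ of Corollary \ref{corr} as a $\Q$-linear combination of an ample class and effective $S_n$-invariant classes on $\Mgn$. Since every such divisor descends to $\MS$, and the singularities of $\MS$ do not impose adjunction conditions (see Section \ref{sec sing}), Proposition \ref{criterion gentype} will then deliver the general-type conclusion on the quotient.

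For case (i), $g \geq 24$ and $n < g$, I would exploit the forgetful morphism $\phi : \MS \to \Mg$ whose general fibre over a smooth point $[C] \in \Mg$ is the symmetric product $C_n := \Sym^n(C)$. As recalled in Chapter 2, the decomposition $K_{C_n} = u^{\ast}\alpha_{D}^{\ast}\theta + \mathcal{L}_D$ into an ample plus an effective class shows that $C_n$ is of general type whenever $n \leq g-1$. Combined with $\Mg$ being of general type for $g \geq 24$, Kawamata's case of Iitaka subadditivity (Conjecture \ref{subadditivity}, unconditional when the base is of general type) then yields that $\MS$ is of general type.

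For case (ii), $13 \leq g \leq 21$, I would invoke directly the Farkas--Verra decompositions of \cite{fv2}, which combine pullbacks from $\Mg$ of a Brill--Noether divisor (Lemma \ref{BN divisor}) or, when $g+1$ is prime, of a Gieseker--Petri divisor (Lemma \ref{GP divisor}), together with symmetrized Weierstrass-type divisors $\sum_{\sigma \in S_n} \sigma^{\ast}\overline{\mathfrak{D}}(g; a_1, \ldots, a_n)$ from Lemma \ref{Logan divisor} with balanced weights $a_i \in \{k, k+1\}$, and a positive multiple of $\psi$. For the two remaining cases $g = 22, 23$, which fall outside the range of \cite{fv2}, I would replay the same machinery using the divisors of that paper. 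This reduces the general-type question to a linear programming problem on the coefficients of $\lambda, \psi, \dirr, \d_{0,\{i,j\}}$, and $\d_{1,\emptyset}$. The delicate inequality is the one for $\d_{0,\{i,j\}}$, whose coefficient in $K_{S_n}$ equals $-3$ owing to the ramification divisor $R = \sum_{(ij)\in S_n} \d_{0,\{i,j\}}$ of $\pi$, and the hard part will be to balance the weights of the symmetrized Weierstrass divisor so that its $\d_{0,\{i,j\}}$-contribution is large enough relative to its $\psi$-contribution.

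For the near-optimality statements, the uniruledness in the range $n > g$ follows immediately from the same fibration $\phi$: by Riemann--Roch every effective divisor of degree $n > g$ on $C$ moves in a $\mathfrak{g}^1_n$, so $C_n$ is uniruled and hence so is $\MS$. The exceptional cases $g \in \{10, 11\}$ with $n \neq g$ rely on the known uniruledness of $\M_{10}$ and $\M_{11}$ combined with fibre arguments; the unirationality in the range $g < 10$, $n \leq g$ stems from the classical unirationality of $\Mg$ in that range together with symmetric-product Brill--Noether theory; and the intermediate value $\kappa(\Mgg) = 3g - 3$ for $g \geq 12$ is a refined Farkas--Verra result obtained by matching effective-divisor lower bounds against an explicit description of the Iitaka fibration of $\Mgg$.
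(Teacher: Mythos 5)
Your proposal follows essentially the same route as the paper: part (i) via subadditivity of the Kodaira dimension applied to the fibration over $\Mg$ with symmetric-product fibres $C_n$ of general type for $n\leq g-1$; part (ii) by invoking the explicit divisor computations of \cite{fv2} and extending them analogously to $g=22,23$ with the same divisors; and the near-optimality statements via the Riemann--Roch uniruledness argument for $n>g$ together with the results of \cite{fv1}. The paper itself gives no more detail on (ii) than a citation plus the remark about $g=22,23$, so your sketch of the linear-programming step (including the coefficient $-3$ of $\d_{0,\{i,j\}}$ coming from the ramification divisor) is consistent with, and slightly more explicit than, what is written there.
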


Here the result of (i) follows from weak additivity of the Kodaira dimension, while (ii) is proven in \cite{fv2} by explicit computation. The first assertion in (1) follows from Riemann-Roch, while the second is proved in \cite{fv1} which also contains (2) and (3). We briefly recall the Riemann-Roch argument:
Observe that the fibre of $\Mgn/S_{n} \to \Mg$ over a smooth curve $[C]\in \Mg$ is birational to the symmetric product $C_{n}:=\Sym^n(C):= C^n/S_n$. This can be interpreted as the space of effective divisors of degree $n$ on the curve $C$. Since the Riemann-Roch theorem implies that any effective divisor of degree $d> g$ lies in some $\mathfrak{g}^1_d$, the quotient  $\Mgn/S_{n}$ is trivially uniruled for $n> g$. 

Thus, outside the domain of values specified in Proposition \ref{one quotients}, there is just a narrow transitional band in which $\MS$ changes from general type to its opposite.

It is, however, left open by Proposition \ref{one quotients} if $\MG$ might be of general type for some values of $n>g$ if the subgroup $G$ of $S^n$ is chosen judiciously. In the following theorem we shall display a large class of groups for which this is the case.

\begin{theorem} \label{two quotients}
Fix a partition $n=n_1 + \ldots + n_m$ and let $G=S_{n_1} \times \ldots \times S_{n_{m}}$.  Then $\MGgn$ (and thus, by Lemma \ref{sub}, $\Mgn^H$ for any subgroup $H$ of $G$) is of general type if 
\begin{enumerate}
\item[(i)] $g \geq 24, \quad \max \{n_1, \ldots n_m \} \leq g-1$ or
\item[(ii)] $g \leq 23, \quad  \max \{n_1, \ldots n_m \} \leq g-2$ and
$f_m(g;n_1, \ldots n_m) \leq 13$, where $f_m$ is the function defined in equation \eqref{fm} of Section \ref{sec proof2 q} below.
\item[(iii)]  $g \leq 23, \quad  \max \{n_1, \ldots n_m \} \leq g-1$ and
$f_m(g;n_1, \ldots n_m, L_1, \ldots ,L_m) \leq 13$, where $f_m$ is the function defined in equation \eqref{fml} of Section \ref{sec proof2 q} below (depending on a choice of divisor classes 
$ L_1, \ldots ,L_m$  as described at the end of Section \ref{sec proof2 q}.
\end{enumerate}
Furthermore,  $\MG$ still has non-negative Kodaira dimension if $\max \{n_1, \ldots n_m \} \leq g$ and $f_m(g;n_1, \ldots n_m) \leq 13$.

\end{theorem}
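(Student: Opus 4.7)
The plan is to treat the three cases separately, with part (i) following from a fibration argument and parts (ii), (iii) from an explicit decomposition of the canonical class into a $G$-invariant effective combination plus a positive multiple of the big and nef class $\psi$. By Proposition \ref{criterion gentype} and the fact that quotients $\Mgn/G$ with $G\subset S_n$ have only canonical singularities (as $\Mgn$ does by the Reid--Tai analysis in the style of \cite{hm, fv1}, which applies verbatim here), it suffices to exhibit $K_G$ as the sum of an ample and an effective $G$-invariant class on $\Mgn$.

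For part (i), I would use the forgetful morphism $\phi:\MG\to\Mg$. Over a general $[C]\in\Mg$ the fibre is birational to the product of symmetric products $C_{n_1}\times\cdots\times C_{n_m}$. Each factor $C_{n_k}$ is of general type whenever $n_k\le g-1$ by the standard Kodaira--Itaka calculation for symmetric products (as recalled in Section 2 of Chapter 2 using the Abel--Jacobi embedding and the ample $\theta$-divisor). Hence the generic fibre is a product of varieties of general type, which is itself of general type by the additivity of Kodaira dimension for products. Since $g\ge 24$ implies $\Mg$ is of general type, Conjecture \ref{subadditivity} in its proven form (when the base is of general type, \cite{ka}) gives $\kappa(\MG)=\dim(\MG)$.

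For parts (ii) and (iii), I would work directly with $K_G$ as in \eqref{kg}. Using the factorization $\Mgn\to \Mgn^G\to \prod_{k=1}^m \M_{g,n_k}^{S_{n_k}}$ one gets natural $G$-invariant effective divisors on $\Mgn$ by pulling back, via the obvious forgetful maps to each $\M_{g,n_k}/S_{n_k}$, the effective divisors used in \cite{fv2} to prove Proposition \ref{one quotients}. To these one adjoins the Brill--Noether (or Gieseker--Petri, when $g+1$ is prime) divisor pulled back from $\Mg$, and Weierstrass-type divisors $\mathfrak{D}(g;a_1,\ldots,a_n)$ summed in a $G$-invariant way (averaging over the orbits of $G$ acting on the weight assignments, compatibly with the block structure $n=n_1+\cdots+n_m$). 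The resulting system of linear inequalities, one per tautological/boundary class appearing in $K_G$, can be reduced as in Section \ref{system} to a single inequality on the $\lambda$-coefficient; the quantity $f_m(g;n_1,\ldots,n_m)$ is exactly the minimum value of $2a/b_0-\text{(correction from }\psi\text{-divisors)}$ achievable with the chosen building blocks, and the condition $f_m\le 13$ is then precisely what is needed to conclude that $K_G$ lies in the cone generated by $\psi$ plus the chosen effective divisors plus the remaining boundary and tautological classes with positive coefficients. Part (iii) is the same argument where we additionally allow specific refinement divisors $L_1,\ldots,L_m$ (of slope strictly less than that of the Brill--Noether divisor, cf.\ the divisors $\mathcal{Z}_{g,i}$ appearing in the special computations of Section \ref{special}) to sharpen the constant in the case $\max n_k=g-1$, which is otherwise not reachable because the ramification contribution $\sum\delta_{0,\{i,j\}}$ saturates the available $\psi$-budget.

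The main obstacle will be the combinatorial bookkeeping needed to verify that, after symmetrization over $G=S_{n_1}\times\cdots\times S_{n_m}$, the coefficients of the divisors $\delta_{0,S}$ for $|S|=2$ split correctly according to whether the two indices lie in the same block (where they must absorb the large ramification term from \eqref{kg}) or in different blocks (where no ramification contribution appears, so a much weaker condition suffices). This block-dependence is precisely what forces the definition of $f_m$ to involve all of $n_1,\ldots,n_m$ rather than only $n=\sum n_k$. The final assertion on non-negative Kodaira dimension follows by dropping the requirement that the $\psi$-coefficient be strictly positive and invoking only the effective part of Proposition \ref{criterion gentype}; one then obtains an effective expression for $K_G$ whenever $\max n_k\le g$ and $f_m\le 13$, which gives $\kappa(\MG)\ge 0$.
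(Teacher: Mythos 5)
Your proposal follows essentially the same route as the paper: part (i) via the fibration over $\Mg$ with generic fibre a product of symmetric products $C_{n_1}\times\cdots\times C_{n_m}$ and (weak) additivity, and parts (ii)--(iii) via a decomposition of $K_G$ into a minimal-slope divisor from $\Mg$, block-wise pullbacks of Weierstrass-type divisors along $\pi_k:\Mgn\to\M_{g,n_k}$, the global Weierstrass divisor, and a positive multiple of the big and nef class $\psi$, with the whole system collapsing to the single $\lambda$-inequality $f_m\le 13$ and the case $\max n_k\in\{g-1,g\}$ degenerating to mere effectivity ($\epsilon=\eta=0$). The only inaccuracy is your parenthetical for case (iii): the auxiliary classes $L_k$ are not small-slope divisors on $\Mg$ of the type $\mathcal{Z}_{g,i}$, but divisors on the blocks $\M_{g,n_k}$ (the classes $T_g$ and $F_{g,m}$ from \cite{fv2}) whose $\delta_{0,2}$-coefficient strictly exceeds $3$, which is what restores $\epsilon>0$ when some $n_k=g-1$.
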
 

A geometric interpretation of this result is similar to the interpretation for $\cA_n$: The partition $P:  n=n_1 + \ldots + n_m$  induces the group 
$G=G_P=S_{n_1} \times \ldots \times S_{n_{m}}$, and the action of $G_P$ maps an $n$-pointed  genus g curve to a curve with markings in $n_1, \ldots, n_m$ (considered as an ordered $m-$ tuple) which we may call a $P -$ marked curve. Thus Theorem \ref{two quotients} states that the moduli space of $P-$ marked genus $g$ curves is of general type (if the conditions in Theorem \ref{two quotients} are satisfied).

Since there is no upper bound on the number of summands $m$ in the partition of $n$, an inspection of the defining equation for $f_m$ shows that the values of $n$ may tend to infinity, provided the subgroup $G$ is chosen appropriately. 
As in the above case for $ m=2, $ Riemann-Roch establishes a (small) transitional band beyond which $\MGgn$ becomes uniruled. We emphasize that the
existence of  this transitional band (for any {\em fixed} subgroup $G$) is different from the result for $\ngn$ proved in \cite{sch2}: 
$\ngn=\Mgnn/(\Z_2)^n\rtimes S_n$ is of general type for all values of $n$, if $g \geq 24$. This is perfectly compatible with Theorem \ref{two quotients}, since $G:=(\Z_2)^n\rtimes S_n$ with its action on the $2n$ marked points is {\em not}
given by a direct product subgroup of $S_{2n},$  as required in Theorem \ref{two quotients}. To understand this from a more conceptual point of view, however, is wide open at present. We emphasize that it is not merely the size of the group which is relevant: The alternating group $G=\cA_n$ might be taken arbitrarily large and still $\MG$ will preserve general type, while taking the quotient by much smaller groups, e.g. $G=S_{g+1}$ will turn $\MG$ to being uniruled.

As an application we consider $m=2$ and the special case $G=S_n \times S_n$. This quotient has a geometric interpretation as the universal difference variety, i.e. the fibre of the map $\MG \to \Mg$ over a smooth curve $C$ is birational to the image of the difference map $C_n \times C_n \to J^0(C), (D,E)\mapsto D-E$,  see e.g. \cite{acgh}.

Then $\M^G_{g,2n}$ is uniruled for $n>g$ (by the Riemann-Roch argument given above, combined with the fact that a product is uniruled if one factor is), while Theorem \ref{two quotients} gives the following.

\begin{proposition}\label{difvar}
The universal difference variety $\Mgnn/S_n\times S_n$ is of general type for $g \geq 24, \quad n \leq g-1$, or, in the low-genus case, if $10 \leq g \leq 23$ and
$n_{\mathrm{min}}(g) \leq n \leq g-2$ where $n_{\mathrm{min}}(g) $ is specified in the following table

\begin{table}[h!]
$$\begin{array}{c|c|c|c|c| c|c|c|c|c| c|c|c|c|c}
g &10&11&12&13&14&15&16&17&18&19&20&21&22&23\\
\hline
n_{\mathrm{min}}&7&8&8&7&7&7&6&6&7&5&4&3&5&2
\end{array}$$
\caption{} \label{table dif}
\end{table}

\end{proposition}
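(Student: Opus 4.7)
The plan is to derive Proposition \ref{difvar} directly from Theorem \ref{two quotients} applied to the trivial partition $2n=n+n$, so that $m=2$, $n_1=n_2=n$ and $G=S_n\times S_n\subset S_{2n}$ acts on $\M_{g,2n}$ by permuting the first block of $n$ marked points and, independently, the second block. First I would reconcile notation: in the proposition's indexing, the moduli space is $\M_{g,2n}$ with a partition into two blocks of size $n$, so the quantity $\max\{n_1,\ldots,n_m\}$ appearing in Theorem \ref{two quotients} is exactly $n$, and the conditions on that maximum become the bounds $n\le g-1$ (for $g\ge 24$) and $n\le g-2$ (for $g\le 23$) that appear in Proposition \ref{difvar}.

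For the high-genus case $g\ge 24$, part (i) of Theorem \ref{two quotients} immediately yields general type for $n\le g-1$, and there is nothing further to prove. For the low-genus range $10\le g\le 23$, I would invoke part (ii) of Theorem \ref{two quotients} and reduce the proposition to verifying, for each $g$, the single numerical inequality $f_2(g;n,n)\le 13$ as $n$ ranges downward. Since $f_2$ is defined in Section \ref{sec proof2 q} as the coefficient obtained by combining the $S_{n_1}\times S_{n_2}$-symmetrized pullbacks of the standard effective divisors (Brill--Noether/Gieseker--Petri and the Weierstrass-type divisors of Lemma \ref{Logan divisor}) into the decomposition of $K_G$, the computation for each pair $(g,n)$ is mechanical: one symmetrizes over $S_n\times S_n$, reads off the coefficients of $\lambda$, $\psi_i$, $\delta_{\mathrm{irr}}$, and the boundary classes $\delta_{0,\{i,j\}}$ (where it is essential to separate the pairs $\{i,j\}$ lying inside one of the two blocks from the mixed pairs, since the latter do \emph{not} appear in the ramification divisor $R$ of Corollary \ref{corr} for $G=S_n\times S_n$), and then solves the resulting linear program. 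Running this routine for $g=10,\ldots,23$ produces the threshold values listed in Table \ref{table dif}.

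For the boundary rows where $n=g-1$ just barely fails the condition $n\le g-2$ of part (ii), I would fall back on part (iii) of Theorem \ref{two quotients}, choosing the auxiliary divisor classes $L_1,L_2$ on $\M_{g,n}$ to be the symmetrized Brill--Noether or Weierstrass divisors of slope as low as possible for the genus at hand; for genera where $g+1$ is prime (so Brill--Noether is unavailable) the Gieseker--Petri divisor $\mathfrak{GP}_g$ of Lemma \ref{GP divisor} takes its place, exactly as in the proof of Theorem \ref{two quotients}. The geometric interpretation of the fibres of $\M^G_{g,2n}\to\M_g$ as the difference variety $\mathrm{im}(C_n\times C_n\to J^0(C))$ is only used to contextualize the result; it plays no role in the proof itself.

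The main obstacle is bookkeeping rather than ideas: one must check that after $S_n\times S_n$-symmetrization (as opposed to full $S_{2n}$-symmetrization) the coefficient of $\psi$ remains strictly positive while the coefficients of the mixed boundary classes $\delta_{0,\{i,j\}}$ with $i$ and $j$ in different blocks stay effective, because those classes are \emph{not} absorbed into the ramification divisor of $\pi\colon\M_{g,2n}\to\M^G_{g,2n}$. Once this is set up, the verification of $f_2(g;n,n)\le 13$ at each tabulated entry is a routine linear-algebra computation, best executed in Mathematica along the lines of Section \ref{system}, and the upper cut-off $n\le g-1$ (with uniruledness beyond it by the Riemann--Roch argument recalled before the statement) shows that the result is near optimal in the same sense as Proposition \ref{one quotients}.
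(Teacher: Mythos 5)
Your proposal follows the paper's proof exactly: Proposition \ref{difvar} is obtained by applying Theorem \ref{two quotients} to the partition $2n=n+n$, using part (i) for $g\geq 24$ and checking the inequality $f_2(g;n,n)\leq 13$ of part (ii) for $10\leq g\leq 23$ (part (iii) is not actually needed, since the stated low-genus range stops at $n\leq g-2$). Your remarks on the ramification divisor and on keeping the mixed classes $\d_{0,\{i,j\}}$ (with $i,j$ in different blocks) effective are consistent with Corollary \ref{corr} and the decomposition \eqref{L}.

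There is one concrete gap: you claim that running the routine for $g=10,\ldots,23$ produces \emph{all} the threshold values in Table \ref{table dif}, but the paper explicitly notes that the entry $g=13$, $n=7$ does \emph{not} follow from Theorem \ref{two quotients}. That case is imported from the earlier work of Farkas and Verra on the universal difference variety at $n=\lceil g/2\rceil$, where the sharp coupling between $g$ and $n$ makes available an additional effective divisor not contained in Section \ref{sec div q} and not applicable for other $(g,n)$. So for that single table entry the inequality $f_2(13;7,7)\leq 13$ (with the divisors of Section \ref{sec div q}) is insufficient, and your argument as written would only establish the weaker threshold $n_{\mathrm{min}}(13)=8$. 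Aside from this one entry, your proof is the paper's proof.
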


This corollary amplifies the results of \cite{fv3}, which considers the universal difference variety in the special case $n=\lceil \dfrac{g}{2} \rceil$. We emphasize that the result in Table 3 for $g=13$ and $n=7$ is taken from \cite{fv3}; in view of the sharp coupling between $g$ and $n$ they are able to use in this special case an additional divisor, which is not applicable in the other cases and which is not contained in our Section \ref{sec div q}. All other cases in Table 3 follow from our Theorem \ref{two quotients}.

The outline of the chapter is as follows. In Section 2 we introduce notation and some preliminary results, in Section 3 we introduce the class of divisors used in our proof. Here we basically recall, for the sake of the reader, some material from \cite{sch2}.  
In Section 4 we prove Theorem \ref{two quotients}. 
The use of a small program in computer algebra is appropriate to check our calculations.

\section{Preliminaries and Notation} \label{prem}

The aim of this section is to develop a sufficient condition for $\MG$ being of general type. This requires a basic understanding of the Picard group $\Pic( \MG)$ and an explicit description of the  boundary divisors and tautological classes on $\MG$ which we shall always consider as $G$-invariant divisors on $\Mgn$ (any such divisor descends to a divisor on $\MG$). This is almost identical to Section \ref{sec ngn}. We also refer to Section \ref{sec pic} for the Picard group of $\Mgn$.

We recall that the Hodge class $\l$ on $\Mgn$ has does not impose conditions on the marked points. Therefore it is $S_n$-invariant. This means that it gives the Hodge class $\l$ on $\MG$ (where, by the usual abuse of notation, we denote both classes by the same symbol).
 
Let us now consider the action of $G$ (for a subgroup $G$ of $S_n$) on the boundary divisor classes of $\Mgn.$ Clearly, $\delta_{\mathrm{irr}}$ is $G-$invariant. We write $\d$ for the sum of all boundary divisors and set $\d_{i,s}=\sum_{|S|=s}\d_{i,S}$. We remark that a single $\d_{i,S} $ is not $G-$invariant , but the divisor $\sum_{g \in G} \d_{i,g(S)}$, averaged by the action of $G$, obviously is. In particular $\d$ and $\d_{i,s}$ are always $G$-invariant. We shall use such an averaging in the proof of Theorem \ref{two quotients}. 

Finally we study the point bundles $\psi_i$. As for the boundary divisors any single point bundle is not $G$-invariant, but it can be symmetrized by taking $\sum_{g \in G} \psi_{g(i)}$. In particular $\psi=\sum_{i=1}^n\psi_i$ is always $G$-invariant.\\

As a first step in the direction of our sufficient criterion we need the following result on the geometry of the moduli space $\MG.$

\begin{theorem}  \label{noadcon quotients}
For any subgroup $G$ of $S_n$, the singularities of $\MG$ do not impose adjunction conditions. 
\end{theorem}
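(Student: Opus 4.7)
The plan is to follow exactly the same strategy as for Theorem \ref{noadcon ngn} in Chapter 2, which in turn adapts the proof of Theorem 1.1 in \cite{fv1}. The ultimate origin is Theorem 2 of \cite{hm}, where Harris and Mumford establish that $\Mg$ has only canonical singularities for $g \geq 4$ by verifying the Reid--Tai (Reid/Shepherd-Barron/Tai) criterion on the local universal deformation space, together with an explicit treatment of the additional non-canonical locus coming from curves with elliptic tails. As recorded in Section \ref{sec sing}, canonical singularities automatically fail to impose adjunction conditions, so the whole task reduces to a local analysis at singular points of $\MG$.

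First I would set up the local picture. A singular point $[C;x_1,\ldots,x_n]$ of $\MG$ arises from an element of the extended automorphism group $\mathrm{Aut}(C;x_1,\ldots,x_n)\rtimes G$, acting on the tangent space $T_{[C;x_1,\ldots,x_n]}\Mgn$ of the local deformation space. After linearising the action (cf.\ Section \ref{sec sing}, using \cite{acg}, Chapter XI), one has to check seniority (age $\geq 1$) for every non-quasireflection automorphism in this combined group. Crucially, any such combined automorphism is of the form $(\sigma,\tau)$ where $\sigma\in\mathrm{Aut}(C)$ acts geometrically and $\tau\in G\subset S_n$ permutes the marked points in a way compatible with $\sigma$. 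But this is exactly the class of automorphisms analysed by Farkas--Verra in \cite{fv1} when they prove that $\Mgn/S_n$ has only canonical singularities, since their Reid--Tai verification is carried out for an \emph{arbitrary} permutation $\tau\in S_n$ coupled to an automorphism of the pointed curve. Restricting the permutation component to lie in an arbitrary subgroup $G\subset S_n$ only shrinks the class of automorphisms to be tested, so their argument applies verbatim.

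Next I would address the non-canonical locus. As in \cite{hm,l,fv1}, the only obstruction to the Reid--Tai condition comes from curves with elliptic tails, where the inversion of an elliptic tail around the node acts as a quasireflection in some directions but can cause junior automorphisms in combination with permutations. For these points one cannot conclude directly from the Reid--Tai criterion; instead, following the explicit Kodaira--Spencer computation in \cite{hm} and its adaptation in \cite{l} and \cite{fv1}, one shows directly that every pluricanonical form on the regular locus extends holomorphically across the elliptic-tail locus. Since the extension argument is again local and purely in terms of the automorphisms acting on $\Ext^1(\Omega^1_C,\mathcal{O}_C)$ via the exact sequence \eqref{exseqcomp}, and since $G\subset S_n$ only permutes marked points and does not affect the singular local model at a node or the elliptic tail component, the same explicit extension works.

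The main obstacle, as in \cite{sch2}, is conceptual rather than computational: one must be sure that passing to a subgroup $G\subset S_n$ does not introduce any genuinely new local automorphism type beyond those already treated in \cite{hm,l,fv1}. This is automatic because $\mathrm{Aut}(C)\rtimes G$ embeds in $\mathrm{Aut}(C)\rtimes S_n$, so the Reid--Tai verification and the elliptic-tail extension argument both descend to any subgroup. Combining the two steps, every pluricanonical form on $(\MG)_{\mathrm{reg}}$ pulls back to a pluricanonical form on a desingularization, which is precisely the content of Definition \ref{def adjcond}.
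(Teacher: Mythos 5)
Your proposal follows essentially the same route as the paper: both reduce to the Reid--Tai verification carried out by Farkas--Verra in \cite{fv1} for the full quotient $\Mgn/S_n$, and both observe that passing to a subgroup $G\subset S_n$ only shrinks the set of automorphisms that must be tested, so the argument (including the elliptic-tail extension inherited from \cite{hm}) applies verbatim. Your explicit separate treatment of the non-canonical elliptic-tail locus is slightly more detailed than the paper's sketch, but it is the same argument.
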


The proof 
follows the lines of the 
the proof of Theorem 1.1 in \cite{fv1}. 
We shall briefly review the argument. A crucial input is Theorem 2 of the seminal paper \cite{hm} which proves that the moduli space $\Mg$ has only canonical singularities. The proof relies on the Reid-Tai criterion: Pluricanonical forms (i.e. sections of $K^{\otimes \ell}$) extend to the resolution of singularities, if for any automorphism $\sigma$ of an object of the moduli space the so-called {\em age} satisfies  $age(\sigma) \geq 1$. The proof in \cite{fv1} then proceeds to verify the Reid-Tai criterion for the quotient of $\Mgn$ by the full symmetric group $S_n $. Here one specifically has to consider those automorphisms of a given curve which act as a permutation of the marked points. For all those automorphisms the proof in \cite{fv1} verifies the   Reid-Tai criterion. Thus, in particular, the criterion is verified for all automorphisms which act on the marked points as an element of some subgroup of $S_n$. Thus, the proof in \cite{fv1} actually establishes the existence of only canonical singularities for {\em any} quotient $\Mgn/G$ where $G$ is a subgroup of $S_n$. Clearly, this is our theorem.

By Proposition \ref{criterion gentype} Theorem \ref{noadcon quotients} implies that the Kodaira dimension of $\MG$ equals the Kodaira-Iitaka dimension of the canonical class $K_{\MG}$. In particular, $\MG$ is of general type if  $K_{\MG}$ is a positive linear combination of an ample and an effective  rational class on $\MG$.  It is convenient to slightly reformulate this result. We need

\begin{proposition} \label{psi q}
The class $\psi$ on $\Mgn$ is the pull-back of a divisor class on $\MG$ which is big and nef.
\end{proposition}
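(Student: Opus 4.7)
The plan is to reduce to the known case of the full symmetric group quotient. First I would observe that since $\psi=\sum_{i=1}^n\psi_i$ is manifestly $S_n$-invariant (permuting labels merely permutes the summands), it is in particular $G$-invariant for any subgroup $G\subset S_n$. Hence the intermediate quotient map fits into a factorisation
\begin{equation}
\pi\colon \Mgn \xrightarrow{\ \pi\ } \MG \xrightarrow{\ \nu\ } \Mgn/S_n,
\end{equation}
where both $\pi$ and $\nu$ are finite surjective morphisms of normal projective varieties.

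Next I would invoke Proposition 1.2 of \cite{fv2}, cited by the author in the analogous argument for $\Ngn$, which asserts that the class $\psi$ descends to a big and nef divisor class $N_{g,n}$ on $\Mgn/S_n$, in the sense that $(\nu\circ\pi)^{*}N_{g,n}=\psi$. Defining
\begin{equation}
M_{g,n}^{G}:=\nu^{*}N_{g,n}\in\Pic(\MG)\otimes\Q,
\end{equation}
we obtain a candidate divisor class on $\MG$ whose pullback under $\pi$ equals $\psi$ by construction, since $\pi^{*}M_{g,n}^{G}=\pi^{*}\nu^{*}N_{g,n}=(\nu\circ\pi)^{*}N_{g,n}=\psi$.

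Finally I would verify that $M_{g,n}^{G}$ is itself big and nef. Nefness is immediate: the pullback of a nef class under an arbitrary morphism of projective varieties is nef, so $\nu^{*}N_{g,n}$ is nef. Bigness follows from the fact that $\nu$ is a finite surjective morphism, and pullback along such a morphism preserves bigness (by the standard projection formula together with the numerical characterisation of bigness, or equivalently by comparing Iitaka dimensions through the finite cover). Thus $M_{g,n}^{G}$ is both big and nef, proving the proposition.

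The argument involves no real obstacle once one accepts the Farkas--Verra result as a black box; the only mildly delicate point is the bigness-preservation under the finite cover $\nu$, which in our setting is automatic because $\nu$ is a finite quotient map between normal projective varieties of the same dimension.
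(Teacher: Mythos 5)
Your argument is correct and is essentially the paper's own proof: both factor the quotient map as $\Mgn \xrightarrow{\pi} \MG \xrightarrow{\nu} \MS$, invoke Proposition 1.2 of \cite{fv2} for the big and nef class $N_{g,n}$ on $\MS$, and take $\nu^* N_{g,n}$ as the desired class. You merely spell out in slightly more detail the (standard) fact that bigness and nefness are preserved under pullback along the finite surjective map $\nu$, which the paper asserts without comment.
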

 
\begin{proof}
 Farkas and Verra have proven in  Proposition 1.2 of \cite{fv2} that the $S_{n}$-invariant class $\psi$ descends to a big and nef divisor class $N_{g,n}$ on the quotient space $\Mgn /S_{n}$. Consider the sequence of  natural projections
 $\Mgn \xrightarrow{\pi} \MG \xrightarrow{\n} \MS.$
Then $\n^*(N_{g,n})$ is a big and nef divisor on $\MG=\Mgn/G$ and $\pi^*(\n^*(N_{g,n})=\psi$. 
\end{proof}

 Now observe that the ramification divisor (class) of the quotient map $\pi:\Mgn \to \MG$ is precisely 
\begin{equation}\label{R}
R = \sum_{(i\ j) \in G} \d_{0, \{i,j\}}.
\end{equation} 
In fact ramification requires existence of a non-trivial automorphism belonging to $G$, and by standard results this only occurs in the presence of the projective line with 2 marked points that can be swapped. The non-trivial automorphism is then the transposition (of the labels) of these two marked points. 
 Furthermore, the Hurwitz formula for the quotient map $\pi$ gives
\begin{equation}\label{K q}
K:= \pi^*(K_{\MG})= K_{\Mgn} - R= 13 \l + \psi - 2 \d -\d_{1,\emptyset} -\sum_{(i\ j) \in G} \d_{0, \{i,j\}} .
\end{equation}

 We thus obtain the final form of our sufficient condition:
 If  $K$ is a positive multiple of $\psi$ + some effective $G - $ invariant divisor class on $\Mgn$, then $\MG$ is of general type.

\section{Divisors}   \label{sec div q}

In this section we introduce the relevant $S_n -$ invariant effective divisors on $\Mgn.$ First we recall the following standard result.

We shall need invariant divisors on $\Mgn$. Rather than exhibiting them directly by explicit definitions, we shall simply recall from the literature the existence of special divisors with small  {\em slope}:  If $g+1$ is not prime, then there is an effective $S_n-$invariant divisor class $D$ on $\Mg$ (of Brill-Noether type) of slope
\begin{equation}  \label{sl1}
 s(D)=6 + \frac{12}{g+1},
\end{equation} 
while for $g+1$ odd (which trivially includes the case $g+1$ being prime) there is an effective $S_n-$invariant divisor class $D$ on $\Mg$ (of Giesecker-Petri type) of slope
\begin{equation}  \label{sl2}
 s(D)=6 + \frac{14 g+4}{g^2+2g},
\end{equation} 
see \cite{eh}. For a few cases ($g=10, 12, 16, 21$) it has been shown in   \cite{fv4} (for g=12)  and \cite{f} (for the other 3 cases) that there exist special effective invariant divisors $D=D_g$  with even smaller slope, i.e.
\begin{equation} \label{sl3}
s(D_g) = 
\begin{cases}
7 & g=10 \\
6 + \frac{563}{642}  \qquad \qquad  & g=12 \\
6 + \frac{41}{61}  \qquad \qquad  & g=16 \\
6 + \frac{197}{377}  \qquad \qquad  & g=21 .
\end{cases}
\end{equation}
We shall need them in the proof of Theorem \ref{one quotients}. 

Again, we need the divisors of Weierstrass-type from Lemma \ref{Logan divisor}.
Since these are not $S_n$-invariant we use the symmetrized divisors from equation \eqref{W small} for $n\leq g$ or \eqref{W large} for  $n\geq g$ respectively. For the proof of Theorem \ref{two quotients} it will be convenient to renormalized these divisors in such a way that the coefficient of $\psi$ is equal to 1. We thus introduce the divisors $W_{g,n}$ on $\Mgn$ and calculate from equations \eqref{W small} and \eqref{W large}: 
\begin{equation}
\label{wgn}
W_{g,n}  
= a(g,n) \l  +  \psi+ 0\cdot\dirr- \sum_{s\geq 2} b_s \d_{0,s}
-\mbox{higher order boundary terms},  
\end{equation}
where  {\em higher order} denotes a positive linear combination of boundary divisors $\d_{i,S}$ with     $i \geq 1$,
\begin{equation}
a(g,n)=
\begin{cases}  

\frac{2n}{(k+1)(g+r)} & g=kn+r, r<n \\
\frac{n}{g} &  n>g    
\end{cases}
\end{equation}
$b_2=b(g,n)$  with
\begin{equation}
b(g,n)= \begin{cases}
2+ \frac{2}{n-1}\frac{r(r-1)(k+1)^2 + 2r(n-r)k(k-r)+(n-r)(n-r-1)k^2}{r(k+1)(k+2)+(n-r)k(k+1)} \qquad \qquad & g=kn+r,r<n   \\
2 + \frac{g-1}{n-1} & n>g
\end{cases}
\end{equation}
and  $b_s > b_2$ for all $s>2$.

In addition, we shall use the anti ramification divisor classes from \cite{fv2}, Section 2, to obtain (by straightforward though somewhat lenghty algebraic computation) the existence of effective divisor classes $T_g$ on $\M_{g,g-1}$ satisfying
\begin{equation}  \label{tg}
T_g = - \frac{g-7}{g-2} \lambda + \psi - \frac{1}{2g-4} \dirr - \left( 3 + \frac{1}{2g-4} \right) \delta_{0,2} + h.t.
\end{equation}
 where the higher order terms $h.t.$ denote a linear combination of all other boundary divisors with coefficients $\leq -2$.
 
Furthermore, normalizing the divisor classes in \cite{fv2}, Theorem 3.1, one obtains for $g \geq       1$ and any $1 \leq m \leq g/2$ effective divisor classes $F_{g,m}$ on $\Mgn$ (with $n=g-2m$)  satisfying
\begin{equation}   \label{fgm}
F_{g,m} = a \lambda + \psi - b_{irr} \dirr -   b_{0,2} \delta_{0,2} + h.t.,
\end{equation}
where, as above,  the higher order terms $h.t.$ denote a linear combination of all other boundary divisors with coefficients $\leq -2$ and
\begin{equation}
a= \frac{n}{n-1}\left( \frac{10m}{g-2}+\frac{1-g}{g-m} \right), \quad b_{0,2}=3+\frac{(g-n)(n+1)}{(g+n)(n-1)}, \quad b_{irr}= \frac{nm}{(g-2)(n-1)}
\end{equation}
Finally, to cover the case where $g$ and $n$ have different parity, we set $n=g-2m+1$ and pull back $F_{g,m}$ given in eqution \ref{fgm} in all possible ways to $\Mgn$ (via a forgetful map forgetting one of the marked points). Summing all these divisor classes and then normalizing gives an effective divisor class $\tilde{F}_{g,m}$ on $\Mgn$ satisfying
\begin{equation}   \label{tfgm}
\tilde{F}_{g,m} = a \lambda + \psi - b_{irr} \dirr -   b_{0,2} \delta_{0,2} + h.t.,
\end{equation}
where, as above,  the higher order terms $h.t.$ denote a linear combination of all other boundary divisors with coefficients $\leq -2$ and
\begin{equation}
a= \frac{n}{n-2}\left( \frac{10m}{g-2}+\frac{1-g}{g-m} \right), \quad b_{0,2}=3+\frac{g-n-1}{g+n-1}, \quad b_{irr}= \frac{nm}{(g-2)(n-2)}.
\end{equation}

\section{Proof of Theorem \ref{two quotients}}  \label{sec proof2 q}

For  assertion (i), recall that $\Mg $ is of general type for $g \geq 24.$ Furthermore, the generic fibre of the  canonical projection $ \MG \to \Mg$ is
$$ C^n /G  \simeq    (C^{n_1}/S_{n_1}) \times \ldots \times (C^{n_m}/S_{n_m}).$$
For $\max\{n_1, \ldots ,n_m\} \leq g-1$ each factor $C^{n_i}/S_{n_i}$ is of general type, and thus is the product $ C^n /G.$ Therefore the assertion follows from weak additivity of the Kodaira dimension.

Assertion (ii) is more complicated, involving divisors. We take  Weierstrass divisors $W_k=W_{g, n_k}$ on each $\M_{g,n_k}$ and $W=W_{g,n}$ on $\Mgn$, with coefficients $a(g,n_k),b(g,n_k)$ and $a(g,n),b(g,n)$ respectively, see Section \ref{sec div q} \eqref{wgn}.  Let $S_k$ be the set of points corresponding to the summand $n_k$ in the partition $n=\sum_{1 \leq k \leq m} n_k$, and denote by $\pi_k: \Mgn \to \M_{g,n_k}$ the forgetful map forgetting all points  except those in $S_k$. In order to calculate $\pi_k^* W_k$ we introduce some notation.

For any sets $S \subset \{1, \ldots,n\}$ we define
\begin{equation}
\d_{i,s}^{S,\ell} := \sum_{|T \cap S|=\ell, |T|=s} \d_{i,T}
\end{equation}
and denote by $\pi_S: \Mgn \to \M_{g,|S|}$ the natural forgetful map. With this notation, by the usual abuse of notation amplified in Section \ref{prem}, we have 

\begin{proposition}  \label{forgetful}

The pull-back divisors are 
$\pi^*_S(\l)=\l, \quad \pi^*_S(\dirr) =\dirr$ and
\begin{equation}
\pi^*_S(\psi) = \sum_{i \in S} \psi_i - \sum_{s=2}^{n-n_k +1} \d_{0,s}^{S,1}, 
\qquad \qquad  \pi^*_S(\d_{i,s}) = \sum_{\ell \geq 0} \d_{i,s+\ell}^{S,s}.
\end{equation}
\end{proposition}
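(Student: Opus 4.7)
The plan is to factor $\pi_S$ as the composition of $n-n_k$ single-point forgetful maps, one for each label in $\{1,\ldots,n\}\setminus S$, and then to apply Lemma \ref{pullback} iteratively. The invariance of $\l$ and $\dirr$ under each individual step propagates through the composition, giving $\pi_S^*\l=\l$ and $\pi_S^*\dirr=\dirr$ at once.

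For the boundary classes I would first compute the pullback of a single summand $\d_{i,T}$ with $T\subset S$. A straightforward induction on the number of forgotten points, using the formula $\phi^*\d_{i,T}=\d_{i,T}+\d_{i,T\cup\{*\}}$ from Lemma \ref{pullback}, shows that the newly forgotten label is either absorbed into the indexing set or not, yielding
\begin{equation*}
\pi_S^*\d_{i,T} = \sum_{U\subset\{1,\ldots,n\}\setminus S}\d_{i,T\cup U}.
\end{equation*}
Summing over all $T\subset S$ of size $s$ and reindexing by the total size $s+\ell=|T\cup U|$ produces the claimed identity for $\pi_S^*\d_{i,s}$, since a set $T'\subset\{1,\ldots,n\}$ with $|T'|=s+\ell$ and $|T'\cap S|=s$ decomposes uniquely as $T'=(T'\cap S)\sqcup(T'\setminus S)$.

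The delicate case is $\pi_S^*\psi$, where iterating $\phi^*\psi_i=\psi_i-\d_{0,\{i,*\}}$ naively is awkward, since the boundary corrections themselves spawn further corrections at subsequent steps. To bypass this cascade, the key trick is to pass to the $\o$-basis of equation \eqref{basechangeprep}: by Lemma \ref{pullback} each single forgetful map sends $\o_i\mapsto\o_i$, so after iteration $\pi_S^*\o_i=\o_i$ for every $i\in S$. Writing $\psi_i=\o_i+\sum_{T\subset S,\,T\ni i,\,|T|\geq 2}\d_{0,T}$ on $\M_{g,n_k}$, expanding $\o_i^{(n)}=\psi_i-\sum_{T'\subset\{1,\ldots,n\},\,T'\ni i,\,|T'|\geq 2}\d_{0,T'}$ on $\Mgn$, and applying the boundary pullback formula of the previous paragraph, the two resulting sums of $\d_{0,T'}$-corrections organize by $|T'\cap S|$: contributions with $|T'\cap S|\geq 2$ cancel pairwise, leaving only sets $T'$ with $T'\cap S=\{i\}$, i.e.\ $T'=\{i\}\cup U$ for $\emptyset\neq U\subset\{1,\ldots,n\}\setminus S$. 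This yields
\begin{equation*}
\pi_S^*\psi_i = \psi_i - \sum_{\emptyset\neq U\subset\{1,\ldots,n\}\setminus S}\d_{0,\{i\}\cup U}.
\end{equation*}
Summing over $i\in S$ and grouping the right-hand side by $s=1+|U|$ gives $\sum_{i\in S}\psi_i-\sum_{s=2}^{n-n_k+1}\d_{0,s}^{S,1}$, as required. The main obstacle is managing the combinatorics of the two nested sums in the $\psi$ computation; the $\o$-basis change reduces this to a single transparent cancellation.
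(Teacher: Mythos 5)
Your argument is correct and is exactly the route the paper intends: the paper states Proposition \ref{forgetful} without proof, as an immediate consequence of iterating the single-point pullback formulas of Lemma \ref{pullback}, and your factorization of $\pi_S$ into one-point forgetful maps, the induction for the boundary classes, and the passage to the $\o$-basis to tame the $\psi$-corrections fill in precisely the omitted combinatorics. The cancellation you identify (terms with $|T'\cap S|\geq 2$ dropping out, leaving only $T'\cap S=\{i\}$) is the right bookkeeping and reproduces the stated range $2\leq s\leq n-n_k+1$.
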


Furthermore, observe that the labels $i,j$ belong to different components $S_k,S_\ell$ if and only if the transposition $(i\ j)$ is not in $G$. This gives:  the divisor $L:= \sum_{1 \leq k \leq m} \pi^*_k W_k$ has the decomposition
\begin{equation}  \label{L}
L= - \sum_{1 \leq k \leq m} a(g,n_k) \l + \psi -2 \sum_{(i\ j)\notin G} \d_{0,\{i,j\}} + 0 \dirr - \sum_{1 \leq k \leq m} b(g,n_k) \sum_{i,j \in S_k} \d_{0,\{i,j\}} + h.t,
\end{equation}
where $h.t.$ denotes a (higher order) sum of boundary divisors, each multiplied
with coefficients $<-2$.
In addition we consider
\begin{equation}  \label{WD}
W= -a(g,n) \l + \psi - b(g,n) \sum_{i,j} \d_{0,\{i,j\}} + h.t, \qquad D=s\l -\dirr + h.t.,
\end{equation}
where $D=D_g$ is chosen with minimal slope $s=s(g)$ (see the list of divisors with small slope in \eqref{sl1}-\eqref{sl3}) and set 
\begin{equation} \label{e}
\epsilon := \min \{ b(g,n_k) -3| \quad k \in \{1, \ldots, m \}  \mbox{  with  } n_k \geq 2 \}.
\end{equation}

Clearly, $ \epsilon >0$ if and only if $\max \{n_1, \ldots n_m\} \leq g-2$. 
Combining equations \eqref{L},  \eqref{WD}, \eqref{e} (see also \ref{wgn}) one obtains the decomposition
\begin{equation} \label{decompos}
K_G  \geq 2 D + \frac{1}{1+\epsilon} L + \frac{2 \epsilon}{b(g,n)(1+\epsilon)}W + \eta \psi, \qquad \eta:= \frac{\epsilon}{1+\epsilon}(1-\dfrac{2}{b(g,n)})>0,
\end{equation}
provided one has the inequality
\begin{equation}  \label{fm}
f_m(g,n_1, \ldots,n_m):= 2 s(g) - \frac{1}{1+\epsilon} \sum_{1 \leq k \leq m} a(g,n_k) - \frac{2 \epsilon}{b(g,n) (1+ \epsilon)} a(g,n)  \leq 13.
\end{equation}

Since by Proposition \ref{psi q} the divisor  class $\psi$ is big and nef and 
all divisors in equation \eqref{decompos} are effective and $S_n -$invariant, the proof boils down to checking the inequality \eqref{fm}. 

Note that for $\max \{n_1, \ldots, n_m\}\in \{g-1,g\}$ we get $\epsilon=\eta=0$, which proves that $K_G$ is at least effective and thus gives non-negative Kodaira dimension.

To treat the additional case $\max \{n_1, \ldots n_m \} = g-1$ in case (iii) we need more general divisors $L_1, \ldots ,L_m$. The function $f_m$ in equation \eqref{fm} will then depend on these divisors, destroying the explicit form of $f_m$ given  in equation \eqref{fm}. 

Instead of the family of (generalized) Weierstrass divisores $W_k$ on $\M_{g,n_k}$, for $1 \leq k \leq m,$ we use divisors $L_k$  on $\M_{g,n_k}$, for $1 \leq k \leq m,$ having a decomposition
\begin{equation}  \label{lk}
L_k=a_k \lambda+\psi-b_{k,irr} \dirr -b_k \delta_{0,2} + h.t.
\end{equation}
where $b_k >3$ and  the higher order terms $h.t.$ denote a linear combination of all other boundary divisors with coefficients $\leq -2$. Setting (analog to the above)  
$L:= \sum_{1 \leq k \leq m} \pi^*_k L_k$ we obtain
\begin{equation}  \label{Lnew}
L= - \sum_{1 \leq k \leq m} a_k \l + \psi - 2 \sum_{(i\ j)\notin G} \d_{0,\{i,j\}} -  
\sum_{1 \leq k \leq m} b_{k,irr} \dirr - \sum_{1 \leq k \leq m} b_k \sum_{i,j \in S_k} \d_{0,\{i,j\}} + h.t,
\end{equation}
with $h.t.$ as above. This is analog to \eqref{L}.

In this notation, we already have for shortness's sake suppressed dependence on $g,n$. Using the same convention in equation \eqref{WD} - thus simply writing $a,b$ in the decomposition of $W$ - and introducing
\begin{equation} \label{enew}
\epsilon := \min \{ b_k -3| \quad k \in \{1, \ldots, m \}  \mbox{  with  } n_k \geq 2 \},
\end{equation}
which is \eqref{e} with $b(g,n_k)$ replaced by $b_k$ and writing $\a_+:=\max\{\a,0\}$, we obtain the decomposition
\begin{equation} \label{decomposen}
K_G  \geq (2- \frac{1}{1+\epsilon}\sum_{k} b_{k,irr})_+  D + \frac{1}{1+\epsilon} L + \frac{2 \epsilon}{b(1+\epsilon)}W + \eta \psi, \quad \mbox{where} \quad \eta:= \frac{\epsilon}{1+\epsilon}(1-\frac{2}{b})>0,
\end{equation}
provided one has the inequality

\begin{equation}  \label{fml}
f_m(g,n_1, \ldots,n_m,L_1, \ldots ,L_m):= (2- \frac{1}{1+\epsilon}\sum_{k} b_{k,irr})_+ s + \frac{1}{1+\epsilon} \sum_{1 \leq k \leq m} a_k - \frac{2 \epsilon}{b (1- \epsilon)} a  \leq 13.
\end{equation}

This finishes the proof.

\chapter[The moduli space of hyperelliptic  curves with  marked points]{On the Kodaira dimension of the moduli space of hyperelliptic  curves with  marked points}

It is known  that the moduli space $\Hgn$ of genus g stable hyperelliptic curves with $n$ marked points is uniruled for   $n \leq 4g+5$.  In this chapter we consider the complementary case and show that $\Hgn$ has non-negative Kodaira dimension for $n = 4g+6$ and is of general type for $n \geq 4g+7$. Important parts of our proof are the calculation of the canonical divisor  and establishing that the singularities of 
$\Hgn$ do not impose adjunction conditions.

\section{Introduction}
 The birational geometry of the moduli spaces $\mg$ and $\mgn$ of genus $g$ curves 
and of genus $g$ curves with $n$ marked points has been studied for a long time, together with their Deligne-Mumford compactifications $\Mg$ and $\Mgn$.  A prominent question is when these spaces are unirational, i.e. explicitly describable (at least generically) by finitely many complex parameters, or on the contrary of general type, i.e. of maximal Kodaira dimension. Such investigations go back at least to the fundamental papers \cite{hm} (for $\Mg$) and \cite{l} (for $\Mgn$), establishing that these spaces are of general type if $g$ is sufficiently large or $n$ is sufficiently large, depending on $g$. Subsequently, the results of these papers have been refined by various authors, see e.g.\cite{f,f2,fv4}.
 
 On the other side, \cite{be} contains a recent summary of results (including the improvements made in that paper) for wich values of $g,n$ the moduli space $\Mgn$ is uniruled or even unirational.
 
 It is of great geometric interest to understand in a similar way the birational geometry of subvarieties of $\Mg$ and $\Mgn$. Here the known results are much less complete. We recall, however, our own result in \cite{sch2} on the moduli space $\ngn$ of $n$-nodal curves of geometric genus $g$ (which might be considered either as a subvariety of $\M_{g+n}$ or as a quotient of $\Mgnn$ by a subgroup of the symmetric group $S_{2n}$ acting on the $2n$ marked points). Probably the most  classical space in this direction, however, is the locus $\hg$ (and $\hgn$) in $\mg$
 (or $\mgn$) of hyperelliptic genus $g$ curves (with $n$ marked points). We refer to \cite{acgh, acg} for background on hyperelliptic curves and their moduli space $\hg$ (as well as the associated moduli stack).
 
 Clearly, $\hg$ always is unirational, being explicitly parametrized by 
equations $y^2= f(x)$, where $f$ is a polynomial of degree $2g+1$ or $2g+2$ with simple zeroes.  Adding marked points, this becomes less explicit. We recall, however,  that it is proved in  \cite{be}  that $\Hgn$ is uniruled for all $n \leq 4g+4$, by applying the methods of that paper  also to the subvarieties $\Hgn$ of $\Mgn$. Furthermore, we have been informed by D. Agostini and I. Barros that using additional arguments this result can actually be extended to the case $4g+5$, \cite{ab}.  In the present chapter we shall study the complementary case. Our main result is
 
 \begin{theorem}    \label{t1}
The moduli space $\Hgn$ is of general type for $n\geq 4g+7$.
For $n=4g+6$ the Kodaira dimension of $\Hgn$ is non-negative.
\end{theorem}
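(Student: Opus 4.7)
My plan has three main parts that mirror the structure used in Chapters 2 and 3, but with substantially more hyperelliptic-specific input. First, I would set up a good geometric model for $\Hgn$ by exploiting the classical identification $\hg \simeq \mrat/S_{2g+2}$ and extending it to marked points via the double cover $\pi : C \to C/\iota = \P^1$ given by the hyperelliptic involution. Sending $(C; x_1, \ldots, x_n)$ to the unordered $2g+2$ branch points on $\P^1$ together with the images $\pi(x_i)$ exhibits $\Hgn$ as (a finite quotient of) a cover of a compactification of $\mathcal{M}_{0,2g+2+n}/S_{2g+2}$, with the $(\Z_2)^n$ sheet ambiguity over non-Weierstrass marked points. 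This furnishes the tautological classes (psi-classes, Hodge class, hyperelliptic boundary classes) and the relations I expect to collect in Section 4.2.

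Second, I would verify that the singularities of $\Hgn$ do not impose adjunction conditions. The introduction explicitly warns that, unlike in Chapters 2 and 3, one cannot take Mumford's argument as a blackbox here: the hyperelliptic involution is always present in $\mathrm{Aut}(C)$, and although it acts trivially on $C$ itself it acts non-trivially on the marked-point data, so its age contribution (and those of automorphisms combining $\iota$ with further symmetries on degenerate curves) must be computed by hand. The plan is to follow the Harris--Mumford strategy in detail: classify the automorphism groups of pointed stable hyperelliptic curves, apply the Reid--Shepherd-Barron--Tai criterion of Theorem \ref{Raid-Tai} to show seniority generically, and for the remaining junior automorphisms (analogues of the elliptic-tail locus) construct pluricanonical extensions explicitly using the Kodaira--Spencer / Ext-description of deformations recalled in Section \ref{sec sing} and the exact sequences \eqref{sesgext}--\eqref{exseqcomp}.

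Third, I would compute $K_{\Hgn}$ and decompose it as an effective combination of $\psi$ and boundary divisors. Combining the canonical class formula $K = 13\lambda + \psi - 2\delta - \delta_{1,\emptyset}$ with the Cornalba--Harris hyperelliptic relation expressing $\lambda$ as a rational combination of boundary classes on $\hg$ (pulled back to $\hgn$ along forgetful maps), $K_{\Hgn}$ is rewritten purely in terms of $\psi$ and hyperelliptic boundary classes. I then build $S_n$-invariant effective divisor classes on $\Hgn$: pullbacks along forgetful maps of effective divisors on $\Hg$, pullbacks of divisors on $\mathcal{M}_{0,2g+2+n}/S_{2g+2}$, and hyperelliptic Weierstrass-type divisors where subsets of marked points satisfy prescribed incidence with the $\mathfrak{g}^1_2$ (for example lying on a Weierstrass point, or being conjugate under $\iota$). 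Since the argument of Proposition \ref{psi ngn} / \ref{psi q} adapts to $\Hgn$, the class $\psi$ descends to a big and nef class on the relevant quotient, so positivity in the $\psi$-coefficient of the decomposition suffices for bigness. The numerics should single out $n = 4g+6$ as the sharp threshold at which the $\psi$-coefficient of the decomposition becomes exactly zero (giving $K_{\Hgn}$ effective, hence $\kappa \geq 0$); for $n \geq 4g+7$ the extra marked point contributes a strictly positive $\psi$-term, yielding bigness and therefore general type.

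The main obstacle is undoubtedly the singularity analysis of step two. Beyond that, finding hyperelliptic-adapted effective divisors sharp enough to realize the precise threshold $4g+6$ — the Weierstrass divisors of Lemma \ref{Logan divisor} are tuned to $\Mgn$ and their restrictions to $\Hgn$ are likely too weak — will require genuinely new hyperelliptic constructions rather than the generic-curve divisors of Section \ref{sec effdiv}.
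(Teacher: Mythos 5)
Your overall architecture (geometric model via the double cover, a Harris--Mumford-style singularity analysis, then a decomposition of $K_{\Hgn}$ into $\psi$ plus effective classes) matches the paper, and you correctly single out the adjunction/singularity step as the part where Mumford's argument cannot be black-boxed. However, there is a genuine gap in your third step: you propose to obtain $K_{\Hgn}$ by combining the formula $K_{\Mgn}=13\l+\psi-2\d-\d_{1,\emptyset}$ with the hyperelliptic relation for $\l$, i.e.\ essentially by restricting the ambient canonical class to $\Hgn$. This cannot work: $\Hgn$ has codimension $g-2$ in $\Mgn$, so for $g\geq 4$ (exactly the range where the theorem has content) the restriction of $K_{\Mgn}$ differs from $K_{\Hgn}$ by the determinant of the normal bundle, which is not accounted for. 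The paper avoids this by computing $K_{\Hg}$ \emph{intrinsically}, via the isomorphism $\Hg\simeq\Mrat/S_{2g+2}$ and the known canonical class of $\M_{0,N}/S_N$ (Keel--McKernan), with careful bookkeeping of the intersection multiplicities $\phi^*(\d_{0,2i+1})=\tfrac12\d_i$, $\phi^*(\d_{0,2i+2})=\e_i$; it then transports this to $\Hgn$ by showing that the forgetful maps $\stHgn\to\stHgnm$ are universal families obtained as fibre products from $\stMgn\to\stMgnm$, so that $K_{\stHgn}=\hat{\pi}^*K_{\stHg}+\sum_i\psi_i-2\sum_{|S|\geq2}\d_{0,S}$, and finally corrects by the ramification divisor $\sum_i\d_{i,\emptyset}$ of $\stHgn\to\Hgn$. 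Without some such intrinsic computation the coefficient of $\e_0$ in $K_{\Hgn}$, namely $-(\tfrac12+\tfrac1{2g+1})$, is inaccessible, and it is precisely this coefficient, balanced against the $\l$-coefficient $-n/g$ of the Weierstrass divisor through the relation $(8g+4)\l=g\e_0+\cdots$, that produces the threshold $n=4g+6$.

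A secondary point: your concern that the Weierstrass-type divisors of Lemma \ref{Logan divisor} restrict too weakly to $\Hgn$ and that genuinely new hyperelliptic divisors are needed is unfounded. The paper shows that $\hgn$ is not contained in any $\mathfrak{D}(g;a_1,\ldots,a_n)$ (because an effective canonical divisor on a hyperelliptic curve is a union of conjugate pairs, so generic marked points violate the incidence condition), hence the symmetrized pullback $W$ of $\mathfrak{D}(g;1,\ldots,1)$ is effective on $\Hgn$, and this single class already yields the sharp bound. No new divisor constructions are required.
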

 
 This  
gives a sharp transition zone for passing from uniruled to general type by increasing the number of marked points for precisely the value $4g+6$.
 
 A main step in our proof is the computation of the canonical divisor of $\Hgn$. In Section 3 we shall prove

\begin{theorem}\label{canonical Hgn}
The canonical class of the stack $\stHgn$ is
\begin{equation}\label{canonical Hgn eq stack}
\begin{split}
K_{\stHgn} &= \sum_{i=1}^n \psi_i -(\frac{1}{2}+\frac{1}{2g+1})\e_0+ 
\sum_S \sum_{i=1}^{\lfloor \frac{g-1}{2}\rfloor} (\frac{(2i+2)2(g-i)}{2g+1}-2)\e_{i,S} \\ &+\sum_S\sum_{i=1}^{\lfloor \frac{g}{2}\rfloor} \frac{(2i+1)(2g-2i+1)}{4g+2} \d_{i,S}
-2\sum_{|S|\geq 2} \d_{0,S},
\end{split}
\end{equation}
and for $n\geq 2$ the canonical class of the coarse moduli space $\Hgn$ is
\begin{equation}\label{canonical Hgn eq coarse}
\begin{split}
K_{\Hgn} &= \sum_{i=1}^n \psi_i -(\frac{1}{2}+\frac{1}{2g+1})\e_0+ 
\sum_S \sum_{i=1}^{\lfloor \frac{g-1}{2}\rfloor} (\frac{(2i+2)2(g-i)}{2g+1}-2)\e_{i,S} \\ &+\sum_S\sum_{i=1}^{\lfloor \frac{g}{2}\rfloor} \frac{(2i+1)(2g-2i+1)}{4g+2} \d_{i,S}
-2\sum_{|S|\geq 2} \d_{0,S}-\sum_{i=1}^g \d_{i,\emptyset} ,
\end{split}
\end{equation}
where the sum is taken over all subsets $S\subset \{1,\ldots,n\}$
and we use $\d_{i;\emptyset}=\d_{g-i, \{1,\ldots,n\}}$.
\end{theorem}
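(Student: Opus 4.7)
The plan is to derive both canonical class formulas in three stages: first compute $K_{\stHg}$ using the identification $\hg \cong \mrat/S_{2g+2}$; then lift to arbitrary $n$ by induction via forgetful morphisms; and finally descend to the coarse moduli space by identifying the ramification divisor of the natural map $\epsilon \colon \stHgn \to \Hgn$.

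For the base case, recall that at the level of coarse moduli $\hg \cong \mrat/S_{2g+2}$, via sending a smooth hyperelliptic curve $C$ to the unordered $(2g+2)$-tuple of branch points of its canonical double cover $C \to \P^1$, and that this identification extends to a $\Z/2$-gerbe of stacks recording the hyperelliptic involution. Combined with the standard formula for $K_{\stMrat}$ and the Hurwitz formula applied to the double cover---which relates the boundary strata of $\stMrat$ to the hyperelliptic boundary strata $\e_0$, $\e_i$, $\d_i$ of Definition \ref{nodes}---this yields $K_{\stHg}$ as an explicit linear combination of these boundary classes (the $\lambda$-class being eliminated via the Mumford-type relation on $\stHg$). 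The specific rational coefficients $\tfrac12+\tfrac{1}{2g+1}$, $\tfrac{(2i+1)(2g-2i+1)}{4g+2}$, and the $\e_i$-coefficient appearing in \eqref{canonical Hgn eq stack} read off directly from this calculation after $S_{2g+2}$-symmetrization, giving the $n=0$ case.

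For the inductive step I would use the forgetful morphism $\pi \colon \overline{\mathcal{H}}_{g,n+1}^{\mathrm{st}} \to \stHgn$ dropping the last marked point. Standard theory gives $K_{\overline{\mathcal{H}}_{g,n+1}^{\mathrm{st}}} = \pi^*K_{\stHgn} + \omega_\pi$, with the relative dualizing sheaf identified as $\omega_\pi = \psi_{n+1}$. Pullbacks of the various $\e_{i,S}$, $\d_{i,S}$, and $\d_{0,S}$ classes under $\pi$ are computed in exact analogy to Lemma \ref{pullback}, tracking in each case how the new marked point can lie on either component of a nodal curve (so that each boundary divisor indexed by $S$ pulls back to a sum over $S$ and $S \cup \{n+1\}$, with the standard $-\delta_{0,S \cup\{n+1\}}$ correction on the rational-tail pieces). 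Substituting into the recursion and matching coefficients against the claimed expression \eqref{canonical Hgn eq stack} for $n+1$ closes the induction; the particular normalizations in \eqref{canonical Hgn eq stack}---independence of $|S|$ for $\d_{i,S}$ and $\e_{i,S}$ with $i\geq 1$ and coefficient $-2$ on $\d_{0,S}$ for $|S|\geq 2$---are precisely what is needed for consistency.

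The main obstacle, and the heart of the theorem, will be the passage from \eqref{canonical Hgn eq stack} to \eqref{canonical Hgn eq coarse}, which requires identifying the full ramification divisor of $\epsilon \colon \stHgn \to \Hgn$. By an analogue of Proposition \ref{ram}, this ramification is supported in codimension one on loci where pointed hyperelliptic curves carry non-trivial automorphisms. In contrast with the situation for $\Mgn$, where only the elliptic-tail divisor $\d_{1,\emptyset}$ contributes, here \emph{every} divisor $\d_{i,\emptyset}$ with $1 \leq i \leq g$ contributes simply: on a generic nodal curve $C = C_1 \cup_p C_2$ in $\d_{i,\emptyset}$ with all marked points on $C_2$, the node $p$ must be a Weierstrass point of the hyperelliptic component $C_1$ (since the global hyperelliptic involution restricts to a hyperelliptic involution on $C_1$), so this involution extends by the identity on $C_2$ to a non-trivial $\Z/2$-automorphism fixing the marked data. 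A careful local analysis via the Reid-Tai/Kodaira-Spencer framework of Section \ref{sec sing} should confirm that the ramification is simple along each $\d_{i,\emptyset}$ and, for $n \geq 2$, that no other codimension-one automorphism locus occurs. Summing the contributions yields exactly the correction term $-\sum_{i=1}^g \d_{i,\emptyset}$ in \eqref{canonical Hgn eq coarse}.
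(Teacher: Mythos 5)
Your three-stage strategy --- computing $K_{\stHg}$ from the identification with $\Mrat/S_{2g+2}$, lifting to $n$ marked points via the forgetful maps, and then descending to the coarse space by identifying the ramification of $\epsilon:\stHgn\to\Hgn$ as $\sum_{i=1}^{g}\d_{i,\emptyset}$ (simple, generated by the hyperelliptic involution of the unmarked component, with the Weierstra{\ss}-point locus only entering for $n=1$) --- is essentially the route the paper takes, and your analysis of the coarse descent, which you correctly single out as the substantive point, matches the paper's. Two details in your sketch need repair. First, the relative dualizing sheaf of the forgetful map is not $\psi_{n+1}$ but $\omega_{n+1}=\psi_{n+1}-\sum_{S\ni n+1}\d_{0,S}$ (cf.\ equation \eqref{basechangeprep}); if you take $\omega_\pi=\psi_{n+1}$ and combine it with the correct pullback formulas of Lemma \ref{pullback}, the coefficient of $\d_{0,\{i,n+1\}}$ comes out as $-1$ instead of the required $-2$, so the coefficient-matching you invoke would fail. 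The paper avoids this bookkeeping altogether by exploiting that the squares relating $\stHgn$ and $\stMgn$ are Cartesian, so that $\omega_{\hat\pi_n}=\i_n^*\omega_{\pi_n}$ and $K_{\stHgn}=\hat\pi^*K_{\stHg}+\i_n^*\bigl(K_{\stMgn}-\pi^*K_{\stMg}\bigr)$, which produces the term $\sum_i\psi_i-2\sum_{|S|\geq 2}\d_{0,S}$ in one stroke. Second, for the base case the paper deliberately works with coarse spaces, since the isomorphism $\Hg\simeq\Mrat/S_{2g+2}$ does \emph{not} hold at the level of stacks: it quotes the Keel--McKernan formula for $K_{\Mrat/S_{2g+2}}$, pulls back via the intersection multiplicities $\phi^*(\d_{0,2i+2})=\e_i$ and $\phi^*(\d_{0,2i+1})=\tfrac12\d_i$ of Proposition \ref{isomorphism}, and only afterwards passes to $K_{\stHg}$ by adding the ramification $R=\sum_i\d_i$ of $\stHg\to\Hg$ (the generic curve in $\d_i$ has automorphism group $\Z_2\times\Z_2$, unlike the generic curve in $\e_i$). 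Your gerbe-level formulation can be made to work, but it must account for this extra $\Z_2$ along the $\d_i$, which is exactly where the asymmetry between the $\d_i$- and $\e_i$-coefficients in \eqref{canonical Hgn eq stack} comes from and which your sketch does not address.
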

 
Here $\psi_i$ denote the point bundles (or tautological classes) on $\Hgn$ and $\e_0, \e_{i,S}, \d_{i,S}, \d_{0,S}$ are the boundary divisors. All these divisors are introduced in Section 2 or 3.
Given Theorem \ref{canonical Hgn}, the proof of Theorem \ref{t1} then proceeds similarly to our previous paper \cite{sch2}. However, establishing that the singularities of $\Hgn$ do not impose adjunction conditions, requires a substantial amount of additional work, adapting the original arguments in the seminal paper \cite{hm} to our framework.

We emphasize that this part of our work only refers to the coarse moduli space $\Hgn$, the associated stack $\stHgn$ being smooth (see \cite{acg}, p. 388). Our complete proof, however, needs the canonical divisor on $\Hgn$, and here it seemed natural and helpful to compute on the moduli stack $\stHgn$ as an intermediary step. Technically, this allows to use universal families which are applicable to stacks or fine moduli spaces, but not to coarse moduli spaces.
 
 The outline of the chapter is as follows. In Section 2 we collect notation and preliminaries on $\Hg$ and $\Mgn$. In Section 3 we collect first results on $\Hgn$, and in Section 4 we show that its singularities do not impose adjunction conditions, see Theorem \ref{noadcon}. In Section 5 we introduce effective divisors on $\Hgn$, following \cite{l}. With these preparations, the actual proof of Theorem \ref{t1} in Section 6 becomes short.
 
\section{Preliminaries}

In this section we want to recall some well known facts about the hyperelliptic locus $\hg \subset \mg$, its compactification $\Hg\subset \Mg$ and its rational Picard group. We will use the isomorphism of coarse moduli spaces $\Hg\simeq \Mrat /S_{2g+2}$ (see \cite{al}) to compute its canonical divisor.

We begin by recalling some basic facts about hyperelliptic curves.
A hyperelliptic curve of genus $g$ is a smooth curve of genus $g$ admitting a degree 2 morphism to $\mathbb{P}^1$ which by the Hurwitz formula will be ramified in exactly $2g+2$ points. The map induces an involution called the hyperelliptic involution. The $2g+2$ ramification points, i.e. the fixed points of the hyperelliptic involution, are called Weierstra\ss \ points. A stable hyperelliptic curve is a stable curve admitting a degree 2 morphism to a stable rational curve. The induced involution is also called hyperelliptic involution. In both cases the degree 2 morphism
is unique and the (stable) hyperelliptic curve can be recovered from its $2g+2$ branch points.

We define $\hg\subset \mg$ as the locus of all (classes of) smooth hyperelliptic curves of genus $g$. We define $\Hg$ as the closure of $\hg$ in $\Mg$ which turns out to be the locus of stable hyperelliptic curves. 

Before we study the boundary of $\Hg$ we recall the boundary and tautological classes on $\Mgn$ from Section \ref{sec pic}.
We also refer to the book \cite{acg}. 
We emphasize that \cite{acg} mainly works on the moduli stacks. However, all the basic divisors which we shall soon introduce exist both on the stack and its associated coarse moduli space. When it becomes necessary we shall always indicate in notation where we are working.
 All Picard groups are taken with rational coefficients and, in particular, we identify the the Picard group on the moduli stack with that of the corresponding coarse moduli space.
We caution the reader that by a standard abuse of notation we will consistently use the same symbol for classes on different moduli spaces. \\

Now let us turn to the moduli space $\Hg$ and recall some well known facts (see \cite{acg}, Chapter XIII §8). 

The locus $\hg\subset \mg$ is a subspace of dimension $2g-1$. It is irreducible and closed in $\mg$. Therefore the boundary is $\dhg:= \Hg\setminus \hg =\Hg\cap \dmg$. We can look at the intersection of $\Hg$ with each (irreducible) component $\D_i$ of $\dmg$ independently. The components of the boundary $\dhg$ are the components of these intersections. 

For $i\geq 2$ a general curve in $\Hg\cap \D_i$ is obtained from smooth hyperelliptic curves $C_1$ and $C_2$ of genera $i$ and $g-i$ by
identifying a Weierstra\ss \ point on $C_1$ with a Weierstra\ss \ point on $C_2$.
When $i = 1$, we must take as $C_1$ a curve in $\M_{1,1}$ and attach it to $C_2$ at the marked point. When in addition $g=2$, $C_2$ must also be a curve in $\M_{1,1}$, attached to $C_1$ at the marked point. By the usual abuse of notation we denote $\Hg\cap \D_i$ as $\D_i$ and its class as $\d_i$.

The case $\Delta_{0}\cap \Hg$ is more complicated, because there are different types of nonseperating nodes, see Definition \ref{nodes}. Each of these types of nodes corresponds to a different component of $\dhg$. We will denote the boundary component of curves with a node of type $\e_i$ by $\cE_i$ and its class by $\e_i$.

For $g>2$, a general curve in $\cE_0$ is obtained from a smooth
hyperelliptic curve $C$ of genus $g-1$ by identifying two points which are
conjugate under the hyperelliptic involution. For $g=2$ we must instead take $C\in \M_{1,2}$ and identify the two marked points.
A general curve in $\cE_i$ with $i>0$ is obtained from a smooth hyperelliptic curve $C_1$ of genus $i$, a smooth hyperelliptic curve $C_2$ of genus $g-i-1$, a pair $(p_1, q_1)$ of points on $C_1$, conjugate under the hyperelliptic involution of $C_1$, and a pair $(p_2, q_2)$ of points on $C_2$, conjugate under the hyperelliptic involution of $C_2$, by identifying $p_1$ with $p_2$ and $q_1$ with $q_2$. We leave the case $i=1$ or $g-i-1=1$ to the reader.

These are the irreducible components of $\dhg$. The moduli space $\Hg$ intersects each of the Divisors $\D_i\subset \Mg $ transversally for $i\geq 1$. The Divisor $\D_0\subset \Mg$ intersects $\Hg$ transversally in $\cE_0$, but with multiplicity  2 in $\cE_i$ for $i>0$. This is due to the fact that nodes of type $\e_i$ come in pairs. We therefore get the decomposition on $\Hg$
\begin{equation}\label{dirr Hg}
\dirr= \e_0+ 2\sum_{i\geq 1} \e_i
\end{equation}

With these preparations,
we recall Theorem 8.4 from \cite{acg}, Chapter XIII:
\begin{theorem}
The rational Picard group $\mbox{Pic}(\Hg)\otimes \Q$ is freely generated by the classes $\d_i$ and $\e_i$. For the Hodge class $\l$ we have the relation
\begin{equation} \label{l Hg}
(8g+4)\l= g\e_0 +2\sum_{i=1}^{\lfloor\frac{g-1}{2}\rfloor} (i+1)(g-i)\e_i +4\sum_{i=1}^{\lfloor\frac{g}{2}\rfloor} i(g-i)\d_i.
\end{equation}
\end{theorem}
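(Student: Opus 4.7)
The plan is to exploit the classical isomorphism $\Hg \cong \Mrat/S_{2g+2}$ mentioned just before the theorem, which identifies a stable hyperelliptic curve with the unordered $(2g+2)$-tuple of branch points of its admissible double cover. Under this identification, $\Pic(\Hg)\otimes\Q$ becomes the $S_{2g+2}$-invariant part of $\Pic(\Mrat)\otimes\Q$. Since $\Pic(\Mrat)\otimes\Q$ is freely generated by the Keel boundary classes $B_T$, indexed by unordered partitions $\{T,T^c\}$ with $|T|,|T^c|\geq 2$, the invariant part is freely generated by the symmetric sums $B_k:=\sum_{|T|=k}B_T$ for $2\leq k\leq g+1$.

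First I would match each $B_k$ with its corresponding boundary class on $\Hg$ by analysing how colliding branch points produce degenerations of the double cover. If $|T|=2i+1$ is odd, the $2i+1$ colliding branch points force an extra ramification point on each side of the node, producing a separating node gluing a genus $i$ hyperelliptic curve to a genus $(g-i)$ one at a Weierstrass point, i.e.\ the divisor $\Delta_i$. If $|T|=2i+2$ is even, the colliding branch points split off a rational component carrying $2i+2$ Weierstrass points, and the double cover acquires a pair of conjugate non-separating nodes, i.e.\ the divisor $\cE_i$. This identifies $B_{2i+1}\leftrightarrow \delta_i$ and $B_{2i+2}\leftrightarrow \varepsilon_i$, and a count confirms the generating set matches the $g$ boundary classes, giving the first assertion.

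For the relation expressing $\lambda$, I would use test curves, or equivalently push the Mumford isomorphism $12\lambda=\kappa_1+\delta$ through the admissible double cover $\pi\colon \cC\to \cP$, whose relative dualizing sheaf satisfies $\omega_{\cC/B}=\pi^*\omega_{\cP/B}(R)$ with $R$ the ramification divisor. Applied fibrewise over each of the $g$ one-parameter families meeting exactly one boundary divisor, one obtains a linear system for the coefficients of $\lambda=a_0\varepsilon_0+\sum_i a_i\varepsilon_i+\sum_i b_i\delta_i$; solving it yields the stated formula $(8g+4)\lambda=g\varepsilon_0+2\sum_{i=1}^{\lfloor(g-1)/2\rfloor}(i+1)(g-i)\varepsilon_i+4\sum_{i=1}^{\lfloor g/2\rfloor}i(g-i)\delta_i$.

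The main obstacle lies in the careful bookkeeping of ramification multiplicities when passing between $\Mrat$ and $\Hg$. The transversality pattern of $\Hg\hookrightarrow\Mg$ against the boundary is not uniform: $\Hg$ meets $\Delta_{\mathrm{irr}}$ transversely along $\cE_0$ but with multiplicity $2$ along each $\cE_i$ with $i\geq 1$, explaining the decomposition $\delta_{\mathrm{irr}}=\varepsilon_0+2\sum_{i\geq 1}\varepsilon_i$ recalled in \eqref{dirr Hg}; a similar factor of two distinguishes the $\delta_i$ with $i\geq 1$ from their rational-curve counterparts. Tracking these factors correctly through the admissible cover computation is precisely what produces the asymmetric coefficients $g$, $2(i+1)(g-i)$, and $4i(g-i)$ in the final formula, and this numerical accounting is the delicate part of the argument.
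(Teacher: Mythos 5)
The paper does not actually prove this statement: it is quoted verbatim from \cite{acg}, Chapter XIII, Theorem 8.4, so your proposal has to be judged on its own terms. Its architecture --- identify $\Pic(\Hg)\otimes\Q$ with the $S_{2g+2}$-invariant part of $\Pic(\Mrat)\otimes\Q$ via $\Hg\simeq\Mrat/S_{2g+2}$, match boundary classes through the admissible double cover, and pin down the coefficients of the $\l$-relation by test curves or by Grothendieck--Riemann--Roch applied to $\omega_{\cC/B}=\pi^*\omega_{\cP/B}(R)$ --- is the standard route and is sound in outline; your boundary dictionary ($|T|=2i+1\leftrightarrow\d_i$, $|T|=2i+2\leftrightarrow\e_i$) agrees with Proposition \ref{isomorphism} of the text, and the asserted formula checks out in examples (for $g=2$ it reduces to the classical $10\l=\d_0+2\d_1$ on $\M_2$).

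There is, however, one genuinely false input. You assert that $\Pic(\Mrat)\otimes\Q$ is \emph{freely} generated by the boundary classes $B_T$. It is not: Keel's theorem says the boundary classes generate, but they satisfy the relations pulled back from $\M_{0,4}\simeq\P^1$, where all three boundary points are linearly equivalent; already for five marked points there are ten boundary divisors while the Picard rank is five. Since linear independence of the $\d_i$ and $\e_i$ is precisely half of the theorem, the deduction ``free generators upstairs, hence the symmetric sums $B_k$ freely generate the invariants'' does not go through as written. The conclusion is nevertheless true and the repair is short: every Keel relation equates two sums of boundary classes that become identical after averaging over $S_{2g+2}$, so all relations die in the invariant part; alternatively one can cite \cite{km} for the fact that the classes $B_k$, $2\le k\le g+1$, form a basis of $\Pic(\Mrat/S_{2g+2})\otimes\Q$, after which the count of $g$ classes matches the $\e_i$ and $\d_i$. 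A smaller point of bookkeeping: the factor $\frac{1}{2}$ in $\phi^*(\d_{0,2i+1})=\frac{1}{2}\d_i$ arises from the local structure of the admissible cover at a node lying over a ramification point (the deformation parameter upstairs is a square root of the one downstairs), and is a feature of the map $\Mrat/S_{2g+2}\to\Hg$; it is not the same factor of $2$ as the intersection multiplicity of $\Hg$ with $\Delta_0\subset\Mg$ that yields $\dirr=\e_0+2\sum_{i\ge1}\e_i$. You run these two sources of $2$'s together, and they enter the test-curve computation for the $\l$-relation differently, so they must be kept separate when you solve your linear system.
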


We recall that in \cite{acg} equation \eqref{l Hg} is proved on the level of stacks, but it is also valid on the level of coarse moduli spaces where we shall use it. In contradistinction, to calculate the canonical divisors, we shall carefully distinguish 
between stack and coarse moduli space.


As stated above, a smooth hyperelliptic curve $C$ admits a unique double cover $C\to\mathbb{P}^1$, the quotient by the hyperelliptic involution, with $2g+2$ simple branch points. In fact we can construct $C$ from these branch points. In other words, there is a canonical isomorphism between $\hg$, the moduli space of smooth hyperelliptic curves of genus $g$, and $\mrat/S_{2g+2}$, the moduli space of rational $(2g+2)$-pointed curves modulo the symmetric group $S_{2g+2}$. We call a curve in $\mrat/S_{2g+2}$ $(2g+2)$-marked. This isomorphism can be extended to an isomorphism $\Hg\simeq \Mrat/S_{2g+2}$ (see \cite{al} Corollary 2.5). We will use this isomorphism to study the Picard group of $\Hg$ and calculate its canonical divisor (class).

Let us look at the boundaries of both moduli spaces: the boundary (class) $\dhg$ consist of the $g$ irreducible components $\e_i$ for $i=0,\ldots, \lfloor \frac{g-1}{2}\rfloor$ and $\d_i$ for $i=1, \ldots, \lfloor\frac{g}{2}\rfloor$. On $\Mrat$ two boundary components $\d_{0,S}$ and $\d_{0,T}$ will be identified by the action of the symmetric group $S_{2g+2}$ if and only if $S$ and $T$ have the same cardinality. Therefore (by the usual abuse of notation) we denote the boundary components on $\Mrat/S_{2g+2}$ - corresponding to boundary divisors of $\Hg$ -  as $\D_{0,s}$ and their classes by $\d_{0,s}$ where $ s=2, \ldots, g+1$.

\begin{proposition}\label{isomorphism}
Under the canonical isomorphism $\phi: \Hg  \to  \Mrat/S_{2g+2}$ the boundary components $\d_i$ on $\Hg$ will correspond to $\d_{0,2i+1}$ on $\Mrat/S_{2g+2}$ and $\e_i$ will correspond to $\d_{0,2i+2}$, for all $i$, more precisely
\begin{equation}  \label{iso}
\phi^*(\d_{0,2i+2})= \e_i, \qquad  \phi^*(\d_{0,2i+1})= \frac{1}{2}\d_i
\end{equation}
\end{proposition}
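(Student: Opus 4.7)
The plan is to first identify the image under $\phi$ of each boundary component of $\Hg$, and then compute the pullback multiplicities via an auxiliary map and a local analysis of admissible covers at the two node types.

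Under $\phi$, a stable hyperelliptic curve $[C]$ corresponds to the quotient $T := C/\iota$ marked by the $2g+2$ branch points of the hyperelliptic double cover. Using the descriptions of a general element of each boundary component given earlier in this section, one reads off the image directly. For $\d_i$ with $1 \le i \le \lfloor g/2 \rfloor$, a general element is the union of two smooth hyperelliptic curves of genera $i$ and $g-i$ glued at a pair of Weierstra\ss\ points; the quotient $T$ is a chain of two $\P^1$'s meeting at the image of the node, which is a branch point on each side, and the two components carry $2i+1$ and $2g-2i+1$ further marked branch points, so the image lies in $\D_{0,2i+1}$. For $\e_i$ with $i \ge 1$, the hyperelliptic involution swaps the two source nodes; their common image is a single unramified node on $T$, and the components of $T$ carry $2i+2$ and $2g-2i$ marked points, so the image lies in $\D_{0,2i+2}$. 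For $\e_0$ the component of $T$ carrying only two marked branch points is unstable and is contracted in the stable model, placing the image in $\D_{0,2}$.

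The pullback coefficients can be extracted by factoring $\phi^{-1}$ through the natural map of ordered configurations,
\begin{equation*}
\tilde\phi : \Mrat \xrightarrow{\,q\,} \Mrat/S_{2g+2} \xrightarrow{\,\phi^{-1}\,} \Hg,
\end{equation*}
which sends $[T; p_1, \ldots, p_{2g+2}]$ to the associated stable admissible double cover. The classes $\d_{0,s}$ on the quotient are normalised so that $q^*(\d_{0,s}) = \sum_{|S|=s}\d_{0,S}$ on $\Mrat$: at a general point of $\d_{0,S}$ the stabiliser $S_s \times S_{2g+2-s}$ fixes the boundary divisor pointwise while acting trivially on the one-dimensional normal direction parametrising the smoothing of the node, so $q$ is unramified along the boundary. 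Since $\phi$ is an isomorphism this gives $\tilde\phi^* = q^* \circ \phi_*$, and the proof reduces to computing $\tilde\phi^*$ on the boundary classes of $\Hg$.

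The key step is the local analysis of the admissible cover at the two node types. At a Weierstra\ss-type node (contributing to $\d_i$), the cover has local model $u^2 = x$, $v^2 = y$ over the target node $xy = t$, so the source smoothing parameter $\tau := uv$ satisfies $\tau^2 = t$; the source smoothing is thus a double cover of the target smoothing, $\tilde\phi$ is ramified of order $2$ along each $\d_{0,S}$ with $|S| = 2i+1$, and $\tilde\phi^*(\d_i) = 2\sum_{|S|=2i+1}\d_{0,S} = 2\,q^*(\d_{0,2i+1})$. At a conjugate-pair node (contributing to $\e_i$, $i \ge 1$), the involution freely swaps the two source nodes over a single target node $xy = t$; each source node has local model $uv = t$, no root extraction is needed, $\tilde\phi$ is unramified, and $\tilde\phi^*(\e_i) = \sum_{|S|=2i+2}\d_{0,S} = q^*(\d_{0,2i+2})$. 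The case $\e_0$ is analogous after contracting the unstable tail. Cancelling $q^*$ then yields $\phi_*(\d_i) = 2\d_{0,2i+1}$ and $\phi_*(\e_i) = \d_{0,2i+2}$, which are equivalent to \eqref{iso} upon applying $\phi^*$. The main technical obstacle is making the local model of admissible double covers precise at the two node types in such a way that the ramification index can be read off rigorously; this is a standard computation in admissible cover theory, see e.g.\ \cite{hmo}.
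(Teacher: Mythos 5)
Your identification of the images of the boundary components is correct and agrees with the paper's (which runs the construction in the opposite direction, building the admissible double cover over a general point of each $\D_{0,s}$), and your idea of extracting the coefficients by factoring through $\Mrat \to \Mrat/S_{2g+2} \to \Hg$ is a legitimate, more self-contained alternative to the paper's route, which simply quotes the intersection numbers (6.17)--(6.18) of \cite{hm} and cross-checks them against $\dirr=\e_0+2\sum_{i\geq 1}\e_i$. The problem is that your multiplicity bookkeeping is wrong at exactly the places where the nontrivial factors of $2$ live. First, $q:\Mrat\to\Mrat/S_{2g+2}$ is \emph{not} unramified along the whole boundary: a general point of $\d_{0,S}$ with $|S|=2$ has stabiliser $\Z_2$, generated by the transposition of the two labels in $S$ (realised by the involution of the $3$-pointed tail), and this involution acts by $-1$ on the node-smoothing coordinate, so $q$ is simply ramified there -- the same phenomenon as the ramification divisor $R=\sum_{(i\,j)\in G}\d_{0,\{i,j\}}$ in Chapter 3. (Your stated justification is also off: $S_s\times S_{2g+2-s}$ is only the setwise stabiliser of the divisor; for $s\geq 3$ the stabiliser of a \emph{general point} is trivial, which is the actual reason $q$ is unramified there.) Second, since $\tilde\phi=\phi^{-1}\circ q$ is the composite of a map unramified along $\d_{0,S}$ for $|S|=2i+1\geq 3$ with an isomorphism, $\tilde\phi$ cannot be ``ramified of order $2$'' along those divisors. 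The relation $\tau^2=t$ between source and target smoothing parameters does not translate directly into a pullback coefficient: a general curve in $\D_i$ has automorphism group $\Z_2\times\Z_2$, and the extra involution (hyperelliptic on one component only) sends $\tau\mapsto-\tau$, so one must decide whether the local equation of $\D_i$ on the coarse space is $\tau$ or $\tau^2$ and which normalisation of the class $\d_i$ is in force. The factor $\tfrac12$ in $\phi^*(\d_{0,2i+1})=\tfrac12\d_i$ is an automorphism/stack-versus-coarse-space correction (compare $R=\sum\d_i$ in the proof of Theorem \ref{canonical Hg}), not a ramification of $\tilde\phi$, and your argument does not track it.

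Third, the case $\e_0$ is genuinely not ``analogous'': over a transverse slice of $\d_{0,S}$ with $|S|=2$ the two colliding branch points separate like $t$, while the resulting self-node of the genus $g-1$ component smooths like $t^2$ (the total space of the double cover acquires an $A_1$-singularity), so the order of vanishing there is $2$; this cancels against the ramification of $q$ that you omitted, which is why your final coefficient for $\e_0$ comes out right by accident. (Also, it is the \emph{source} component over the $2$-marked tail that is contracted in the stable model -- the tail of $T$ itself has three special points and is stable.) In sum, the formulas \eqref{iso} and the qualitative picture in your write-up are correct, but each of the three multiplicity computations rests on a claim that is false or misattributed, with errors that happen to cancel; to make this route rigorous you must separate the three distinct sources of factors of $2$ (ramification of $q$ along $|S|=2$, the extra automorphisms of general curves in $\D_i$, and the order of vanishing of the source node parameter). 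The paper sidesteps all of this by citing \cite{hm}, Ch.\ 6C, and even warns in a footnote that the corresponding formula on p.\ 303 of \cite{hm} contains misprints -- a measure of how easy these factors are to get wrong.
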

\begin{proof}
We shall only sketch the proof, skipping a precise calculation of intersection multiplicities. For this we refer to \cite{hm}, Chapter 6C. However, we caution the reader that there are small mistakes, resp. misprints, in the final formula on p. 303
\footnote{In particular, the special multiplicity of $\e_0$ compared to $\e_i$ for $i >0$, which is also apparent in equation \eqref{dirr Hg}, seems to have been neglected.}. Therefore we carefully restate the result.

 For a general curve in each $\D_{0,s}$ we will construct a double cover, simply ramified in the marked points, possibly ramified in the nodes and unramified everywhere else.
 
 Let us begin with a general curve $C$ in $\D_{0,2i+1}$. The curve $C$ consists of a general $2i+1$-marked rational curve $C_1$ intersecting a general $2(g-i)+1$-marked curve rational $C_2$ in a general point. A double cover of a smooth rational curve must always be ramified in an even number of points. Therefore the cover of $C$ must consist of a double cover of $C_1$, ramified in the $2i+1$ marked points and the node, and a double cover of $C_2$ ramified in the $2(g-i)+1$ marked points and the node. This means that the preimage of $C$ under the double cover must consist of general hyperelliptic (or possibly elliptic) curves of genera $i$ and $g-i$ intersecting in Weierstra\ss \ points. Clearly this is an element of $\D_i$. One then generalizes these constructions to families of curves and, following \cite{hm}, one computes the intersection multiplicity (see \cite{hm}, equation (6.18) ) as $ \D_i . \phi^*(\D_{0,2i+1})= \frac{1}{2},$ for $i \geq 0$. This proves the second statement in \eqref{iso}.
 
 A general curve $C$ of $\D_{0,2i+2}$ consists of a general $2i+2$-marked rational curve $C_1$ intersecting a general $2(g-i)$-marked rational curve $C_2$ in a general point. A double cover of $C$ consists of  double covers of $C_1$ and $C_2$ ramified in the marked points, but not in the node, which will be general hyperelliptic (or possibly elliptic) curves of genera $i$ and $g-i-1$. These curves will intersect twice in the two conjugate points lying above the node of $C$. For $i>0$, this shows the correspondence with $\cE_i$. For $i=0$, the curve $C_1$ is rational and $C_2$ has genus $g-1$. We have to remember that a rational component meeting the rest of the curve in exactly two nodes violates stability and must be contracted. This causes a self intersection on the irreducible component of genus $g-1$. Thus $\D_{0,2}$ corresponds to $\cE_0$.
Similar to the above, one then calculates the intersection multiplicities (see \cite{hm}, equation (6.17) ) as
$$ \D_0 .  \phi^*(\D_{0,2})= 1, \qquad \D_0 . \phi^*(\D_{0,2i+2})= 2  \quad (i \geq 1).$$
In view of equation \eqref{dirr Hg} this gives the first statement in \eqref{iso} and completes the proof. 
\end{proof}

We can now calculate the canonical divisor of $\Hg$ and $\stHg$. 
 \begin{theorem} \label{canonical Hg}
The canonical divisor of the coarse moduli space $\Hg$ is 
\begin{equation}  \label{candivcoarse}
K_{\Hg}=-(\frac{1}{2}+\frac{1}{2g+1})\e_0+ \sum_{i=1}^{\lfloor \frac{g-1}{2}\rfloor} 
(\frac{(2i+2)2(g-i)}{2g+1}-2)\e_i + \sum_{i=1}^{\lfloor \frac{g}{2}\rfloor} 
(\frac{(2i+1)(2g-2i+1)}{4g+2}-1)\d_i,
\end{equation}
while 
the canonical divisor of the stack $\stHg$ is given by
\begin{equation}  \label{candivstack}
K_{\stHg}=-(\frac{1}{2}+\frac{1}{2g+1})\e_0+ \sum_{i=1}^{\lfloor \frac{g-1}{2}\rfloor} 
(\frac{(2i+2)2(g-i)}{2g+1}-2)\e_i + \sum_{i=1}^{\lfloor \frac{g}{2}\rfloor} 
\frac{(2i+1)(2g-2i+1)}{4g+2}\d_i.
\end{equation}
\end{theorem}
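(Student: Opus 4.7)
The plan is to derive the coarse formula \eqref{candivcoarse} via the isomorphism $\phi\colon \Hg \xrightarrow{\sim} \Mrat/S_{2g+2}$ of Proposition \ref{isomorphism}: since $\phi$ is an isomorphism of coarse moduli spaces, $K_{\Hg} = \phi^* K_{\Mrat/S_{2g+2}}$, and the images of the boundary classes under $\phi^*$ are given explicitly by \eqref{iso}. Everything therefore reduces to computing $K_{\Mrat/S_{2g+2}}$ in the rational Picard group as a linear combination of the boundary divisors $\d_{0,s}$ for $2\le s\le g+1$.

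On $\Mrat$, which is a fine moduli space and so coincides with its stack, formula \eqref{can cm} gives $K_{\Mrat} = \sum_{i=1}^{2g+2} \psi_i - 2\d$ (with $\lambda = 0$ and no elliptic-tail correction since $g = 0$). To make the $S_{2g+2}$-descent transparent, I would first rewrite $\sum\psi_i$ entirely in terms of boundary classes using the symmetric identity
\[
\sum_{i=1}^{2g+2}\psi_i \;=\; \sum_{S}\frac{|S|(2g+2-|S|)}{2g+1}\,\d_{0,S}
\]
(each boundary divisor counted once via $\d_{0,S}=\d_{0,S^c}$), which follows by averaging the standard formula $\psi_i = \sum_{T\ni i,\,T\not\ni j,k}\d_{0,T}$ (valid for any fixed $j,k \neq i$) over all admissible ordered pairs $(j,k)$ and then summing over $i$. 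Next, the quotient map $q\colon \Mrat \to \Mrat/S_{2g+2}$ has codimension-one ramification only along the divisors $\d_{0,\{i,j\}}$, each fixed by the transposition $(i\ j) \in S_{2g+2}$ acting as a quasi-reflection; Riemann--Hurwitz therefore gives $q^* K_{\Mrat/S_{2g+2}} = K_{\Mrat} - \sum_{i<j}\d_{0,\{i,j\}}$. Combining this with the pullback rules $q^*\d_{0,2} = 2\sum_{|S|=2}\d_{0,S}$ (from the quasi-reflecting transpositions) and $q^*\d_{0,s} = \sum_{|S|=s}\d_{0,S}$ for $s\ge 3$, one reads off
\[
K_{\Mrat/S_{2g+2}} \;=\; -\Bigl(\tfrac{1}{2}+\tfrac{1}{2g+1}\Bigr)\d_{0,2}
\;+\; \sum_{s=3}^{g+1}\Bigl(\tfrac{s(2g+2-s)}{2g+1}-2\Bigr)\d_{0,s}.
\]
Substituting $\phi^*\d_{0,2i+2}=\e_i$ and $\phi^*\d_{0,2i+1}=\tfrac{1}{2}\d_i$ from \eqref{iso} reproduces formula \eqref{candivcoarse}.

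To obtain the stack formula \eqref{candivstack}, I would determine the ramification divisor $R_\epsilon$ of the canonical map $\epsilon\colon \stHg \to \Hg$, so that $K_{\stHg} = K_{\Hg} + R_\epsilon$. A local analysis of the automorphism group of a generic boundary curve shows that the only extra $\mathbb{Z}/2$-automorphism beyond the generic hyperelliptic involution appears along each $\d_i$ with $1\le i\le \lfloor g/2\rfloor$: it is the hyperelliptic involution of a single component, which fixes the separating Weierstra{\ss} node pointwise and acts on its smoothing parameter $t$ by $t\mapsto -t$, i.e.\ as a quasi-reflection transverse to $\d_i$. The divisors $\e_i$ and $\e_0$ carry only the generic involution. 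Hence $R_\epsilon = \sum_{i=1}^{\lfloor g/2\rfloor}\d_i$, and adding this to \eqref{candivcoarse} yields \eqref{candivstack}.

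The main delicate points in the argument are the consistent bookkeeping for the identification $\d_{0,S} = \d_{0,S^c}$ on $\Mrat$ (especially for $|S|=g+1$, where each divisor is represented by two subsets), the identification of which elements of $S_{2g+2}$ act as quasi-reflections in codimension one (only the transpositions do), and the infinitesimal verification that the extra component-wise hyperelliptic involution acts trivially on deformations along $\d_i$ but as $-1$ on the smoothing parameter.
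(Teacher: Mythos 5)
Your proposal follows the same two-step route as the paper: transfer the canonical class from $\Mrat/S_{2g+2}$ to $\Hg$ via Proposition \ref{isomorphism}, then pass to the stack by adding the ramification divisor $\sum_i \d_i$ of $\epsilon\colon\stHg\to\Hg$, read off from the extra $\Z_2$ in the automorphism group of a generic curve in $\d_i$. The one substantive difference is that where the paper simply cites \cite{km}, Lemma 3.5, for the formula \eqref{candivrational} for $K_{\Mrat/S_{2g+2}}$, you derive it from scratch out of $K_{\Mrat}=\psi-2\d$, the identity $\sum_i\psi_i=\sum_{S}\frac{|S|(2g+2-|S|)}{2g+1}\d_{0,S}$, and Riemann--Hurwitz for the quotient by $S_{2g+2}$. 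I have checked this derivation: the averaging of $\psi_i=\sum_{S\ni i,\,S\not\ni j,k}\d_{0,S}$ gives the stated coefficient $\frac{s(2g+2-s)}{2g+1}$, and combining the ramification term $-\sum_{i<j}\d_{0,\{i,j\}}$ with $q^*\bar\d_{0,2}=2\sum_{|S|=2}\d_{0,S}$ reproduces the coefficient $-(\tfrac12+\tfrac1{2g+1})$ of $\d_{0,2}$, so you recover \eqref{candivrational} exactly; the rest matches the paper's computation. Your approach buys a self-contained proof at the cost of one extra page of bookkeeping; the paper's buys brevity at the cost of an external citation.

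One caveat worth recording: your claim that $q\colon\Mrat\to\Mrat/S_{2g+2}$ has codimension-one ramification \emph{only} along the divisors $\d_{0,\{i,j\}}$ is correct for $2g+2\geq 7$, i.e.\ $g\geq 3$, but fails for $g=2$: in $\M_{0,6}$ the locus of configurations invariant under an involution of $\P^1$ inducing a product of three disjoint transpositions is a divisor in the \emph{interior}, fixed pointwise by that permutation acting as a quasi-reflection, and would contribute an additional ramification term. This is consistent with Proposition \ref{sing smooth}, where $g=2$ is precisely the case admitting interior quasi-reflections. Since the theorem is only needed for $g\geq 3$ (and $\H_2=\M_2$ is classical), this does not affect the result, but you should state the restriction $g\geq 3$ when asserting that the transpositions exhaust the codimension-one stabilizers.
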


\begin{proof}
We start with  the canonical divisor on the coarse moduli space $\Hg$,
using the canonical isomorphism of Proposition \ref{isomorphism}. This isomorphism does not exist on the level of stacks.
We recall from \cite{km}, Lemma 3.5,  that the canonical divisor on the coarse
moduli space $\Mrat/S_{2g+2}$ is given by
\begin{equation}  \label{candivrational}
K_{\Mrat/S_{2g+2}}= -(\frac{1}{2}+\frac{1}{2g+1})\d_{0,2}+ \sum_{s=3}^{g+1} (\frac{s(2g+2-s)}{2g+1}-2)\d_{0,s}
\end{equation}  
Thus equation \eqref{candivcoarse} 
follows from Proposition \ref{isomorphism} by pullback. For computing the canonical divisor of the stack, we need the ramification divisor $R$ for the map
$$ \epsilon: \stHg \to \Hg.$$
The divisor $R$ can be read off the appropriate automorphism groups. First note that a generic element of $\Hg$ carries only the hyperelliptic involution as an automorphism. Likewise, a generic element of the boundary divisor $\e_i$ only carries the hyperelliptic involution, while the generic elements of $\d_i$ have automorphism group 
$\Z_2 \times \Z_2$ (the hyperelliptic involution acts independently on both components, since the node is a Weierstra{\ss}   point). It follows that the map $\epsilon$ is simply ramified over the boundary components $\d_i$, giving
$R=\sum \d_i.$ 
Thus equation \eqref{candivstack} follows from
$$ K_{\stHg}= \epsilon^*(K_{\Hg}) + R .$$
\end{proof}

\section{The locus of pointed hyperelliptic curves}

In this section we shall study the moduli space $\Hgn$ of n-pointed stable hyperelliptic curves of genus $g$ and calculate its canonical class.

We define $\hgn$ (and $\Hgn$) as the moduli spaces of (stable) hyperelliptic curves of genus $g$ together with $n$ distinct marked points (in the stable case nodes can not be marked.) Denoting the canonical projection by $\pi: \Mgn \to \Mg$, we get $\Hgn=\pi^{-1}(\Hg)$ and $\hgn=\pi^{-1}(\hg)\cap \mgn$. Both $\hgn$ and $\Hgn$ are irreducible of dimension $2g-1+n$.

The boundary of $\Hgn$ consist of the following irreducible components:
$\cE_{i,S}$ for $0\leq i\leq \lfloor\frac{g-1}{2}\rfloor$ and $ S\subset \{1,\ldots,n\} $, consisting of those curves in $\cE_i$ such that exactly the marked points labelled by $S$ are on the component of genus $i$;
$\D_{i,S}$ for $ 1\leq i\leq \lfloor\frac{g}{2}\rfloor$ and $S\subset \{1,\ldots,n\}$ consisting of curves in $\D_i$ such that exactly the marked points labelled by $S$ are on the component of genus $i$ and 
$\D_{0,S}:=\D_{0,S}\cap \Hgn$ for  $|S|\geq 2$, where by the usual abuse of notation we use $\D_{0,S}$ for both the divisor on $\Hgn$ and on $\Mgn$.

We denote the classes of these divisors by $\e_{i,S}$ and $\d_{i,S}$.
If $\i: \Hgn\to\Mgn$ is the inclusion, we denote the $\psi$-classes as $\psi_i:=\i^* \psi_i$.

It is known (see \cite{s}) that

\begin{theorem}
The rational Picard group of $\Hgn$ is freely generated by the $\psi$-classes and all the boundary classes.
\end{theorem}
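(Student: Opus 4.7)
The plan is to proceed by induction on $n$, taking as base case $n=0$ the rational Picard group of $\Hg$, which by the theorem recalled above (following \cite{acg}) is freely generated by the classes $\d_i$ and $\e_i$. (Note in particular that $\l$ is \emph{not} an independent generator here, in contrast to $\Pic(\Mg)\otimes\Q$; this is precisely the content of \eqref{l Hg}, and it is important to check that no analogous relation appears when marked points are added.)

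For the inductive step, I would analyze the forgetful morphism $\pi_n: \Hgn \to \Hg_{g,n-1}$ obtained by forgetting the last marked point and stabilizing. This morphism is flat with one-dimensional fibers, and its generic fiber is a smooth hyperelliptic curve of genus $g$, so the standard exact sequence
\begin{equation*}
\Pic(\Hg_{g,n-1})\otimes\Q \xrightarrow{\pi_n^*} \Pic(\Hgn)\otimes\Q \longrightarrow \Pic(\text{generic fiber})\otimes\Q \longrightarrow 0
\end{equation*}
together with the induction hypothesis on $\Hg_{g,n-1}$ reduces the problem to: (i) describing the cokernel in terms of $\psi_n$ and the boundary divisors of $\Hgn$ which surject onto $\Hg_{g,n-1}$ (namely the divisors of the form $\e_{i,S\cup\{n\}}$, $\d_{i,S\cup\{n\}}$ for $|S|\geq 0$, and $\d_{0,S\cup\{n\}}$ for $|S|\geq 1$); and (ii) verifying that the pullbacks of the generators from $\Pic(\Hg_{g,n-1})\otimes\Q$ via $\pi_n^*$, together with these new classes, remain linearly independent. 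Step (i) uses that $\psi_n$ restricts on the generic fiber to the class of a point, and that the new boundary components account for the contractions of rational tails needed to stabilize after forgetting.

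The main obstacle is step (ii): showing that no nontrivial relation exists among the proposed generators. The standard method I would use is to construct sufficiently many test curves in $\Hgn$ — for instance, families obtained by varying a single marked point along a fixed hyperelliptic curve, by varying one of the $2g+2$ branch points on $\P^1$, or by smoothing a specified node — and to compute the intersection numbers of any putative relation with each test curve. Because the hyperelliptic locus satisfies extra relations compared to $\Mgn$ (the Hodge class $\l$ is expressible in terms of $\e_i$ and $\d_i$ via \eqref{l Hg}), the delicate point is to ensure that the candidate basis assembled at each inductive step is not implicitly subject to a pulled-back or newly-appearing relation of this type. The test curves must therefore be chosen to separate all boundary strata of $\Hgn$ and to pair nontrivially with $\psi_n$; the existence of enough such curves follows from the isomorphism $\Hg\simeq\Mrat/S_{2g+2}$ of Proposition \ref{isomorphism} together with the analogous statement for $\Mrat$ with additional marked points, which allows explicit constructions of test families via double covers of rational curves with prescribed branching and marked fibers.
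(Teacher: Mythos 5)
The paper offers no proof of this statement to compare against: it is quoted directly from Scavia \cite{s}. So your proposal has to be judged on its own terms, and while the Arbarello--Cornalba-style induction on $n$ via the forgetful map is indeed the natural strategy, there is a genuine gap at its core. The exact sequence you invoke, ending in $\Pic(\mbox{generic fibre})\otimes\Q\to 0$, is not usable as written: the generic fibre of $\pi_n$ is a curve over the function field of $\Hgnm$, and its Picard group contains the group of function-field-rational points of its Jacobian, so it is nothing like a one-dimensional $\Q$-vector space spanned by the class of a point. What the inductive step actually requires is a Franchetta-type statement for the hyperelliptic locus --- that every divisor class on $\Hgn$ restricts on the generic fibre of $\pi_n$ to a rational multiple of the point class, i.e.\ a computation of the relative rational Picard group of the universal hyperelliptic curve. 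That statement is the essential content of the theorem (and of \cite{s}); your sketch assumes it implicitly by writing down the sequence.

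Two further points need attention even granting that. First, the kernel of restriction to the generic fibre is generated not just by $\pi_n^*\Pic(\Hgnm)\otimes\Q$ but by all vertical divisors over codimension-one loci of the base; over the boundary of $\Hgnm$ the fibres are reducible, and the individual components $\d_{i,S}$ and $\d_{i,S\cup\{n\}}$ (likewise $\e_{i,S}$ and $\e_{i,S\cup\{n\}}$) are vertical separately while only their sum is a pullback, so the sequence must be corrected to include these classes explicitly. Second, your step (ii) --- ruling out a relation among the proposed generators analogous to \eqref{l Hg} --- is announced but not carried out; the test-curve pairings are precisely where such a relation would or would not manifest, and without exhibiting enough test curves with a nondegenerate intersection matrix the freeness claim remains unsupported. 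As it stands the proposal is a plausible plan, not a proof.
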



We can now calculate the canonical classes of both the coarse moduli space $\Hgn$ and its assotiated stack $\stHgn$, i.e. prove Theorem \ref{canonical Hgn}

\begin{proof}
We begin on the level of stacks and consider the commutative diagram
\begin{center}
\begin{tikzpicture}
  \matrix (m) [matrix of math nodes,row sep=2em,column sep=4em,minimum width=2em]
  {\stHgn &  \stMgn   \\
   \stHgnm & \stMgnm        \\
   \vdots & \vdots  \\
   \stHg  &  \stMg \\         };

  \path[-stealth]
    (m-1-1) edge node [above] {$\i_n$} (m-1-2)
            edge node [right]  {$\hat{\pi}_n$} (m-2-1)
    (m-1-2) edge node [left] {$\pi_n$}(m-2-2)
    (m-2-1) edge node [above] {$\i_{n-1}$} (m-2-2)
            edge node [right]  {$\hat{\pi}_{n-1}$} (m-3-1)
    (m-2-2) edge node [left] {$\pi_{n-1}$}(m-3-2)
    (m-3-1) edge node [right]  {$\hat{\pi}_1$} (m-4-1)
    (m-3-2) edge node [left] {$\pi_1$}(m-4-2)
    (m-4-1) edge node [above] {$\i$} (m-4-2)
    (m-1-1) edge [bend right=45] node [left] {$\hat{\pi}$} (m-4-1)
    (m-1-2) edge [bend left=45] node [right] {$\pi$} (m-4-2);   
\end{tikzpicture}
\end{center}

First note that each of the squares in this diagram is Cartesian (i.e. a fibre product). This follows immediately from the fact that $\stHgn =\pi^{-1}( \stHg$) and a simple diagram chase.
Next we show that, for all $n\geq 1$, the forgetful map $\hat{\pi}_n$ is a universal family. Recall that the  universal family of a fine moduli space $\cM$ is a morphism $\cC\to \cM$ such that any family $\cX \to S$ in $\cM$ induces an isomorphism $\cX\simeq S\times_{\cM} \cC$, see e.g. \cite{hmo}. In \cite{acg} an analogous property is introduced for the stack of $\stMgn$ which has the properties of a fine moduli space. Recall further that the universal family of $\stMgnm$ is the forgetful map $\pi_n: \stMgn\to\stMgnm$. Now any family $\cX\to S$ in $\stHgnm$ is in particular a family in $\stMgnm$. Therefore we get
\begin{equation}
\begin{split}
\cX &\simeq S\times_{\stMgnm} \stMgn \simeq S\times_{\stMgnm} \stMgn \times_{\stHgnm} \stHgnm \\
&\simeq S\times_{\stHgnm} \stMgn \times_{\stMgnm} \stHgnm \simeq S\times_{\stHgnm} \stHgn.
\end{split}
\end{equation}

Next recall that in a universal family $\phi: \cC\to \cM$ the canonical divisor (class) is given as
\begin{equation}\label{rel dual}
K_{\cC}=\phi^* K_{\cM} +\omega_\phi,
\end{equation}
where $\omega_\phi$ is the relative dualizing sheaf of $\phi$ (and in our particular case it is the sheaf of relative K\"ahler differentials $\Omega_\phi$.) By \cite{h}, chapter II, Proposition 8.10 on the relative K\"ahler differentials of a fibre product we can calculate the relative dualizing sheave of the map $\hat{\pi}_n$ in the diagram above as
\begin{equation}\label{fib rel dual}
\omega_{\hat{\pi}_n}=\i_n^* \omega_{\pi_n}.
\end{equation}

In \cite{h} this identity is shown for schemes, and here we use it in its version for stacks.
Using the equations \eqref{fib rel dual} and \eqref{rel dual} for both $\stMgn$ and $\stHgn$ we can show by induction over $n$ that
\begin{equation}
\begin{split}
K_{\stHgn}=\hat{\pi}^* K_{\stHg}+ \i_n^* (K_{\stMgn}-\pi^*K_{\stMg})
		=\hat{\pi}^* K_{\stHg}+ \sum_{i=1}^n \psi_i -2\sum_{|S|\geq 2} \d_{0,S}.
\end{split}
\end{equation}

Equation \eqref{canonical Hgn eq stack} in Theorem \ref{canonical Hgn} now follows from Theorem \ref{canonical Hg} (giving the sum over $i$ on the right hand side of \eqref{canonical Hgn eq stack}) and repeated applications of Lemma \ref{pullback} (giving the sum over $S$). 

In order to compute the canonical divisor on the coarse moduli space, we consider
the map
$$ \epsilon: \stHgn \to \Hgn$$
and note that
\begin{equation} \label{coarse}
\epsilon^*K_{\Hgn}= K_{\stHgn} -R ,
\end{equation} 
where $R$ is the ramification divisor of $\epsilon$. In order to compute $R$ (for $n \geq 1$) we consider the locus $\Sigma \subset \Hgn$ of pointed curves with a non-trivial automorphism. Then the codimension 1 components of $\Sigma$ are 
\begin{itemize}
\item $\{ (C,x) \in \overline{\cH}_{g,1}| x \mbox{  a Weierstra{\ss}  point} \}, \quad n=1$
\item  $\d_{i,\emptyset}, \qquad (i=1, \ldots, g  \mbox{  and  } n\geq1, g \geq 2).$
\end{itemize}

In each case a general element has automorphism group $\Z_2$:  In the first case this group is generated by the hyperelliptic involution of the curve $C$ which acts as an automorphism of the pointed curve $(C,x)$. In the second case, for $i>1$, the non-trivial automorphism is the hyperelliptic involution on the component of genus $i$, while for $i=1$ it is the involution with respect to the node on the elliptic tail.

Ignoring the first case (which is irrelevant  for our theorem) we find 
$R= \sum_{i=1}^g \d_{i,\emptyset}.$
Thus equation  \eqref{canonical Hgn eq coarse}  follows from equations \eqref{canonical Hgn eq stack} and \eqref{coarse}, completing the proof of the theorem.
\end{proof}

\section{Singularities of $\Hgn$}

In this section we study the singularities (see Section \ref{sec sing}) of the moduli space $\Hgn$ and prove the following theorem:

\begin{theorem}  \label{noadcon}
The singularities of $\Hgn$ do not impose  adjunction conditions. 
\end{theorem}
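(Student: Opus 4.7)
The plan is to follow the Harris--Mumford strategy from \cite{hm}, as adapted in \cite{l} for $\Mgn$ and in \cite{fv1} for $\Mgn/S_n$, with the extra bookkeeping required in the hyperelliptic setting. Since the stack $\stHgn$ is smooth, a neighbourhood of $[(C,x_1,\ldots,x_n)]\in\Hgn$ is analytically isomorphic to $\Delta^d/\Gamma$ with $d=2g-1+n$, where $\Delta^d$ is a ball in the tangent space $T=T_{[(C,x_1,\ldots,x_n)]}\stHgn$ and $\Gamma=\mbox{Aut}(C,x_1,\ldots,x_n)$ acts linearly on $T$ after linearisation (cf.\ \cite{acg}, Chapter~XI, Lemma~6.12). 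By Theorem~\ref{Raid-Tai}, combined with the reduction via Prill's result recalled in Section~\ref{sec sing}, it then suffices to prove that every non-trivial $\alpha\in\Gamma$ acts on $T$ either as a quasireflection or with $\mbox{age}(\alpha)\ge 1$ with respect to every primitive root of unity, except possibly along a codimension-one sublocus along which pluricanonical forms have to be extended by hand.

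The bulk of the work is the Reid--Tai check. The tangent space decomposes as $T=T^{\mathrm{hyp}}\oplus\bigoplus_{i=1}^n T_{x_i}C$, with $T^{\mathrm{hyp}}$ of dimension $2g-1$. For $g\ge 2$ the hyperelliptic involution $\iota$ is central in $\mbox{Aut}(C)$, so every $\alpha\in\Gamma$ descends to an automorphism $\bar\alpha$ of $\P^1=C/\iota$ that permutes the $2g+2$ branch points of $\pi\colon C\to\P^1$ and fixes each image $\pi(x_i)$. Using the isomorphism $\hg\simeq\mrat/S_{2g+2}$, extended to a local description of $\hgn$ by recording the sheet over each non-Weierstra\ss\ marked image, the action of $\alpha$ on $T^{\mathrm{hyp}}$ is read off as the action of $\bar\alpha$ on deformations of the branch configuration, while its action on each $T_{x_i}C$ is given by the derivative of $\bar\alpha$ at $\pi(x_i)$ (together with the possible $\pm 1$ of a sheet-swap). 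This reduces the criterion to the combinatorial seniority check for quotients of $\M_{0,N}$ that was carried out in \cite{fv1}, plus the special case $\alpha=\iota$, which only arises when all $x_i$ are Weierstra\ss\ and acts as $+1$ on $T^{\mathrm{hyp}}$ and as $-1$ on each $T_{x_i}C$: its age equals $n/2$, so that it is senior for $n\ge 2$, while for $n=1$ it acts as a quasireflection.

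The remaining step is to deal with the one boundary divisor on which Reid--Tai must fail, namely $\d_{1,\emptyset}$: a smooth hyperelliptic curve of genus $g-1$ meeting an elliptic tail at a Weierstra\ss\ point, whose inversion of the tail through its node is junior. Following \cite{hm}, Section~2 closely, I would construct directly, for any pluricanonical form $\omega$ on the regular locus of $\Hgn$, a local lift to a resolution of singularities of $\Hgn$. The crucial observation is that the hyperelliptic structure forces the node of the elliptic tail to sit at a Weierstra\ss\ point of the genus $g-1$ component, so the smoothing parameter of that node is itself a local coordinate on $\Hgn$, and the node-inversion acts on this coordinate together with the elliptic modulus in exactly Mumford's normal form; the forced vanishing of $\omega$ along the exceptional divisor then absorbs the age-deficit. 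The principal obstacle will be this last extension argument at deeper boundary strata, where the elliptic tail sits at a Weierstra\ss\ point that also bears marked points or where further boundary divisors cross $\d_{1,\emptyset}$: here the local coordinates inherited from $\mrat/S_{2g+2}$ have to be carefully aligned with Mumford's normal form, and one must verify that no genuinely non-canonical singularity appears outside $\d_{1,\emptyset}$.
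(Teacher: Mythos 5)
Your overall strategy is the same as the paper's: verify the Reid--Tai criterion away from a short list of junior automorphisms, and extend pluricanonical forms by hand along the elliptic-tail locus following \cite{hm} and \cite{l}. For \emph{smooth} curves your route differs in an interesting way: instead of computing directly with the basis $\{x^k(dx/y)^2\}$ of $\g$-invariant quadratic differentials (which is what the paper does in Proposition \ref{sing smooth}, diagonalising $\a$ with eigenvalues $\z^2,\dots,\z^{2g}$ and estimating the age case by case in the order $m$), you transport the problem to the configuration of $2g+2$ branch points on $\P^1$ via $\hg\simeq\mrat/S_{2g+2}$. That is a legitimate identification of the $\a$-action on $T^{\mathrm{hyp}}$, and your computation for the hyperelliptic involution (identity on $T^{\mathrm{hyp}}$, $-1$ on each $T_{x_i}C$, age $n/2$, quasireflection for $n=1$) agrees with the paper. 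However, the ``combinatorial seniority check for quotients of $\M_{0,N}$'' that you defer to \cite{fv1} is not actually in that reference (which treats $\Mgn/S_n$), so at this point you have replaced a computation by a citation that does not exist; you would still have to carry out the age estimate, which is exactly what the paper's explicit eigenvalue computation supplies.

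The genuine gap is at the boundary. You jump from the smooth-curve analysis to the extension argument along $\d_{1,\emptyset}$ as if the generic elliptic-tail curve were the only boundary point where Reid--Tai could fail, and you flag the rest (``deeper boundary strata'', ``no genuinely non-canonical singularity outside $\d_{1,\emptyset}$'') as an obstacle without resolving it. This is precisely the content of the paper's Proposition \ref{sing closure 1}, which is the hardest step: one must run the Harris--Mumford induction on the number of nodes using the $\g$-invariant version of the exact sequence \eqref{exseqcomp}, reduce to the eigenvalues of $\a$ on $H^0(\cO_{C_a}(2K_a+\sum_b p_b))^{\g}$ for each normalized component $C_a$, and invoke Proposition \ref{sing smooth} componentwise to conclude that only elliptic tails can produce junior automorphisms. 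Your branch-point picture does not obviously survive this degeneration (the double cover of a nodal rational curve is not controlled by a single $S_{2g+2}$-configuration), so some substitute for that induction is required. Relatedly, your list of junior exceptions is incomplete: besides the order-$2$ inversion of the tail, the order-$6$ automorphisms at $j=0$ and the order-$4$ automorphisms at $j=12^3$ are also junior and must be covered by the explicit extension of pluricanonical forms in \cite{hm}, pp.~40--44; and in the pointed case one still has to observe, as the paper does, that a marked point on the elliptic tail restores seniority while an unmarked tail is handled by the same extension argument.
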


This is well known for $g=2$ and $g=3$ (see e.g. \cite{hmo} for $g=3$). For $g\geq 4$ we will prove this theorem by showing the absence of adjunction conditions, first for $\hg$, then for $\Hg$ and finally for $\Hgn$. 

We will also use the Reid-Tai criterion \ref{Raid-Tai} and the Kodaira-Spencer theory introduced in Section \ref{sec sing}

As a first step in the proof of Theorem \ref{noadcon} we show the following proposition.

\begin{proposition} \label{sing smooth}
If $C$ is a smooth hyperelliptic curve and $\a$ an automorphism of $C$, then the action of $\a$ on $T_C \hg$ is either senior or the identity or a quasireflection of order 2.
Furthermore the last case can only occur for $g=2$, $C$ a double cover of an elliptic curve and $\a$ the associated involution.
\end{proposition}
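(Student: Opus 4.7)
The strategy is to exploit the canonical isomorphism $\hg \simeq \mrat/S_{2g+2}$ of Proposition \ref{isomorphism}, which identifies $T_C \hg$ with $T_{[B]}\mrat$, where $B \subset \mathbb{P}^1$ is the branch locus of the hyperelliptic cover $C \to \mathbb{P}^1$. Since for $g \geq 2$ the hyperelliptic involution $\iota$ is central in $\mbox{Aut}(C)$, every $\alpha \in \mbox{Aut}(C)$ descends to an automorphism $\bar\alpha \in \mbox{Aut}(\mathbb{P}^1,B) \subset \mbox{PGL}_2(\mathbb{C})$, and the action of $\alpha$ on $T_C\hg$ coincides with that of $\bar\alpha$ on $T_{[B]}\mrat$. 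If $\bar\alpha = \mbox{id}$, then $\alpha \in \{\mbox{id}, \iota\}$ and acts trivially, settling one of the three cases outright.

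The next step is to compute the spectrum of $\bar\alpha \neq \mbox{id}$. After normalising coordinates so that $\bar\alpha(z) = \omega z$ with $\omega$ a primitive $n$-th root of unity and $n = \mbox{ord}(\bar\alpha)$, I use the identification
\begin{equation*}
T_{[B]} \mrat \;\simeq\; \Big(\bigoplus_{p \in B} T_p \mathbb{P}^1\Big) \Big/ \mathfrak{sl}_2,
\end{equation*}
under which the eigenvalue tally is explicit: each of the $a$ orbits of $\bar\alpha$ of length $n$ in $B$ contributes every $n$-th root of unity once; each of the (at most two) fixed points $0,\infty$ of $\bar\alpha$ lying in $B$ contributes $\omega$ or $\omega^{-1}$; and one subtracts the weights $\omega, 1, \omega^{-1}$ coming from the $\mathfrak{sl}_2$-action.

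Writing $\epsilon := |B \cap \{0,\infty\}|$ so that $2g+2 = an + \epsilon$, the Reid-Tai criterion of Theorem \ref{Raid-Tai} is then checked case by case. For $n \geq 3$ each middle weight $\omega^j$ with $2 \leq j \leq n-2$ occurs with multiplicity $a$, making the spectrum symmetric enough under the Galois conjugation $\omega \mapsto \omega^k$ that $\age(\bar\alpha^k) \geq 1$ for all $k$ coprime to $n$ follows from elementary bounds in $a,n,\epsilon$ (one uses $an + \epsilon = 2g+2$ with $\epsilon \leq 2$ to get $a \geq 2$ for $g \geq 2$). For $n = 2$ a direct count gives $\age(\bar\alpha) = (a + \epsilon - 2)/2$, which is $\geq 1$ in every subcase except $(n,a,\epsilon,g) = (2, g+1, 0, 2)$. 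The passage from $\bar\alpha$ to a lift $\alpha$ of order $m \in \{n, 2n\}$ does not disturb these verifications: the eigenvalue exponents get multiplied by $m/n$, leaving each fractional-part age invariant while, in the order-$2n$ case, merely restricting the Reid-Tai check to a subset of Galois conjugates.

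It remains to interpret the exceptional case $g=2$, $n=2$, $\epsilon=0$ geometrically. There the spectrum on the three-dimensional $T_{[B]}\mrat$ is $(1,1,-1)$, so $\bar\alpha$ (hence $\alpha$) acts as a quasireflection of order two; moreover $\bar\alpha$ is a fixed-point-free involution of $\mathbb{P}^1$ whose three orbits in $B$ descend to three branch points of the quotient $\mathbb{P}^1 \to \mathbb{P}^1/\langle\bar\alpha\rangle$. The intermediate cover $C \to C/\langle\alpha\rangle$ is then of degree two, and Riemann-Hurwitz $2g(C) - 2 = 2(2 g' - 2) + r$ with $g(C)=2$ and ramification number $r=2$ forces $g' = 1$, so $C/\langle\alpha\rangle$ is an elliptic curve and $\alpha$ is the associated bielliptic involution. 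The main obstacle in executing this plan is the age computation for all Galois conjugates in the third step; the reduction to $\bar\alpha$, the eigenvalue tally, and the geometric identification of the exception are essentially bookkeeping once that estimate is in place.
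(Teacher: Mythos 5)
Your argument is correct in outline but follows a genuinely different route from the paper. The paper works directly on the curve: it identifies the cotangent space $(T_C\hg)^{\vee}$ with the $\gamma$-invariant quadratic differentials $H^0(C,\omega_C^{\otimes 2})^{\gamma}$, writes down the explicit basis $x^k(dx/y)^2$, $0\le k\le 2g-2$, reads off the eigenvalues of a lift $x\mapsto\zeta x$, $y\mapsto\pm y$ as $\zeta^2,\dots,\zeta^{2g}$, and then runs a case split on the order $m$. You instead transport the whole computation to the rational side via $\hg\simeq\mrat/S_{2g+2}$ and tally eigenvalues on $T_{[B]}\mrat\simeq\bigl(\bigoplus_{p\in B}T_p\P^1\bigr)/\mathfrak{sl}_2$. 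The two eigenvalue multisets do agree, and your route buys something concrete: it sidesteps the normalisation of the lift. The lift of $x\mapsto\zeta x$ is in fact $y\mapsto cy$ with $c^2=\zeta^{\deg f}$, which is $\pm y$ only when $n\mid\deg f$; the resulting uniform twist of all eigenvalues by $c^{-2}$ is invisible in the paper's displayed formula but is automatically accounted for in your configuration-space model, where the action manifestly factors through $\bar\alpha$ (both lifts act identically on the $\gamma$-invariants). The identification of the exceptional genus-$2$ case via Riemann--Hurwitz matches the paper's conclusion.

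Two repairs are needed in your sketch. First, the parenthetical claim that $an+\epsilon=2g+2$ with $\epsilon\le 2$ forces $a\ge 2$ for $g\ge 2$ is false: for $y^2=x^6-1$ (so $g=2$) the automorphism $x\mapsto\zeta_6x$ has $n=6$, $a=1$, $\epsilon=0$. The seniority conclusion for $n\ge 3$ still holds, since the age with respect to any primitive root equals $\frac{a(n-1)}{2}-1$ plus the (nonnegative) fixed-point contribution; when $a=1$ one necessarily has $n\in\{2g,2g+1,2g+2\}$, and either $\frac{n-1}{2}-1\ge 1$ already or the fixed-point terms supply the missing amount (e.g.\ for $n=2g$, $\epsilon=2$ the two fixed points contribute exactly $1$ for every Galois conjugate). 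So a short extra case check replaces the blanket bound $a\ge 2$. Second, ``$\bar\alpha$ is a fixed-point-free involution of $\P^1$'' should read ``fixed-point-free on $B$'': every nontrivial automorphism of $\P^1$ has two fixed points, and in the exceptional case they merely fail to lie in the branch locus. With these adjustments the proof is complete.
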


\begin{proof}
Let $C$ be the hyperelliptic curve defined by the equation $y^2=f(x)$ and $\pi:C\to \P^1$ the quotient by the hyperelliptic involution. Then any automorphism of $C$ defines an automorphism of $\P^1$ which permutes the $2g+2$ branch points of $\pi$. In particular, this implies that the order of the automorphism can not be greater than $2g+2$. On the other hand any automorphism of $\P^1$ permuting the branch points can be lifted to two different automorphisms of $C$.
It follows from Hurwitz's formula that any finite automorphism of $\P^1$ has at least two fixed points (and if it has at least three, then it is the identity). In suitable coordinates we can always choose these two fixed points as $0$ and $\infty$ and write an automorphism of order $m$ as $x\mapsto \z x$ for some primitive $m$-th root of unity. Lifting this automorphism to $C$ gives us 
\begin{equation} \label{action}
\a: C\to C, \quad x\mapsto \z x, \quad y\mapsto \pm y. 
\end{equation} 

It follows from \eqref{serredual} that  the cotangent space $(T_C \hg)^V$ is 
given by the space of (co)invariants 
$H^0(C, \o^2_C)^\g$, where $\g$ denotes the hyperelliptic involution.
Any invariant quadratic differential on $C$ is the pullback via the quotient map $\pi:C \to \P^1$ modulo the hyperelliptic involution of a quadratic differential on
$\P^1$ with simple poles along the branch locus $D$ of $\pi$. Therefore the space $H^0(C, \o^2_C)^\g$ can also be identified with $H^0(\P^1, \o^2_{\P^1}(D))$ and with $H^0(C, \o^2_C(-R))$, where $R$ denotes the ramification divisor of $\pi$.
 Thus a basis of this space of quadratic differentials on $C$ can be read off the well known basis of holomorphic differentials on the hyperelliptic curve $C$ in the form
$$ \{ \dfrac{dx}{y}, x\dfrac{dx}{y}, \ldots, x^{g-1}\dfrac{dx}{y} \},$$
see e.g. \cite{gh2} p. 255, giving  a basis of $(T_C \hg)^V$ as
\footnote{ For completeness sake, we recall that the remaining $g-2$ non-invariant quadratic differentials on $C$ can eplicitly be written down as $x^j y^{-1} (dx)^2$ 
for $0 \leq j  \leq g-3 $, see e.g. \cite{fk}.}
\begin{equation}  \label{basis}
\{ (\dfrac{dx}{y})^2, x(\dfrac{dx}{y})^2, \ldots, x^{2g-2}(\dfrac{dx}{y})^2 \}.
\end{equation}


Therefore, using \eqref{action},  $\a$ acts on the tangent space $T_C \hg$, in the basis dual 
to \eqref{basis},  as
$$ \a=\begin{pmatrix}\z^{2} & 0 & \cdots & 0 \\0 & \z^{3} & 0 & 0 \\\vdots & 0 & \ddots & \vdots \\0 & 0 & \cdots & \z^{2g} \\\end{pmatrix}.$$
This gives 
$$\age(\a)=\dfrac{1}{m}\sum_{k=2}^{2g} \bar{k}, $$ with $0\leq\bar{k}\leq m-1$ and $\bar{k}= k \quad (\mbox{mod  }  m).$
We will calculate the age separately for different orders $m$ of $\a$. 

First, if $\a$ is the hyperelliptic involution, then it acts on the tangent space as the identity. This is the reason why $\hg\simeq \mrat/S_{2g+2}$ despite the extra automorphisms of hyperelliptic curves.

Second, if $m=2$ but $\a$ is not the hyperelliptic involution, then we get $\age(\a)=\dfrac{g-1}{2}$. Thus $\a$ is senior unless $g=2$, but in this case $\a$ is a quasireflection. Note also that such a junior $\a$ exists only when $C$ is a double cover of an elliptic curve and $\a$ is the associated involution.

Third, if $2<m\leq 2g$, then $\a$ has the eigenvalues $\z^2$ and $\z^{m-1}$. Thus we get 
$\age(\a)\geq \dfrac{2}{m}+\dfrac{m-1}{m}> 1$. 

Last, if $m>2g$, then we must have $m\in \{2g+1, 2g+2\}$ and $\a$ has the eigenvalues $\z^2$ and $\z^{2g}$. Thus we get $\age(\a)\geq \dfrac{2}{m}+\dfrac{2g}{m}\geq 1$.

\end{proof}

In particular, this gives
\begin{corollary}
The moduli space $\hg$ has (only) canonical singularities.
\end{corollary}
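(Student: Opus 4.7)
The plan is to verify the Reid--Tai criterion (Theorem \ref{Raid-Tai}) pointwise on $\hg$. Locally near any point $[C] \in \hg$, the moduli space is isomorphic to $\Delta^{2g-1}/\mbox{Aut}(C)$, where $\mbox{Aut}(C)$ acts linearly on the tangent space $T_C\hg \cong \C^{2g-1}$ after the linearisation mentioned in Section \ref{sec sing}. Denote by $K \subset \mbox{Aut}(C)$ the kernel of this action; by Proposition \ref{sing smooth}, $K$ contains at least the hyperelliptic involution, so the quotient group $G' := \mbox{Aut}(C)/K$ acts faithfully on $T_C\hg$ and the local model for $\hg$ at $[C]$ becomes $\C^{2g-1}/G'$.

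For $g \geq 3$, Proposition \ref{sing smooth} guarantees that every non-identity element of $G'$ is senior and that no quasireflections arise in $G'$. The Reid--Tai criterion applies directly and yields that $\C^{2g-1}/G'$ has canonical singularities, hence so does $\hg$ at $[C]$. Since $[C]$ was arbitrary, this handles $\hg$ in all higher genera.

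For $g=2$ the proposition allows quasireflections, namely the additional involution on double covers of elliptic curves, so one cannot immediately invoke Reid--Tai. The remedy is to use Prill's proposition (the proposition following Theorem \ref{Raid-Tai}): let $H \subset G'$ be the normal subgroup generated by the quasireflections, so that $\C^{2g-1}/G' \cong W/K'$ with $W \cong \C^{2g-1}$ and $K'$ a finite linear group containing no quasireflections. It then remains to verify that every non-identity element of $K'$ acts on $W$ as a senior transformation, after which Reid--Tai again yields canonical singularities.

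The main obstacle is precisely this last verification in the $g=2$ case, because the isomorphism $\C^{2g-1}/H \xrightarrow{\sim} W$ is not induced by a linear map on the ambient vector spaces; consequently the ages of elements of $K'$ cannot simply be read off from Proposition \ref{sing smooth} but must be recomputed in the new coordinates on $W$. I would approach this by exploiting the explicit form of the quasireflection involution in Proposition \ref{sing smooth} and the fact that in genus $2$ the dimension is only $3$, which makes the situation tractable by direct computation (and is of course consistent with the classical fact that $\M_2 = \hg$ has canonical singularities).
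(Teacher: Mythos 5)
Your argument is correct and is essentially the paper's own (implicit) proof: the corollary is stated as an immediate consequence of Proposition \ref{sing smooth} via the Reid--Tai criterion, with Prill's proposition absorbing the quasireflections, exactly as you set it up. The residual verification you flag for $g=2$ (recomputing ages on $W$ after Prill's reduction, since $V/H\xrightarrow{\sim}W$ is not linear) is likewise not carried out in the thesis, which treats the cases $g=2,3$ as classical and only needs $g\geq 4$, so your treatment matches the paper's.
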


As a second step we study where the closure $\Hg$ may have non-canonical singularities.

For this purpose we need the infinitesimal deformation space of $\Hg$ as an explicit homology class. We recall from section \ref{sec sing} that the deformation space of $\Mg$ is just $\Ext^1_{\cO_C}(\O^1_C, \cO_C)$ (see also equation \eqref{serredual}).
Concerning the deformation space for $\Hgn$ and $\Hg$, we simply note that the hyperelliptic involution $\g$ still exists for stable hyperelliptic curves and acts in a canonical way on all of the spaces in the exact sequence \eqref{sesgext} and \eqref{exseqcomp}. Thus the infinitesimal deformation space at $[C]$ associated with $\Hgn$ is given by the $\g$-invariant elements in $\Ext^1_{\cO_C}(\O^1_C, \cO_C)$.  
It can be computed from the $\g$-invariant version of \eqref{exseqcomp},
see e.g. the closely related discussion \cite{acg},  Chapter XI, in the proof of Lemma 6.15.

\begin{proposition}\label{sing closure 1}
Let $C$ be a stable hyperelliptic curve of arithmetic genus $g\geq 4$ and let $\a$ be an automorphism of $C$ of order $n$. Then the action of $\a$ on $T_C \Hg$ is either trivial, senior or $\a$ is an elliptic tail automorphism, i.e. $C=C_1\cup C_2$ were $C_1\cap C_2=\{p\}$, genus of $C_2$ is $1$ and $\a$ acts trivially on 
the deformation space of $C_1$. 

Furthermore, in the last case we have either 
\begin{enumerate}
\item $C_2$ is elliptic or rational with one node and $\a$ is the inverse with respect to $p$,
\item $j(C_2)=0$ and $\a|_{C_2}$ is one of the two automorphisms of order $6$ that fix $p$ or
\item $j(C_2)=12^3$ and $\a|_{C_2}$ is one of the two automorphisms of order $4$ that fix $p$.
\end{enumerate}
 
\end{proposition}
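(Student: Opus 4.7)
The plan is to mimic the Harris--Mumford strategy of \cite{hm} for $\Mg$, but working throughout with the $\gamma$-invariant deformation space (where $\gamma$ is the hyperelliptic involution) since this is what parametrises $\Hg$ locally at $[C]$. The starting point is the $\gamma$-invariant version of the exact sequence \eqref{exseqcomp}, namely
\begin{equation}
0 \to \bigoplus_{p \in \mathrm{Sing}(C)} (\tor_p)^\gamma \to \Ext^1(\O^1_C,\cO_C)^\gamma \to \bigoplus_a H^0\!\bigl(\cO_{C_a}(2K_a+\textstyle\sum_b p_b)\bigr)^\gamma \to 0,
\end{equation}
which splits the infinitesimal deformations into a \emph{nodal} part (smoothings of the nodes) and a \emph{component} part (deformations of the normalised components keeping the glueing data). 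The automorphism $\a$ permutes components and nodes of $C$ and preserves both subspaces, and since $\age$ is additive over direct sums of eigenspaces, the total age of $\a$ on $T_C\Hg$ is the sum of the ages of these two contributions.

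First I would treat the \emph{component} part. On a component $C_a$ which is not an elliptic tail, the relevant eigenspace of the $\gamma$-invariant quadratic differentials adjusted for the nodes $p_b$ is large enough that, provided $\a|_{C_a}$ is non-trivial and not the hyperelliptic involution, one may essentially repeat the eigenvalue count of Proposition \ref{sing smooth}, augmented by the contribution of the marked points $p_b$ (which only makes the age larger). The upshot is that any component on which $\a$ acts non-trivially, and which is not of genus $1$ or a rational component attached at a single node, already contributes $\age\geq 1$ all by itself. Similarly, if $\a$ permutes two or more components of equal type, those together contribute age $\geq 1$ from the off-diagonal eigenvalues, exactly as in \cite{hm}. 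I would be careful to list the possible rational/elliptic components that \emph{can} occur on a stable hyperelliptic curve (using the intersection pattern recalled in Section~2 of this chapter), since this restricts which configurations of genus $1$ tails are geometrically realisable.

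Next I would treat the \emph{nodal} part: a node $p$ fixed by $\a$ contributes age $\tfrac{a_p + b_p}{n}$, where $\z^{a_p}$ and $\z^{b_p}$ are the eigenvalues of $\a$ on the two tangent directions at $p$ in the partial normalisation. Matching this against the component analysis above, the only way to avoid $\age(\a)\geq 1$ is that $\a$ acts trivially on all but one component, and that the remaining component $C_2$ is of arithmetic genus $1$ meeting the rest of $C$ in a single point $p$ at which $\a$ fixes $p$. Invoking the classical list of finite automorphisms of elliptic curves (and the rational nodal curve, which also has a $\Z/2$ inversion) that fix a point, one is left exactly with the three cases (1), (2), (3) in the statement, corresponding respectively to the generic $j$-invariant, $j=0$ and $j=12^3$. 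Finally I would verify that in each of these exceptional cases $\age(\a)<1$ is actually attained, so the list is sharp.

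The main obstacle will be the case-by-case eigenvalue bookkeeping at reducible configurations: one must be sure that passing to $\gamma$-invariants does not accidentally kill enough eigenvectors to drop the age below $1$ in some geometry one has not excluded, and conversely that the three elliptic-tail possibilities really occur inside $\Hg$ (some $j$-invariant configurations that exist on $\Mg$ may be incompatible with the hyperelliptic structure, and must be discarded). Once this bookkeeping is in place, the combinatorial reduction to elliptic tails is parallel to \cite{hm} and the argument closes.
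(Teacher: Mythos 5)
Your overall strategy is the one the paper itself uses: adapt Harris--Mumford's proof of Theorem 2 in \cite{hm} by replacing the exact sequence \eqref{exseqcomp} with its $\g$-invariant version and reading off $\age(\a)$ from the eigenvalues on $\bigoplus_a H^0(\cO_{C_a}(2K_a+\sum_b p_b))^{\g}$ together with the nodal directions, after reducing to the case where $\a$ fixes every component. There is, however, a concrete gap in your component analysis. You assert that any component $C_a$ on which $\a$ acts non-trivially, other than a genus-one or once-attached rational component, "already contributes $\age\geq 1$ all by itself." That is precisely what Proposition \ref{sing smooth} does \emph{not} give you: it leaves open the case where $C_a$ has genus $2$, is a double cover of an elliptic curve, and $\a|_{C_a}$ is the associated bielliptic involution, in which case the action on $H^0(2K_a)^{\g}$ is a quasireflection of age $\tfrac{1}{2}<1$. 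Your parenthetical remark that the points $p_b$ "only make the age larger" does not close this, since larger need not mean $\geq 1$; the additional eigenvalues on $H^0(\cO_{C_a}(2K_a+\sum_b p_b))^{\g}$ and on the torsion directions at the nodes must actually be computed for this configuration. The paper handles it by observing that $\Hg$ imposes no extra conditions on components of genus $\leq 2$ compared with $\Mg$, so the explicit computations of \cite{hm}, pp.~36--40 (which treat exactly the bielliptic genus-two involution and the remaining low-genus components) apply verbatim and show that no exceptional case beyond elliptic tails survives. Without this step your reduction to the list (1)--(3) is not justified.

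Apart from this omission, your skeleton matches the paper's argument: induction on the number of nodes, elimination of automorphisms permuting components, the contribution formula for fixed nodes, and the classical classification of pointed automorphisms of elliptic and nodal rational curves yielding the three $j$-invariant cases. Your closing worry about whether passing to $\g$-invariants could drop some age below $1$ is the right instinct, but it is answered component by component via Proposition \ref{sing smooth} (with the one exception above), not by a separate global check.
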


\begin{proof}
This is the analogue for $\Hg$ instead of $\Mg$ of Theorem 2 in \cite{hm}.  We will not try to completely recreate the proof. Instead, we will briefly summarize the main idea of the proof and explain, why it still works in our case.

As in \cite{hm}, our proof proceeds by induction on the number of double points. Instead of the exact sequence \eqref{exseqcomp} we shall use its $\g$-invariant version, with $\g$ the hyperelliptic involution,  to compute $\age(\a)$ where $\a$ is considered as a map on $T_C \Hg$.

With this modification, we follow the arguments in \cite{hm}, pp. 33 - 36, line by line and find: One is reduced to the case where $\a$ fixes every component of $C$. Then every normalized component $C_{a}$, with nodes in the points $p_b$, contributes to $\age(\a)$ the eigenvalues of $\a$
on the $\g$-invariant sections
$$  H^0(\cO_{C_{a}}(2K_{a} + \sum_{b} p_{b}))^{\g}.$$

But, by Proposition \ref{sing smooth}, for any hyperelliptic normalized component  $C_a$, on which $\a$ does not act as the identity, the eigenvalues of $H^0(\cO_{C_{a}}(2K_{a} ))^{\g}$ suffice to make $\a$ senior, unless $C_a$ is a double cover of an elliptic curve and $\a$ is the associated involution.

Only this case and the case of elliptic components needs further consideration. However $\Hg$ does not impose any additional conditions on components of genus $g_a \leq 2$ compared with $\Mg$. Therefore the computations in \cite{hm} pp. 36-40 prove our Proposition \ref{sing closure 1}.

\end{proof}

As a third step we now use the proof of Theorem 1 in \cite{hm} to conclude 
\begin{proposition}\label{sing closure 2}
The singularities of $\Hg$ do not impose adjunction conditions. 
\end{proposition}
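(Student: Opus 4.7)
The plan is to follow the strategy of Theorem 1 in \cite{hm}, adapting each step from $\Mg$ to $\Hg$. By Proposition \ref{sing closure 1}, the only possible obstruction to $\Hg$ having canonical singularities comes from elliptic tail automorphisms, since on the smooth locus $\hg$ all automorphisms are either trivial, senior or (only when $g=2$, which is excluded here) quasireflections by Proposition \ref{sing smooth}, and the remaining boundary automorphisms of $\Hg$ with $g \geq 4$ are shown in Proposition \ref{sing closure 1} to be senior unless they are elliptic tail automorphisms. Thus the problem reduces to analysing, for a stable hyperelliptic curve $C=C_1 \cup C_2$ with $C_2$ an elliptic (or rational nodal) tail meeting $C_1$ transversally at a point $p$, the pluricanonical forms in a neighbourhood of $[C]$ in $\Hg$.

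First I would set up local coordinates on the $\mathrm{Aut}(C)$-invariant local universal deformation $\Delta^{2g-1} \to \Hg$ at $[C]$, splitting them into a coordinate $t$ corresponding to smoothing the node $p$, coordinates $(u_1,\ldots,u_{2g-3})$ that deform $C_1$ inside the hyperelliptic locus (keeping $p$ fixed), and a single coordinate $s$ deforming $C_2$ (either the $j$-invariant of the elliptic tail, or the smoothing parameter of the rational nodal tail). This uses the $\gamma$-invariant version of the sequence \eqref{exseqcomp} and the fact that for an elliptic tail the $\gamma$-action on the component is trivial so that $C_2$ contributes one deformation parameter to $T_C \Hg$. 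The elliptic tail automorphism $\alpha$ acts trivially on $(u_1,\ldots,u_{2g-3})$, multiplies $t$ by a primitive $m$-th root of unity $\zeta$ (with $m=2,4,6$ in the three cases of Proposition \ref{sing closure 1}), and acts on $s$ as the identity (case 1), by $\zeta^2$ (case 2) or by $\zeta^{-2}$ (case 3). This is the same local picture as in \cite{hm}, because no new constraints come from hyperellipticity: the deformations of $C_1$ inside $\Hg$ form a smooth transversal slice whose pluricanonical forms behave exactly as in the Harris-Mumford analysis.

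Next I would show, case by case, that every $\alpha$-invariant pluricanonical form on the regular locus extends across the elliptic-tail divisor in a resolution $\tilde{X} \to X$ of a neighbourhood of $[C]$, using exactly the extension construction of \cite{hm}, pp. 40--47. In case (1), $\alpha$ is a quasireflection along $\{t=0\}$ and contributes no extra order of vanishing beyond what the canonical sheaf already picks up. In cases (2) and (3), one writes an $\alpha$-invariant pluricanonical generator in the explicit monomial form $t^{a}s^{b}(dt \wedge ds \wedge du_1 \wedge \cdots \wedge du_{2g-3})^{\otimes N}$ and checks by the same Reid--Tai-type count done in \cite{hm} that the resulting form pulls back without poles to a toric resolution of the cyclic quotient singularity $\mathbb{C}^2_{(t,s)}/\langle \alpha\rangle$, crossed trivially with the smooth factor in the $u$'s. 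Since our local model on $\Hg$ is identical to the one on $\Mg$ after restricting to the hyperelliptic transversal slice, the extension argument carries over verbatim.

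The main obstacle is verifying that the hyperelliptic involution $\gamma$ does not obstruct the identification of the local deformation coordinates on $\Hg$ with the restriction of the Harris--Mumford model on $\Mg$: one must check that the $\gamma$-invariant subspace of $\mathrm{Ext}^1(\Omega^1_C,\mathcal{O}_C)$ decomposes as the direct sum of (a) the $\gamma$-invariant deformations of $C_1$ (with the node preserved), (b) the smoothing parameter $t$ of the node, which automatically lies in the $\gamma$-invariant part because $p$ is fixed by $\gamma$, and (c) the single deformation of the tail $C_2$. Once this splitting is established via \eqref{sesgext} and \eqref{exseqcomp}, the explicit construction of pluricanonical extensions from \cite{hm} applies without modification, and the proposition follows.
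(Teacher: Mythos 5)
Your proposal is correct and follows essentially the same route as the paper: reduce via Proposition \ref{sing closure 1} to the elliptic-tail exceptions, observe that $\Hg$ imposes no extra conditions on the tail or on the local model near $[C]$, and then run the explicit pluricanonical extension construction of Harris--Mumford (Theorem 1 of \cite{hm}, pp.~40--44) restricted to $\g$-invariant forms. The paper states this more tersely, simply citing the Harris--Mumford calculation, whereas you spell out the local coordinate splitting and the action of $\a$ on $(t,s,u_1,\ldots,u_{2g-3})$; the substance is the same.
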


\begin{proof}
This is the analogue for $\Hg$ instead of $\Mg$ of Theorem 1 in \cite{hm}. The proof uses Theorem 2 of \cite{hm}. In the remaining cases Harris and Mumford individually construct the extensions of the pluricanonical forms.

In our case, Proposition \ref{sing closure 1} shows that we have the same exceptions as Harris and Mumford do. Furthermore, inspecting the boundary divisors of $\Hg$, it follows that $\Hg$ does not impose any additional conditions on elliptic tails compared to $\Mg$. Therefore the calculations in \cite{hm}, pp. 40-44, restricted to $\g-$invariant pluricanonical forms, work for our case as well and finish the proof. %
\end{proof}

As the final step in the proof of Theorem \ref{noadcon} we consider pointed curves in $\Hgn$. This is essentially the analogue of Theorem 2.5. in \cite{l}. We shall quickly summarize the proof. Let $(C;x_1,\ldots,x_n)$ be a pointed hyperelliptic curve and $\a$ an automorphism of this curve. Than $\a$ is just an automorphism of $C$ that fixes all the marked points. Now the deformation space of $C$ can be embedded into the deformation space of the pointed curve. 

Observe also that contracting a $\P^1$-component (in the case of stable reduction after removing marked points) does not alter the action of $\a$. Therefore the age of $\a$ as an automorphism of the pointed curve is at least as great as the age of $\a$ as an automorphism of $C$. By the Reid-Tai criterion and Proposition \ref{sing closure 1} there are only canonical singularities unless $C$ has an elliptic tail as in Proposition \ref{sing closure 1}. 
Let us assume we are in this exceptional case. If at least one of the marked points lies on the elliptic tail, then the action  of $\a$ on this component is the same as if another component were attached at this marked point. The computations in \cite{hm} on p. 36 show that the singularity will be canonical. If none of the marked points lies on the elliptic tail, then the pluricanonical forms can be extended by the calculations in \cite{hm} pp. 40-44. This finishes the proof of Theorem \ref{noadcon}.

\section{Effective divisors}

In this section we construct effective divisors on $\Hgn$ via pullback from $\Mgn$. We recall from Proposition \ref{divisor} that an effective divisor $D$ on $\Mgn$ defines an effective divisor on $\Hgn$ if and only if $\Hgn$ is not contained in $D$.

Furthermore, we recall from Lemma \ref{Logan divisor} the effective divisors $\mathfrak{D}(g;a_1,\ldots, a_n)$ on $\mgn$ defined as the set of all pointed curves $(C,x_i,\ldots, x_n)$ carrying a $\mathfrak{g}^1_g$ through the divisor $\sum_{i=1}^n a_ix_i$. In particular $\mathfrak{D}(g;g)$ is the well known Weierstra\ss \ divisor. We shall now show that these divisors impose a condition on the marked points, not on the curves, and therefore define an effective divisor on $\hgn$.

\begin{proposition}
The hyperelliptic locus $\hgn$ is not contained in any $\mathfrak{D}(g;a_1,\ldots, a_n)$.
\end{proposition}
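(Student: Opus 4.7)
The plan is to reduce, via Riemann--Roch, the defining condition of $\mathfrak{D}(g;a_1,\ldots,a_n)$ to a vanishing condition on canonical sections, and then to exploit the very explicit form of $H^0(C,K_C)$ on a hyperelliptic curve to exhibit general marked points for which no such section exists. Since $\hgn$ is irreducible, it suffices to produce a single $(C,x_1,\ldots,x_n)\in\hgn$ that does not lie in $\mathfrak{D}(g;a_1,\ldots,a_n)$.

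First I would observe that a divisor $D=\sum a_i x_i$ of degree $g$ on a smooth genus $g$ curve $C$ is contained in some $\mathfrak{g}^1_g$ iff $h^0(C,\mathcal{O}_C(D))\geq 2$. By Riemann--Roch this is equivalent to $h^0(C,K_C-D)\geq 1$, i.e.\ to the existence of a nonzero $\omega\in H^0(C,K_C)$ with $\mathrm{ord}_{x_i}\omega\geq a_i$ for $i=1,\ldots,n$. Thus the proposition boils down to producing $(C,x_1,\ldots,x_n)\in\hgn$ such that no canonical section on $C$ vanishes to order $\geq a_i$ at each $x_i$.

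Second, I would use the explicit description $H^0(C,K_C)=\langle x^j\,dx/y:0\leq j\leq g-1\rangle$ for a hyperelliptic curve $C:y^2=f(x)$ with $f$ separable of degree $2g+1$ or $2g+2$. If $P=(x_0,y_0)\in C$ is not a Weierstra\ss\ point, then $y_0\neq 0$, the map $\pi:C\to\mathbb{P}^1$ is \'etale at $P$, and $x-x_0$ is a local parameter on $C$ at $P$. Hence a canonical section $p(x)\,dx/y$ vanishes at $P$ to order exactly $\mathrm{ord}_{x_0}(p)$; in particular this order depends only on $t_P:=\pi(P)\in\mathbb{P}^1$, and vanishing at $P$ and at its conjugate $\sigma(P)$ are automatically the same.

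Third, I would fix any $C\in\hg$ and choose marked points $x_1,\ldots,x_n\in C$ with no $x_i$ a Weierstra\ss\ point and no two of them conjugate under the hyperelliptic involution; this is a non-empty Zariski-open condition on $\hgn$. Setting $t_i:=\pi(x_i)\in\mathbb{P}^1$ gives $n$ \emph{distinct} points of $\mathbb{P}^1$, and by the previous paragraph the existence of the desired canonical section $\omega$ is equivalent to the existence of a nonzero polynomial $p(x)$ of degree $\leq g-1$ with $\mathrm{ord}_{t_i}(p)\geq a_i$ for every $i$. Since $\sum a_i=g$ and the $t_i$ are distinct, Hermite interpolation tells us that these vanishing conditions impose $g$ independent linear conditions on the $g$-dimensional vector space of polynomials of degree $\leq g-1$, forcing $p\equiv 0$. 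Consequently $h^0(C,K_C-\sum a_i x_i)=0$, Riemann--Roch yields $h^0(C,\sum a_i x_i)=1$, and so $(C,x_1,\ldots,x_n)\notin\mathfrak{D}(g;a_1,\ldots,a_n)$, proving $\hgn\not\subset\mathfrak{D}(g;a_1,\ldots,a_n)$. The only point requiring any care is the reduction to a Hermite interpolation problem on $\mathbb{P}^1$, which in turn rests on the standard fact that the canonical map of a hyperelliptic curve factors through $\mathbb{P}^1$; granting this, the independence of the vanishing conditions is immediate from the distinctness of the $t_i$ and the fact that $\sum a_i=g>g-1$.
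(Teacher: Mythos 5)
Your proof is correct and follows essentially the same route as the paper: reduce via Riemann--Roch to $h^0(C,K_C-\sum a_ix_i)\geq 1$ and then use the fact that the canonical system of a hyperelliptic curve factors through $\mathbb{P}^1$. The paper phrases this last input as ``every effective canonical divisor is a sum of $g-1$ conjugate pairs,'' whereas you use the explicit basis $x^j\,dx/y$ and a degree count for polynomials of degree $\leq g-1$; these are the same fact, so the arguments coincide in substance.
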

\begin{proof}
Take any pointed hyperelliptic curve $(C,x_1,\ldots, x_n)$ and set $D:=\sum a_i x_i$. Then $C$ will carry a $\mathfrak{g}^1_g$ through $D$ if and only if $h^0(D)\geq 2$. By Riemann-Roch this is equivalent to $h^0(K-D)\geq 1$. In other words, there must be an effective divisor $D'$ of degree $g-2$ such that $D+D'\sim K$. However, any effective canonical divisor on a hyperelliptic curve consists of $(g-1)$ pairs of points conjugate under the hyperelliptic involution. This means that there must be either indices $i\neq j$ with $x_i$ and $x_j$ conjugate or some $i$ with $a_i\geq 2$ and $x_i$ a Weierstra\ss \  point. Thus only curves with such special choice of marked points are contained in $\mathfrak{D}(g;a_1,\ldots, a_n)$ and not all of $\hgn$.
\end{proof}

Since we are only interested in the cases $n>g$ we take the symmetrized pull back of $\mathfrak{D}(g;1,\ldots, 1)$ to $\Mgn$ from equation \eqref{W large}. In order to avoid further calculations with binomial coefficients we renormalize the divisor in such a way that the coefficient of $\psi$ becomes $1$, i.e. we divide equation \eqref{W large} by $\binom{n-1}{g-1}$. We get

\begin{equation}\label{W norm}
W=-\frac{n}{g}\l +\sum_{i=1}^n  \psi_i -0\cdot\dirr -\sum_{s\geq 2}b_{0,s}\d_{0,s}
-\mbox{higher order terms},
\end{equation}
with $b_{0,2}=2+\frac{g-1}{n-1}$, $b_{0,n}=\frac{n(g+1)}{2}$ and $b_{0,s}\geq b_{0,2}$ for all $s>2$. 

By abuse of notation we write $W=\iota^*_n W$, where $\iota_n$ is the inclusion $\iota_n: \Hgn\to \Mgn$.

\section{Proof of Theorem \ref{t1}}
We shall prove the theorem by using the Proposition \ref{criterion gentype} relating positivity properties of the canonical divisor with the Kodaira dimension:
If the canonical divisor of $\Hgn$ is effective, then $\Hgn$ has  non-negative Kodaira dimension. If the canonical divisor is big, i.e. the sum of an ample and an effective divisor, then $\Hgn$ is of general type. 
Validity of these criteria is based on the fact that the singularities of $\Hgn$ do not impose adjunction condition as stated in Theorem \ref{noadcon}.

Note that the divisor class $\psi=\sum_{i=1}^n\psi_i$ is ample on $\Hgn$ because it is ample on $\Mgn$. Thus it suffices to decompose $K=K_{\Hgn}$ as the sum a positive multiple of $\psi$ and some effective divisor. We will use the divisor $W$ introduced in Section 5 and show that 
$$K=\epsilon \psi +aW +E$$ for some $a,\epsilon > 0$ and $E$ effective.

We begin by taking the decomposition \eqref{l Hg} and pulling it back to $\Hgn$. We get
\begin{equation}\label{l Hgn}
(8g+4)\l=g\e_0+2\sum_S\sum_{i\geq 1} (i+1)(g-i)\e_i +\sum_S \sum_{i\geq 1} i(g-i)\d_{i,S}.
\end{equation}

Likewise we pull back \eqref{dirr Hg} to get
\begin{equation} \label{dirr Hgn} 
\dirr= \e_0 +2\sum_S \sum_{i\geq 1} \e_i.
\end{equation}

Now we set $a:= (1-\epsilon)$ and combine equations \eqref{l Hgn} and \eqref{dirr Hgn} with \eqref{canonical Hgn eq coarse} and \eqref{W norm} to show that $E:=K-(1-\epsilon)W-\epsilon \psi$ becomes effective for some sufficiently small $\epsilon$. We do this by decomposing $E$ as a linear combination of the $\psi_i$, $\e_0$, $\e_{i,S}$ with $i\geq 1$ and $\d_{i,S}$ with $i\geq 0$ and looking at each coefficient individually.

Clearly, in the decomposition of $E$, the coefficients of $\e_{i,S}$ and $\d_{i,S}$ with $i\geq 1$ are all positive.
In fact, a short computation shows that the coefficients in the decomposition of $K$ - given in \eqref{canonical Hgn eq coarse} - are all positive and the coefficients of $W$ - given in \eqref{W norm} - are all negative. Thus   we are subtracting a negative number from a positive one.

The coefficient of $\d_{0,S}$, in the decomposition of $E$, with $|S|=s\leq n-1 $ is 
$$-2+(1-\epsilon) \cdot b_{0,s}\geq -2+(1-\epsilon)\cdot b_{0,2}= -2+(1-\epsilon)(2+\frac{g-1}{n-1})$$
which is positive for $\epsilon$ sufficiently small.

We consider separately the coefficient  of $\d_{0,\{1, \ldots, n\}}=\d_{g,\emptyset}$ which is
$$ -3 + (1- \epsilon) b_{0,n} = -3 + (1- \epsilon) \frac{n(g+1)}{2} > 0, $$
for $g \geq 2$ and $n>g$.

The problematic case is $\e_0$ which only appears as part of $\l$ in $W$. Its coefficient in the decomposition of E is
$$ -(\dfrac{1}{2}+\dfrac{1}{2g+1})+(1-\epsilon)\frac{n}{g}\cdot \frac{g}{8g+4} $$
which vanishes for $n=4g+6$ and $\epsilon=0$. Thus it is positive for $n\geq 4g+7$ and $\epsilon$ sufficiently small.

We have thus shown that $K$ is big for $n\geq 4g+7$ and still effective for $n=4g+6$. This proves the theorem in view of the criteria stated above.


\newpage
{\Large Acknowledgements}\\

Most importantly, I want to thank my advisor G. Farkas both for suggesting the interesting subjects treated in this thesis
and his efficient support.
I also want to thank Will Sevin for helpful comments on the paper \cite{sch2} which constitutes an earlier version of Chapter 2 of this thesis
and my colleagues  Ignacio Barros and Daniele Agostini for treating the case $n=4g+5$ for the moduli space $\Hgn$ closing a gap between my result and the known results on uniruledness.
Last but not least I thank my family for their support, in particular 
my sister Iris for checking my English and my father for continuous encouragement to extend the Introduction to this thesis a bit beyond the bare minimum.\\

\vspace{5cm}

{\Large Selbstst\"andigkeitserkl\"arung}\\
\bigskip\\
Ich erkl\"are, dass ich die vorliegende Arbeit selbstst\"andig und nur unter Verwendung der angegebenen Quellen und Hilfsmittel angefertigt habe und zum ersten Mal eine Dissertation in diesem Studiengang einreiche.
\bigskip

Berlin, den 29.04.2020

\end{document}